\title{Involutions and Representations for Reduced Quantum Algebras}
\author{\textbf{Simone Gutt}\thanks{\texttt{sgutt@ulb.ac.be}} \\[0.2cm]
  \begin{minipage}{6cm}
      \begin{center}{\small{
          D{\'e}partement de Math{\'e}matique \\
          Universit{\'e} Libre de Bruxelles \\
          Campus Plaine, C. P. 218 \\
          Boulevard du Triomphe \\
          B-1050 Bruxelles \\
          Belgium}}
      \end{center}
  \end{minipage}
  \qquad
  \begin{minipage}{6cm}
      \begin{center}
         {\small{ Universit\'e de Metz\\
          D{\'e}partement de Math{\'e}matique\\
          Ile du Saulcy \\
          F-57045~Metz Cedex 01\\
          France}}
      \end{center}
  \end{minipage}
  \vspace{0.2cm} \\
  and
  \\[0.2cm]
  \addtocounter{footnote}{5}
  \textbf{Stefan Waldmann}\thanks{\texttt{Stefan.Waldmann@physik.uni-freiburg.de}}
  \\[0.2cm]
  \begin{minipage}{8cm}
      \begin{center}{\small{
          Fakult{\"a}t f{\"u}r Mathematik und Physik \\
          Albert-Ludwigs-Universit{\"a}t Freiburg \\
          Physikalisches Institut \\
          Hermann Herder Strasse 3 \\
          D 79104 Freiburg \\
          Germany}}
      \end{center}
  \end{minipage}
}
\date{\today}
\newcounter{comment}[section] % comments at the margins
\renewcommand{\mathbb}[1]{\mathbbm{#1}}          % use nicer bbm fonts
\renewcommand{\arraystretch}{1.2}                % in formulas and tables
\newcommand{\refitem}[1] {~\textit{\ref{#1}.)}}
\newtheorem{lemma}{Lemma}[section]
\newtheorem{proposition}[lemma]{Proposition}
\newtheorem{theorem}[lemma]{Theorem}
\newtheorem{corollary}[lemma]{Corollary}
\newtheorem{definition}[lemma]{Definition}
\newtheorem{remark}[lemma]{Remark}
\newcommand\qedsymbol{\hbox{$\boxempty$}}
\newcommand\qed{\relax\ifmmode\boxempty\else
  {\unskip\nobreak\hfil\penalty50\hskip1em\null\nobreak\hfil\qedsymbol
  \parfillskip=\z@\finalhyphendemerits=0\endgraf}\fi}
\newcommand\subqedsymbol{\hbox{$\triangledown$}}
\newcommand\subqed{\relax\ifmmode\triangledown\else
  {\unskip\nobreak\hfil\penalty50\hskip1em\null\nobreak\hfil\subqedsymbol
  \parfillskip=\z@\finalhyphendemerits=0\endgraf}\fi}
\newenvironment{proof}[1][{}]{\par\noindent Proof{#1}. }{\qed}
\newcommand{\Lie}        {\operatorname{\mathscr{L}\!}}    
\newcommand{\cc}[1]      {\overline{{#1}}}              
\newcommand{\id}         {\operatorname{\mathsf{id}}}   
\newcommand{\supp}       {\operatorname{\mathrm{supp}}}  
\newcommand{\Pol}        {\operatorname{\mathrm{Pol}}}   
\newcommand{\tr}         {\operatorname{\mathsf{tr}}}    
\newcommand{\ad}         {\operatorname{\mathrm{ad}}}    
\newcommand{\Ad}         {\operatorname{\mathrm{Ad}}}    
\newcommand{\Hom}        {\operatorname{\mathsf{Hom}}}   
\newcommand{\End}        {\operatorname{\mathsf{End}}}   
\newcommand{\SP}[1]      {\left\langle{#1}\right\rangle} 
\newcommand{\Unit}       {\mathbb{1}}
\newcommand{\cl}         {\mathrm{cl}}
\newcommand{\I}          {\mathrm{i}}
\newcommand{\D}          {\operatorname{\mathrm{d}}}
\newcommand{\Log}        {\operatorname{\mathrm{Log}}}
\newcommand{\Anti}       {\Lambda}
\newcommand{\AntiC}      {\Lambda_{\mathbb{C}}}
\newcommand{\Sym}        {\mathrm{S}}
\newcommand{\Cinfty}     {\mathcal{C}^\infty}
\newcommand{\ins}        {\operatorname{\mathrm{i}}}
\newcommand{\lie}[1]     {\mathfrak{#1}}
\newcommand{\red}        {\mathrm{red}}
\newcommand{\can}        {\mathrm{can}}
\newcommand{\prol}       {\operatorname{\mathrm{prol}}}
\newcommand{\pr}         {\operatorname{\mathrm{pr}}}
\newcommand{\image}      {\operatorname{\mathrm{im}}}
\newcommand{\opp}        {\mathrm{opp}}
\newcommand{\ring}[1]    {\mathsf{#1}}
\newcommand{\hor}        {\mathrm{hor}}                
\newcommand{\ver}        {\mathrm{ver}}                
\newcommand{\Secinfty}   {\Gamma^\infty}
\newcommand{\Density}[1][{}] {|\Anti^{\mathrm{top}}|^{#1}}
\newcommand{\Dleft}      {\D^{\mathrm{left}}}
\newcommand{\codim}      {\operatorname{\mathrm{codim}}}
\newcommand{\koszul}     {\partial}
\newcommand{\nice}       {\mathrm{nice}}
\newcommand{\deform}[1]  {\boldsymbol{#1}}
\newcommand{\starred}    {\mathbin{\star_\red}}
\newcommand{\bulletred}  {\mathbin{\bullet_\red}}
\newcommand{\conv}       {\mathbin{*}}
\newcommand{\starredk}   {\mathbin{\star_\red^{(\kappa)}}}
\newcommand{\bulletredk} {\mathbin{\bullet_\red^{(\kappa)}}}
\newcommand{\deformiotak}{\deform{\iota}^{\deform{*}}_\kappa}
\newcommand{\bulletk}    {\mathbin{\bullet_\kappa}}
\newcommand{\bulletcan}  {\mathbin{\bullet_\can}}
\newcommand{\starG}      {\mathbin{\star_{\mathrm{G}}}}
\newcommand{\starstd}    {\mathbin{\star_{\mathrm{Std}}}}
\newcommand{\stdrep}     {\mathop{\varrho_{\mathrm{Std}}}}
\newcommand{\weylrep}    {\mathop{\varrho_{\mathrm{Weyl}}}}
\newcommand{\at}[1]      {\big|_{#1}}
\newcommand{\At}[1]      {\Big|_{#1}}
\newcommand{\Diffop}     {\operatorname{\mathrm{DiffOp}}}
\newcommand{\Bounded}[1][{}] {\operatorname{\mathfrak{B}}_{\scriptscriptstyle{#1}}}
\newcommand{\Finite}[1][{}]  {\operatorname{\mathfrak{F}}_{\scriptscriptstyle{#1}}}
\newcommand{\Cinftycf}   {\mathcal{C}^\infty_{\mathrm{cf}}}
\newcommand{\Bimod}[5] {\sideset{^{\scriptscriptstyle{#1}}_{\scriptscriptstyle{#2}}}{^{\scriptscriptstyle{#4}}_{\scriptscriptstyle{#5}}}{\operatorname{#3}}}
\newcommand{\EA}   {\Bimod{}{}{\mathcal{E}}{}{\mathcal{A}}}
\newcommand{\EpA}  {\Bimod{}{}{\mathcal{E}}{\prime}{\mathcal{A}}}
\newcommand{\EppA} {\Bimod{}{}{\mathcal{E}}{\prime\prime}{\mathcal{A}}}
\newcommand{\BEA}  {\Bimod{}{\mathcal{B}}{\mathcal{E}}{}{\mathcal{A}}}
\newcommand{\AtEB} {\Bimod{}{\mathcal{A}}{\widetilde{\mathcal{E}}}{}{\mathcal{B}}}
\newcommand{\AccEB}{\Bimod{}{\mathcal{A}}{\cc{\mathcal{E}}}{}{\mathcal{B}}}
\newcommand{\CFB}  {\Bimod{}{\mathcal{C}}{\mathcal{F}}{}{\mathcal{B}}}
\newcommand{\AEAred}  {\Bimod{}{\mathcal{A}}{\mathcal{E}}{}{\mathcal{A}_\red}}
\newcommand{\AredHD}  {\Bimod{}{\mathcal{A}_\red}{\mathcal{H}}{}{\mathcal{D}}}
\newcommand{\IP}[4]{{\,}_{\scriptscriptstyle{#2}\!\!}\left\langle{{#1}}\right\rangle^{\scriptscriptstyle{#3}}_{\scriptscriptstyle{#4}}}
\newcommand{\SPA}[1]     {\IP{{#1}}{}{}{\mathcal{A}}}
\newcommand{\SPEA}[1]    {\IP{{#1}}{}{\mathcal{E}}{\mathcal{A}}}
\newcommand{\SPEpA}[1]   {\IP{{#1}}{}{\mathcal{E}'}{\mathcal{A}}}
\newcommand{\BSP}[1]     {\IP{{#1}}{\mathcal{B}}{}{}}
\newcommand{\SPFB}[1]    {\IP{{#1}}{}{\mathcal{F}}{\mathcal{B}}}
\newcommand{\SPFEA}[1]   {\IP{{#1}}{}{\mathcal{F} \tensor \mathcal{E}}{\mathcal{A}}}
\newcommand{\tensor}[1][{}] {\mathbin{\otimes_{\scriptscriptstyle{#1}}}}
\newcommand{\itensor}[1][{}]{\mathbin{\widehat{\otimes}_{\scriptscriptstyle{#1}}}}
\newcommand{\extensor}      {\mathbin{\otimes_{\scriptscriptstyle{\mathsf{ext}}}}}
\newcommand{\rep}[1][{}]  {\sideset{^*}{_{#1}}{\operatorname{\textrm{-}\mathsf{rep}}}}
\newcommand{\Rep}[1][{}]  {\sideset{^*}{_{#1}}{\operatorname{\textrm{-}\mathsf{Rep}}}}
\newcommand{\smod}[1][{}] {\sideset{^*}{_{#1}}{\operatorname{\textrm{-}\mathsf{mod}}}}
\newcommand{\sMod}[1][{}] {\sideset{^*}{_{#1}}{\operatorname{\textrm{-}\mathsf{Mod}}}}
\begin{document}

\maketitle

\begin{abstract}
    In the context of deformation quantization, there exist various
    procedures to deal with the quantization of a reduced space
    $M_\red$.  We shall be concerned here mainly with the classical
    Marsden-Weinstein reduction, assuming that we have a proper action
    of a Lie group $G$ on a Poisson manifold $M$, with a moment map
    $J$ for which zero is a regular value.  For the quantization, we
    follow \cite{bordemann.herbig.waldmann:2000a} (with a simplified
    approach) and build a star product $ \starred$ on $M_\red$ from a
    strongly invariant star product $\star$ on $M$. The new questions
    which are addressed in this paper concern the existence of natural
    $^*$-involutions on the reduced quantum algebra and the
    representation theory for such a reduced $^*$-algebra.

    We assume that $\star$ is Hermitian and we show that the choice of
    a formal series of smooth densities on the embedded coisotropic
    submanifold $C=J^{-1}(0)$, with some equivariance property,
    defines a $^*$-involution for $\starred$ on the reduced space.
    Looking into the question whether the corresponding
    $^*$-involution is the complex conjugation (which is a
    $^*$-involution in the Marsden-Weinstein context) yields a new
    notion of quantized unimodular class.

    We introduce a left $(\Cinfty(M)[[\lambda]], \star)$-submodule and
    a right $(\Cinfty(M_\red)[[\lambda]], \starred)$-submodule
    $\Cinftycf(C)[[\lambda]]$ of $C^\infty(C)[[\lambda]]$; we define
    on it a $\Cinfty(M_\red)[[\lambda]]$-valued inner product and we
    establish that this gives a strong Morita equivalence bimodule
    between $\Cinfty(M_\red)[[\lambda]]$ and the finite rank operators
    on $\Cinftycf(C)[[\lambda]]$. The crucial point is here to show
    the complete positivity of the inner product. We obtain a Rieffel
    induction functor from the strongly non-degenerate
    $^*$-representations of $ \left(\Cinfty(M_\red)[[\lambda]],
        \starred\right)$ on pre-Hilbert right $\mathcal{D}$-modules to
    those of $\left(\Cinfty(M)[[\lambda]], \star \right),$ for any
    auxiliary coefficient $^*$-algebra $\mathcal{D}$ over
    $\mathbb{C}[[\lambda]]$.
\end{abstract}

\medskip
\medskip
\medskip
\medskip

\noindent
MSC Classification (2000): 53D55 (primary), 53D20, 16D90, 81S10 (secondary)

\newpage

\tableofcontents

\newpage

%
% Introduction
%

\section{Introduction}
\label{sec:Introduction}

Some mathematical formulations of quantizations are based on the
algebra of observables and consist in replacing the classical algebra
of observables $\mathcal{A}$ (typically complex-valued smooth
functions on a Poisson manifold $M$) by a non commutative one
$\deform{\mathcal{A}}$.  Formal deformation quantization was 
introduced in \cite{bayen.et.al:1978a}; it constructs the quantum
observable algebra by means of a formal deformation (in the sense of
Gerstenhaber) of the classical algebra. Given a Poisson manifold $M$
and the classical algebra $\mathcal{A} = C^\infty (M)$ of
complex-valued smooth functions, a star product on $M$ is a
$\mathbb{C}[[\lambda]]$-bilinear associative multiplication on
$\Cinfty(M)[[\lambda]]$ with
\begin{equation}
    f \star g = \sum_{r=0}^\infty \lambda^r C_r (f, g),
\end{equation}
where $C_0(f, g) = fg$ and $C_1 (f, g) - C_1(g, f) = \I \{f, g\}$,
where the $C_r$ are bidifferential operators so that $1 \star f = f =
f \star 1$ for all $f \in \Cinfty(M)[[\lambda]]$. The algebra of
quantum observables is $\deform{\mathcal{A}} =
(\Cinfty(M)[[\lambda]],\star)$.

An important classical tool to ``reduce the number of variables'',
i.e. to start from a ``big'' Poisson manifold $M$ and construct a
smaller one $M_{\red}$, is given by reduction: one considers an
embedded coisotropic submanifold in the Poisson manifold, $\iota: C
\hookrightarrow M$ and the canonical foliation of $C$ which we assume
to have a nice leaf space $M_\red$. In this case one knows that
$M_\red$ is a Poisson manifold in a canonical way.

We shall consider here the particular case of the Marsden-Weinstein
reduction: let $\mathsf{L}: G \times M \longrightarrow M$ be a smooth
left action of a connected Lie group $G$ on $M$ by Poisson
diffeomorphisms and assume we have an $\ad^*$-equivariant momentum
map. The constraint manifold $C$ is now chosen to be the level surface
of $J$ for momentum $0 \in \lie{g}^*$ (thus we assume, for simplicity,
that $0$ is a regular value). Then $ C = J^{-1}(\{0\})$ is an embedded
submanifold which is coisotropic. The group $G$ acts on $C$ and the
reduced space is the orbit space of this group action of $G$ on $C$
(in order to guarantee a good quotient we assume that $G$ acts freely
and properly).

Given a mathematical formulation of quantization, one studies then a
quantized version of reduction and how ``quantization commutes with
reduction''. This has been done in the framework of deformation
quantization by various authors \cite{bordemann.herbig.waldmann:2000a,
  cattaneo.felder:2007a, cattaneo.felder:2004a}.  We shall use here
the approach proposed by Bordemann \cite{bordemann:2005a} .  Since the
emphasis is put in our quantization scheme on the observable algebra,
recall that at the classical level if $\iota: C \hookrightarrow M$ is
an embedded coisotropic submanifold, one considers $\mathcal{J}_C =
\{f \in \Cinfty(M) \; | \; \iota^*f = 0\} = \ker \iota^*$ the
vanishing ideal of $C$ [which is an ideal in the associative algebra
$\Cinfty(M)$ and a Poisson subalgebra of $\Cinfty(M)$], defining
$\mathcal{B}_C = \left\{f \in \Cinfty(M) \; | \; \{f, \mathcal{J}_C\}
    \subseteq \mathcal{J}_C \right\}$, and assuming that the canonical
foliation of $C$ has a nice leaf space $M_\red$ (i.e. a structure of a
smooth manifold such that the canonical projection $\pi: C
\longrightarrow M_\red $ is a submersion);  then
\begin{equation}
    \mathcal{B}_C \big/ \mathcal{J}_C
    \ni [f] \; \mapsto \; \iota^*f \in
    \pi^* \Cinfty(M_\red) = \mathcal{A}_{red}
\end{equation}
induces an isomorphism of Poisson algebras. We recall in
Section~\ref{subsec:ClassicalKoszulResolution} this isomorphism in our
setting of Marsden Weinstein reduction using the Koszul complex.

Passing to a deformation quantized version of phase space reduction,
one starts with a formal star product $\star$ on $M$.  The associative
algebra $\deform{\mathcal{A}} = (\Cinfty(M)[[\lambda]], \star)$ is
playing the role of the quantized observables of the big system.  A
good analog of the vanishing ideal $\mathcal{J}_C$ will be a left
ideal $\deform{\mathcal{J}}_C \subseteq \Cinfty(M)[[\lambda]] $ such
that the quotient $\Cinfty(M)[[\lambda]] \big/ \deform{\mathcal{J}}_C$
is in $\mathbb{C}[[\lambda]]$-linear bijection to the functions
$\Cinfty(C)[[\lambda]]$ on $C$.  Then we define
$\deform{\mathcal{B}}_C = \{a \in \deform{\mathcal{A} } \; | \; [a,
\deform{\mathcal{J}}_C] \subseteq \deform{\mathcal{J}}_C \}$, i.e. the
normalizer of $\deform{\mathcal{J}}_C$ with respect to the commutator
Lie bracket of $\deform{\mathcal{A}},$ and consider the associative
algebra $\deform{\mathcal{B}}_C \big/ \deform{\mathcal{J}}_C$ as the
reduced algebra $\deform{\mathcal{A}}_{red}$. Of course, this is only
meaningful if one can show that $\deform{\mathcal{B}}_C \big/
\deform{\mathcal{J}}_C$ is in $\mathbb{C}[[\lambda]]$-linear bijection
to $\Cinfty(M_\red)[[\lambda]]$ in such a way, that the isomorphism
induces a star product $\starred$ on $M_\red$.  Starting from a
strongly invariant star product on $M$, we describe in
Section~\ref{subsec:QuantizedKoszulComplex} a method to construct a
good left ideal inspired by the BRST approach in
\cite{bordemann.herbig.waldmann:2000a} but simpler as we only need the
deformation of the Koszul part of the BRST complex.

The algebra of quantum observables is not only an associative algebra
but is has a $^*$-involution; in the usual picture, where observables
are represented by operators, this $^*$-involution corresponds to the
passage to the adjoint operator. In the framework of deformation
quantization, a way to have a $^*$-involution on
$\deform{\mathcal{A}}=(C^\infty(M)[[\lambda]], \star)$ is to ask the
star product to be Hermitian, i.e such that $\cc{f \star g} = \cc{g}
\star \cc{f}$ and the $^*$-involution is then just given by complex
conjugation.  A first question that we discuss in this paper is how to
get in a natural way a $^*$-involution for the reduced algebra,
assuming that $\star$ is a Hermitian star product on $M$.  We want a
construction coming from the reduction process itself; we start with a
left ideal $\deform {\mathcal{J}} \subseteq \deform{\mathcal{A}}$ in
some algebra and take $ \deform{\mathcal{B}} \big/
\deform{\mathcal{J}}$ as the reduced algebra, where $
\deform{\mathcal{B}}$ is the normaliser of $\deform{\mathcal{J}}$ in
$\deform{\mathcal{A}}.$ If now $\deform{\mathcal{A}}$ is in addition a
$^*$-algebra we have to construct a $^*$-involution for
$\deform{\mathcal{B}} \big/ \deform{\mathcal{J}}$.  From all relevant
examples in deformation quantization one knows that
$\deform{\mathcal{J}}$ is only a left ideal, hence can not be a
$^*$-ideal and thus $\deform{\mathcal{B}}$ can not be a
$^*$-subalgebra. Consequently, there is no obvious way to define a
$^*$-involution on the quotient.

The main idea here is to use a representation of the reduced quantum
algebra and to translate the notion of the adjoint. Observe that
$\deform{\mathcal{B}} \big/ \deform{\mathcal{J}}$ can be identified
(with the opposite algenbra structure) to the algebra of
$\deform{\mathcal{A}}$-linear endomorphisms of $\deform{\mathcal{A}}
\big/ \deform{\mathcal{J}}.$ We shall use an additional positive
linear functional i.e. a $\mathbb{C}[[\lambda]]$-linear functional
$\omega: \deform{\mathcal{A}} \longrightarrow \mathbb{C}[[\lambda]]$
such that $ \omega(a^*a) \ge 0$ for all $a \in \deform{\mathcal{A}},$
where positivity in $\mathbb{C}[[\lambda]]$ is defined using the
canonical ring ordering of $\mathbb{R}[[\lambda]]$.  Defining the
Gel'fand ideal of $\omega$ by $\deform{\mathcal{J}}_\omega = \left\{a
    \in \deform{\mathcal{A}} \; \big| \; \omega(a^*a) = 0 \right\}$,
one can construct a $^*$-representation (the GNS representation), of
$\deform{\mathcal{A}}$ on $\mathcal{H}_\omega = \deform{ \mathcal{A}}
\big/ \deform{ \mathcal{J}}_\omega$ with the pre Hilbert space
structure defined via $\SP{\psi_a, \psi_b} = \omega(a^*b)$ where
$\psi_a$ denotes the equivalence class of $a\in\deform{ \mathcal{A}}.$
Then the algebra of $\deform{\mathcal{A}}$-linear endomorphisms of
$\mathcal{H}_\omega$ (with the opposite structure) is equal to
$\deform{\mathcal{B}} \big/ \deform{\mathcal{J}}_\omega$.  Hence, to
define a $^*$-involution on our reduced quantum algebra, the main idea
is now to look for a positive linear functional $\omega$ such that the
left ideal $\deform{\mathcal{J}}$ we use for reduction coincides with
the Gel'fand ideal $\deform{\mathcal{J}}_\omega$ and such that all
left $\deform{\mathcal{A}}$-linear endomorphisms of
$\mathcal{H}_\omega$ are adjointable.  In this case
$\deform{\mathcal{B}} \big/ \deform{\mathcal{J}}$ becomes in a natural
way a $^*$-subalgebra of the set $\Bounded(\mathcal{H}_\omega)$ of
adjointable maps.  Up to here, the construction is entirely algebraic
and works for $^*$-algebras over rings of the form $\ring{C} =
\ring{R}(\I)$ with $\I^2 = -1$ and an ordered ring $\ring{R}$, instead
of $\mathbb{C}[[\lambda]]$ and $\mathbb{R}[[\lambda]]$.

We show in Section~\ref{subsec:ReducedInvolution} that the choice of a
formal series of smooth densities $\sum_{r=0}^\infty \lambda^r \mu_r
\in \Gamma^\infty(|\Anti^{\mathrm{top}}| T^*C)[[\lambda]]$ on the
coisotropic submanifold $C$ such that $\cc{\mu} = \mu$ is real, $\mu_0
> 0$ and so that $\mu$ transforms under the $G$-action as
$\mathsf{L}^*_{g^{-1}} \mu = \frac{1}{\Delta(g)} \mu$ where $\Delta$
is the modular function yields a positive linear functional which
defines a $^*$-involution on the reduced space. Along the way we
identify the corresponding GNS representation.  We show that in the
classical Marsden Weinstein reduction, complex conjugation is a
$^*$-involution of the reduced quantum algebra. Looking in general to
the question whether the $^*$-involution corresponding to a series of
densities $\mu$ is the complex conjugation yields  a new notion of
quantized unimodular class.

The next problem that we tackle in this paper is the study of the
representations of the reduced algebra with the $^*$-involution given
by complex conjugation.  We want to relate the categories of modules
of the big algebra and the reduced algebra.  The usual idea is to use
a bimodule and the tensor product to pass from modules of one algebra
to modules of the other. In the context of quantization and reduction
this point of view has been pushed forward by Landsman
\cite{landsman:1998a}, mainly in the context of geometric
quantization. Contrary to his approach, we have, by construction of
the reduced star product, a bimodule structure on
$\Cinfty(C)[[\lambda]]$.  We want more properties to have a relation
between the $^*$-representations of our algebras on inner product
modules. The notions are transferred, following
\cite{bursztyn.waldmann:2005b, bursztyn.waldmann:2001a}, from the
theory of Hilbert modules over $C^*$-algebras to our more algebraic
framework and are recalled in
Sections~\ref{subsec:AlgebraValuedInnerProducts} and
\ref{subsec:StrongMoritaEquivalence}.

We look at $\Cinftycf(C) = \left\{ \phi \in \Cinfty(C) \; \big| \;
    \supp(\phi) \cap \pi^{-1}(K) \; \textrm{is compact for all
      compact} \; K \subseteq M_\red\right\};$ then
$\Cinftycf(C)[[\lambda]]$ is a left $(\Cinfty(M)[[\lambda]],
\star)$-module and a right $(\Cinfty(M_\red)[[\lambda]],
\starred)$-module; we define on it a
$\Cinfty(M_\red)[[\lambda]]$-valued inner product and we establish in
Section~\ref{sec:StrongMoritaEquivalenceBimodule} that this bimodule
structure and inner product on $\Cinftycf(C)[[\lambda]]$ gives a
strong Morita equivalence bimodule between
$\Cinfty(M_\red)[[\lambda]]$ and the finite rank operators on
$\Cinftycf(C)[[\lambda]]$. The crucial point is here to show the
complete positivity of the inner product. In some sense, the resulting
equivalence bimodule can be viewed as a deformation of the
corresponding classical limit which is studied independently in the
context of the strong Morita equivalence of the crossed product
algebra with the reduced algebra.  If $G$ is not finite, the finite
rank operators do not have a unit, thus we have a first non-trivial
example of a $^*$-equivalence bimodule for star product algebras going
beyond the unital case studied in \cite{bursztyn.waldmann:2002a}.

We show that the $^*$-algebra $(\Cinfty(M)[[\lambda]], \star)$ acts on
$\Cinftycf(C)[[\lambda]]$ in an adjointable way with respect to the
$\Cinfty(M_\red)[[\lambda]]$-valued inner product and we obtain a
Rieffel induction functor from the strongly non-degenerate
$^*$-representations of $ \left(\Cinfty(M_\red)[[\lambda]],
    \starred\right)$ on pre-Hilbert right $\mathcal{D}$-modules to
those of $\left(\Cinfty(M)[[\lambda]], \star \right),$ for any
auxiliary coefficient $^*$-algebra $\mathcal{D}$ over
$\mathbb{C}[[\lambda]]$.

In Section~\ref{sec:Example}, we consider the geometrically trivial
situation $M = M_\red \times T^*G$ where on $M_\red$ a Poisson bracket
and a corresponding star product $\starred$ is given while on $T^*G$
we use the canonical symplectic Poisson structure and the canonical
star product $\starG$ from \cite{gutt:1983a}.  Up to the completion
issues, the Rieffel induction with $\Cinftycf(M_\red \times
G)[[\lambda]]$ simply consists in tensoring the given
$^*$-representation of $\Cinfty(M_\red)[[\lambda]]$ with the
Schrödinger representation (see \eqref{eq:SchroedingerRep}) on
$\Cinfty_0(G)[[\lambda]]$.

\medskip

\noindent
\textbf{Acknowledgements:} It is a pleasure to thank Martin Bordemann,
Henrique Bursztyn, and Dominic Maier for valuable discussions and
comments. We thank the FNRS for a grant which allowed SW to be in
Brussels during part of the preparation of this paper.

%
% The classical construction
%

\section{The classical construction}
\label{sec:ClassicalConstruction}

In this section we recall some basic features of phase space reduction
in order to establish our notation. The material is entirely standard,
we essentially follow \cite{bordemann.herbig.waldmann:2000a}.

%
% The geometric framework
%

\subsection{The geometric framework}
\label{subsec:GeometricFramework}

Throughout this paper, $M$ will denote a Poisson manifold with Poisson
bracket $\{\cdot, \cdot\}$ coming from a real Poisson tensor. Thus the
complex-valued functions $\Cinfty(M)$ on $M$ become a Poisson
$^*$-algebra with respect to $\{\cdot, \cdot\}$ and the pointwise
complex conjugation $f \mapsto \cc{f}$ as $^*$-involution.

Let $\iota: C \hookrightarrow M$ be an embedded submanifold and denote
by $\mathcal{J}_C = \{f \in \Cinfty(M) \; | \; \iota^*f = 0\} = \ker
\iota^*$ the vanishing ideal of $C$ which is an ideal in the
associative algebra $\Cinfty(M)$.  Then $C$ is called
\emph{coisotropic} (or \emph{first class constraint}) if
$\mathcal{J}_C$ is a Poisson subalgebra of $\Cinfty(M)$.  In this case
we define
\begin{equation}
    \label{eq:BCDef}
    \mathcal{B}_C = 
    \left\{
        f \in \Cinfty(M) 
        \; \big| \;
        \{f, \mathcal{J}_C\} \subseteq \mathcal{J}_C 
    \right\},
\end{equation}
which turns out to be the largest Poisson subalgebra of $\Cinfty(M)$
which contains $\mathcal{J}_C$ as a Poisson ideal. The geometric
meaning of $\mathcal{B}_C$ is now the following: since $C$ is
coisotropic we have a canonical foliation of $C$ which we assume to
have a nice leaf space $M_\red$. More technically, we assume that
$M_\red$ can be equipped with the structure of a smooth manifold such
that the canonical projection
\begin{equation}
    \label{eq:piCtoMred}
    \pi: C \longrightarrow M_\red
\end{equation}
is a submersion. In this case one knows that $M_\red$ is a Poisson
manifold in a canonical way such that
\begin{equation}
    \label{eq:BcmodJCtoCinftyMred}
    \mathcal{B}_C \big/ \mathcal{J}_C 
    \ni [f] \mapsto \iota^*f \in
    \pi^* \Cinfty(M_\red)
\end{equation}
induces an isomorphism of Poisson algebras, see e.g.
\cite{bordemann.herbig.waldmann:2000a, bordemann:2005a}. In fact, we
will give a detailed proof of this in some more particular situation
later.

While in principle, phase space reduction and its deformation
quantization analogs are interesting for general coisotropic
submanifolds, we shall consider only a very particular case, the
Marsden-Weinstein reduction: let $\mathsf{L}: G \times M
\longrightarrow M$ be a smooth left action of a connected Lie group
$G$ on $M$ by Poisson diffeomorphisms. Moreover, assume we have an
$\ad^*$-equivariant momentum map
\begin{equation}
    \label{eq:MomentumMap}
    J: M \longrightarrow \lie{g}^*
\end{equation}
for this action, i.e. an $\ad^*$-equivariant smooth map with values in
the dual $\lie{g}^*$ of the Lie algebra $\lie{g}$ of $G$ such that the
Hamiltonian vector field $X_{J_\xi} = \{\cdot, J_\xi\}$ for $J_\xi \in
\Cinfty(M)$ with $J_\xi(p) = \SP{J(p), \xi}$ coincides with the
fundamental vector field $\xi_M\in \Gamma^\infty(TM)$ for all $\xi \in
\lie{g}$. We use the convention that $\xi \mapsto \Lie_{\xi_M}$
defines an anti-homomorphism of Lie algebras, i.e. 
\begin{equation}
    \label{eq:FundamentalVectorField}
    \xi_M(p) =
    \frac{\D}{\D t}\At{t=0} \mathsf{L}_{\exp(t\xi)}(p)    
\end{equation}
for all $p \in M$.  The $\ad^*$-equivariance can be expressed by
\begin{equation}
    \label{eq:JadstarEquivariant}
    \{J_\xi, J_\eta\} = J_{[\xi, \eta]}
\end{equation}
for all $\xi, \eta \in \lie{g}$ and it is equivalent to
$\Ad^*$-equivariance with respect to $G$ as $G$ is connected.

The constraint manifold $C$ is now chosen to be the level surface of
$J$ for momentum $0 \in \lie{g}^*$. Thus we assume that $0$ is a value
and, for simplicity, that $0$ is even a \emph{regular value}. Then
\begin{equation}
    \label{eq:CLevelNullofJ}
    C = J^{-1}(\{0\})
\end{equation}
is an embedded submanifold which turns out to be coisotropic. The
group $G$ acts on $C$ as well since $0$ is $\Ad^*$-invariant. We use
the same symbol $\mathsf{L}$ for this action. The
quotient~\eqref{eq:piCtoMred} turns out to be just the orbit space of
this group action of $G$ on $C$, i.e.
\begin{equation}
    \label{eq:piCtoMredIsCmodG}
    \pi: C \longrightarrow M_\red = C \big/ G.
\end{equation}
In order to guarantee a good quotient we assume that $G$ acts
\emph{freely} and \emph{properly}: in this case $C$ is a principal
$G$-bundle over $M_\red$ and \eqref{eq:piCtoMredIsCmodG} is a
surjective submersion as wanted. To be conform with the usual
principal bundle literature, sometimes we pass to the corresponding
right action of $G$ on $C$ given by $\mathsf{R}: C \times G
\longrightarrow C$ with $\mathsf{R}_g(p) = \mathsf{L}_{g^{-1}}(p)$ as
usual. Note however, that $\xi_M$ as well as $\xi_C$ are the
fundamental vector fields with respect to the \emph{left} actions on
$M$ and $C$, respectively, as in \eqref{eq:FundamentalVectorField}.

%
% The classical Koszul resolution
%

\subsection{The classical Koszul resolution}
\label{subsec:ClassicalKoszulResolution}

For $C$ we can now define the classical Koszul resolution. As a
complex we consider $\Cinfty(M, \AntiC^\bullet \lie{g}) = \Cinfty(M)
\otimes \AntiC^\bullet \lie{g}$ with the canonical free
$\Cinfty(M)$-module structure. The group $G$ acts on $\Cinfty(M,
\AntiC^\bullet \lie{g})$ by the combined action of $G$ on the manifold
and the adjoint action on $\lie{g}$ extended to $\AntiC^\bullet
\lie{g}$ by automorphisms of the $\wedge$-product. We shall denote
this $G$-action and the corresponding $\lie{g}$-action by
$\varrho$. The \emph{Koszul differential} is now defined by
\begin{equation}
    \label{eq:ClassicalKoszul}
    \koszul x = \ins(J) x,
\end{equation}
where $x \in \Cinfty(M, \AntiC^\bullet \lie{g})$ and $\ins(J)$ denotes
the insertion of $J$ at the first position in the $\AntiC^\bullet
\lie{g}$-part of $x$. If $e_1, \ldots, e_N \in \lie{g}$ denotes a
basis with dual basis $e^1, \ldots, e^N \in \lie{g}^*$ then we can
write $J = J_a e^a$ with scalar functions $J_a \in \Cinfty(M)$. Here
and in the following we shall use Einstein's summation convention. The
Koszul differential is then
\begin{equation}
    \label{eq:KoszulInBasis}
    \koszul x = J_a \ins(e^a) x.
\end{equation}
Clearly, $\koszul$ is a super derivation of the canonical
$\wedge$-product on $\Cinfty(M, \AntiC^\bullet \lie{g})$ of degree
$-1$ and $\koszul^2 = 0$. Moreover, $\koszul$ is $\Cinfty(M)$-linear
hence we have a complex of free $\Cinfty(M)$-modules. Sometimes we
write $\koszul_k$ for the restriction of $\koszul$ to the
antisymmetric degree $k \ge 1$.

Before proving that this indeed gives an acyclic complex we make some
further simplifying assumptions needed later in the quantum version.
We assume that $G$ acts properly not only on $C$ but on all of $M$. In
this case we can find an open neighbourhood $M_\nice \subseteq M$ of
$C$ with the following properties: there exists a $G$-equivariant
diffeomorphism
\begin{equation}
    \label{eq:PhiMniceCtimesgStar}
    \Phi: M_\nice \longrightarrow U_\nice \subseteq C \times \lie{g}^*
\end{equation}
onto an open neighbourhood $U_\nice$ of $C \times \{0\}$, where the
$G$-action on $C \times \lie{g}^*$ is the product action of the one on
$C$ and $\Ad^*$, such that for each $p \in C$ the subset $U_\nice \cap
(\{p\} \times \lie{g}^*)$ is star-shaped around the origin $\{p\}
\times \{0\}$ and the momentum map $J$ is given by the projection onto
the second factor, i.e. $J \at{M_\nice} = \pr_2 \circ \Phi$. For a
proof of this well-known fact see e.g.
\cite[Lem.~3]{bordemann.herbig.waldmann:2000a}.

We can use this particular tubular neighbourhood $M_\nice$ of $C$ to
define the following \emph{prolongation map}
\begin{equation}
    \label{eq:Prol}
    \prol: \Cinfty(C)
    \ni \phi \mapsto \prol(\phi) = (\pr_1 \circ \Phi)^* \phi \in
    \Cinfty(M_\nice). 
\end{equation}
By the equivariance of the diffeomorphism $\Phi$ the prolongation is
$G$-equivariant as well, i.e. for $g \in G$ we have
\begin{equation}
    \label{eq:prolEquivariant}
    \mathsf{L}^*_g \prol(\phi) = \prol(\mathsf{L}^*_g \phi).
\end{equation}
The prolongation deserves its name as clearly we have for all $\phi
\in \Cinfty(C)$
\begin{equation}
    \label{eq:ProlisProl}
    \iota^* \prol (\phi) = \phi.
\end{equation}
The last ingredient from the classical side is the following homotopy
which we also define only on $M_\nice$ for convenience. Let $x \in
\Cinfty(M_\nice, \AntiC^k \lie{g})$. Since $U_\nice$ is star-shaped,
we set
\begin{equation}
    \label{eq:Homotopyk}
    (h_k x)(p) = e_a \wedge \int_0^1 t^k 
    \frac{\partial (x \circ \Phi^{-1})}{\partial \mu_a}
    (c, t\mu)
    \D t,
\end{equation}
where $\Phi(p) = (c, \mu)$ for $p \in M_\nice$ and $\mu_a$ denote the
linear coordinates on $\lie{g}^*$ with respect to the basis $e^1,
\ldots, e^N$. The collection of all these maps $h_k$ gives a map
\begin{equation}
    \label{eq:Homotopy}
    h: \Cinfty(M_\nice, \AntiC^\bullet \lie{g})
    \longrightarrow
    \Cinfty(M_\nice, \AntiC^{\bullet+1} \lie{g}),
\end{equation}
whose properties are summarized in the following proposition, see
e.g. \cite[Lem.~5 \& 6]{bordemann.herbig.waldmann:2000a}:
\begin{proposition}
    \label{proposition:ClassicalHomotopy}
    The Koszul complex $(\Cinfty(M_\nice, \AntiC^\bullet \lie{g}),
    \koszul)$ is acyclic with explicit homotopy $h$ and homology
    $\Cinfty(C)$ in degree $0$. In detail, we have
    \begin{equation}
        \label{eq:KoszulAcyclic}
        h_{k-1} \koszul_k + \koszul_{k+1} h_k
        = \id_{\Cinfty(M_\nice, \AntiC^k \lie{g})}
    \end{equation}
    for $k \ge 1$ and
    \begin{equation}
        \label{eq:proliota}
        \prol \iota^* + \koszul_1 h_0 = \id_{\Cinfty(M_\nice)}
    \end{equation}
    as well as $\iota^* \koszul_1 = 0$. Thus the Koszul complex is a
    free resolution of $\Cinfty(C)$ as $\Cinfty(M_\nice)$-module. We
    have
    \begin{equation}
        \label{eq:hNullprol}
        h_0 \prol = 0,
    \end{equation}
    and all the homotopies $h_k$ are $G$-equivariant.
\end{proposition}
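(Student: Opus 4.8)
The plan is to reduce the whole statement to an elementary computation on the star-shaped open set $U_\nice \subseteq C \times \lie{g}^*$ by transporting every object along the $G$-equivariant diffeomorphism $\Phi$ of \eqref{eq:PhiMniceCtimesgStar}. Write $\mu_1, \dots, \mu_N$ for the linear coordinates on $\lie{g}^*$ dual to $e^1, \dots, e^N$; since $J\at{M_\nice} = \pr_2 \circ \Phi$ one has, after transport, $J_a \circ \Phi^{-1}(c,\mu) = \mu_a$. For $x \in \Cinfty(M_\nice, \AntiC^\bullet\lie{g})$ put $y = x \circ \Phi^{-1}$. Then by \eqref{eq:KoszulInBasis} the Koszul differential becomes the operator $y \mapsto \mu_a \ins(e^a) y$, and by \eqref{eq:Homotopyk} the homotopy becomes $y \mapsto \big(\,(c,\mu) \mapsto e_a \wedge \int_0^1 t^k \, (\partial y / \partial \mu_a)(c, t\mu)\, \D t\,\big)$. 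It thus suffices to prove the three identities for these transported operators on $\Cinfty(U_\nice, \AntiC^\bullet\lie{g})$ and then transport back. I also record that $\Phi\at{C} = (\id_C, 0)$ — equivalently \eqref{eq:ProlisProl} — and that $(\prol(\phi)) \circ \Phi^{-1}(c,\mu) = \phi(c)$ is independent of $\mu$, both of which I will use for the degree-zero part.

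For the homotopy identity \eqref{eq:KoszulAcyclic} in degree $k \ge 1$ I would compute both composites. Applying the transported $h_{k-1}$ to $\mu_a \ins(e^a) y$ and differentiating the product under the integral gives $e_b \wedge \int_0^1 t^{k-1} \ins(e^b) y(c,t\mu)\, \D t + \mu_a\, e_b \wedge \int_0^1 t^k \ins(e^a)(\partial y/\partial\mu_b)(c,t\mu)\, \D t$. Applying the transported $\koszul_{k+1}$ to $h_k y$ and using the contraction rule $\ins(e^a)(e_b \wedge w) = \delta^a_b w - e_b \wedge \ins(e^a) w$ gives $\mu_a \int_0^1 t^k (\partial y/\partial\mu_a)(c,t\mu)\, \D t - \mu_a\, e_b \wedge \int_0^1 t^k \ins(e^a)(\partial y/\partial\mu_b)(c,t\mu)\, \D t$; the last term is precisely the negative of the second term of the first composite, so these cancel. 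For what remains I would use the Euler identity $\sum_b e_b \wedge \ins(e^b) y = k y$ (valid on antisymmetric degree $k$) to turn the first surviving term into $k \int_0^1 t^{k-1} y(c,t\mu)\, \D t$, and the chain rule $\frac{\D}{\D t} y(c,t\mu) = \mu_a (\partial y/\partial\mu_a)(c,t\mu)$ followed by an integration by parts — the boundary contribution at $t=0$ dropping out because $k \ge 1$ — to turn the second into $y(c,\mu) - k \int_0^1 t^{k-1} y(c,t\mu)\, \D t$. The two surviving terms add up to $y(c,\mu)$, which is \eqref{eq:KoszulAcyclic} after transporting back by $\Phi$.

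The degree-zero assertions follow from the same calculation with $k = 0$: for a scalar $x$, $(\koszul_1 h_0 x) \circ \Phi^{-1}(c,\mu) = \mu_a \int_0^1 (\partial y/\partial\mu_a)(c,t\mu)\, \D t = \int_0^1 \frac{\D}{\D t} y(c,t\mu)\, \D t = y(c,\mu) - y(c,0)$, and by $\Phi\at{C} = (\id_C,0)$ the value $y(c,0) = x(\Phi^{-1}(c,0)) = (\iota^* x)(c)$ equals $(\prol(\iota^* x)) \circ \Phi^{-1}(c,\mu)$, so transporting back gives $\koszul_1 h_0 x = x - \prol(\iota^* x)$, i.e. \eqref{eq:proliota}. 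The identity $\iota^* \koszul_1 = 0$ is immediate since $\koszul_1$ carries the factor $J_a$ and $\iota^* J_a = 0$; and $h_0 \prol = 0$ follows because $(\prol(\phi)) \circ \Phi^{-1}(c,\mu) = \phi(c)$ does not depend on $\mu$, so every $\partial/\partial\mu_a$ kills it — this is \eqref{eq:hNullprol}. Together, \eqref{eq:KoszulAcyclic}, \eqref{eq:proliota} and $\iota^* \koszul_1 = 0$ say exactly that $(\Cinfty(M_\nice, \AntiC^\bullet\lie{g}), \koszul)$ is a free resolution of $\Cinfty(C)$ as a $\Cinfty(M_\nice)$-module.

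Finally, $G$-equivariance of the $h_k$: since $\Phi$ is equivariant for the action on $C$ times $\Ad^*$ on $\lie{g}^*$, it is enough to check that the transported operator $y \mapsto e_a \wedge \int_0^1 t^k (\partial/\partial\mu_a)(\cdot)(c,t\mu)\, \D t$ commutes with the action that is $\Ad^*$ on the coordinates $\mu$ (the $t$-scaling being unaffected, as it is linear) and $\Ad$ on the fibre $\AntiC^\bullet\lie{g}$; this amounts to the $\Ad$-invariance of the canonical element $\sum_a e_a \otimes e^a \in \lie{g} \otimes \lie{g}^*$ together with the chain rule for the linear substitution. I expect the only genuine, if modest, effort to lie in keeping the super-algebra signs and the index placements straight in the cancellation of the cross terms in \eqref{eq:KoszulAcyclic}, and in making sure $\Phi$ is normalized so that $\Phi\at{C} = (\id_C, 0)$; everything else is routine calculus on the star-shaped fibres of $U_\nice$.
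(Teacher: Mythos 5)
Your proof is correct: the paper itself gives no argument for this proposition, deferring to \cite[Lem.~5 \& 6]{bordemann.herbig.waldmann:2000a}, and your computation --- transporting everything along the equivariant $\Phi$ to the star-shaped tube, cancelling the cross terms, and combining the Euler identity $e_b \wedge \ins(e^b) = k \cdot \id$ on degree $k$ with the chain rule and an integration by parts in $t$ --- is exactly the standard homotopy argument used there. The degree-zero identities, $\iota^*\koszul_1 = 0$, $h_0 \prol = 0$, and the $G$-equivariance via the $\Ad$-invariance of $e_a \otimes e^a$ are all handled correctly, including the normalization $\Phi\at{C} = (\id_C, 0)$, which indeed is part of (and, as you note, recoverable from \eqref{eq:ProlisProl} together with $J\at{M_\nice} = \pr_2 \circ \Phi$ in) the cited tubular-neighbourhood construction.
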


Here resolution means that the homology at $k = 0$ is isomorphic to
$\Cinfty(C)$ as a $\Cinfty(M_\nice)$-module: indeed, the image of
$\koszul_1$ is just $\mathcal{J}_C \cap \Cinfty(M_\nice)$ as
\eqref{eq:proliota} shows. This gives immediately
\begin{equation}
    \label{eq:CinftyCiskerkoszulNullmodJC}
    \Cinfty(M_\nice) \big/ (\mathcal{J}_C \cap \Cinfty(M_\nice))
    =
    \ker \koszul_0 \big/ (\mathcal{J}_C \cap \Cinfty(M_\nice))
    \cong
    \Cinfty(C),
\end{equation}
induced via $\iota^*$ and $\prol$.

It will be useful to consider the augmented Koszul complex where in
degree $k = -1$ one puts $\Cinfty(C)$ and re-defines $\koszul_0 =
\iota^*$. With $h_{-1} = \prol$ the proposition yields
\begin{equation}
    \label{eq:TrivialHomologyForAllk}
    h_{k-1} \koszul_k + \koszul_{k+1} h_k = \id_k
\end{equation}
for all $k \ge -1$. This augmented complex has now trivial homology in
\emph{all} degrees.

We can use the Koszul complex to prove \eqref{eq:BcmodJCtoCinftyMred}:
indeed, for $u \in \Cinfty(M_\red)$ we have $\prol(\pi^*u) \in
\mathcal{B}_C$ whence \eqref{eq:BcmodJCtoCinftyMred} is surjective.
The injectivity of \eqref{eq:BcmodJCtoCinftyMred} is clear by
definition. The Poisson bracket on $M_\red$ can then be defined
through \eqref{eq:BcmodJCtoCinftyMred} and gives explicitly
\begin{equation}
    \label{eq:ReducedPoissonBracket}
    \pi^* \{u, v\}_\red = \iota^*\{\prol(\pi^* u), \prol(\pi^*v)\}
\end{equation}
for $u, v \in \Cinfty(M_\red)$, since the left hand side of
\eqref{eq:BcmodJCtoCinftyMred} is canonically a Poisson algebra.
\begin{remark}[$M$ versus $M_\nice$]
    \label{remark:ComplexOverMnice}
    For simplicity, we have defined $\prol$ as well as the homotopy
    $h$ only on the neighbourhood $M_\nice$. In
    \cite{bordemann.herbig.waldmann:2000a} it was shown that one can
    extend the definitions to all of $M$ preserving the
    $G$-equivariance and the properties \eqref{eq:KoszulAcyclic},
    \eqref{eq:proliota}, and \eqref{eq:hNullprol}. Since for the phase
    space reduction in deformation quantization we will only need a
    very small neighbourhood (in fact: an infinitesimal one) of $C$,
    the neighbourhood $M_\nice$ is completely sufficient. The geometry
    of $M$ far away from $C$ will play no role in the following. Thus
    we may even assume $M_\nice = M$ without restriction in the
    following to simplify our notation.
\end{remark}

%
% The quantized bimodule structure
%

\section{The quantized bimodule structure}
\label{sec:QuantizedBimodule}

When passing to a deformation quantized version of phase space
reduction we have to reformulate everything in terms of now
non-commutative algebras where Poisson brackets are to be replaced by
commutators. We recall here a general approach to reduction as
proposed by Bordemann \cite{bordemann:2005a} as well as by Cattaneo
and Felder \cite{cattaneo.felder:2007a, cattaneo.felder:2004a} and
others, see also \cite{lu:1993a}.

Thus in the following, let $\star$ be a formal star product
\cite{bayen.et.al:1978a} on $M$, i.e. a
$\mathbb{C}[[\lambda]]$-bilinear associative multiplication for
$\Cinfty(M)[[\lambda]]$ with
\begin{equation}
    \label{eq:StarProduct}
    f \star g = \sum_{r=0}^\infty \lambda^r C_r (f, g),
\end{equation}
where $C_0(f, g) = fg$ and $C_1 (f, g) - C_1(g, f) = \I \{f, g\}$.
Moreover, we assume that $\star$ is bidifferential and satisfies $1
\star f = f = f \star 1$ for all $f \in \Cinfty(M)[[\lambda]]$.
Physically speaking, the formal parameter $\lambda$ corresponds to
Planck's constant $\hbar$ whenever we can establish convergence of the
above formal series, see e.g. \cite{dito.sternheimer:2002a} for a
review on deformation quantization and \cite{waldmann:2007a} for a
gentle introduction.

The first observation is that a good analog of the vanishing ideal
$\mathcal{J}_C$ will be a \emph{left ideal}: this is Dirac's old ideal
of ``weakly vanishing operators'' annihilating the ``true physical
states'' inside some ``unphysical, too big Hilbert space'', see
\cite{dirac:1964a} as well as \cite{lu:1993a}. Thus the general
situation is to have an associative algebra $\mathcal{A}$ playing the
role of the observables of the big system with a left ideal
$\mathcal{J} \subseteq \mathcal{A}$. The functions on the constraint
surface will correspond to the left $\mathcal{A}$-module $\mathcal{A}
\big/ \mathcal{J}$ in the non-commutative world. The following simple
proposition gives now a nice motivation how to define the reduced
algebra, i.e. the observables of the reduced system:
\begin{proposition}
    \label{proposition:AmodJ}
    Let $\mathcal{A}$ be a unital algebra with a left ideal
    $\mathcal{J} \subseteq \mathcal{A}$. Define
    \begin{equation}
        \label{eq:BDef}
        \mathcal{B} = 
        \{a \in \mathcal{A} 
        \; \big| \;
        [a, \mathcal{J}] \subseteq \mathcal{J} \},
    \end{equation}
    i.e. the normalizer of $\mathcal{J}$ with respect to the
    commutator Lie bracket of $\mathcal{A}$. Then $\mathcal{B}$ is the
    largest unital subalgebra of $\mathcal{A}$ such that $\mathcal{J}
    \subseteq \mathcal{B}$ is a two-sided ideal and
    \begin{equation}
        \label{eq:BmodJisEndAmodJ}
        \mathcal{B} \big/ \mathcal{J}
        \ni [b] \mapsto ([a] \mapsto [ab]) \in
        \End_{\mathcal{A}} 
        \left(\mathcal{A} \big/ \mathcal{J}\right)^\opp
    \end{equation}
    is an isomorphism of unital algebras.
\end{proposition}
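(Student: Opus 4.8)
The plan is to rephrase the normalizer $\mathcal{B}$ in a form that respects the one-sidedness of $\mathcal{J}$. Since $\mathcal{J}$ is a left ideal, $b\mathcal{J} \subseteq \mathcal{J}$ holds automatically, so the condition $[b,\mathcal{J}] \subseteq \mathcal{J}$ is equivalent to $\mathcal{J}b \subseteq \mathcal{J}$; in other words $\mathcal{B}$ is precisely the idealizer $\{b \in \mathcal{A} \mid \mathcal{J}b \subseteq \mathcal{J}\}$ of $\mathcal{J}$. From this description the first assertions are immediate. We have $1 \in \mathcal{B}$, and for $a, b \in \mathcal{B}$ both $\mathcal{J}(ab) = (\mathcal{J}a)b \subseteq \mathcal{J}b \subseteq \mathcal{J}$ and $\mathcal{J}(a+b) \subseteq \mathcal{J}$, so $\mathcal{B}$ is a unital subalgebra. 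It contains $\mathcal{J}$ because $\mathcal{A}\mathcal{J} \subseteq \mathcal{J}$ forces $\mathcal{J}j \subseteq \mathcal{J}$ for every $j \in \mathcal{J}$; and $\mathcal{J}$ is a two-sided ideal of $\mathcal{B}$ since $\mathcal{B}\mathcal{J} \subseteq \mathcal{A}\mathcal{J} \subseteq \mathcal{J}$ while $\mathcal{J}\mathcal{B} \subseteq \mathcal{J}$ by the definition of $\mathcal{B}$. Maximality is equally direct: if $\mathcal{C}$ is any unital subalgebra of $\mathcal{A}$ containing $\mathcal{J}$ as a two-sided ideal, then $\mathcal{J}c \subseteq \mathcal{J}$ for all $c \in \mathcal{C}$, hence $\mathcal{C} \subseteq \mathcal{B}$.

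Next I would build the homomorphism. For $b \in \mathcal{B}$ let $R_b \colon \mathcal{A}/\mathcal{J} \to \mathcal{A}/\mathcal{J}$ be $R_b([a]) = [ab]$; this is well defined exactly because $\mathcal{J}b \subseteq \mathcal{J}$, and it is a morphism of left $\mathcal{A}$-modules since $R_b(c \cdot [a]) = [cab] = c \cdot R_b([a])$. If $b \in \mathcal{J}$ then $[ab] = 0$ for all $a$, so $R_b = 0$; hence $b \mapsto R_b$ descends to a map $\Psi \colon \mathcal{B}/\mathcal{J} \to \End_{\mathcal{A}}(\mathcal{A}/\mathcal{J})$. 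It is additive, sends $[1]$ to $\id$, and reverses multiplication because $R_{b_1 b_2}([a]) = [ab_1 b_2] = R_{b_2}(R_{b_1}([a]))$, so $\Psi$ is a unital algebra homomorphism into $\End_{\mathcal{A}}(\mathcal{A}/\mathcal{J})^{\opp}$.

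It remains to check that $\Psi$ is bijective. For injectivity, if $R_b = 0$ then $[b] = R_b([1]) = 0$, i.e. $b \in \mathcal{J}$. For surjectivity, given $\phi \in \End_{\mathcal{A}}(\mathcal{A}/\mathcal{J})$ choose $b \in \mathcal{A}$ with $\phi([1]) = [b]$; then $\mathcal{A}$-linearity gives $\phi([a]) = a \cdot \phi([1]) = [ab]$ for every $a$, so $\phi$ is right multiplication by $b$. Since this map is well defined on $\mathcal{A}/\mathcal{J}$, evaluating on classes of elements of $\mathcal{J}$ yields $\mathcal{J}b \subseteq \mathcal{J}$, so $b \in \mathcal{B}$ and $\phi = \Psi([b])$. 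Thus $\Psi$ is the desired isomorphism.

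The argument is entirely formal, so I do not expect a genuine obstacle; the only thing that needs attention is the left/right bookkeeping. One must notice at the outset that the normalizer coincides with the idealizer $\{b : \mathcal{J}b \subseteq \mathcal{J}\}$ — this is what makes $R_b$ well defined and makes $\mathcal{J}$ genuinely two-sided in $\mathcal{B}$ — and one must keep track of the order reversal in $R_{b_1 b_2} = R_{b_2} R_{b_1}$, which is exactly why the target is $\End_{\mathcal{A}}(\mathcal{A}/\mathcal{J})^{\opp}$ and not $\End_{\mathcal{A}}(\mathcal{A}/\mathcal{J})$ itself.
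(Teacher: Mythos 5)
Your proof is correct: the key observation that the normalizer of a left ideal coincides with its idealizer $\{b \mid \mathcal{J}b \subseteq \mathcal{J}\}$, which makes right multiplication well defined on $\mathcal{A}/\mathcal{J}$ and forces the passage to the opposite algebra, is exactly the standard argument. The paper states this proposition without proof (as a ``simple proposition''), and your write-up is precisely the routine verification the authors had in mind, with the left/right bookkeeping handled correctly.
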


This observation gives now the guideline for the reduction of star
products: for the star product $\star$ on $M$, we have to find a left
ideal $\deform{\mathcal{J}}_C \subseteq \Cinfty(M)[[\lambda]]$ such
that the quotient $\Cinfty(M)[[\lambda]] \big/ \deform{\mathcal{J}}_C$
is in $\mathbb{C}[[\lambda]]$-linear bijection to the functions
$\Cinfty(C)[[\lambda]]$ on $C$. Then we consider the associative
algebra $\deform{\mathcal{B}}_C \big/ \deform{\mathcal{J}}_C$ as the
reduced algebra. Of course, this is only meaningful if one can show
that $\deform{\mathcal{B}}_C \big/ \deform{\mathcal{J}}_C$ is in
$\mathbb{C}[[\lambda]]$-linear bijection to
$\Cinfty(M_\red)[[\lambda]]$ in such a way, that the isomorphism
induces a star product $\starred$ on $M_\red$. This is the general
reduction philosophy as proposed by
\cite{bordemann.herbig.waldmann:2000a, bordemann:2005a,
  cattaneo.felder:2007a, cattaneo.felder:2004a, lu:1993a} which makes
sense for general coisotropic submanifolds. We note that as a result
one obtains even a \emph{bimodule} structure on
$\Cinfty(C)[[\lambda]]$ where $(\Cinfty(M)[[\lambda]], \star)$ acts
from the left and $(\Cinfty(M_\red)[[\lambda]], \starred)$ acts from
the right.  Note also that the situation will be quite asymmetric in
general: while all left $\star$-linear endomorphisms are indeed given
by right multiplications with functions in
$\Cinfty(M_\red)[[\lambda]]$ according to
Proposition~\ref{proposition:AmodJ}, the converse needs not to be true
in general: In fact, we will see explicit counter-examples later.

%
% The quantized Koszul complex
%

\subsection{The quantized Koszul complex}
\label{subsec:QuantizedKoszulComplex}

We describe now a method how to construct a left ideal and a deformed
left module structure for the functions on $C$ inspired by the BRST
approach in \cite{bordemann.herbig.waldmann:2000a}. However, for us
things will be slightly simpler as we only need the Koszul part of the
BRST complex.

Before defining the deformed Koszul operator we have to make some
further assumptions on the star product $\star$ on $M$. First, we want
it to be \emph{$\lie{g}$-covariant}, i.e.
\begin{equation}
    \label{eq:StargCovariant}
    J_\xi \star J_\eta - J_\eta \star J_\xi
    = \I \lambda J_{[\xi, \eta]} 
\end{equation}
for all $\xi, \eta \in \lie{g}$. Second, we need $\star$ to be
\emph{$G$-invariant}, i.e.
\begin{equation}
    \label{eq:StarGInvariant}
    \mathsf{L}^*_g (f \star h)
    = (\mathsf{L}^*_g f) \star (\mathsf{L}^*_g h)
\end{equation}
for all $g \in G$ and $f, h \in \Cinfty(M)[[\lambda]]$. In general,
both conditions are quite independent but there is one way to
guarantee both features: we ask for a \emph{strongly invariant} star
product, see also \cite{arnal.cortet.molin.pinczon:1983a}. This means
\begin{equation}
    \label{eq:StronglyInvariant}
    J_\xi \star f - f \star J_\xi 
    = \I \lambda \{J_\xi, f\}
    = - \I \lambda \Lie_{\xi_M} f
\end{equation}
for all $f \in \Cinfty(M)[[\lambda]]$ and $\xi \in \lie{g}$. Indeed,
\eqref{eq:StronglyInvariant} clearly implies \eqref{eq:StargCovariant}
by taking $f = J_\eta$ using \eqref{eq:JadstarEquivariant}.  Since the
left hand side of \eqref{eq:StronglyInvariant} is a (quasi-inner)
derivation of $\star$ so is the right hand side. Thus the
invariance~\eqref{eq:StarGInvariant} follows by differentiation of $g
= \exp(t\xi)$ as usual. Note that $G$ is assumed to be connected in
the context of phase space reduction.
\begin{remark}
    \label{remark:StrongInvariance}
    Since the action of $G$ is assumed to be proper we find an
    invariant covariant derivative $\nabla$ on $M$. Out of this, one
    can construct strongly invariant star products by means of
    Fedosov's technique in the symplectic case \cite{fedosov:1996a}
    and, more generally, by Dolgushev's equivariant formality in the
    general Poisson case \cite{dolgushev:2005a}. Thus the, in general
    quite strong, assumption \eqref{eq:StronglyInvariant} is
    achievable in the case of a proper action of $G$.
\end{remark}
Using the $\wedge$-product for $\AntiC^\bullet \lie{g}$ we extend
$\star$ to $\Cinfty(M, \AntiC^\bullet \lie{g})$ in the canonical way.
This allows for the following definition:
\begin{definition}[Quantized Koszul operator]
    \label{definition:QuantizedKoszul} 
    Let $\kappa \in \mathbb{C}[[\lambda]]$. The quantized Koszul
    operator $\deform{\koszul}^{(\kappa)}: \Cinfty(M, \AntiC^\bullet
    \lie{g})[[\lambda]] \longrightarrow \Cinfty(M, \AntiC^{\bullet+1}
    \lie{g})[[\lambda]]$ is defined by
    \begin{equation}
        \label{eq:QuantizedKoszul}
        \deform{\koszul}^{(\kappa)} x
        =
        \ins(e^a) x \star J_a
        +
        \frac{\I\lambda}{2} C_{ab}^c 
        e_c \wedge \ins(e^a) \ins(e^b) x
        +
        \I \lambda \kappa \ins(\Delta) x,
    \end{equation}
    where $C_{ab}^c = e^c([e_a, e_b])$ are the structure constants of
    $\lie{g}$ and
    \begin{equation}
        \label{eq:ModularOneForm}
        \Delta (\xi) = \tr \ad (\xi)
        \quad
        \textrm{for}
        \quad
        \xi \in \lie{g}
    \end{equation}
    is the modular one-form $\Delta \in \lie{g}^*$ of $\lie{g}$.
\end{definition}
Note that with respect to the chosen basis we have
\begin{equation}
    \label{eq:ModularOneFormStructureConstants}
    \Delta = C_{ab}^b e^a.
\end{equation}
\begin{lemma}
    \label{lemma:QuantizedKoszul}
    Let $\star$ be strongly invariant and $\kappa \in
    \mathbb{C}[[\lambda]]$.
    \begin{compactenum}
    \item \label{item:KoszulCommutesInsDelta} One has
        $\deform{\koszul}^{(0)} \ins (\Delta) + \ins(\Delta)
        \deform{\koszul}^{(0)} = 0$.
    \item \label{item:DeformKoszulLeftLinear}
        $\deform{\koszul}^{(\kappa)}$ is left $\star$-linear.
    \item \label{item:DeformKoszulClassicalLimit} The classical limit
        of $\deform{\koszul}^{(\kappa)}$ is $\koszul$.
    \item \label{item:DeformKoszulInvariant}
        $\deform{\koszul}^{(\kappa)}$ is $G$-equivariant.
    \item \label{item:DeformKoszulSquareZero}
        $\deform{\koszul}^{(\kappa)} \circ \deform{\koszul}^{(\kappa)}
        = 0$.
    \end{compactenum}
\end{lemma}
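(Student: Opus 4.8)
The plan is to dispatch the first four items by inspection and to keep the vanishing of the square (item~\refitem{item:DeformKoszulSquareZero}) for last. Throughout write $\deform{\koszul}^{(0)} = A + B$, where
\begin{equation}
    A x = \ins(e^a) x \star J_a
    \qquad \text{and} \qquad
    B x = \frac{\I\lambda}{2} C_{ab}^c \, e_c \wedge \ins(e^a) \ins(e^b) x,
\end{equation}
so that $\deform{\koszul}^{(\kappa)} = A + B + \I\lambda\kappa \, \ins(\Delta)$. Item~\refitem{item:DeformKoszulClassicalLimit} is then immediate, since $B$ and $\ins(\Delta)$ carry an explicit factor $\lambda$ while the $\lambda^0$-part of $A$ is $\ins(e^a) x \cdot J_a = J_a \ins(e^a) x = \koszul x$. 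For item~\refitem{item:DeformKoszulLeftLinear}, note that $\ins(e^a)$, $\ins(\Delta)$ and $e_c \wedge (\cdot)$ act only on the $\AntiC^\bullet \lie{g}$-factor and are $\Cinfty(M)$-linear, hence commute with left $\star$-multiplication by functions, while $x \mapsto x \star J_a$ commutes with it by associativity of $\star$.

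For item~\refitem{item:DeformKoszulInvariant} I would treat the three summands separately: $\ins(\Delta)$ is $\varrho$-invariant because $\Delta$ is $\ad$-invariant ($\Delta([\xi, \eta]) = \tr \ad([\xi, \eta]) = 0$, so $\Ad^*_g \Delta = \Delta$ for all $g$, as $G$ is connected); the operator $C_{ab}^c \, e_c \wedge \ins(e^a) \ins(e^b)$ is $\varrho$-equivariant because it is built from the $\Ad$-equivariant bracket $\lie{g} \otimes \lie{g} \to \lie{g}$; and $x \mapsto \ins(e^a) x \star J_a$ is $\varrho$-equivariant because $J$ is $\Ad^*$-equivariant and $\star$ is $G$-invariant (which follows from strong invariance, as recalled above). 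The $\lie{g}$-equivariance then follows by differentiating $g = \exp(t\xi)$ at $t = 0$.

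Item~\refitem{item:KoszulCommutesInsDelta} is a short termwise computation. Since insertions of one-forms anticommute, $\ins(e^a) \ins(\Delta) + \ins(\Delta) \ins(e^a) = 0$, and as $\ins(\Delta)$ commutes with $\cdot \star J_a$ this gives $A \ins(\Delta) + \ins(\Delta) A = 0$. For the $B$-part one applies the super-Leibniz rule $\ins(e^d)(e_c \wedge y) = \delta_c^d \, y - e_c \wedge \ins(e^d) y$: the contraction term acquires the factor $C_{ab}^c C_{ce}^e = \Delta([e_a, e_b]) = 0$, while the remaining term equals $- B \ins(\Delta)$ after reordering the three one-form insertions (an even permutation). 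Hence $B \ins(\Delta) + \ins(\Delta) B = 0$, and item~\refitem{item:KoszulCommutesInsDelta} follows.

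Finally, for item~\refitem{item:DeformKoszulSquareZero}, the real work, I would expand
\begin{equation}
    \bigl(\deform{\koszul}^{(\kappa)}\bigr)^2
    = \bigl(\deform{\koszul}^{(0)}\bigr)^2
      + \I\lambda\kappa \, \bigl(\deform{\koszul}^{(0)} \ins(\Delta) + \ins(\Delta) \deform{\koszul}^{(0)}\bigr)
      + (\I\lambda\kappa)^2 \, \ins(\Delta)^2 ;
\end{equation}
the middle term vanishes by item~\refitem{item:KoszulCommutesInsDelta} and the last one because $\ins(\Delta)$ is a one-form insertion, so it remains to show $\bigl(\deform{\koszul}^{(0)}\bigr)^2 = A^2 + (AB + BA) + B^2 = 0$. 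One finds $A^2 x = \ins(e^a) \ins(e^b) x \star (J_b \star J_a)$; antisymmetrizing in $a, b$ and using $\lie{g}$-covariance in the form $J_b \star J_a - J_a \star J_b = - \I\lambda C_{ab}^c J_c$ gives $A^2 x = - \frac{\I\lambda}{2} C_{ab}^c \, \ins(e^a) \ins(e^b) x \star J_c$. Computing $AB + BA$ with the same super-Leibniz rule, the terms involving three one-form insertions cancel (again, moving an insertion past two others is an even permutation) and the remainder is $\frac{\I\lambda}{2} C_{ab}^c \, \ins(e^a) \ins(e^b) x \star J_c$, exactly cancelling $A^2$. It then remains to check $B^2 = 0$: the top-degree term, proportional to $C_{ab}^c C_{de}^f \, e_c \wedge e_f \wedge \ins(e^a) \ins(e^b) \ins(e^d) \ins(e^e)$, vanishes by the symmetry $(a, b, c) \leftrightarrow (d, e, f)$, and the surviving contribution is a multiple of $C_{af}^c C_{de}^f \, e_c \wedge \ins(e^a) \ins(e^d) \ins(e^e)$, whose totally antisymmetric part in $(a, d, e)$ is the Jacobi identity for the structure constants, hence zero. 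The main obstacle is precisely this item: keeping the $\pm$ signs straight through the repeated super-Leibniz manipulations and recognizing the various contractions of structure constants as the Jacobi identity and as $\tr \ad([\cdot, \cdot]) = 0$.
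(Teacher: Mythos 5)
Your proposal is correct and follows essentially the same route as the paper: parts \textit{ii.)}--\textit{iv.)} by inspection, part \textit{i.)} from the anticommutation of $\ins(\Delta)$ with both summands using $\Delta([\xi,\eta])=0$, and part \textit{v.)} reduced to $\kappa=0$ via part \textit{i.)} and $\ins(\Delta)^2=0$, followed by the direct computation with the covariance \eqref{eq:StargCovariant} (paired with the Jacobi identity for the $B^2$-term), which the paper leaves to the reader and you carry out correctly, signs included.
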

\begin{proof}
    For the first part we note that the insertion of the
    \emph{constant} one-form $\Delta \in \lie{g}^*$ anti-commutes with
    the first part of $\deform{\koszul}^{(0)}$. It also anti-commutes
    with the second part as $\Delta$ vanishes on Lie brackets. The
    second and third part is clear. The fourth part is a simple
    computation. For the last part it is sufficient to consider the
    case $\kappa = 0$ which is a straightforward computation using the
    covariance of $\star$. Then $\ins(\Delta) \ins(\Delta) = 0$ and
    the first part give also the general case $\kappa \in
    \mathbb{C}[[\kappa]]$.
\end{proof}

The importance of the correction term $\I\lambda\kappa \ins(\Delta)$
will become clear in Section~\ref{subsec:PositiveFunctional}. For the
time being, $\kappa$ can be arbitrary. In particular, $\kappa = 0$
gives a very simple choice for the quantized Koszul operator. However,
we set
\begin{equation}
    \label{eq:DeformKoszulForSpecialKappa}
    \deform{\koszul} = \deform{\koszul}^{(\kappa = \frac{1}{2})}
\end{equation}
for abbreviation as this value of $\kappa$ will turn out to be the
most useful choice.  The following constructions will always depend
on $\kappa$. If we omit the reference to $\kappa$ in our notation, we
always mean the particular value of $\kappa$ as in
\eqref{eq:DeformKoszulForSpecialKappa}.

Following \cite{bordemann.herbig.waldmann:2000a} we obtain a
deformation of the restriction map $\iota^*$ as follows. We define
\begin{equation}
    \label{eq:deformiota}
    \deformiotak
    =
    \iota^*\left(
        \id + \left(
            \deform{\koszul}^{(\kappa)}_1 - \koszul_1
        \right)
        h_0
    \right)^{-1}:
    \Cinfty(M)[[\lambda]]
    \longrightarrow
    \Cinfty(C)[[\lambda]]
\end{equation}
and
\begin{equation}
    \label{eq:deformhNull}
    \deform{h}^{(\kappa)}_0
    =
    h_0 \left(
        \id + \left(
            \deform{\koszul}^{(\kappa)}_1 - \koszul_1
        \right)
        h_0
    \right)^{-1}:
    \Cinfty(M)[[\lambda]]
    \longrightarrow
    \Cinfty(M, \lie{g})[[\lambda]],
\end{equation}
which are both well-defined since $\deform{\koszul}^{(\kappa)}$ is a
deformation of $\koszul$. From
\cite[Prop.~25]{bordemann.herbig.waldmann:2000a} we know that
\begin{equation}
    \label{eq:DeformKoszulAndHomotopyNull}
    \deform{h}^{(\kappa)}_0 \prol = 0,
    \quad
    \deformiotak \deform{\koszul}^{(\kappa)}_1 = 0,
    \quad
    \textrm{and}
    \quad
    \deformiotak \prol = \id_{\Cinfty(C)[[\lambda]]}.
\end{equation}
Analogously to the definition of $\deform{h}^{(\kappa)}_0$ one can
also deform the higher homotopies $h_k$ by setting
\begin{equation}
    \label{eq:DeformedHomotopy}
    \deform{h}^{(\kappa)}_k 
    = h_k 
    \left(
        h_{k-1} \deform{\koszul}^{(\kappa)}_k
        + \deform{\koszul}^{(\kappa)}_{k+1}  h_k
    \right)^{-1},
\end{equation}
for which one obtains the following properties
\cite{bordemann.herbig.waldmann:2000a}:
\begin{lemma}
    \label{lemma:DeformedHomotopy}
    The deformed augmented Koszul complex, where
    $\deform{\koszul}^{(\kappa)}_0 = \deformiotak$, has
    trivial homology: with $\deform{h}^{(\kappa)}_{-1} = \prol$ one
    has
    \begin{equation}
        \label{eq:DeformedTrivialHomology}
        \deform{h}^{(\kappa)}_{k-1} \deform{\koszul}^{(\kappa)}_k
        +
        \deform{\koszul}^{(\kappa)}_{k+1} \deform{h}^{(\kappa)}_k
        =
        \id_{\Cinfty(M, \AntiC^k \lie{g})[[\lambda]]}
    \end{equation}
    for $k \ge 0$ and $\deformiotak \prol =
    \id_{\Cinfty(C)[[\lambda]]}$ for $k = -1$. Moreover, the maps
    $\deformiotak$ and $\deform{h}^{(\kappa)}_k$ are
    $G$-equivariant.
\end{lemma}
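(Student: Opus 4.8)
The proof is a homological perturbation argument in the spirit of \cite{bordemann.herbig.waldmann:2000a}: by Lemma~\ref{lemma:QuantizedKoszul} the operator $\deform{\koszul}^{(\kappa)}$ is a $\lambda$-adically small deformation of the classical Koszul differential $\koszul$, so the explicit classical contraction of Proposition~\ref{proposition:ClassicalHomotopy} can be transported along it, and the formulas \eqref{eq:deformiota}, \eqref{eq:deformhNull}, \eqref{eq:DeformedHomotopy} are one concrete choice of the resulting perturbed data. I would establish \eqref{eq:DeformedTrivialHomology} by verifying these formulas directly, degree by degree, using four inputs: $\deform{\koszul}^{(\kappa)}\circ\deform{\koszul}^{(\kappa)}=0$ and the $G$-equivariance of $\deform{\koszul}^{(\kappa)}$ (Lemma~\ref{lemma:QuantizedKoszul}), the bottom relations \eqref{eq:DeformKoszulAndHomotopyNull} already at our disposal, the classical identities \eqref{eq:KoszulAcyclic} and \eqref{eq:proliota}, and the side conditions $h_{k+1}h_k=0$ of the explicit homotopy \eqref{eq:Homotopyk} (a wedge of symmetric second derivatives vanishes; the case $k=-1$ is exactly \eqref{eq:hNullprol}).

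First I would record that every inverse used exists: $N_k:=h_{k-1}\deform{\koszul}^{(\kappa)}_k+\deform{\koszul}^{(\kappa)}_{k+1}h_k=\id+O(\lambda)$ on $\Cinfty(M,\AntiC^k\lie{g})[[\lambda]]$ for $k\ge1$, and likewise $\id+(\deform{\koszul}^{(\kappa)}_1-\koszul_1)h_0=\id+O(\lambda)$, are invertible by geometric series. The case $k=-1$ of \eqref{eq:DeformedTrivialHomology} is just $\deformiotak\prol=\id$, which is part of \eqref{eq:DeformKoszulAndHomotopyNull}. For $k=0$ I would substitute \eqref{eq:deformiota} and \eqref{eq:deformhNull}, factor out the common inverse, and use $\prol\iota^*=\id-\koszul_1h_0$ from \eqref{eq:proliota}:
\[
\prol\,\deformiotak+\deform{\koszul}^{(\kappa)}_1\deform{h}^{(\kappa)}_0
=\bigl(\prol\iota^*+\deform{\koszul}^{(\kappa)}_1h_0\bigr)
 \bigl(\id+(\deform{\koszul}^{(\kappa)}_1-\koszul_1)h_0\bigr)^{-1}
=\id .
\]

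For $k\ge1$ I would use $\deform{\koszul}^{(\kappa)}_k\deform{\koszul}^{(\kappa)}_{k+1}=0$ to rewrite $\deform{\koszul}^{(\kappa)}_{k+1}h_k=N_k-h_{k-1}\deform{\koszul}^{(\kappa)}_k$, so that $\deform{\koszul}^{(\kappa)}_{k+1}\deform{h}^{(\kappa)}_k=\id-h_{k-1}\deform{\koszul}^{(\kappa)}_kN_k^{-1}$ and \eqref{eq:DeformedTrivialHomology} becomes $\deform{h}^{(\kappa)}_{k-1}\deform{\koszul}^{(\kappa)}_k=h_{k-1}\deform{\koszul}^{(\kappa)}_kN_k^{-1}$. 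Substituting the definition of $\deform{h}^{(\kappa)}_{k-1}$, using $\deform{\koszul}^{(\kappa)}_kN_k=\deform{\koszul}^{(\kappa)}_kh_{k-1}\deform{\koszul}^{(\kappa)}_k$, and replacing $\deform{\koszul}^{(\kappa)}_kh_{k-1}=N_{k-1}-h_{k-2}\deform{\koszul}^{(\kappa)}_{k-1}$, this reduces for $k\ge2$ to the single identity $h_{k-1}N_{k-1}^{-1}h_{k-2}=0$. The latter follows since $h_{k-1}(N_{k-1}-\id)=h_{k-1}(\deform{\koszul}^{(\kappa)}_k-\koszul_k)h_{k-1}$ (because $N_{k-1}-\id=h_{k-2}(\deform{\koszul}^{(\kappa)}_{k-1}-\koszul_{k-1})+(\deform{\koszul}^{(\kappa)}_k-\koszul_k)h_{k-1}$ by \eqref{eq:KoszulAcyclic} and $h_{k-1}h_{k-2}=0$), hence $h_{k-1}(N_{k-1}-\id)^n=\bigl(h_{k-1}(\deform{\koszul}^{(\kappa)}_k-\koszul_k)\bigr)^nh_{k-1}$, and then $h_{k-1}h_{k-2}=0$ kills every term of the geometric series for $N_{k-1}^{-1}$. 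The junction $k=1$ needs its own short computation because $\deform{h}^{(\kappa)}_0$ is given by the special formula \eqref{eq:deformhNull}: rewriting $\deform{h}^{(\kappa)}_0=\bigl(\id+h_0(\deform{\koszul}^{(\kappa)}_1-\koszul_1)\bigr)^{-1}h_0$ reduces the claim to $h_0\koszul_1h_0\deform{\koszul}^{(\kappa)}_1=h_0\deform{\koszul}^{(\kappa)}_1$, which holds since $h_0\koszul_1h_0=(\id-\koszul_2h_1)h_0=h_0$ by \eqref{eq:KoszulAcyclic} and $h_1h_0=0$.

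Finally, $G$-equivariance is essentially automatic: $\koszul$, $\iota^*$, $\prol$ and all the $h_k$ are $G$-equivariant by Proposition~\ref{proposition:ClassicalHomotopy}, $\deform{\koszul}^{(\kappa)}$ is $G$-equivariant by Lemma~\ref{lemma:QuantizedKoszul}, and the $G$-action is $\mathbb{C}[[\lambda]]$-linear and commutes with composition and with $\lambda$-adically convergent geometric series; hence the operators defined by \eqref{eq:deformiota}, \eqref{eq:deformhNull}, \eqref{eq:DeformedHomotopy} are $G$-equivariant. I expect the only genuine bookkeeping hurdle to be the $k\ge1$ part of \eqref{eq:DeformedTrivialHomology}, and within it the $k=1$ junction, where the special bottom homotopy \eqref{eq:deformhNull} has to be reconciled with the uniform pattern \eqref{eq:DeformedHomotopy}; that is precisely the place where the side conditions $h_{k+1}h_k=0$ and the classical relation \eqref{eq:proliota} are indispensable, everything else being formal rewriting once $\deform{\koszul}^{(\kappa)}\circ\deform{\koszul}^{(\kappa)}=0$ and \eqref{eq:DeformKoszulAndHomotopyNull} are in hand.
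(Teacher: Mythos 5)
Your argument is correct, and it checks out line by line: the invertibility of $N_k = h_{k-1}\deform{\koszul}^{(\kappa)}_k + \deform{\koszul}^{(\kappa)}_{k+1}h_k$ by formal geometric series, the $k=0$ case via $\prol\iota^* + \koszul_1 h_0 = \id$, the side conditions $h_{k+1}h_k=0$ for the explicit homotopy \eqref{eq:Homotopyk} (symmetric second derivatives against an antisymmetric wedge), the identity $h_{k-1}N_{k-1}^{-1}h_{k-2}=0$, the separate $k=1$ junction using $h_0\koszul_1 h_0 = h_0$, and the equivariance by equivariance of all building blocks. Note, however, that the paper itself does not prove this lemma at all — it is quoted from \cite{bordemann.herbig.waldmann:2000a} — so your proposal supplies a self-contained verification where the paper only cites; in spirit it is the same homological perturbation argument as in that reference. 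One remark on economy: for $k\ge 2$ your reduction to $h_{k-1}N_{k-1}^{-1}h_{k-2}=0$ is more than is needed, since $\deform{\koszul}^{(\kappa)}\circ\deform{\koszul}^{(\kappa)}=0$ already gives the intertwining relation
\begin{equation*}
    N_{k-1}\,\deform{\koszul}^{(\kappa)}_k
    =
    \deform{\koszul}^{(\kappa)}_k h_{k-1} \deform{\koszul}^{(\kappa)}_k
    =
    \deform{\koszul}^{(\kappa)}_k\, N_k ,
\end{equation*}
hence $h_{k-1}N_{k-1}^{-1}\deform{\koszul}^{(\kappa)}_k = h_{k-1}\deform{\koszul}^{(\kappa)}_k N_k^{-1}$ in one line, with no appeal to $h_{k-1}h_{k-2}=0$ or to \eqref{eq:KoszulAcyclic} at that level; equivalently, in your own computation the offending term carries the factor $\deform{\koszul}^{(\kappa)}_{k-1}\deform{\koszul}^{(\kappa)}_k = 0$. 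Your extra identity is nevertheless true as you prove it, so nothing is wrong — and the $k=1$ junction, where the bottom homotopy \eqref{eq:deformhNull} does not fit the uniform pattern because $\prol\iota^*\deform{\koszul}^{(\kappa)}_1\neq 0$ in general, genuinely requires the special argument you give there.
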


For $k = 0$ the homotopy equation~\eqref{eq:DeformedHomotopy} becomes
explicitly
\begin{equation}
    \label{eq:HomotopyForKNull}
    \prol \deformiotak 
    + \deform{\koszul}^{(\kappa)}_1 \deform{h}^{(\kappa)}_0 =
    \id_{\Cinfty(M)[[\lambda]]}. 
\end{equation}
In fact, we will only need this part of the Koszul resolution.
Finally, we mention the following locality feature of
$\deformiotak$ which is remarkable since the homotopy $h_0$
used in \eqref{eq:deformiota} is \emph{not} local, see
\cite[Lem.~27]{bordemann.herbig.waldmann:2000a}:
\begin{lemma}
    \label{lemma:RestrictionLocal}
    There is a formal series $S_\kappa = \id + \sum_{r=1}^\infty
    \lambda^r S^{(\kappa)}_r$ of $G$-invariant differential operators
    $S^{(\kappa)}_r$ on $M$ such that
    \begin{equation}
        \label{eq:deformedIotaDifferential}
        \deformiotak = \iota^* \circ S_\kappa.
    \end{equation}
    Moreover, $S_\kappa$ can be arranged such that $S_\kappa 1 = 1$.
\end{lemma}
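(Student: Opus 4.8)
The plan is to unfold the definition \eqref{eq:deformiota},
\[
  \deformiotak = \iota^* \circ A^{-1}, \qquad A = \id + T h_0, \qquad T := \deform{\koszul}^{(\kappa)}_1 - \koszul_1 ,
\]
and to show that the only non-local ingredient, the homotopy $h_0$, disappears upon composition with $\iota^*$. First I would check that $T \colon \Cinfty(M, \lie{g})[[\lambda]] \to \Cinfty(M)[[\lambda]]$ is differential and starts in order $\lambda$: on antisymmetric degree one the middle term of \eqref{eq:QuantizedKoszul} drops out, so
\[
  T x = \sum_{r \ge 1} \lambda^r C_r(\ins(e^a) x, J_a) + \I\lambda\kappa\, \ins(\Delta) x ,
\]
which is bidifferential in the first argument (with $J_a$ fixed) plus a zeroth order term; moreover $T$ is $G$-equivariant since both $\deform{\koszul}^{(\kappa)}_1$ and $\koszul_1$ are. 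Hence $A = \id + \lambda(\cdots)$ is invertible with $A^{-1} = \sum_{k \ge 0} (-T h_0)^k$, whose $\lambda^n$-coefficient is a finite sum of terms $\pm\, T_{r_1} h_0 T_{r_2} h_0 \cdots T_{r_k} h_0$ with $r_1 + \dots + r_k = n$, $r_i \ge 1$, and each $T_{r_i}$ a $G$-equivariant differential operator of finite order.

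The heart of the matter is the following local estimate. Using the $G$-equivariant tube $\Phi$ from \eqref{eq:PhiMniceCtimesgStar} (and $M_\nice = M$), with linear coordinates $\mu_a$ on $\lie{g}^*$, the case $k = 0$ of \eqref{eq:Homotopyk} reads $(h_0 x)(c,\mu) = e_a \int_0^1 \frac{\partial (x \circ \Phi^{-1})}{\partial \mu_a}(c, t\mu)\, \D t$; differentiating and setting $\mu = 0$, the integrals $\int_0^1 t^m\, \D t$ produce only numerical constants, so the $m$-jet of $h_0 x$ along $C$ is a differential operator of order $\le m + 1$ applied to the $(m{+}1)$-jet of $x$ along $C$. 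Propagating this bookkeeping from the innermost $h_0$ outward, for differential operators $E_1, \dots, E_k$ one gets $\iota^* E_1 h_0 E_2 h_0 \cdots E_k h_0 = \iota^* \circ D$ for a differential operator $D$ on $M$ of order at most $k + \sum_i \deg E_i$. Feeding this into each $\lambda^n$-coefficient of $A^{-1}$ yields $\deformiotak = \iota^*\bigl(\id + \sum_{r \ge 1} \lambda^r S^{(\kappa)}_r\bigr)$ with $S^{(\kappa)}_r$ differential; since every ingredient ($\Phi$, $h_0$, the $T_{r_i}$, $\iota^*$) is $G$-equivariant, one can extend the jet-operators along $C$ to $M$ using the $G$-equivariant product structure provided by $\Phi$ and thereby choose the $S^{(\kappa)}_r$ $G$-invariant. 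Finally $h_0 1 = 0$ — immediate from \eqref{eq:Homotopyk}, consistent with \eqref{eq:hNullprol} — so every term $T_{r_1} h_0 \cdots T_{r_k} h_0$ annihilates the constant $1$; hence the zeroth order part of each $S^{(\kappa)}_r$ may be dropped, giving $S_\kappa 1 = 1$.

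I expect the local estimate to be the main obstacle. The subtle point is that in a composition $T_{r_1} h_0 \cdots T_{r_k} h_0$ the inner occurrences of $h_0$ act on functions that are themselves non-local over $M$, so one must argue that $\iota^*$ of the full composition still depends on the argument through only finitely many transverse derivatives along $C$; this is exactly what the above jet-order count guarantees, the crucial feature being that the radial integrals in \eqref{eq:Homotopyk} cost no loss of locality, only a factor $1/(m+1)$. Equivalently, one can organize the same computation as an order-by-order recursion $S_0 = \id$, $\iota^* S_\kappa + \iota^*(S_\kappa T) h_0 = \iota^*$, which is well-posed precisely because $T$ starts in order $\lambda$, and which automatically respects $G$-invariance and the normalization $S_\kappa 1 = 1$.
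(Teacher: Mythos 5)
Your argument is essentially correct, and it is worth noting that the paper itself does not prove this lemma at all: it is quoted with a reference to \cite[Lem.~27]{bordemann.herbig.waldmann:2000a}, so your proposal supplies the omitted proof rather than competing with one. The route you take is the natural (and, as far as the cited source goes, the standard) one: write $\deformiotak = \iota^*\sum_k(-Th_0)^k$ with $T = \deform{\koszul}^{(\kappa)}_1 - \koszul_1 = O(\lambda)$ differential and $G$-equivariant, and observe that although $h_0$ is non-local, the transverse Taylor coefficients of $h_0x$ along $C$ are obtained from those of $x$ by the harmless factors $\int_0^1 t^{|\alpha|}\,\D t = \tfrac{1}{|\alpha|+1}$, so that $\iota^*$ of each composition $T_{r_1}h_0\cdots T_{r_k}h_0$ only sees a finite jet of its argument along $C$; since each $\lambda$-order of the geometric series is a finite sum of such terms of bounded differential order, one gets $\deformiotak = \iota^*\circ S_\kappa$. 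Two small steps deserve one more line each. First, passing from ``a $G$-equivariant differential operator along $\iota$'' to a genuinely $G$-invariant differential operator $S^{(\kappa)}_r$ on $M$: using the equivariant product structure $\Phi$ one extends the operator by applying the same tangential-plus-transverse formula at the level $\{\mu\}$ instead of $\{0\}$; this is local, hence defined on all of $M_\nice$, and equivariant because the $G$-action on $C\times\lie{g}^*$ is a product action and all ingredients are equivariant. Second, your normalization step: ``dropping the zeroth order part'' of $S^{(\kappa)}_r$ changes $\iota^*\circ S^{(\kappa)}_r$ by $-\iota^*(S^{(\kappa)}_r 1)\cdot\iota^*(\,\cdot\,)$, so you should say explicitly that this vanishes because every term $T_{r_1}h_0\cdots T_{r_k}h_0$ kills constants (as $h_01=0$), whence $\iota^*(S^{(\kappa)}_r 1)=0$ and the modified $S_\kappa$ still satisfies \eqref{eq:deformedIotaDifferential} while $S_\kappa 1 = 1$; invariance is preserved since $S^{(\kappa)}_r 1$ is an invariant function. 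With these two remarks made explicit, your proof is complete.
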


%
% The reduced star product and the bimodule
%

\subsection{The reduced star product and the bimodule}
\label{subsec:ReducedStarProductBimodule}

Let us now use the deformed homotopy equation
\eqref{eq:DeformedTrivialHomology} to construct the bimodule structure
on $\Cinfty(C)[[\lambda]]$. This construction is implicitly available
in \cite{bordemann.herbig.waldmann:2000a}, see also
\cite{bordemann:2005a} for a more profound discussion.
\begin{definition}
    \label{definition:DeformedLeftModule}
    The deformed left multiplication of $\phi \in
    \Cinfty(C)[[\lambda]]$ by some $f \in \Cinfty(M)[[\lambda]]$ is
    defined by
    \begin{equation}
        \label{eq:fbulletphi}
        f \bulletk \phi
        = \deformiotak (f \star \prol(\phi)).
    \end{equation}
\end{definition}
This defines a left module structure indeed. Moreover, it has nice
locality and invariance properties which we summarize in the following
proposition:
\begin{proposition}
    \label{proposition:LeftModuleStructure}
    Let $\deform{\mathcal{J}}_C = \image
    \deform{\koszul}^{(\kappa)}_1$ be the image of the Koszul
    differential.
    \begin{compactenum}
    \item \label{item:JCisLeftIdeal} $\deform{\mathcal{J}}_C$ is a
        left $\star$-ideal.
    \item \label{item:CinftyMModJC} The left module
        $\Cinfty(M)[[\lambda]] \big/ \deform{\mathcal{J}}_C$ is
        isomorphic to $\Cinfty(C)[[\lambda]]$ equipped with
        $\bulletk$ via the mutually inverse isomorphisms
        \begin{equation}
            \label{eq:deformIotaIso}
            \Cinfty(M)[[\lambda]] \big/ \deform{\mathcal{J}}_C
            \ni [f] \mapsto \deformiotak f \in 
            \Cinfty(C)[[\lambda]]
        \end{equation}
        and
        \begin{equation}
            \label{eq:ClassOfProlIso}
            \Cinfty(C)[[\lambda]]
            \ni \phi \mapsto [\prol(\phi)] \in
            \Cinfty(M)[[\lambda]] \big/ \deform{\mathcal{J}}_C.
        \end{equation}
    \item \label{item:ModuleStructureBidifferential} The left module
        structure $\bulletk$ is bidifferential along $\iota^*$,
        i.e. we have $\mathbb{C}$-bilinear operators $L^{(\kappa)}_r:
        \Cinfty(M) \times \Cinfty(C) \longrightarrow \Cinfty(C)$ with
        \begin{equation}
            \label{eq:fbulletphiBidifferential}
            f \bulletk \phi 
            = \iota^*(f) \phi
            + \sum_{r=1}^\infty \lambda^r L^{(\kappa)}_r(f, \phi),
        \end{equation}
        where $L^{(\kappa)}_r$ is differential along $\iota^*$ in the
        first and differential in the second argument.
    \item \label{item:LeftModuleGInvariant} The left module structure
        is $G$-invariant in the sense that
        \begin{equation}
            \label{eq:LeftModuleGInvariant}
            \mathsf{L}^*_g  (f \bulletk \phi)
            = (\mathsf{L}^*_g f) \bulletk (\mathsf{L}^*_g \phi) 
        \end{equation}
        for all $g \in G$, $f \in \Cinfty(M)[[\lambda]]$, and $\phi
        \in \Cinfty(C)[[\lambda]]$. Moreover, we have for all $\xi \in
        \lie{g}$
        \begin{equation}
            \label{eq:Jxibulletphi}
            J_\xi \bulletk \phi 
            = 
            - \I \lambda \Lie_{\xi_C} \phi 
            - \I \lambda \kappa \Delta(\xi) \phi.
        \end{equation}
    \end{compactenum}
\end{proposition}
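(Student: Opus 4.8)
The plan is to derive all four assertions from the identities in \eqref{eq:DeformKoszulAndHomotopyNull} and \eqref{eq:HomotopyForKNull}, from the left $\star$-linearity and $G$-equivariance of the quantized Koszul operator (Lemmas~\ref{lemma:QuantizedKoszul} and~\ref{lemma:DeformedHomotopy}), and from the locality statement of Lemma~\ref{lemma:RestrictionLocal}; essentially no computation beyond these inputs is needed. The first claim is immediate: $\deform{\mathcal{J}}_C = \image \deform{\koszul}^{(\kappa)}_1$ is a $\mathbb{C}[[\lambda]]$-submodule of $\Cinfty(M)[[\lambda]]$, and by the left $\star$-linearity of $\deform{\koszul}^{(\kappa)}$ (Lemma~\ref{lemma:QuantizedKoszul}\refitem{item:DeformKoszulLeftLinear}) one has $f \star \deform{\koszul}^{(\kappa)}_1(x) = \deform{\koszul}^{(\kappa)}_1(f \star x) \in \deform{\mathcal{J}}_C$ for all $f \in \Cinfty(M)[[\lambda]]$, so $\deform{\mathcal{J}}_C$ is a left ideal.

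For the second claim I would first check that the maps \eqref{eq:deformIotaIso} and \eqref{eq:ClassOfProlIso} are well defined and mutually inverse. The map \eqref{eq:deformIotaIso} descends to the quotient because $\deformiotak \deform{\koszul}^{(\kappa)}_1 = 0$ by \eqref{eq:DeformKoszulAndHomotopyNull}; one has \eqref{eq:deformIotaIso}$\circ$\eqref{eq:ClassOfProlIso}$= \id$ since $\deformiotak \prol = \id$, again by \eqref{eq:DeformKoszulAndHomotopyNull}; and \eqref{eq:HomotopyForKNull} gives $f - \prol(\deformiotak f) = \deform{\koszul}^{(\kappa)}_1(\deform{h}^{(\kappa)}_0 f) \in \deform{\mathcal{J}}_C$, so \eqref{eq:ClassOfProlIso}$\circ$\eqref{eq:deformIotaIso}$= \id$ on the quotient. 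Transporting the left $\star$-module structure of $\Cinfty(M)[[\lambda]] \big/ \deform{\mathcal{J}}_C$ along \eqref{eq:ClassOfProlIso} lets $f$ act on $\phi$ by $\deformiotak\bigl(f \star \prol(\phi)\bigr) = f \bulletk \phi$; hence $\bulletk$ is indeed a left module structure and \eqref{eq:deformIotaIso}, \eqref{eq:ClassOfProlIso} are mutually inverse module isomorphisms. (Equivalently, the module axioms $\Unit \bulletk \phi = \phi$ and $(f \star f') \bulletk \phi = f \bulletk (f' \bulletk \phi)$ can be checked directly, again using only \eqref{eq:DeformKoszulAndHomotopyNull}, \eqref{eq:HomotopyForKNull}, and the left $\star$-linearity of $\deform{\koszul}^{(\kappa)}$.)

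For the third claim, using Lemma~\ref{lemma:RestrictionLocal} I would write $f \bulletk \phi = \iota^*\bigl(S_\kappa(f \star \prol(\phi))\bigr)$. Since $\star$ is bidifferential and $S_\kappa$ is a formal series of differential operators on $M$, $S_\kappa(f \star \prol(\phi))$ is a $\lambda$-graded sum of terms $(Df)\,(E\,\prol(\phi))$ with differential operators $D, E$ on $M$, and applying $\iota^*$ turns each such term into $\iota^*(Df)\,\iota^*(E\,\prol(\phi))$. The first factor is $\iota^*$ of a differential operator applied to $f$, i.e. differential along $\iota^*$. For the second factor one notes that $\prol = (\pr_1 \circ \Phi)^*$ is the pullback along the submersion $\pr_1 \circ \Phi\colon M \to C$, which admits $\iota$ as a section (since $\iota^* \prol = \id$ forces $(\pr_1 \circ \Phi) \circ \iota = \id_C$); in adapted local coordinates in which this submersion is $(x,y) \mapsto x$ and its section is $x \mapsto (x,0)$ one reads off at once that $\iota^* \circ E \circ \prol$ is a differential operator on $C$. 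This yields \eqref{eq:fbulletphiBidifferential}; the order-zero term equals $\iota^*(f\,\prol(\phi)) = \iota^*(f)\,\phi$ by \eqref{eq:ProlisProl}, while every contribution with $r \ge 1$ carries a genuine differential operator and furnishes the $L^{(\kappa)}_r$ of the stated type. I expect this bookkeeping to be the only step with real content, and even there the substance reduces to the standard fact that a differential operator pulled back along a submersion and restricted along a section of that submersion is again a differential operator.

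For the fourth claim, the equivariance \eqref{eq:LeftModuleGInvariant} follows directly from the $G$-equivariance of $\deformiotak$ (Lemma~\ref{lemma:DeformedHomotopy}), the $G$-invariance of $\star$ \eqref{eq:StarGInvariant}, and the equivariance of $\prol$ \eqref{eq:prolEquivariant}, via $\mathsf{L}^*_g(f \bulletk \phi) = \deformiotak\bigl(\mathsf{L}^*_g(f \star \prol(\phi))\bigr) = \deformiotak\bigl((\mathsf{L}^*_g f) \star \prol(\mathsf{L}^*_g \phi)\bigr) = (\mathsf{L}^*_g f) \bulletk (\mathsf{L}^*_g \phi)$. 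For \eqref{eq:Jxibulletphi} the key computation is to apply $\deform{\koszul}^{(\kappa)}_1$ to $\prol(\phi) \otimes \xi \in \Cinfty(M, \AntiC^1 \lie{g})[[\lambda]]$: in \eqref{eq:QuantizedKoszul} the first term gives $\prol(\phi) \star J_\xi$, the double-insertion term vanishes in antisymmetric degree one, and the last term gives $\I\lambda\kappa\,\Delta(\xi)\,\prol(\phi)$, so applying $\deformiotak$ and using $\deformiotak \deform{\koszul}^{(\kappa)}_1 = 0$ together with $\deformiotak \prol = \id$ yields $\deformiotak(\prol(\phi) \star J_\xi) = -\I\lambda\kappa\,\Delta(\xi)\,\phi$. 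Combining this with strong invariance \eqref{eq:StronglyInvariant}, which gives $J_\xi \star \prol(\phi) = \prol(\phi) \star J_\xi - \I\lambda\,\Lie_{\xi_M}\prol(\phi) = \prol(\phi) \star J_\xi - \I\lambda\,\prol(\Lie_{\xi_C}\phi)$ — the last equality by differentiating \eqref{eq:prolEquivariant} at $g = \exp(t\xi)$ — and applying $\deformiotak$ once more produces exactly \eqref{eq:Jxibulletphi}. Thus the left-ideal property, the module identification, and the $G$-invariance are all formal consequences of identities already established, the only genuine obstacle being the differential-operator argument leading to \eqref{eq:fbulletphiBidifferential}.
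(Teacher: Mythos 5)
Your proposal is correct and follows essentially the same route as the paper's own (sketched) proof: the left-ideal property from left $\star$-linearity of $\deform{\koszul}^{(\kappa)}_1$, the module identification from \eqref{eq:DeformKoszulAndHomotopyNull} and \eqref{eq:HomotopyForKNull}, bidifferentiability from Lemma~\ref{lemma:RestrictionLocal} together with bidifferentiability of $\star$, and \eqref{eq:Jxibulletphi} by writing $\prol(\phi)\star J_\xi$ via $\deform{\koszul}^{(\kappa)}_1(\prol(\phi)\otimes\xi)$ and using strong invariance. The only (harmless) variation is that in the last step you commute the Lie derivative through $\prol$ via \eqref{eq:prolEquivariant} before applying $\deformiotak$, whereas the paper uses the $G$-equivariance of $\deformiotak$ afterwards.
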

\begin{proof}
    For the reader's convenience we sketch the proof, see also
    \cite{bordemann.herbig.waldmann:2000a, bordemann:2005a}. Recall
    that we assume $M = M_\nice$.  Since
    $\deform{\koszul}^{(\kappa)}_1$ is left $\star$-linear its image
    is a left ideal.  Then the well-definedness of
    \eqref{eq:deformIotaIso} and \eqref{eq:ClassOfProlIso} follows
    from \eqref{eq:HomotopyForKNull} and
    \eqref{eq:DeformKoszulAndHomotopyNull}. It is clear that they are
    mutually inverse to each other. The canonical module structure of
    the left hand side of \eqref{eq:deformIotaIso} transported to
    $\Cinfty(C)[[\lambda]]$ via \eqref{eq:deformIotaIso} and
    \eqref{eq:ClassOfProlIso} gives \eqref{eq:fbulletphi}. This shows
    the second part. The third follows from
    Lemma~\ref{lemma:RestrictionLocal} since $\star$ is
    bidifferential, too.  The $G$-invariance is clear as $\star$,
    $\prol$, and $\deformiotak$ are $G$-invariant. The last part is a
    straightforward computation using the strong invariance
    \eqref{eq:StronglyInvariant}. We have
    \begin{align*}
        J_\xi \bulletk \phi
        &=
        \deformiotak
        \left(
            J_\xi \star \prol(\phi) - \prol(\phi) \star J_\xi +
            \prol(\phi) \star J_\xi
        \right) \\
        &=
        \deformiotak\left(
            \I \lambda \{J_\xi, \prol(\phi)\}
            +
            \deform{\koszul}^{(\kappa)}_1
            \left(\prol(\phi) \otimes \xi\right)
            -
            \I\lambda\kappa \ins(\Delta)
            \left(\prol(\phi) \otimes \xi\right)
        \right) \\
        &=
        -
        \I \lambda \deformiotak \Lie_{\xi_M} \prol(\phi)
        -
        \I\lambda\kappa \deformiotak 
        \left(\Delta(\xi) \prol(\phi)\right)
        =
        -
        \I \lambda \Lie_{\xi_C} \phi
        -
        \I \lambda \kappa \Delta(\xi) \phi,
    \end{align*}
    using the invariance of $\deformiotak$ and
    $\deformiotak \prol(\phi) = \phi$.
\end{proof}
\begin{remark}
    \label{remark:MniceM}
    Thanks to the locality features of $\deformiotak$ and $\bulletk$
    we see that only $M_\nice \subseteq M$ enters the game. Thus this
    justifies our previous simplification in
    Remark~\ref{remark:ComplexOverMnice} to consider $M_\nice$ only
    and assume $M_\nice = M$ from the beginning.
\end{remark}
\begin{remark}
    \label{remark:DeformIotaLeftLinear}
    Since $\deform{\koszul}_\kappa$ is left $\star$-linear, it follows
    from \eqref{eq:HomotopyForKNull} that $\deformiotak$ is left
    $\star$-linear, i.e a module homomorphism. This way, the deformed
    Koszul complex becomes indeed a (free) resolution of the deformed
    module $(\Cinfty(C)[[\lambda]], \bulletk)$, see also the proof of
    Theorem~\ref{theorem:GNS}.
\end{remark}
\begin{remark}
    \label{remark:KappaNull}
    From \eqref{eq:Jxibulletphi} we see that $\kappa = 0$ would also
    be a preferred choice. Note that all choices of $\kappa$ are
    compatible with the representation property
    \begin{equation}
        \label{eq:JbulletIsRepresentation}
        J_\xi \bulletk J_\eta \bulletk \phi
        -
        J_\eta \bulletk J_\xi \bulletk \phi
        =
        \I\lambda J_{[\xi, \eta]} \bulletk \phi
    \end{equation}
    since $\Delta(\xi)$ is a constant. Of course,
    \eqref{eq:JbulletIsRepresentation} is also clear from
    \eqref{eq:StargCovariant} and $\bulletk$ being a left module
    structure.
\end{remark}

From our general considerations in Proposition~\ref{proposition:AmodJ}
we know already how to compute the module endomorphisms of the
deformed module $(\Cinfty(C)[[\lambda]], \bulletk)$. The next
proposition gives now an explicit description of the quotient
$\deform{\mathcal{B}}_C \big/ \deform{\mathcal{J}}_C$ where
\begin{equation}
    \label{eq:DeformedBCDef}
    \deform{\mathcal{B}}_C
    =
    \left\{
        f \in \Cinfty(M)[[\lambda]]
        \; \big| \;
        [f, \deform{\mathcal{J}}_C]_\star 
        \subseteq \deform{\mathcal{J}}_C
    \right\}
\end{equation}
according to \eqref{eq:BDef}. This way, we also obtain the explicit
form of the bimodule structure, see \cite[Thm.~29 \&
32]{bordemann.herbig.waldmann:2000a}:
\begin{proposition}
    \label{proposition:ReducedAlgebraBimodule}
    Let $f \in \Cinfty(M)[[\lambda]]$, $\phi \in
    \Cinfty(C)[[\lambda]]$, and $u, v \in \Cinfty(M_\red)[[\lambda]]$.
    \begin{compactenum}
    \item \label{item:fInBC} We have $f \in \deform{\mathcal{B}}_C$
        iff $\Lie_{\xi_C} \deformiotak f = 0$ for all $\xi
        \in \lie{g}$ iff $\deformiotak f \in \pi^*
        \Cinfty(M_\red)[[\lambda]]$.
    \item \label{item:BcmodJCisMred} The quotient algebra
        $\deform{\mathcal{B}}_C \big/ \deform{\mathcal{J}}_C$ is
        isomorphic to $\Cinfty(M_\red)[[\lambda]]$ via the mutually
        inverse maps
        \begin{equation}
            \label{eq:BCJCtoMred}
            \deform{\mathcal{B}}_C \big/ \deform{\mathcal{J}}_C
            \ni [f] \mapsto \deformiotak f \in
            \pi^*\Cinfty(M_\red)[[\lambda]]
        \end{equation}
        and
        \begin{equation}
            \label{eq:MredToBCJC}
            \Cinfty(M_\red)[[\lambda]]
            \ni u \mapsto [\prol(\pi^*u)] \in
            \deform{\mathcal{B}}_C \big/ \deform{\mathcal{J}}_C.
        \end{equation}
    \item \label{item:ReducedStarProduct} The induced associative
        product $\starredk$ on $\Cinfty(M_\red)[[\lambda]]$ from
        $\deform{\mathcal{B}}_C \big/ \deform{\mathcal{J}}_C$ is
        explicitly given by
        \begin{equation}
            \label{eq:ReducedStarProduct}
            \pi^*(u \starredk v)
            = 
            \deformiotak \left(
                \prol(\pi^* u) \star \prol(\pi^* v)
            \right).
        \end{equation}
        This is a bidifferential star product quantizing the Poisson
        bracket \eqref{eq:ReducedPoissonBracket}.
    \item \label{item:RightModuleStructure} The induced right
        $(\Cinfty(M_\red)[[\lambda]], \starredk)$-module
        structure $\bulletredk$ on $\Cinfty(C)[[\lambda]]$
        from \eqref{eq:BmodJisEndAmodJ} is bi\-differential and
        explicitly given by
        \begin{equation}
            \label{eq:ReducedRightModule}
            \phi \bulletredk u 
            =
            \deformiotak \left(
                \prol(\phi) \star \prol(\pi^* u)
            \right).
        \end{equation}
    \item \label{item:RightModuleGInvariant} The right module
        structure is $G$-invariant, i.e. for $g \in G$ we have
        \begin{equation}
            \label{eq:RightModuleGInvariant}
            \mathsf{L}^*_g (\phi \bulletredk u)
            =
            (\mathsf{L}^*_g \phi) \bulletredk u.
        \end{equation}
    \item \label{item:EinsBulletu} We have $1 \bulletredk u
        = \pi^*u$.
    \end{compactenum}
\end{proposition}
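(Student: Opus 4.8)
The plan is to deduce everything from Proposition~\ref{proposition:AmodJ} applied to $\mathcal{A} = (\Cinfty(M)[[\lambda]], \star)$ with the left ideal $\deform{\mathcal{J}}_C = \image \deform{\koszul}^{(\kappa)}_1$, together with the identification $(\Cinfty(C)[[\lambda]], \bulletk) \cong \Cinfty(M)[[\lambda]] \big/ \deform{\mathcal{J}}_C$ from Proposition~\ref{proposition:LeftModuleStructure}~\refitem{item:CinftyMModJC}. First I would prove \refitem{item:fInBC}. Under the isomorphism \eqref{eq:deformIotaIso}, left multiplication by $f$ on the quotient corresponds to $\phi \mapsto f \bulletk \phi$, so $f \in \deform{\mathcal{B}}_C$ means this operator commutes with all operators coming from $\deform{\mathcal{B}}_C$; but more directly, using \eqref{eq:Jxibulletphi} the operators $\phi \mapsto -\I\lambda\Lie_{\xi_C}\phi - \I\lambda\kappa\Delta(\xi)\phi$ are left multiplication by $J_\xi$, and since the $J_\xi$ lie in $\deform{\mathcal{B}}_C$ (this follows from strong invariance: $[J_\xi, -]_\star$ preserves $\deform{\mathcal{J}}_C$ because $\deform{\koszul}^{(\kappa)}_1$ is $G$-equivariant, Lemma~\ref{lemma:QuantizedKoszul}~\refitem{item:DeformKoszulInvariant}), one shows $f\in\deform{\mathcal{B}}_C$ forces $[J_\xi, f]_\star \in \deform{\mathcal{J}}_C$, hence $\deformiotak([J_\xi,f]_\star) = 0$, which by left $\star$-linearity of $\deformiotak$ and \eqref{eq:Jxibulletphi} gives $\Lie_{\xi_C}\deformiotak f = 0$ (the $\Delta(\xi)$ terms cancel in the commutator). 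Conversely, if $\Lie_{\xi_C}\deformiotak f = 0$ for all $\xi$, a partition-of-unity / foliation argument shows $\deformiotak f$ is constant along the fibres of $\pi$, hence lies in $\pi^*\Cinfty(M_\red)[[\lambda]]$; then $[f, \deform{\koszul}^{(\kappa)}_1 x]_\star = \deform{\koszul}^{(\kappa)}_1(f\star x) - (\text{something})$ must be checked to land in $\deform{\mathcal{J}}_C$, which follows once we know $f$ acts on $\Cinfty(C)[[\lambda]]$ the same way as an element of the image of $\prol\circ\pi^*$.

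Next, \refitem{item:BcmodJCisMred}: by Proposition~\ref{proposition:AmodJ}, $\deform{\mathcal{B}}_C/\deform{\mathcal{J}}_C \cong \End_{\mathcal{A}}(\Cinfty(C)[[\lambda]])^{\opp}$; restricting \eqref{eq:deformIotaIso} to $\deform{\mathcal{B}}_C$ and using \refitem{item:fInBC} gives that $[f]\mapsto\deformiotak f$ maps into $\pi^*\Cinfty(M_\red)[[\lambda]]$, with inverse $u \mapsto [\prol(\pi^*u)]$ — well-definedness of the latter uses $\prol(\pi^*u) \in \deform{\mathcal{B}}_C$, which is \refitem{item:fInBC} again since $\deformiotak\prol(\pi^*u)$ need not equal $\pi^*u$ but differs from it by something $G$-invariant (one should check $\deformiotak\prol = \id$ on $\Cinfty(C)[[\lambda]]$ from \eqref{eq:DeformKoszulAndHomotopyNull}, so actually $\deformiotak\prol(\pi^*u) = \pi^*u$ directly and $\Lie_{\xi_C}(\pi^*u) = 0$). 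Then \refitem{item:ReducedStarProduct} is immediate: transporting the algebra structure of $\deform{\mathcal{B}}_C/\deform{\mathcal{J}}_C$ through these isomorphisms, $\pi^*(u\starredk v) = \deformiotak(\prol(\pi^*u)\star\prol(\pi^*v))$; that this is bidifferential follows from Lemma~\ref{lemma:RestrictionLocal} (write $\deformiotak = \iota^*\circ S_\kappa$ with $S_\kappa$ differential) and bidifferentiability of $\star$ and $\prol$; that it quantizes \eqref{eq:ReducedPoissonBracket} follows by comparing first orders in $\lambda$, using $C_0 = $ pointwise product, $C_1$ antisymmetric part $= \frac{\I}{2}\{\cdot,\cdot\}$, and that $S_\kappa = \id + O(\lambda)$, $\prol$ and $\iota^*$ are Poisson-compatible at order zero.

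For \refitem{item:RightModuleStructure}: the right module structure coming from \eqref{eq:BmodJisEndAmodJ} is $[f]\cdot[b] = [\text{class of } b\star f \text{ in the module}]$ — concretely, $\phi\bulletredk u$ is the action of $[\prol(\pi^*u)] \in \deform{\mathcal{B}}_C/\deform{\mathcal{J}}_C$ on $\phi \in \Cinfty(C)[[\lambda]]$, which via the module isomorphism is $\deformiotak(\prol(\phi)\star\prol(\pi^*u))$; bidifferentiability again follows from Lemma~\ref{lemma:RestrictionLocal}. Property \refitem{item:RightModuleGInvariant} follows because $\prol$, $\star$, $\deformiotak$ are all $G$-equivariant (Lemmas \ref{lemma:QuantizedKoszul}, \ref{lemma:DeformedHomotopy}, and the classical $G$-equivariance of $\prol$) and $\prol(\pi^*u)$ is $G$-invariant since $\pi^*u$ is. Finally \refitem{item:EinsBulletu}: $1\bulletredk u = \deformiotak(\prol(1)\star\prol(\pi^*u)) = \deformiotak(1\star\prol(\pi^*u)) = \deformiotak\prol(\pi^*u) = \pi^*u$, using $\prol(1) = 1$ (clear from \eqref{eq:Prol}) and $\deformiotak\prol = \id$ from \eqref{eq:DeformKoszulAndHomotopyNull}.

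The main obstacle I expect is the converse direction of \refitem{item:fInBC} — showing that $\Lie_{\xi_C}\deformiotak f = 0$ for all $\xi$ actually forces $[f,\deform{\mathcal{J}}_C]_\star \subseteq \deform{\mathcal{J}}_C$, not just the weaker infinitesimal statement. The clean way is: from $\deformiotak f \in \pi^*\Cinfty(M_\red)[[\lambda]]$ and \refitem{item:BcmodJCisMred}'s inverse map we get $[f] = [\prol(\pi^*(\deformiotak f))]$ in $\Cinfty(M)[[\lambda]]/\deform{\mathcal{J}}_C$, i.e. $f - \prol(\pi^*(\deformiotak f)) \in \deform{\mathcal{J}}_C$; so it suffices to show $\prol(\pi^*u) \in \deform{\mathcal{B}}_C$ for all $u$, and then $\deform{\mathcal{B}}_C$ being closed under adding elements of $\deform{\mathcal{J}}_C \subseteq \deform{\mathcal{B}}_C$ finishes it. But $\prol(\pi^*u) \in \deform{\mathcal{B}}_C$ is in turn equivalent to showing $[\prol(\pi^*u), \deform{\koszul}^{(\kappa)}_1 x]_\star$ is Koszul-exact, which one handles by writing the $\star$-commutator, using left $\star$-linearity of $\deform{\koszul}^{(\kappa)}_1$ to pull it out on one side, and controlling the remaining terms with the homotopy \eqref{eq:DeformedTrivialHomology} together with $\Lie_{\xi_C}\deformiotak\prol(\pi^*u) = \Lie_{\xi_C}\pi^*u = 0$; this is the one genuinely computational lemma, essentially \cite[Thm.~29]{bordemann.herbig.waldmann:2000a}, and I would cite it rather than redo it in full.
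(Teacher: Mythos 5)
Your treatment of parts \textit{ii.)}--\textit{vi.)} matches the paper's (translation through the isomorphisms \eqref{eq:deformIotaIso}, \eqref{eq:ClassOfProlIso}, locality via Lemma~\ref{lemma:RestrictionLocal}, $G$-equivariance of all maps, $\prol(1)=1$), but part \textit{i.)}, which is the only substantive point, has two problems. In the forward direction your inference is invalid as written: from ``$J_\xi \in \deform{\mathcal{B}}_C$ and $f \in \deform{\mathcal{B}}_C$'' you cannot conclude $[J_\xi, f]_\star \in \deform{\mathcal{J}}_C$ --- the normalizer of a left ideal is closed under commutators, but a commutator of two normalizer elements need not lie in the ideal (take the extreme case $\mathcal{J}=0$, $\mathcal{B}=\mathcal{A}$). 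What you actually need, and what is immediate from \eqref{eq:QuantizedKoszul}, is that $J_\xi$ lies up to a constant in the left ideal itself: $\deform{\koszul}^{(\kappa)}_1(1\otimes\xi) = J_\xi + \I\lambda\kappa\Delta(\xi) \in \deform{\mathcal{J}}_C$, so $f\in\deform{\mathcal{B}}_C$ gives $[f,J_\xi]_\star\in\deform{\mathcal{J}}_C=\ker\deformiotak$, and strong invariance plus $G$-equivariance of $\deformiotak$ yield $\Lie_{\xi_C}\deformiotak f=0$. In the converse direction you give no argument at all: you reduce to the claim $\prol(\pi^*u)\in\deform{\mathcal{B}}_C$ and defer it to an external citation, and your sketched strategy (pull $\deform{\koszul}^{(\kappa)}_1$ out by left $\star$-linearity and ``control the rest with the homotopy'') does not address the actual obstruction, which is the term $\deform{\koszul}^{(\kappa)}_1 x \star \prol(\pi^*u)$ sitting on the wrong side; the homotopy identities do not handle it, strong invariance does.

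The paper settles both directions with one short computation that you could simply adopt, avoiding the detour through $\prol(\pi^*u)$ and the citation: write a general element of $\image\deform{\koszul}^{(\kappa)}_1$ as $g = g^a \star J_a + \I\lambda\kappa\, C_{ba}^a\, g^b$ and use \eqref{eq:StronglyInvariant} to get
\begin{equation*}
    [f, g]_\star
    = \deform{\koszul}^{(\kappa)}_1\left([f, g^b]_\star \otimes e_b\right)
    + \I\lambda\, g^a \star \Lie_{(e_a)_M} f .
\end{equation*}
Since $\deform{\mathcal{J}}_C$ is a left ideal, $[f,g]_\star\in\deform{\mathcal{J}}_C$ for all choices of the $g^a$ if and only if $\Lie_{(e_a)_M} f \in \deform{\mathcal{J}}_C = \ker\deformiotak$ for all $a$ (taking $g^a$ constant gives the ``only if''). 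Hence $f\in\deform{\mathcal{B}}_C$ iff $\deformiotak\Lie_{\xi_M}f = 0$ for all $\xi\in\lie{g}$, and $G$-equivariance of $\deformiotak$ converts this into $\Lie_{\xi_C}\deformiotak f = 0$, i.e. $\deformiotak f \in \pi^*\Cinfty(M_\red)[[\lambda]]$ (using that $G$ is connected and acts along the fibres of $\pi$). With part \textit{i.)} established this way, the rest of your proposal goes through as you wrote it.
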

\begin{proof}
    Again, we sketch the proof. For the first part note that
    $\deform{\mathcal{J}}_C = \ker \deformiotak$ according
    to \eqref{eq:HomotopyForKNull}. Now let $g = g^a \star J_a +
    \I\lambda\kappa C_{ba}^a g^b$ with $g^a \in \Cinfty(M)[[\lambda]]$
    be in the image of $\deform{\koszul}^{(\kappa)}_1$. For $f \in
    \Cinfty(M)[[\lambda]]$ we have by a straightforward computation
    \[
    [f, g]_\star 
    = \deform{\koszul}^{(\kappa)}_1 h 
    + \I \lambda g^a \star \Lie_{(e_a)_M} f
    \]
    with some $h \in \Cinfty(M, \lie{g})$ using the strong
    invariance~\eqref{eq:StronglyInvariant} of $\star$. Thus $[f,
    g]_\star$ is in $\deform{\mathcal{J}}_C$ iff $g^a \star
    \Lie_{(e_a)_M} f$ is in the image of
    $\deform{\koszul}^{(\kappa)}_1$ for all $g^a$. This shows that $f
    \in \deform{\mathcal{B}}_C$ iff $\Lie_{\xi_M} f \in \image
    \deform{\koszul}^{(\kappa)}_1 = \ker \deformiotak$.
    Since $\deformiotak$ is $G$-invariant the first part
    follows.  The second part is then clear from the first part and
    \eqref{eq:ReducedStarProduct} is a straightforward translation
    using the isomorphisms \eqref{eq:BCJCtoMred} and
    \eqref{eq:MredToBCJC}. From \eqref{eq:ReducedStarProduct} and
    Lemma~\ref{lemma:RestrictionLocal} it follows that
    $\starredk$ is bidifferential. The first orders of
    $\starredk$  are easily computed showing that it is
    indeed a star product on $M_\red$. The fourth part is clear from
    the second, the next part follows from the $G$-invariance of all
    involved maps.  The last part is clear since $\prol(1) = 1$.
\end{proof}
\begin{remark}
    \label{remark:Endomorphism}
    From the general Proposition~\ref{proposition:AmodJ} we know that
    the right multiplications by functions $u \in
    \Cinfty(M_\red)[[\lambda]]$ via $\bulletredk$ constitute
    precisely the module endomorphisms with respect to the left
    $\bulletk$-multiplications. The converse is not true: though
    the map
    \begin{equation}
        \label{eq:MtoEndoMred}
        (\Cinfty(M)[[\lambda]], \star)
        \ni f \mapsto (\phi \mapsto f \bulletk \phi) \in
        \End_{(\Cinfty(M_\red)[[\lambda]], \starredk)} 
        \left(\Cinfty(C)[[\lambda]], \bulletredk\right)
    \end{equation}
    is a homomorphism of algebras, it is neither injective nor
    surjective: By the locality \eqref{eq:fbulletphiBidifferential} it
    is clear that $f \bulletk \phi = 0$ for all $\phi$ if all
    derivatives of $f$ vanish on $C$. In particular, we have $f
    \bulletk \phi = 0$ for $\supp f \cap C = \emptyset$. Also, the map
    $\phi \mapsto \mathsf{L}^*_g \phi$ for $g \in G$ is in the module
    endomorphisms with respect to the right $\starredk$-module
    structure by \eqref{eq:RightModuleGInvariant}. Being a
    \emph{non-local} operation (unless $g$ acts trivially on $C$) we
    conclude that it can not be of the form $\phi \mapsto f \bulletk
    \phi$.
\end{remark}

%
% $^*$-Involutions by reduction
%

\section{$^*$-Involutions by reduction}
\label{sec:AlternativeInvolutionsReduction}

In this section we discuss how a $^*$-involution for $\starred$ can be
constructed. To this end we assume that $\star$ is a \emph{Hermitian}
star product on $M$, i.e. we have
\begin{equation}
    \label{eq:starHermitian}
    \cc{f \star g} = \cc{g} \star \cc{f}
\end{equation}
for all $f, g \in \Cinfty(M)[[\lambda]]$. The existence of such
Hermitian star products is well-understood, see e.g.
\cite{neumaier:2002a} for the symplectic case.

The question we would like to address is whether and how one can
obtain a star product $\starred$ for which the complex conjugation or
a suitable deformation is a $^*$-involution. In principle, there is a
rather cheap answer: one has to compute a certain characteristic class
of $\starred$, apply the results of \cite{neumaier:2002a}, and
conclude that there is an equivalent star product to $\starred$ which
is Hermitian. However, we want a construction coming from the
reduction process itself and hence from $M$ instead of the above more
intrinsic argument. From a more conceptual point of view this is very
much desirable as ultimately one wants to apply reduction procedures
also to situations where nice differential geometry for $M_\red$, and
hence the results of e.g.  \cite{neumaier:2002a}, may fail due to
singularities.

Now this approach makes things more tricky: according to our reduction
philosophy we start with a left ideal $\mathcal{J} \subseteq
\mathcal{A}$ in some algebra and take $\mathcal{B} \big/ \mathcal{J}$
as the reduced algebra. If now $\mathcal{A}$ is in addition a
$^*$-algebra we have to construct a $^*$-involution for $\mathcal{B}
\big/ \mathcal{J}$. From all relevant examples in deformation
quantization one knows that $\mathcal{J}$ is \emph{only} a left ideal.
Thus $\mathcal{J}$ can not be a $^*$-ideal and thus $\mathcal{B}$ can
not be a $^*$-subalgebra. Consequently, there is \emph{no obvious} way
to define a $^*$-involution on the quotient $\mathcal{B} \big/
\mathcal{J}$. In fact, some additional ingredients will be needed.

%
% Algebraic preliminaries
%

\subsection{Algebraic preliminaries}
\label{subsec:AlgebraicPreliminaries}

The main idea to construct the $^*$-involution is to use a
representation of the reduced algebra as adjointable operators acting
on a pre-Hilbert space over $\mathbb{C}[[\lambda]]$. Since the reduced
algebra $\mathcal{B} \big/\mathcal{J}$ can be identified  to the algebra $
\End_{\mathcal{A}}(\mathcal{A} \big/ \mathcal{J})^\opp$ (i.e. with the
opposite algebra structure), a first idea
is to build a structure of pre-Hilbert space on $\mathcal{A} \big/
\mathcal{J}$. To this aim, one considers an additional positive linear
functional on $\mathcal{A}$. To put things into the correct algebraic
framework we consider the following situation, see e.g.
\cite[Chap.~7]{waldmann:2007a} for more details and further
references. Let $\ring{R}$ be an ordered ring and $\ring{C} =
\ring{R}(\I)$ its extension by a square root $\I$ of $-1$.  The
relevant examples for us are $\ring{R} = \mathbb{R}$ and $\ring{R} =
\mathbb{R}[[\lambda]]$ with $\ring{C} = \mathbb{C}$ and $\ring{C} =
\mathbb{C}[[\lambda]]$, respectively. Recall that a formal series in
$\mathbb{R}[[\lambda]]$ is called positive if the lowest non-vanishing
order is positive. Then let $\mathcal{A}$ be a $^*$-algebra over
$\ring{C}$, i.e. an associative algebra equipped with a
$\ring{C}$-antilinear involutive anti-automorphism, the
$^*$-involution $^*: \mathcal{A} \longrightarrow \mathcal{A}$. The
ring ordering allows now the following definition: a $\ring{C}$-linear
functional $\omega: \mathcal{A} \longrightarrow \ring{C}$ is called
positive if for all $a \in \mathcal{A}$
\begin{equation}
    \label{eq:omegaPositive}
    \omega(a^*a) \ge 0.
\end{equation}
In this case we have a Cauchy-Schwarz inequality $\omega(a^*b)
\cc{\omega(a^*b)} \le \omega(a^*a) \omega(b^*b)$ and the reality
$\omega(a^*b) = \cc{\omega(b^*a)}$ as usual. It follows that
\begin{equation}
    \label{eq:GelfandIdeal}
    \mathcal{J}_\omega
    = 
    \left\{
        a \in \mathcal{A}
        \; \big| \; 
        \omega(a^*a) = 0
    \right\}
\end{equation}
is a left ideal in $\mathcal{A}$, the \emph{Gel'fand ideal} of
$\omega$.

Recall that a \emph{pre Hilbert space} $\mathcal{H}$ over $\ring{C}$
is a $\ring{C}$-module equipped with a scalar product $\SP{\cdot,
  \cdot}: \mathcal{H} \times \mathcal{H} \longrightarrow \ring{C}$
which is $\ring{C}$-linear in the second argument and satisfies
$\cc{\SP{\phi, \psi}} = \SP{\psi, \phi}$ as well as $\SP{\phi, \phi} >
0$ for $\phi \ne 0$. By $\Bounded(\mathcal{H})$ we denote the
\emph{adjointable operators} on $\mathcal{H}$, i.e. those maps $A:
\mathcal{H} \longrightarrow \mathcal{H}$ for which there is a map
$A^*: \mathcal{H} \longrightarrow \mathcal{H}$ with $\SP{\phi, A\psi}
= \SP{A^*\phi, \psi}$ for all $\phi, \psi \in \mathcal{H}$. It follows
that such a map is $\ring{C}$-linear and $\Bounded(\mathcal{H})$
becomes a unital $^*$-algebra over $\ring{C}$.  This allows to define
a \emph{$^*$-representation} of $\mathcal{A}$ on $\mathcal{H}$ to be a
$^*$-homomorphism $\pi: \mathcal{A} \longrightarrow
\Bounded(\mathcal{H})$. For these notions and further references on
$^*$-representation theory we refer to \cite[Chap.~7]{waldmann:2007a}
as well as to \cite{schmuedgen:1990a} for the more particular case of
$^*$-algebras and $O^*$-algebras over $\mathbb{C}$.

Having a positive linear functional $\omega: \mathcal{A}
\longrightarrow \ring{C}$ one constructs a $^*$-representation
$(\mathcal{H}_\omega, \pi_\omega)$, the \emph{GNS representation}, of
$\mathcal{A}$ as follows: setting $\mathcal{H}_\omega = \mathcal{A}
\big/ \mathcal{J}_\omega$ yields on one hand a left
$\mathcal{A}$-module and on the other hand a pre Hilbert space via
$\SP{\psi_a, \psi_b} = \omega(a^*b)$ where $\psi_a, \psi_b \in
\mathcal{H}_\omega$ denote the equivalence classes of $a, b \in
\mathcal{A}$. Then one checks immediately that the canonical left
module structure, denoted by $\pi_\omega(a) \psi_b = \psi_{ab}$ in
this context, is a $^*$-representation of $\mathcal{A}$ on
$\mathcal{H}_\omega$.

Back to our reduction problem, the main idea is now to look for a
positive linear functional $\omega$ such that the left ideal
$\mathcal{J}$ we use for reduction coincides with the Gel'fand ideal
$\mathcal{J}_\omega$. In this case we have the following simple
statement:
\begin{proposition}
    \label{proposition:ReducedInvolutionGeneral}
    Assume $\omega: \mathcal{A} \longrightarrow \ring{C}$ is a
    positive linear functional with $\mathcal{J}_\omega = \mathcal{J}$
    and hence $\mathcal{A} \big/ \mathcal{J} = \mathcal{H}_\omega$.
    Then $\End_{\mathcal{A}} (\mathcal{A} \big/ \mathcal{J}) \cap
    \Bounded(\mathcal{H}_\omega)$ is a $^*$-subalgebra of
    $\Bounded(\mathcal{H}_\omega)$ and a subalgebra of
    $\End_{\mathcal{A}}(\mathcal{A} \big/ \mathcal{J})$.
\end{proposition}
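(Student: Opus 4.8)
The plan is to notice first that the only point that really requires an argument is stability under the $^*$-involution. Indeed, both $\End_{\mathcal{A}}(\mathcal{A}\big/\mathcal{J})$ and $\Bounded(\mathcal{H}_\omega)$ are unital subalgebras of the algebra of all $\ring{C}$-linear endomorphisms of $\mathcal{H}_\omega$: the first because a composition of $\mathcal{A}$-module endomorphisms is again one and $\id$ is one, the second because $(AB)^* = B^*A^*$ and $\id^* = \id$ exhibit $\Bounded(\mathcal{H}_\omega)$ as a unital $^*$-algebra as recalled above. Hence their intersection is automatically a unital subalgebra of each, in particular a subalgebra of $\End_{\mathcal{A}}(\mathcal{A}\big/\mathcal{J})$. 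So the substance of the proposition is that if $A$ is simultaneously a left $\mathcal{A}$-module endomorphism of $\mathcal{H}_\omega$ and adjointable with respect to $\SP{\cdot,\cdot}$, then its adjoint $A^*$ is again a left $\mathcal{A}$-module endomorphism.

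The key input is that the GNS construction furnishes a genuine $^*$-representation $\pi_\omega \colon \mathcal{A} \to \Bounded(\mathcal{H}_\omega)$, so that $\pi_\omega(a)^* = \pi_\omega(a^*)$ for every $a \in \mathcal{A}$, and that the module structure on $\mathcal{H}_\omega = \mathcal{A}\big/\mathcal{J}$ is precisely $b\cdot\psi_c = \pi_\omega(b)\psi_c$. First I would restate ``$A$ is a module endomorphism'' as the family of identities $A\,\pi_\omega(a) = \pi_\omega(a)\,A$ for all $a \in \mathcal{A}$, and observe $A^*\in\Bounded(\mathcal{H}_\omega)$ with $(A^*)^*=A$ since the adjointable operators form a $^*$-algebra. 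Then I would take adjoints of each commutation relation: using $\pi_\omega(a)^*=\pi_\omega(a^*)$, $(A^*)^*=A$, and nondegeneracy of $\SP{\cdot,\cdot}$, the relation $\SP{A^*\pi_\omega(a^*)\phi,\psi} = \SP{\phi,\pi_\omega(a)A\psi} = \SP{\phi,A\pi_\omega(a)\psi} = \SP{\pi_\omega(a^*)A^*\phi,\psi}$ yields $A^*\,\pi_\omega(a^*) = \pi_\omega(a^*)\,A^*$ for all $a \in \mathcal{A}$. Finally, since the involution $a \mapsto a^*$ of $\mathcal{A}$ is a bijection, $\{a^* \mid a \in \mathcal{A}\} = \mathcal{A}$, so $A^*$ commutes with $\pi_\omega(b)$ for every $b \in \mathcal{A}$; that is, $A^* \in \End_{\mathcal{A}}(\mathcal{A}\big/\mathcal{J})$, and together with $A^*\in\Bounded(\mathcal{H}_\omega)$ this closes the intersection under the involution.

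I do not expect a genuine obstacle here: the statement is a formal consequence of the $^*$-representation property of $\pi_\omega$ and the surjectivity of the $^*$-involution on $\mathcal{A}$. The one place to be slightly careful is to phrase module linearity throughout in terms of the operators $\pi_\omega(a)$ and to invoke surjectivity of $^*$ at the end to upgrade ``commutes with all $\pi_\omega(a^*)$'' to ``commutes with all $\pi_\omega(b)$'' — logically the same, but worth spelling out so that the identification $A^* \in \End_{\mathcal{A}}(\mathcal{A}\big/\mathcal{J})$ is unambiguous.
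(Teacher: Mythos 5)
Your proposal is correct and follows essentially the same route as the paper: the only substantive point is that the adjoint $A^*$ of an adjointable module endomorphism is again $\mathcal{A}$-linear, which you establish exactly as the paper does, via the chain of inner-product identities using $\pi_\omega(a)^*=\pi_\omega(a^*)$, the commutation of $A$ with $\pi_\omega$, and non-degeneracy of $\SP{\cdot,\cdot}$. The only cosmetic difference is that you prove $A^*\pi_\omega(a^*)=\pi_\omega(a^*)A^*$ and then invoke bijectivity of the involution, whereas the paper arranges the computation to yield $A^*\pi_\omega(a)=\pi_\omega(a)A^*$ directly.
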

\begin{proof}
    In general, there may be left module endomorphisms which are not
    adjointable and adjointable endomorphisms which are not left
    $\mathcal{A}$-linear. The non-trivial part is to show that for $A
    \in \End_{\mathcal{A}}(\mathcal{A} \big/ \mathcal{J}) \cap
    \Bounded(\mathcal{H}_\omega)$ also $A^*$ is left
    $\mathcal{A}$-linear. Thus let $a \in \mathcal{A}$ and $\phi, \psi
    \in \mathcal{H}_\omega = \mathcal{A} \big/ \mathcal{J}$. Then
    \[
    \SP{A^* \pi_\omega(a)\phi, \psi}
    = \SP{\pi_\omega(a)\phi, A\psi}
    = \SP{\phi, \pi_\omega(a^*) A\psi}
    = \SP{\phi, A \pi_\omega(a^*)\psi}
    = \SP{\pi_\omega(a) A^*\phi, \psi},
    \]
    since $\pi_\omega(a)^* = \pi_\omega(a^*)$. Since $\SP{\cdot,
      \cdot}$ is non-degenerate we conclude that $A^*\pi_\omega(a) =
    \pi_\omega(a) A^*$.
\end{proof}

Thus from $\mathcal{B} \big/ \mathcal{J} \cong
\End_{\mathcal{A}}(\mathcal{A} \big/ \mathcal{J})^\opp$ we obtain at
least a subalgebra of $\mathcal{B} \big/ \mathcal{J}$ which is a
$^*$-algebra with $^*$-involution inherited from
$\Bounded(\mathcal{H}_\omega)$.

\begin{remark}
    \label{remark:InvolutionProgram}
    In conclusion, we want a positive functional $\omega: \mathcal{A}
    \longrightarrow \ring{C}$ such that first $\mathcal{J} =
    \mathcal{J}_\omega$ and second \emph{all} left
    $\mathcal{A}$-linear endomorphisms of $\mathcal{H}_\omega =
    \mathcal{A} \big/ \mathcal{J}$ are adjointable. In this case
    $\mathcal{B} \big/ \mathcal{J}$ becomes a $^*$-subalgebra of
    $\Bounded(\mathcal{H}_\omega)$ in a natural way.  Of course, up to
    now this is only an algebraic game as the existence of such a
    functional $\omega$ is by far not obvious.
\end{remark}

%
% The positive functional
%

\subsection{The positive functional}
\label{subsec:PositiveFunctional}

While in the general algebraic situation not much can be said about
the existence of a suitable positive functional, in our geometric
context we can actually construct a fairly simple $\omega$.

To this end we investigate the behaviour of the operators introduced
in Section~\ref{sec:ClassicalConstruction} and
\ref{sec:QuantizedBimodule} under complex conjugation. On $\Cinfty(M,
\AntiC^\bullet \lie{g})$ we define the complex conjugation pointwise
in $M$ and require the elements of $\Anti_{\mathbb{R}}^\bullet
\lie{g}$ to be real. Recall that our construction of $\starred$ uses a
strongly invariant Hermitian star product $\star$ on $M$.
\begin{lemma}
    Let $x \in \Cinfty(M, \AntiC^\bullet \lie{g})[[\lambda]]$. Then
    \begin{equation}
        \label{eq:cchxkoszuliota}
        \cc{hx} = h \cc{x},
        \quad
        \cc{\koszul x} = \koszul \cc{x},
        \quad
        \textrm{and}
        \quad
        \cc{\iota^* x} = \iota^* \cc{x},
    \end{equation}
    and
    \begin{equation}
        \label{eq:ccDeformedKoszul}
        \cc{\deform{\koszul}^{(\kappa)} x}
        = \deform{\koszul}^{(\kappa)} \cc{x}
        - \I \lambda \Lie_{(e_a)_M} \ins(e^a) \cc{x}
        - \I \lambda C_{ab}^c e_c \wedge \ins(e^a) \ins(e^b) \cc{x}
        - \I\lambda (\cc{\kappa} + \kappa) \ins(\Delta) \cc{x}.
    \end{equation}
    Moreover, for $\phi \in \Cinfty(C)[[\lambda]]$ we have
    \begin{equation}
        \label{eq:ccProl}
        \cc{\prol(\phi)} = \prol(\cc{\phi}).
    \end{equation}
\end{lemma}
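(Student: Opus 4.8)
The plan is to verify each of the three displayed equations by working directly from the definitions, exploiting that $\star$ is Hermitian and that the geometric maps $\prol$, $\iota^*$, $h$ are built from real (pullback) operations, together with the fact that complex conjugation on $\Cinfty(M,\AntiC^\bullet\lie{g})[[\lambda]]$ acts pointwise and fixes $\Anti^\bullet_{\mathbb{R}}\lie{g}$. A key preliminary remark is that complex conjugation reverses the formal parameter nowhere here (the coefficients $C_r$ of $\star$ are the ones carrying the behaviour) — so $\cc{\lambda} = \lambda$ but $\cc{\I\lambda} = -\I\lambda$, which is what produces the sign flips below.

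For \eqref{eq:cchxkoszuliota}: the map $\iota^*$ is a pullback by the inclusion, hence commutes with pointwise complex conjugation, giving $\cc{\iota^*x} = \iota^*\cc{x}$. Similarly $\prol = (\pr_1\circ\Phi)^*$ is a pullback, so \eqref{eq:ccProl} is immediate; the same observation, applied to $\Phi^{-1}$, and the fact that the integral $\int_0^1 t^k(\,\cdot\,)\,\D t$ has real weight and that the coefficients $e_a\in\Anti_{\mathbb R}^\bullet\lie{g}$ are real, gives $\cc{h_kx} = h_k\cc{x}$ from \eqref{eq:Homotopyk}. For the classical Koszul differential, $\koszul x = J_a\ins(e^a)x$ with $J_a$ real-valued (the momentum map is real) and $e^a$ real, so $\cc{\koszul x} = \koszul\cc{x}$. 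These are all routine and I would state them quickly.

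The one equation that carries content is \eqref{eq:ccDeformedKoszul}, and this is where I expect the real work. Here I would start from
\[
    \deform{\koszul}^{(\kappa)}x
    = \ins(e^a)x \star J_a
    + \frac{\I\lambda}{2} C_{ab}^c\, e_c\wedge\ins(e^a)\ins(e^b)x
    + \I\lambda\kappa\,\ins(\Delta)x
\]
and conjugate term by term. The last two terms are easy: $C_{ab}^c$, the $e$'s and $\Delta$ are real, so conjugation multiplies the respective prefactors $\frac{\I\lambda}{2}C_{ab}^c$ and $\I\lambda\kappa$ by $-1$ up to $\cc{\kappa}$; writing $-\I\lambda\kappa = \I\lambda\kappa - 2\I\lambda\kappa$ (and similarly for the middle term) and recombining with the ``uncorrected'' $\deform{\koszul}^{(\kappa)}\cc{x}$ accounts for the terms $-\I\lambda C_{ab}^c e_c\wedge\ins(e^a)\ins(e^b)\cc{x}$ and $-\I\lambda(\cc{\kappa}+\kappa)\ins(\Delta)\cc{x}$ (the middle term appears with coefficient $\frac{\I\lambda}{2} - (-\frac{\I\lambda}{2}) = \I\lambda$ mismatch, hence the full $-\I\lambda C_{ab}^c(\dots)$ correction after absorbing the $\frac12$). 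The genuinely nontrivial piece is the first term: since $\star$ is Hermitian, $\cc{\ins(e^a)x \star J_a} = \cc{J_a}\star\cc{\ins(e^a)x} = J_a\star\ins(e^a)\cc{x}$ — note this is now a \emph{right} multiplication by $J_a$, i.e. $J_a\star\ins(e^a)\cc x$, not $\ins(e^a)\cc x\star J_a$. So I would rewrite $J_a\star\ins(e^a)\cc{x} = \ins(e^a)\cc x\star J_a + [J_a,\ins(e^a)\cc x]_\star$ and apply strong invariance \eqref{eq:StronglyInvariant}, $[J_a, f]_\star = -\I\lambda\Lie_{(e_a)_M}f$, to the graded components; because $\ins(e^a)$ acts only on the $\AntiC^\bullet\lie{g}$-factor while $\Lie_{(e_a)_M}$ acts only on the $\Cinfty(M)$-factor, the two commute, yielding exactly $-\I\lambda\Lie_{(e_a)_M}\ins(e^a)\cc x$. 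Collecting the corrected first term with the two recombined correction terms from above gives precisely \eqref{eq:ccDeformedKoszul}.

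The main obstacle, then, is purely bookkeeping: getting the signs and the factor $\tfrac12$ in the structure-constant term to line up after using Hermiticity to turn the left $\star$-multiplication into a right one. I would carry out the three conjugations in the order $\ins(\Delta)$-term, $C_{ab}^c$-term, then the $\star J_a$-term (easiest to hardest), record the discrepancy each produces relative to $\deform{\koszul}^{(\kappa)}\cc x$, and sum. No deep ingredient beyond Hermiticity \eqref{eq:starHermitian}, strong invariance \eqref{eq:StronglyInvariant}, reality of $J$, $C_{ab}^c$, $\Delta$ and the basis, and the commuting of $\ins(e^a)$ with $\Lie_{(e_a)_M}$ is needed.
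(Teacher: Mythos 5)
Your proposal is correct and is essentially the paper's own argument: the identities \eqref{eq:cchxkoszuliota} and \eqref{eq:ccProl} are handled as trivial consequences of the reality of the pullbacks, the basis, and $J$, while \eqref{eq:ccDeformedKoszul} is verified by the same direct computation the paper alludes to, conjugating \eqref{eq:QuantizedKoszul} term by term and using Hermiticity \eqref{eq:starHermitian} to turn $\cc{\ins(e^a)x\star J_a}$ into $J_a\star\ins(e^a)\cc{x}$, then strong invariance \eqref{eq:StronglyInvariant} to produce the $-\I\lambda\Lie_{(e_a)_M}\ins(e^a)\cc{x}$ correction. The only cosmetic difference is that the paper treats $\kappa=0$ first and adds the $\ins(\Delta)$-term afterwards using the reality of $\Delta$, whereas you conjugate all three terms at once; your sign bookkeeping (discrepancies $-\I\lambda$ for the structure-constant term and $-\I\lambda(\cc{\kappa}+\kappa)$ for the $\Delta$-term) comes out right.
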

\begin{proof}
    The claims in \eqref{eq:cchxkoszuliota} and \eqref{eq:ccProl} are
    trivial. For $\kappa = 0$ the claim \eqref{eq:ccDeformedKoszul} is
    a simple computation using again the strong invariance
    \eqref{eq:StronglyInvariant} as well as \eqref{eq:starHermitian}.
    But then the case $\kappa \in \mathbb{C}[[\lambda]]$ follows since
    the one-form $\Delta$ is real.
\end{proof}

Before we proceed we have to rewrite the result
\eqref{eq:ccDeformedKoszul} in the following way.  Since $\star$ is
strongly invariant we have for the (left) action of $\lie{g}$ on
$\Cinfty(M, \AntiC^\bullet \lie{g})$
\begin{equation}
    \label{eq:LeftActiongOnKoszulComplex}
    \varrho(\xi) x 
    = - \Lie_{\xi_M} x + \ad_\xi x
    = \frac{1}{\I\lambda} [J_\xi, x]_\star + \ad_\xi x
\end{equation}
for all $x \in \Cinfty(M, \AntiC^\bullet \lie{g})$.  Using this, we
get from \eqref{eq:ccDeformedKoszul}
\begin{equation}
    \label{eq:ccDeformedKoszulII}
    \cc{\deform{\koszul}^{(\kappa)} x} 
    =
    \deform{\koszul}^{(\kappa)} \cc{x} 
    +
    \I\lambda \left(\varrho(e_a) - \ad_{e_a}\right) \ins(e^a) \cc{x}
    -
    \I\lambda C_{ab}^c e_c \wedge \ins(e^a) \ins(e^b) \cc{x}
    -
    \I\lambda(\cc{\kappa} + \kappa) \ins(\Delta) \cc{x}.
\end{equation}
While the Lie derivative $\Lie_{\xi_M}$ commutes with all the
insertions $\ins(\alpha)$ of \emph{constant} one-forms $\alpha \in
\lie{g}^*$ this is no longer true for $\varrho(\xi)$ and $\ad_\xi$. In
fact, by a simple computation we get
\begin{equation}
    \label{eq:adeainsea}
    \ad_{e_a} \ins(e^a) x - \ins(e^a) \ad_{e_a} x
    =
    \ins(\Delta) x
    =
    \varrho(e_a) \ins(e^a) x - \ins(e^a) \varrho(e_a) x
\end{equation}
for all $x \in \Cinfty(M, \AntiC^\bullet)[[\lambda]]$.

From \eqref{eq:ccDeformedKoszul} we already see that the behaviour of
$\deformiotak$ and $\deform{h}^{(\kappa)}$ is more complicated under
complex conjugation. For the relevant geometric series in
\eqref{eq:deformiota} we have the following result:
\begin{lemma}
    \label{lemma:ccFunnyGeometricSeries}
    Let $f \in \Cinfty(M)[[\lambda]]$. Then the operators
    \begin{equation}
        \label{eq:OperatorAa}
        A_\kappa^a(f)
        =
        \sum_{k=1}^\infty \sum_{\ell = 1}^k
        \left(
            \left(\koszul_1 - \deform{\koszul}^{(\kappa)}_1\right)
            h_0
        \right)^{k-\ell}
        \ins(e^a)
        h_0
        \cc{
          \left(
              \left(\koszul_1 - \deform{\koszul}^{(\kappa)}_1\right)
              h_0
          \right)^{\ell - 1}
          \cc{f}
        }
    \end{equation}
    and
    \begin{equation}
        \label{eq:TheCoolOperatorB}
        B_\kappa(f)
        =
        \sum_{k=1}^\infty \sum_{\ell = 1}^k
        \left(
            \left(\koszul_1 - \deform{\koszul}^{(\kappa)}_1\right)
            h_0
        \right)^{k-\ell}
        \ins(\Delta)
        h_0
        \cc{
          \left(
              \left(\koszul_1 - \deform{\koszul}^{(\kappa)}_1\right)
              h_0
          \right)^{\ell - 1}
          \cc{f}
        }
    \end{equation}
    yield well-defined $\mathbb{C}[[\lambda]]$-linear maps
    $A^a_\kappa, B_\kappa: \Cinfty(M)[[\lambda]] \longrightarrow
    \Cinfty(M)[[\lambda]]$ such that
    \begin{align}
        \label{eq:ccFunnyGeometricSeriesI}
        \cc{
          \left(
              \id + \left(
                  \deform{\koszul}^{(\kappa)}_1 - \koszul_1
              \right) h_0
          \right)^{-1} f
        }
        &=
        \left(
            \id + \left(
                \deform{\koszul}^{(\kappa)}_1 - \koszul_1
            \right) h_0
        \right)^{-1} \cc{f}
        + \I \lambda \Lie_{(e_a)_M} A^a (\cc{f})
        + \I\lambda (\cc{\kappa} + \kappa) B(\cc{f}) \\
        &=
        \left(
            \id + \left(
                \deform{\koszul}^{(\kappa)}_1 - \koszul_1
            \right) h_0
        \right)^{-1} \cc{f}
        + \I \lambda A^a \left(\Lie_{(e_a)_M} (\cc{f})\right)
        + \I\lambda (\cc{\kappa} + \kappa - 1) B(\cc{f})
        \label{eq:ccFunnyGeometricSeriesII}
    \end{align}
\end{lemma}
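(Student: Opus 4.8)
The plan is to isolate the single operator that matters, namely $T := \bigl(\koszul_1 - \deform{\koszul}^{(\kappa)}_1\bigr) h_0 \colon \Cinfty(M)[[\lambda]] \to \Cinfty(M)[[\lambda]]$, for which the map inverted in \eqref{eq:deformiota} is $\id - T$ with $(\id - T)^{-1} = \sum_{k\ge 0} T^k$. Since the classical limit of $\deform{\koszul}^{(\kappa)}$ is $\koszul$ (Lemma~\ref{lemma:QuantizedKoszul}), $T$ is of order $\lambda$, so this series converges $\lambda$-adically; the same estimate shows that the $(k,\ell)$-summand in \eqref{eq:OperatorAa} and \eqref{eq:TheCoolOperatorB} is of order $\lambda^{k-1}$, which gives the asserted well-definedness of $A^a_\kappa$ and $B_\kappa$ (and $\ins(e^a)h_0$, $\ins(\Delta)h_0$ visibly map $\Cinfty(M)[[\lambda]]$ to itself, raising the antisymmetric degree $0 \to 1$ and lowering it back to $0$).

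The computational core is a one-step formula for $\cc{Tf}$. Applying \eqref{eq:ccDeformedKoszul} to $x = h_0 f$, which sits in antisymmetric degree $1$ so that the term $C_{ab}^c e_c \wedge \ins(e^a)\ins(e^b)\cc{h_0 f}$ drops out (a double insertion into a $\lie{g}$-valued function vanishes), and combining with the trivial relations \eqref{eq:cchxkoszuliota}, one obtains
\[
  \cc{Tf} = T\cc f + \I\lambda\,\Lie_{(e_a)_M}\ins(e^a) h_0 \cc f + \I\lambda(\cc{\kappa}+\kappa)\ins(\Delta)h_0 \cc f =: T\cc f + \Theta\cc f ,
\]
where $\Theta$ is a $\mathbb{C}[[\lambda]]$-linear operator on $\Cinfty(M)[[\lambda]]$ of order $\lambda$. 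Because this holds for arbitrary functions, the $\mathbb{C}[[\lambda]]$-linear map $g \mapsto \cc{T\cc g}$ equals $T + \Theta$, so $\cc{T^m f} = (T+\Theta)^m \cc f$ and hence $\cc{(\id-T)^{-1} f} = (\id - T - \Theta)^{-1}\cc f$. Expanding $(\id - T - \Theta)^{-1} = \sum_m (T+\Theta)^m$ by collecting the words in the two letters $T, \Theta$ according to the position of the \emph{first} $\Theta$ gives $(\id - T - \Theta)^{-1} = (\id-T)^{-1} + (\id-T)^{-1}\Theta(\id-T-\Theta)^{-1}$. Reindexing the two sums in \eqref{eq:OperatorAa} and \eqref{eq:TheCoolOperatorB} and using $\cc{T^{\ell-1}\cc f} = (T+\Theta)^{\ell-1} f$, one identifies $A^a_\kappa = (\id-T)^{-1}\ins(e^a)h_0(\id-T-\Theta)^{-1}$ and $B_\kappa = (\id-T)^{-1}\ins(\Delta)h_0(\id-T-\Theta)^{-1}$, so inserting the two pieces of $\Theta$ produces exactly $\I\lambda(\id-T)^{-1}\Lie_{(e_a)_M}\ins(e^a)h_0(\id-T-\Theta)^{-1}\cc f$ and $\I\lambda(\cc{\kappa}+\kappa)B_\kappa(\cc f)$ as the two correction terms.

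To land on \eqref{eq:ccFunnyGeometricSeriesI} exactly I still have to pull $\Lie_{(e_a)_M}$ out in front of $(\id-T)^{-1}$. For this I use \eqref{eq:LeftActiongOnKoszulComplex} in the form $\Lie_{(e_a)_M} = \ad_{e_a} - \varrho(e_a)$, the fact that $\ad_{e_a}$ annihilates antisymmetric degree $0$ (so $\Lie_{(e_a)_M}$ restricts to $-\varrho(e_a)$ on $\Cinfty(M)[[\lambda]]$), and the $G$-equivariance of $(\id-T)^{-1}$ (which holds since $\koszul_1$, $\deform{\koszul}^{(\kappa)}_1$ and $h_0$ are $G$-equivariant), so that $(\id-T)^{-1}$ commutes with $\varrho(e_a)$ and hence with $\Lie_{(e_a)_M}$ on functions. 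Finally, \eqref{eq:ccFunnyGeometricSeriesII} should follow from \eqref{eq:ccFunnyGeometricSeriesI} by commuting $\Lie_{(e_a)_M}$ to the right through $\ins(e^a)h_0$ inside $\Lie_{(e_a)_M}A^a_\kappa(\cc f)$: $\Lie_{(e_a)_M}$ commutes with the constant insertion $\ins(e^a)$, $\varrho(e_a)$ commutes with the $G$-equivariant operators $h_0$, $(\id-T)^{-1}$ and $(\id-T-\Theta)^{-1}$ ($\Theta$ is $G$-equivariant because $T$ is and conjugation commutes with the action), $\ad_{e_a}$ kills degree $0$, and the only surviving term arises from rewriting $\sum_a \ins(e^a)\ad_{e_a}$ on the range of $h_0$ by \eqref{eq:adeainsea}; this gives $\sum_a \Lie_{(e_a)_M}A^a_\kappa(\cc f) = \sum_a A^a_\kappa(\Lie_{(e_a)_M}\cc f) - B_\kappa(\cc f)$, and substituting it into \eqref{eq:ccFunnyGeometricSeriesI} shifts the coefficient of $B_\kappa(\cc f)$ from $\cc{\kappa}+\kappa$ to $\cc{\kappa}+\kappa-1$.

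I expect the main obstacle to be organisational rather than a single hard estimate: bookkeeping the formally convergent geometric series so that the double sums defining $A^a_\kappa$, $B_\kappa$ reappear verbatim, and then keeping exact track of how $\Lie_{(e_a)_M} = \ad_{e_a} - \varrho(e_a)$ moves across $(\id-T)^{-1}$ and $\ins(e^a)h_0$. It is precisely this $\ad$-versus-$\varrho$ discrepancy, together with the $\ins(\Delta)$ appearing in \eqref{eq:adeainsea}, that is responsible for the difference between the two asserted forms, so getting that step right is the delicate point.
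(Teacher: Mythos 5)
Your argument is correct and is essentially the paper's own proof in slightly different packaging: your one-step relation $\cc{Tf}=(T+\Theta)\cc{f}$ is the paper's equation obtained from \eqref{eq:ccDeformedKoszul} on degree-one elements, your resolvent identity $(\id-T-\Theta)^{-1}=(\id-T)^{-1}+(\id-T)^{-1}\Theta(\id-T-\Theta)^{-1}$ is exactly the expanded commutator bookkeeping the paper uses to recover $A^a_\kappa$ and $B_\kappa$, and the passage between the two asserted forms rests on the same ingredients ($G$-equivariance, $\varrho(e_a)=-\Lie_{(e_a)_M}$ on functions, and \eqref{eq:adeainsea}). The only cosmetic deviation is that you derive \eqref{eq:ccFunnyGeometricSeriesII} from \eqref{eq:ccFunnyGeometricSeriesI} via the identity $\Lie_{(e_a)_M}A^a_\kappa=A^a_\kappa\Lie_{(e_a)_M}-B_\kappa$ on functions, whereas the paper starts from the alternative one-step relation and commutes the action to the right; both come to the same computation.
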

\begin{proof}
    Since the term with the two insertions does not contribute and
    since $\ad_{e_a}$ vanishes on functions we get from
    \eqref{eq:ccDeformedKoszulII}
    \[
    \cc{\left(\koszul_1 - \deform{\koszul}^{(\kappa)}_1\right) h_0 f}
    =
    \left(\koszul_1 - \deform{\koszul}^{(\kappa)}_1\right)
    h_0 \cc{f}
    -
    \I\lambda \varrho(e_a) \ins(e^a) h_0 \cc{f}
    +
    \I\lambda (\cc{\kappa} + \kappa) \ins(\Delta) h_0 \cc{f}.
    \tag{$*$}
    \]
    Now using \eqref{eq:adeainsea} we get the alternative version
    \[
    \cc{\left(\koszul_1 - \deform{\koszul}^{(\kappa)}_1\right) h_0 f}
    =
    \left(\koszul_1 - \deform{\koszul}^{(\kappa)}_1\right)
    h_0 \cc{f}
    -
    \I\lambda \ins(e^a) h_0 (\varrho(e_a) \cc{f})
    +
    \I\lambda (\cc{\kappa} + \kappa - 1) \ins(\Delta) h_0 \cc{f}
    \tag{$**$}
    \]
    for the commutation relation. It is this equation which motivates
    $\kappa = \frac{1}{2}$ instead of $\kappa = 0$. Applying the
    general commutation relation $C B^k = [C, B] B^{k-1} + B [C, B]
    B^{k-2} + \cdots + B^{k-1} [C, B] + B^k C$ to the complex
    conjugation and the map $\left(\koszul_1 -
        \deform{\koszul}^{(\kappa)}_1\right) h_0$ gives
    \begin{align*}
        &\cc{
          \left(
              \left(
                  \koszul_1 - \deform{\koszul}^{(\kappa)}_1
              \right) h_0
          \right)^k f
        }
        =
        \left(
            \left(
                \koszul_1 - \deform{\koszul}^{(\kappa)}_1
            \right) h_0
        \right)^k \cc{f} \\
        & \quad+ 
        \sum_{\ell = 1}^k
        \left(  
            \left(
                \koszul_1 - \deform{\koszul}^{(\kappa)}_1
            \right) h_0
        \right)^{k-\ell}
        \left(
            - \I\lambda \varrho(e_a) \ins(e^a) h_0 
            + \I\lambda(\cc{\kappa} + \kappa) \ins(\Delta) h_0
        \right)
        \cc{
          \left(  
              \left(
                  \koszul_1 - \deform{\koszul}^{(\kappa)}_1
              \right) h_0
          \right)^{\ell-1} f
        }.
    \end{align*}
    Using the operators $A^a_\kappa$ and $B_\kappa$, which are clearly
    well-defined as formal series since the difference $\koszul_1 -
    \deform{\koszul}^{(\kappa)}_1$ is at least of order $\lambda$, we
    get for the geometric series
    \[
    \cc{
      \left(
          \id + \left(
              \deform{\koszul}^{(\kappa)}_1 - \koszul_1
          \right) h_0
      \right)^{-1} f
    }
    =
    \left(
        \id + \left(
            \deform{\koszul}^{(\kappa)}_1 - \koszul_1
        \right) h_0
    \right)^{-1} \cc{f}
    -
    \I\lambda \varrho(e_a) A^a_\kappa(\cc{f})
    +
    \I\lambda (\cc{\kappa} + \kappa) B_\kappa (\cc{f}),
    \]
    since the action of $e_a$ can be commuted to the \emph{left} as
    all operators are $G$-invariant. This proves the first equation
    \eqref{eq:ccFunnyGeometricSeriesI} since $A^a_\kappa(\cc{f})$ is a
    function whence the left action of $e_a$ is just $- \Lie_{(e_a)_M}$.
    Now conversely, using the second version ($**$) we can commute
    the action of $e_a$ to the right, as now only invariant operators
    remain. This gives \eqref{eq:ccFunnyGeometricSeriesII}.
\end{proof}
\begin{corollary}
    \label{corollary:ccDeformedIota}
    Let $f \in \Cinfty(M)[[\lambda]]$. Then we have
    \begin{align}
        \label{eq:ccDeformedIotaI}
        \cc{\deformiotak f}
        &=
        \deformiotak \cc{f}
        +
        \I\lambda \Lie_{(e_a)_C} \iota^* A^a_\kappa(\cc{f})
        +
        \I\lambda (\cc{\kappa} + \kappa) \iota^* B_\kappa (\cc{f}) \\
        &=
        \deformiotak \cc{f}
        +
        \I\lambda \iota^* A^a_\kappa
        \left(\Lie_{(e_a)_M} \cc{f}\right)
        +
        \I\lambda (\cc{\kappa} + \kappa - 1) \iota^* B_\kappa (\cc{f}).
        \label{eq:ccDeformedIotaII}
    \end{align}
\end{corollary}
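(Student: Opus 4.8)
The plan is to derive both identities directly from Lemma~\ref{lemma:ccFunnyGeometricSeries} by applying the restriction map $\iota^*$. Write $G_\kappa = \id + \left(\deform{\koszul}^{(\kappa)}_1 - \koszul_1\right) h_0$, so that definition \eqref{eq:deformiota} reads $\deformiotak = \iota^* \circ G_\kappa^{-1}$, and recall from \eqref{eq:cchxkoszuliota} that $\iota^*$ commutes with pointwise complex conjugation. Hence I would start from $\cc{\deformiotak f} = \iota^*\,\cc{G_\kappa^{-1} f}$ and substitute the expression for $\cc{G_\kappa^{-1} f}$ furnished by Lemma~\ref{lemma:ccFunnyGeometricSeries}; everything of substance is already contained in that lemma.

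For \eqref{eq:ccDeformedIotaI} I would use the first version \eqref{eq:ccFunnyGeometricSeriesI}. Applying $\iota^*$ term by term yields $\deformiotak \cc{f} + \I\lambda\,\iota^* \Lie_{(e_a)_M} A^a_\kappa(\cc{f}) + \I\lambda(\cc{\kappa}+\kappa)\,\iota^* B_\kappa(\cc{f})$. The only point that needs checking is that $\iota^* \Lie_{(e_a)_M} = \Lie_{(e_a)_C}\iota^*$ on functions; this holds because the fundamental vector field $(e_a)_M$ of the $G$-action on $M$ is tangent to $C = J^{-1}(\{0\})$ (as $0$ is $\Ad^*$-invariant) and restricts to the fundamental vector field $(e_a)_C$ of the induced $G$-action on $C$, exactly as recorded in Section~\ref{subsec:GeometricFramework}. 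Rewriting the middle term accordingly gives \eqref{eq:ccDeformedIotaI}.

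For \eqref{eq:ccDeformedIotaII} I would instead apply $\iota^*$ to the second version \eqref{eq:ccFunnyGeometricSeriesII} of the lemma. There the Lie derivative $\Lie_{(e_a)_M}$ already acts on $\cc{f}$ inside the argument of $A^a_\kappa$, so no intertwining with $\iota^*$ is needed and the claimed form is immediate, the coefficient of the $B_\kappa$-term now being $\cc{\kappa}+\kappa-1$ rather than $\cc{\kappa}+\kappa$ — precisely the shift occurring in the passage from \eqref{eq:ccFunnyGeometricSeriesI} to \eqref{eq:ccFunnyGeometricSeriesII}, which traces back to \eqref{eq:adeainsea}. I do not expect any genuine obstacle in this argument: it is a purely formal consequence of Lemma~\ref{lemma:ccFunnyGeometricSeries} together with the $\iota$-relatedness of $(e_a)_M$ and $(e_a)_C$, the latter being part of the geometric setup rather than something to be proved here.
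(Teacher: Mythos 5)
Your argument is correct and is exactly the intended one: the paper states this as a corollary with no separate proof precisely because it follows from Lemma~\ref{lemma:ccFunnyGeometricSeries} by applying $\iota^*$, using $\cc{\iota^* x} = \iota^* \cc{x}$ from \eqref{eq:cchxkoszuliota} and the $\iota$-relatedness of $(e_a)_M$ and $(e_a)_C$ for the first identity. Nothing is missing.
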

Analogously, we obtain the behaviour of the homotopy
$\deform{h}^{(\kappa)}$ under complex conjugation. It is clear from
\eqref{eq:ccDeformedIotaII} that the value
\begin{equation}
    \label{eq:kappaIsFracEinsZwei}
    \kappa = \frac{1}{2}
\end{equation}
will simplify things drastically as in this case the presence of the
operator $B$ is absent in \eqref{eq:ccDeformedIotaII}. Thus from now
on we will exclusively consider $\kappa = \frac{1}{2}$ and omit the
subscript $\kappa$ at all relevant places. A first consequence of this
choice is the following:
\begin{corollary}
    \label{corollary:ccIotaOnGInvariantFunctions}
    If $f \in \Cinfty(M)[[\lambda]]$ is $G$-invariant then
    \begin{equation}
        \label{eq:ccIotaInvariantf}
        \cc{\deform{\iota^*} f} = \deform{\iota^*} \cc{f}.
    \end{equation}
\end{corollary}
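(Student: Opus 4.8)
The plan is to read this off directly from the second form \eqref{eq:ccDeformedIotaII} of Corollary~\ref{corollary:ccDeformedIota}, which was tailored precisely for the value $\kappa = \frac{1}{2}$. Recall that from \eqref{eq:kappaIsFracEinsZwei} onward we fix $\kappa = \frac{1}{2}$, so that $\cc{\kappa} + \kappa - 1 = 0$. Hence the term involving the operator $B_\kappa$ in \eqref{eq:ccDeformedIotaII} is absent, and we are left with
\begin{equation*}
    \cc{\deform{\iota^*} f}
    =
    \deform{\iota^*} \cc{f}
    +
    \I\lambda \iota^* A^a \left(\Lie_{(e_a)_M} \cc{f}\right).
\end{equation*}
So everything reduces to showing that the remaining correction term vanishes whenever $f$ is $G$-invariant.

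The one point that needs a word of justification is that $G$-invariance of $f$ carries over to $\cc{f}$. This is immediate: the pullback $\mathsf{L}^*_g$ by the (real) diffeomorphism $\mathsf{L}_g$ commutes with pointwise complex conjugation, so $\mathsf{L}^*_g f = f$ gives $\mathsf{L}^*_g \cc{f} = \cc{\mathsf{L}^*_g f} = \cc{f}$. Since $G$ is connected, $G$-invariance is equivalent to the vanishing of all the infinitesimal generators, i.e. $\Lie_{\xi_M} \cc{f} = 0$ for every $\xi \in \lie{g}$, and in particular $\Lie_{(e_a)_M} \cc{f} = 0$ for $a = 1, \ldots, N$.

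Plugging this into the displayed identity, the correction term $\I\lambda \iota^* A^a(\Lie_{(e_a)_M} \cc{f})$ vanishes identically, and we conclude $\cc{\deform{\iota^*} f} = \deform{\iota^*} \cc{f}$, which is exactly \eqref{eq:ccIotaInvariantf}. There is no real obstacle here; the content of the corollary is entirely in the algebraic identity \eqref{eq:ccDeformedIotaII} together with the choice $\kappa = \frac12$ that makes the $B$-term disappear, so the only thing to check is the elementary fact that complex conjugation preserves $G$-invariance.
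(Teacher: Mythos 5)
Your proof is correct and is exactly the route the paper intends: the corollary is stated as an immediate consequence of \eqref{eq:ccDeformedIotaII} once $\kappa=\frac12$ kills the $B$-term and $G$-invariance of $f$ (hence of $\cc{f}$) kills the $A^a$-term via $\Lie_{(e_a)_M}\cc{f}=0$. The only superfluous remark is the appeal to connectedness of $G$, which is not needed for the direction you use (invariance under the group action already yields vanishing of the fundamental Lie derivatives by differentiating along $\exp(t\xi)$), but this does not affect correctness.
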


Even though it is not in the main line of our argument according to
the previous section, we can now directly prove that $\starred$ is
Hermitian:
\begin{proposition}
    \label{proposition:StarredIsHermitian}
    The star product $\starred$ is Hermitian.
\end{proposition}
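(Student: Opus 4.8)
The plan is to deduce the Hermiticity of $\starred$ directly from the explicit formula $\pi^*(u \starred v) = \deform{\iota^*}\bigl(\prol(\pi^* u) \star \prol(\pi^* v)\bigr)$ of Proposition~\ref{proposition:ReducedAlgebraBimodule}, using that $\pi^*$ is injective and commutes with complex conjugation. Thus it suffices to check $\cc{\deform{\iota^*}\bigl(\prol(\pi^* u) \star \prol(\pi^* v)\bigr)} = \deform{\iota^*}\bigl(\prol(\pi^* \cc{v}) \star \prol(\pi^* \cc{u})\bigr)$ for all $u, v \in \Cinfty(M_\red)[[\lambda]]$, since the right-hand side equals $\pi^*(\cc{v} \starred \cc{u})$ and the left-hand side equals $\pi^*\cc{u \starred v}$.

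The first step is to observe that the function $\prol(\pi^* u) \star \prol(\pi^* v)$ is $G$-invariant. Indeed, $\pi^* u \in \Cinfty(C)[[\lambda]]$ is $G$-invariant because $\pi$ is the orbit projection, so $\mathsf{L}^*_g \prol(\pi^* u) = \prol(\mathsf{L}^*_g \pi^* u) = \prol(\pi^* u)$ by the equivariance \eqref{eq:prolEquivariant} of the prolongation, and likewise for $v$; since $\star$ is $G$-invariant, the product is $G$-invariant as well. Hence Corollary~\ref{corollary:ccIotaOnGInvariantFunctions} applies and gives $\cc{\deform{\iota^*}\bigl(\prol(\pi^* u) \star \prol(\pi^* v)\bigr)} = \deform{\iota^*}\cc{\prol(\pi^* u) \star \prol(\pi^* v)}$.

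It then remains to push the complex conjugation through the $\star$-product and the prolongations. By the Hermiticity \eqref{eq:starHermitian} of $\star$, by \eqref{eq:ccProl}, and since $\pi^*$ commutes with complex conjugation, one has $\cc{\prol(\pi^* u) \star \prol(\pi^* v)} = \cc{\prol(\pi^* v)} \star \cc{\prol(\pi^* u)} = \prol(\pi^* \cc{v}) \star \prol(\pi^* \cc{u})$. Substituting this into the previous identity yields $\pi^*\cc{u \starred v} = \pi^*(\cc{v} \starred \cc{u})$, and injectivity of $\pi^*$ gives the claim.

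The only nontrivial ingredient is Corollary~\ref{corollary:ccIotaOnGInvariantFunctions}, and this is exactly where the distinguished value $\kappa = \frac{1}{2}$ enters: for a general $\kappa$, Corollary~\ref{corollary:ccDeformedIota} shows that $\cc{\deform{\iota^*} f}$ differs from $\deform{\iota^*} \cc{f}$ by a $\Lie_{(e_a)_C}$-type term and a $B$-type term; on $G$-invariant $f$ the first term drops out while the second is proportional to $\cc{\kappa} + \kappa - 1$, which vanishes precisely for $\kappa = \frac{1}{2}$. So once that corollary is available the argument is essentially formal, and the substantive work sits in Lemma~\ref{lemma:ccFunnyGeometricSeries} rather than here.
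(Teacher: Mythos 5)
Your proof is correct and follows essentially the same route as the paper: the explicit formula \eqref{eq:ReducedStarProduct}, the $G$-invariance of $\prol(\pi^*u)\star\prol(\pi^*v)$, Corollary~\ref{corollary:ccIotaOnGInvariantFunctions} (where $\kappa=\tfrac{1}{2}$ enters), Hermiticity of $\star$ together with \eqref{eq:ccProl}, and injectivity of $\pi^*$. No gaps; your closing remark about where the real work lies matches the paper's structure.
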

\begin{proof}
    Let $u, v \in \Cinfty(M_\red)[[\lambda]]$. Then we have
    \begin{align*}
        \cc{\pi^*(u \starred v)}
        &=
        \cc{\deform{\iota^*} \left(
              \prol(\pi^*u) \star \prol(\pi^*v)
          \right)}
        =
        \deform{\iota^*} \left(
            \cc{\prol(\pi^*u) \star \prol(\pi^*v)}
        \right)
        =
        \deform{\iota^*} \left(
            \prol(\pi^* \cc{v}) \star \prol(\pi^* \cc{u})
        \right) \\
        &=
        \pi^*(\cc{v} \starred \cc{u}),
    \end{align*}
    since by the $G$-invariance of $\prol$ and $\star$ as in
    \eqref{eq:StarGInvariant} we know that $\prol(\pi^*u) \star
    \prol(\pi^* v)$ is $G$-invariant and thus
    Corollary~\ref{corollary:ccIotaOnGInvariantFunctions} applies.
\end{proof}
\begin{remark}
    \label{remark:StarredIsHermitian}
    From the proof we see that one needs the complex conjugation of
    the functions $\Cinfty(C)[[\lambda]]$. In a purely algebraic
    setting as in Section~\ref{subsec:AlgebraicPreliminaries} this
    would mean to have a $^*$-involution on the \emph{module}
    $\mathcal{A} \big/ \mathcal{J}$, which is clearly a non-canonical
    extra structure.  Thus also the above seemingly canonical proof
    that $\starred$ is Hermitian is not that conceptual from the point
    of view of our considerations in
    Section~\ref{subsec:AlgebraicPreliminaries}.
\end{remark}

We will now come back to the construction of the positive functional.
First we choose a formal series of densities $\mu =
\sum_{r=0}^\infty \lambda^r \mu_r \in
\Gamma^\infty(|\Anti^{\mathrm{top}}| T^*C)[[\lambda]]$ on $C$ such
that $\cc{\mu} = \mu$ is real and $\mu_0 > 0$ is everywhere positive.
Moreover, we require that $\mu$ transforms under the $G$ action as
follows
\begin{equation}
    \label{eq:muModularWeight}
    \mathsf{L}^*_{g^{-1}} \mu = \frac{1}{\Delta(g)} \mu,
\end{equation}
where $\Delta: G \longrightarrow \mathbb{R}^+$ is the modular function
of $G$ and we take the \emph{left} action of $G$ on densities. Recall
that $\Delta$ is the Lie group homomorphism obtained from
exponentiating the Lie algebra homomorphism $\Delta (\xi) = \tr
ad_\xi$, thereby motivating our notation. For the (well-known)
existence of densities with \eqref{eq:muModularWeight} see
Appendix~\ref{sec:DensitiesPrincipalBundles}.
\begin{definition}
    \label{definition:ScalarProductmu}
    For $\phi, \psi \in \Cinfty_0(C)[[\lambda]]$ we define
    \begin{equation}
        \label{eq:ScalarProductMu}
        \SP{\phi, \psi}_\mu
        = \int_C \deform{\iota^*} \left(
            \cc{\prol(\phi)} \star \prol(\psi)
        \right) \mu.
    \end{equation}
\end{definition}
\begin{lemma}
    \label{lemma:ScalarProductTechnicalities}
    Let $\phi, \psi \in \Cinfty_0(C)[[\lambda]]$ and $f \in
    \Cinfty_0(M)[[\lambda]]$. Then $\SP{\phi, \psi}_\mu$ is
    well-defined and we have
    \begin{equation}
        \label{eq:SPmuAlternative}
        \SP{\phi, \psi}_\mu 
        = \int_C \left(\cc{\prol(\phi)} \bullet \psi\right) \mu
    \end{equation}
    and
    \begin{equation}
        \label{eq:ccintCf}
        \cc{\int_C \deform{\iota^*}(f) \mu}
        =
        \int_C \deform{\iota^*}\left(\cc{f}\right) \mu.
    \end{equation}
\end{lemma}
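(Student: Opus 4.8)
The plan is to establish the three assertions in turn; well-definedness and \eqref{eq:SPmuAlternative} are bookkeeping with supports together with an unwinding of definitions, whereas the reality \eqref{eq:ccintCf} is where the value $\kappa=\frac12$ fixed in \eqref{eq:kappaIsFracEinsZwei} pays off. For well-definedness the point is that every operator in \eqref{eq:ScalarProductMu} controls supports: since $\prol=(\pr_1\circ\Phi)^*$, the function $\prol(\phi)$ is supported in the tube $\Phi^{-1}\big(\supp(\phi)\times\lie g^*\big)$ and likewise $\prol(\psi)$, so, $\star$ being bidifferential, $\cc{\prol(\phi)}\star\prol(\psi)$ is supported in $\Phi^{-1}\big((\supp(\phi)\cap\supp(\psi))\times\lie g^*\big)$. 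Applying $\deform{\iota^*}=\iota^*\circ S$ with $S=\id+O(\lambda)$ a formal series of differential operators (Lemma~\ref{lemma:RestrictionLocal}) and using $\pr_1\circ\Phi\circ\iota=\id_C$, one sees that $\deform{\iota^*}\big(\cc{\prol(\phi)}\star\prol(\psi)\big)$ has support in the compact set $\supp(\phi)\cap\supp(\psi)\subseteq C$, so the $\lambda$-wise integrals converge. The same reasoning shows that $f\bullet\phi=\deform{\iota^*}(f\star\prol(\phi))$ has compact support on $C$ whenever $\phi$ does, for any $f\in\Cinfty(M)[[\lambda]]$; taking $f=\cc{\prol(\phi)}$, which by \eqref{eq:ccProl} equals $\prol(\cc\phi)$, and comparing with Definition~\ref{definition:DeformedLeftModule} identifies the integrand of \eqref{eq:ScalarProductMu} with $\cc{\prol(\phi)}\bullet\psi$, which is \eqref{eq:SPmuAlternative}.

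For the reality \eqref{eq:ccintCf}, the integrand $\deform{\iota^*}(f)\,\mu$ is compactly supported ($f\in\Cinfty_0(M)[[\lambda]]$ and $\deform{\iota^*}$ is differential along $\iota^*$), and since $\mu$ is real, $\cc{\int_C\deform{\iota^*}(f)\,\mu}=\int_C\cc{\deform{\iota^*}(f)}\,\mu$. I would then invoke Corollary~\ref{corollary:ccDeformedIota} in the form \eqref{eq:ccDeformedIotaI}. At $\kappa=\frac12$ two simplifications occur: $\cc\kappa+\kappa=1$, and, because $\Delta=C_{ab}^b e^a$ by \eqref{eq:ModularOneFormStructureConstants} so that $\ins(\Delta)=C_{ab}^b\ins(e^a)$ while nothing else in \eqref{eq:OperatorAa}--\eqref{eq:TheCoolOperatorB} depends on the index $a$, one has the identity $B=C_{ab}^b A^a=\Delta(e_a)A^a$. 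Hence \eqref{eq:ccDeformedIotaI} becomes
\[
  \cc{\deform{\iota^*}f}
  =\deform{\iota^*}\cc f
  +\I\lambda\Big(\Lie_{(e_a)_C}+\Delta(e_a)\Big)\iota^*A^a(\cc f),
\]
with Einstein summation over $a$ and $\Delta(e_a)=\tr\ad_{e_a}=C_{ab}^b$.

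To conclude, each function $\iota^*A^a(\cc f)$ has compact support on $C$: the non-local homotopy $h_0$ of \eqref{eq:Homotopyk} maps functions supported in a tube $\Phi^{-1}(K\times\lie g^*)$ over a compact $K\subseteq C$ to functions with the same property, and so do $\koszul_1$, $\deform{\koszul}_1$, $\ins(e^a)$ and pointwise conjugation, which do not enlarge supports (using that $\star$ is bidifferential for $\deform{\koszul}_1$), so $A^a(\cc f)$ is tube-supported and $\iota^*$ cuts it down to the compact $K$. Integrating by parts on $C$ — using that the integral over $C$ of the Lie derivative of the compactly supported density $\iota^*A^a(\cc f)\,\mu$ vanishes, together with $\Lie_{\xi_C}\mu=\Delta(\xi)\mu$, which follows by differentiating \eqref{eq:muModularWeight} at $g=\exp(t\xi)$ — gives $\int_C\Lie_{(e_a)_C}\!\big(\iota^*A^a(\cc f)\big)\,\mu=-\Delta(e_a)\int_C\iota^*A^a(\cc f)\,\mu$ for each $a$, so that summing over $a$ the two correction terms cancel and $\int_C\cc{\deform{\iota^*}f}\,\mu=\int_C\deform{\iota^*}\cc f\,\mu$, which is \eqref{eq:ccintCf}. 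The only genuine obstacle is this last step: seeing that the two a priori unrelated corrections produced by Corollary~\ref{corollary:ccDeformedIota} combine — via $B=\Delta(e_a)A^a$ and the modular weight \eqref{eq:muModularWeight} of $\mu$ — into a total divergence on $C$, which is exactly what forces $\kappa=\frac12$; the support bookkeeping licensing the integration by parts is routine but should not be skipped.
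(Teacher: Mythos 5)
Your proposal is correct and follows essentially the same route as the paper: well-definedness and \eqref{eq:SPmuAlternative} from support bookkeeping plus the definition \eqref{eq:fbulletphi} of $\bullet$, and \eqref{eq:ccintCf} from Corollary~\ref{corollary:ccDeformedIota} at $\kappa=\tfrac12$, the identity $B=\Delta(e_a)A^a$, and an integration by parts using $\Lie_{\xi_C}\mu=\Delta(\xi)\mu$. Your explicit justification of $B=\Delta(e_a)A^a$ and of the compact support of $\iota^*A^a(\cc f)$ merely spells out what the paper leaves as ``a close look at the definitions''.
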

\begin{proof}
    Even though the prolongation is not a local operator the supports
    are only changed in transverse directions to $C$. It follows that
    the support of the integrand in \eqref{eq:SPmuAlternative} as well
    as in \eqref{eq:ccintCf} is compact in every order of $\lambda$.
    Then the first part is clear from \eqref{eq:fbulletphi}. 
    For the second we compute
    \begin{align*}
        \int_C \cc{\deform{\iota^*} f} \mu
        &=
        \int_C \deform{\iota^*} \cc{f}
        +
        \I\lambda \int_C \Lie_{(e_a)_C} \iota^* A^a(\cc{f}) \mu
        +
        \I\lambda \int_C \iota^*B(\cc{f}) \mu \\
        &=
        \int_C \deform{\iota^*} \cc{f}
        -
        \I\lambda \int_C \iota^* A^a(\cc{f}) \Lie_{(e_a)_C} \mu
        +
        \I\lambda \int_C \iota^* B(\cc{f}) \mu \\
        &=
        \int_C \deform{\iota^*} \cc{f}
        -
        \Delta(e_a)
        \I\lambda \int_C \iota^* A^a(\cc{f}) \mu
        +
        \I\lambda \int_C \iota^* B(\cc{f}) \mu.
    \end{align*}
    Here we used $\Lie_{\xi_C} \mu = \Delta(\xi) \mu$ which follows
    from differentiating \eqref{eq:muModularWeight}. Finally, a close
    look at the definitions of $A^a$ and $B$ shows that $\Delta(e_a)
    A^a(f) = B(f)$. This shows the second part.
\end{proof}
\begin{proposition}
    \label{proposition:SPmuIsScalarProduct}
    The scalar product $\SP{\cdot, \cdot}_\mu$ makes
    $\Cinfty_0(C)[[\lambda]]$ a pre Hilbert space over
    $\mathbb{C}[[\lambda]]$ and the left module structure $\bullet$
    becomes a $^*$-representation of $(\Cinfty(M)[[\lambda]], \star)$
    on it.
\end{proposition}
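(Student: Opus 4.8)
The plan is to verify the three axioms of a pre-Hilbert space over $\mathbb{C}[[\lambda]]$ — $\mathbb{C}[[\lambda]]$-linearity in the second slot, Hermitian symmetry, and positive definiteness — and then the identity $\SP{\phi, f \bullet \psi}_\mu = \SP{\cc{f} \bullet \phi, \psi}_\mu$ for $f \in \Cinfty(M)[[\lambda]]$, which together with the fact that $\bullet$ is a left module structure says precisely that $f \mapsto (\psi \mapsto f \bullet \psi)$ is a $^*$-representation of $(\Cinfty(M)[[\lambda]], \star)$. Well-definedness of $\SP{\cdot,\cdot}_\mu$ on $\Cinfty_0(C)[[\lambda]]$ is already Lemma~\ref{lemma:ScalarProductTechnicalities}, and linearity in the second argument (conjugate-linearity in the first) is immediate, since $\prol$, $\star$, $\deform{\iota^*}$ and integration against $\mu$ are all $\mathbb{C}[[\lambda]]$-linear.

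For the Hermitian symmetry I would start from $\cc{\SP{\phi,\psi}_\mu} = \cc{\int_C \deform{\iota^*}(\cc{\prol(\phi)} \star \prol(\psi))\,\mu}$, move the complex conjugation inside using \eqref{eq:ccintCf}, and then apply the Hermiticity $\cc{a \star b} = \cc{b} \star \cc{a}$ of $\star$ together with double conjugation; this lands on $\int_C \deform{\iota^*}(\cc{\prol(\psi)} \star \prol(\phi))\,\mu = \SP{\psi,\phi}_\mu$. In particular $\SP{\phi,\phi}_\mu \in \mathbb{R}[[\lambda]]$. For positive definiteness I would pass to lowest order in $\lambda$: writing a nonzero $\phi$ as $\phi = \lambda^k \phi'$ with $\phi'_0 \neq 0$, and using that $\deform{\iota^*}$ and $\star$ reduce modulo $\lambda$ to $\iota^*$ and the pointwise product, the coefficient of $\lambda^{2k}$ in $\SP{\phi,\phi}_\mu$ is $\int_C \iota^*\bigl(\cc{\prol(\phi'_0)}\prol(\phi'_0)\bigr)\mu_0 = \int_C |\phi'_0|^2\,\mu_0$ by $\iota^*\prol = \id$ from \eqref{eq:ProlisProl}; since $\mu_0 > 0$ everywhere and $\phi'_0$ is a nonzero compactly supported continuous function, this is strictly positive, so $\SP{\phi,\phi}_\mu > 0$ in the ring ordering of $\mathbb{R}[[\lambda]]$ whenever $\phi \neq 0$.

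For the $^*$-representation property I would first note that $f \bullet \psi = \deform{\iota^*}(f \star \prol(\psi))$ again lies in $\Cinfty_0(C)[[\lambda]]$ for every $f \in \Cinfty(M)[[\lambda]]$, since $\supp \prol(\psi)$ meets $C$ compactly and $\deform{\iota^*} = \iota^* \circ S_\kappa$ is support non-increasing by Lemma~\ref{lemma:RestrictionLocal}; hence $f \bullet (\cdot)$ is an operator on $\Cinfty_0(C)[[\lambda]]$ and all the integrands below are defined in each order of $\lambda$. The key step is to remove the prolongation of $f \bullet \psi$: by the homotopy equation \eqref{eq:HomotopyForKNull} the elements $\prol(f \bullet \psi)$ and $f \star \prol(\psi)$ differ by $\deform{\koszul}_1 \deform{h}_0(f \star \prol(\psi)) \in \deform{\mathcal{J}}_C$, and since $\deform{\mathcal{J}}_C$ is a left $\star$-ideal (Proposition~\ref{proposition:LeftModuleStructure}~\refitem{item:JCisLeftIdeal}) annihilated by $\deform{\iota^*}$, left-multiplying by $\cc{\prol(\phi)}$ and applying $\deform{\iota^*}$ gives
\[
\SP{\phi, f \bullet \psi}_\mu = \int_C \deform{\iota^*}\!\bigl(\cc{\prol(\phi)} \star f \star \prol(\psi)\bigr)\,\mu .
\]
Running the same computation with $\phi$ and $\psi$ interchanged and $f$ replaced by $\cc{f}$ gives $\SP{\psi, \cc{f} \bullet \phi}_\mu = \int_C \deform{\iota^*}(\cc{\prol(\psi)} \star \cc{f} \star \prol(\phi))\,\mu$; conjugating this, invoking \eqref{eq:ccintCf} (legitimate here because the $\deform{\iota^*}$-image is $C$-compactly supported), the iterated Hermiticity $\cc{a \star b \star c} = \cc{c} \star \cc{b} \star \cc{a}$, and the symmetry already proven, I would obtain $\SP{\cc{f} \bullet \phi, \psi}_\mu = \int_C \deform{\iota^*}(\cc{\prol(\phi)} \star f \star \prol(\psi))\,\mu$, which is exactly the displayed expression for $\SP{\phi, f \bullet \psi}_\mu$. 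Together with $\bullet$ being a left module structure this shows that $\bullet$ is a $^*$-representation. I expect the one genuinely delicate point to be the support bookkeeping ensuring that every integrand has $C$-compact support so that \eqref{eq:ccintCf} and the integral manipulations are valid; the algebraic content is then forced by the left-ideal property of $\deform{\mathcal{J}}_C$ and the Hermiticity of $\star$.
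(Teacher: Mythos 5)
Your proposal is correct and follows essentially the same route as the paper: linearity is immediate, symmetry comes from \eqref{eq:ccintCf} plus Hermiticity of $\star$, positivity is read off from the lowest $\lambda$-order against $\mu_0>0$, and the adjointness $\SP{\phi, f\bullet\psi}_\mu=\SP{\cc{f}\bullet\phi,\psi}_\mu$ is obtained by the same conjugation trick. The only cosmetic difference is that you re-derive the needed identity $\deform{\iota^*}(\cc{\prol(\phi)}\star\prol(f\bullet\psi))=\deform{\iota^*}(\cc{\prol(\phi)}\star f\star\prol(\psi))$ directly from \eqref{eq:HomotopyForKNull} and the left-ideal property, whereas the paper packages exactly this as the module associativity $\cc{\prol(\phi)}\bullet(f\bullet\psi)=(\cc{\prol(\phi)}\star f)\bullet\psi$ together with \eqref{eq:SPmuAlternative}.
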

\begin{proof}
    The $\mathbb{C}[[\lambda]]$-linearity of $\SP{\cdot, \cdot}_\mu$
    in the second argument is clear. For the symmetry we compute
    \begin{align*}
        \cc{\SP{\phi, \psi}_\mu}
        &=
        \cc{
          \int_C \deform{\iota^*} \left(
              \cc{\prol(\phi)} \star \prol(\psi)
          \right) \mu
        }
        \stackrel{\eqref{eq:ccintCf}}{=}
        \int_C \deform{\iota^*} \left(
            \cc{
              \cc{\prol(\phi)} \star \prol(\psi)
            }
        \right) \mu
        =
        \int_C \deform{\iota^*} \left(
            \cc{\prol(\psi)} \star \prol(\phi)
        \right) \mu \\
        &=
        \SP{\psi, \phi}_\mu,
    \end{align*}
    using that $\star$ is Hermitian. Finally, for $\phi = \sum_{r=r_0}
    \lambda^r \phi_r$ with $\phi_{r_0} \ne 0$ the lowest non-vanishing
    term in $\SP{\phi, \phi}_\mu$ is simply given by the
    $\mu_0$-integral of $\cc{\phi_{r_0}}\phi_{r_0}$ over $C$ which
    gives a positive result since $\mu_0$ is a \emph{positive}
    density. Thus $\Cinfty_0(C)[[\lambda]]$ becomes a pre Hilbert
    space indeed. Now for $f \in \Cinfty(M)[[\lambda]]$ we compute
    \begin{align*}
        \SP{\phi, f \bullet \psi}_\mu
        &\stackrel{\mathclap{\eqref{eq:SPmuAlternative}}}{=} \;
        \int_C \left(
            \cc{\prol(\phi)} \bullet (f \bullet \psi)
        \right) \mu \\
        &=
        \int_C \left(
            (\cc{\prol(\phi)} \star f) \bullet \psi
        \right) \mu \\
        &=
        \int_C \left(
            \cc{(\cc{f} \star \prol(\phi))} \bullet \psi
        \right) \mu \\
        &=
        \int_C \deform{\iota^*}
        \left(
            \cc{(\cc{f} \star \prol(\phi))} \star \prol(\psi)
        \right) \mu \\
        &\stackrel{\mathclap{\eqref{eq:ccintCf}}}{=} \;
        \cc{
          \int_C \deform{\iota^*}
          \left(
              \cc{\prol(\psi)} \star (\cc{f} \star \prol(\phi))
          \right) \mu
        } \\
        &=
        \cc{
          \int_C \deform{\iota^*}
          \left(
              (\cc{\prol(\psi)} \star \cc{f}) \star \prol(\phi)
          \right) \mu
        } \\
        &=
        \cc{
          \int_C
          \left(
              (\cc{\prol(\psi)} \star \cc{f}) \bullet \phi
          \right) \mu
        } \\
        &=
        \cc{
          \int_C
          \left(
              \cc{\prol(\psi)} \bullet (\cc{f} \bullet \phi)
          \right) \mu
        } \\
        &\stackrel{\mathclap{\eqref{eq:SPmuAlternative}}}{=} \;
        \cc{\SP{\psi, \cc{f} \bullet \phi}_\mu},
    \end{align*}
    using the fact that $\bullet$ is a left $\star$-module structure.
    It follows that we have a $^*$-representation.
\end{proof}

We will now show that this $^*$-representation is actually the GNS
representation of the following positive linear functional:
\begin{definition}
    \label{definition:ThePositiveFunctional}
    For $f \in \Cinfty_0(M)[[\lambda]]$ we define the
    $\mathbb{C}[[\lambda]]$-linear functional $\omega_\mu$ by
    \begin{equation}
        \label{eq:omegamuDef}
        \omega_\mu (f) = \int_C \deform{\iota^*}(f) \mu.
    \end{equation}
\end{definition}
First note that $\deform{\iota^*} f \in \Cinfty_0(C)[[\lambda]]$ by
Lemma~\ref{lemma:RestrictionLocal} whence $\omega_\mu$ is
well-defined. Since neither $C$ nor $M$ need to be compact we are
dealing with a $^*$-algebra $\Cinfty_0(M)[[\lambda]]$ \emph{without}
unit in general. The following lemma would be much easier if $1 \in
\Cinfty_0(C)$:
\begin{lemma}
    \label{lemma:PositiveFunctionalAndGelfandIdeal}
    The $\mathbb{C}[[\lambda]]$-linear functional $\omega_\mu$ is
    positive and its Gel'fand ideal is
    \begin{equation}
        \label{eq:TheGelfandIdeal}
        \mathcal{J}_{\omega_\mu} =
        \left\{
            f \in \Cinfty_0(M)[[\lambda]]
            \; \big| \;
            \deform{\iota^*} f = 0
        \right\}.
    \end{equation}
\end{lemma}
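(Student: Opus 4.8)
The plan is to reduce the whole statement to the single identity
\begin{equation}
    \omega_\mu(\cc{f} \star f) = \SP{\deform{\iota^*} f, \deform{\iota^*} f}_\mu
\end{equation}
valid for every $f \in \Cinfty_0(M)[[\lambda]]$, where the right-hand side is the scalar product of Definition~\ref{definition:ScalarProductmu} (well defined because $\deform{\iota^*} f \in \Cinfty_0(C)[[\lambda]]$ by Lemma~\ref{lemma:RestrictionLocal}). Granting this, the lemma follows at once: by Proposition~\ref{proposition:SPmuIsScalarProduct} the right-hand side is $\ge 0$, so $\omega_\mu$ is positive, and since $\SP{\cdot,\cdot}_\mu$ is positive definite it vanishes exactly when $\deform{\iota^*} f = 0$; hence $\mathcal{J}_{\omega_\mu} = \{f \in \Cinfty_0(M)[[\lambda]] \mid \deform{\iota^*} f = 0\}$. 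All support questions arising below are of the ``transverse'' type already handled in the proofs of Lemma~\ref{lemma:RestrictionLocal} and Lemma~\ref{lemma:ScalarProductTechnicalities}, so I will not dwell on them.

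To prove the displayed identity I would first use that $\deform{\iota^*}$ is left $\star$-linear (Remark~\ref{remark:DeformIotaLeftLinear}) to rewrite $\deform{\iota^*}(\cc{f} \star f) = \cc{f} \bullet \deform{\iota^*} f$, so that $\omega_\mu(\cc{f} \star f) = \int_C (\cc{f} \bullet \deform{\iota^*} f)\, \mu$, whereas \eqref{eq:SPmuAlternative} gives $\SP{\deform{\iota^*} f, \deform{\iota^*} f}_\mu = \int_C (\cc{\prol(\deform{\iota^*} f)} \bullet \deform{\iota^*} f)\, \mu$. Subtracting, the identity reduces to the vanishing of the cross term $\int_C (\cc{j} \bullet \deform{\iota^*} f)\, \mu$, where $j := f - \prol(\deform{\iota^*} f)$; observe that $j \in \deform{\mathcal{J}}_C = \ker \deform{\iota^*}$ by \eqref{eq:HomotopyForKNull} together with $\deform{\iota^*} \prol = \id$.

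The one genuinely delicate point is that in $\cc{j} \bullet \deform{\iota^*} f = \deform{\iota^*}\bigl(\cc{j} \star \prol(\deform{\iota^*} f)\bigr)$ the element $j$ sits on the \emph{left} of the $\star$-product — the ``wrong'' side, since $\deform{\mathcal{J}}_C$ is merely a left ideal and not a $^*$-ideal, so one cannot move it into $\ker \deform{\iota^*}$ directly. The remedy is to flip the product using the reality of $\omega_\mu$, which is precisely \eqref{eq:ccintCf}, together with the Hermiticity \eqref{eq:starHermitian} of $\star$:
\begin{align*}
    \int_C \bigl(\cc{j} \bullet \deform{\iota^*} f\bigr)\, \mu
    &= \omega_\mu\bigl(\cc{j} \star \prol(\deform{\iota^*} f)\bigr)
     = \cc{\omega_\mu\bigl(\cc{\prol(\deform{\iota^*} f)} \star j\bigr)} \\
    &= \cc{\int_C \bigl(\cc{\prol(\deform{\iota^*} f)} \bullet \deform{\iota^*} j\bigr)\, \mu}
     = 0 ,
\end{align*}
where the last equality uses $\deform{\iota^*} j = 0$ and the penultimate one again the left $\star$-linearity of $\deform{\iota^*}$. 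This establishes the identity and hence the lemma.

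I expect this flipping step to be the main obstacle: one must organize the computation so that the component of $f$ lying in the left ideal $\deform{\mathcal{J}}_C$ ends up to the right of the product, where $\deform{\iota^*}$ annihilates it, and it is the reality of $\omega_\mu$ that makes this legitimate. Conceptually this reflects the fact that $\bullet$ is a $^*$-representation (Proposition~\ref{proposition:SPmuIsScalarProduct}); indeed, in the special case $1 \in \Cinfty_0(C)$ the argument collapses to the observation that $\omega_\mu = \SP{1, (\,\cdot\,) \bullet 1}_\mu$ is a vector state and $f \bullet 1 = \deform{\iota^*} f$, but that shortcut is unavailable for non-compact $C$, which is exactly why the reality trick is needed.
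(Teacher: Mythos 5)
Your proposal is correct, but it takes a genuinely different route from the paper. The paper handles positivity by first reducing to representatives $f \in \Cinfty_0(M)$ without higher $\lambda$-orders (citing \cite[Prop.~7.1.51]{waldmann:2007a}) and then exploiting a \emph{local unit}: a real $\chi \in \Cinfty_0(C)$ with $\prol(\chi) = 1$ on $\supp f$, so that $f \bullet \chi = \deform{\iota^*}f$ and $\omega_\mu(f) = \SP{\chi, f \bullet \chi}_\mu$; the $^*$-representation property of $\bullet$ from Proposition~\ref{proposition:SPmuIsScalarProduct} then gives $\omega_\mu(\cc{f}\star f) = \SP{f\bullet\chi, f\bullet\chi}_\mu = \SP{\deform{\iota^*}f,\deform{\iota^*}f}_\mu$ — exactly the cutoff trick that replaces the unavailable shortcut $\omega_\mu = \SP{1,(\,\cdot\,)\bullet 1}_\mu$ you mention at the end. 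You instead prove the same key identity for arbitrary formal series $f$ by splitting $f = \prol(\deform{\iota^*}f) + j$ with $j \in \ker\deform{\iota^*}$ via \eqref{eq:HomotopyForKNull} and killing the cross term by flipping the product with \eqref{eq:ccintCf} and the Hermiticity of $\star$. This buys you a direct argument at the level of $\Cinfty_0(M)[[\lambda]]$ (no appeal to the external reduction-to-zeroth-order result), at the price of applying $\omega_\mu$ and \eqref{eq:ccintCf} to functions such as $\cc{j}\star\prol(\deform{\iota^*}f)$ that are \emph{not} in $\Cinfty_0(M)[[\lambda]]$, only compactly supported along $C$ in each order; this is legitimate because $h_0$, $A^a$, $B$, $S$ and $\star$ all preserve support in $C$-directions, so the proof of Lemma~\ref{lemma:ScalarProductTechnicalities} goes through verbatim for this class, but you should state this extension explicitly rather than only gesturing at it, since the lemma as formulated assumes compact support on $M$.
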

\begin{proof}
    In order to show the positivity $\omega_\mu(\cc{f} \star f) \ge 0$
    it is sufficient to consider $f \in \Cinfty_0(M)$ without higher
    $\lambda$-orders thanks to \cite[Prop.~7.1.51]{waldmann:2007a}.
    For $f \in \Cinfty_0(M)$ we choose a $\chi = \cc{\chi} \in
    \Cinfty_0(C)$ such that $\prol(\chi)\at{\supp f} = 1$ which is
    clearly possible. By the locality of $\star$ we get $f \star
    \prol(\chi) = f = \prol(\chi) \star f$ and hence also $f \bullet
    \chi = \deform{\iota^*} f$. Thus
    \begin{align*}
        \SP{\chi, f \bullet \chi}_\mu
        &=
        \int_C 
        \left(\cc{\prol(\chi)} \bullet (f \bullet \chi)\right)
        \mu
        =
        \int_C 
        \left((\prol(\chi) \star f) \bullet \chi\right) \mu
        =
        \int_C (f \bullet \chi) \mu
        =
        \int_C \deform{\iota^*}(f) \mu \\
        &=
        \omega_\mu(f).
    \end{align*}
    Since $\supp(\cc{f} \star f) \subseteq \supp f$ the same applies
    for $\cc{f} \star f$ and we have
    \[
    \omega_\mu(\cc{f} \star f) 
    = \SP{\chi, (\cc{f} \star f) \bullet \chi}_\mu
    = \SP{f \bullet \chi, f \bullet \chi}_\mu
    = \SP{\deform{\iota^*} f, \deform{\iota^*} f}_\mu
    \ge 0
    \]
    by the positivity of $\SP{\cdot, \cdot}_\mu$. Finally,
    $\omega_\mu(\cc{f} \star f) = 0$ iff $\deform{\iota^*} f = 0$ is
    clear from this computation.
\end{proof}

This lemma allows to identify the GNS representation induced by
$\omega_\mu$ easily. Recall that the GNS representation automatically
extends to the whole algebra since $\Cinfty_0(M)[[\lambda]]$ is a
$^*$-ideal, see e.g. \cite[Lem.~7.2.18]{waldmann:2007a}:
\begin{theorem}
    \label{theorem:GNS}
    The GNS representation of $\Cinfty(M)[[\lambda]]$ on
    $\mathcal{H}_{\omega_\mu} = \Cinfty_0(M)[[\lambda]] \big/
    \mathcal{J}_{\omega_\mu}$ is unitarily equivalent to the
    $^*$-representation $\bullet$ on $\Cinfty_0(C)[[\lambda]]$ where
    the inner product is $\SP{\cdot, \cdot}_\mu$. The unitary
    intertwiner is explicitly given by
    \begin{equation}
        \label{eq:UnitaryIntertwiner}
        \Cinfty_0(M)[[\lambda]] \big/ \mathcal{J}_{\omega_\mu}
        \ni \psi_f \mapsto \deform{\iota^*} f \in
        \Cinfty_0(C)[[\lambda]].
    \end{equation}
\end{theorem}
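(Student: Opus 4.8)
The plan is to verify directly that the map \eqref{eq:UnitaryIntertwiner}, $T\colon\psi_f\mapsto\deform{\iota^*}f$, is a unitary intertwiner from the GNS representation on $\mathcal{H}_{\omega_\mu}=\Cinfty_0(M)[[\lambda]]\big/\mathcal{J}_{\omega_\mu}$ to the $^*$-representation $\bullet$ on the pre Hilbert space $(\Cinfty_0(C)[[\lambda]],\SP{\cdot,\cdot}_\mu)$ from Proposition~\ref{proposition:SPmuIsScalarProduct}; this is precisely the claimed equivalence. First I would settle the formal points. Well-definedness and injectivity of $T$ are immediate from Lemma~\ref{lemma:PositiveFunctionalAndGelfandIdeal}, which identifies $\mathcal{J}_{\omega_\mu}$ with the kernel of $\deform{\iota^*}$ restricted to $\Cinfty_0(M)[[\lambda]]$, while $\deform{\iota^*}f\in\Cinfty_0(C)[[\lambda]]$ follows from the locality statement of Lemma~\ref{lemma:RestrictionLocal}. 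For surjectivity I would, given $\phi\in\Cinfty_0(C)[[\lambda]]$, cut off the (in general not compactly supported) prolongation: choose $\chi\in\Cinfty_0(M)$ with $\chi\equiv 1$ on a neighbourhood of $\supp\phi\subseteq C\subseteq M$ and put $f=\chi\,\prol(\phi)\in\Cinfty_0(M)[[\lambda]]$; since $\deform{\iota^*}=\iota^*\circ S_\kappa$ is differential along $\iota^*$ and $\prol(\phi)$, being constant along the fibres of $\pr_1\circ\Phi$, vanishes in a whole neighbourhood of every point of $C\setminus\supp\phi$, one obtains $\deform{\iota^*}f=\deform{\iota^*}\prol(\phi)=\phi$ from \eqref{eq:DeformKoszulAndHomotopyNull}. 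Hence $T$ is a $\mathbb{C}[[\lambda]]$-linear bijection.

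Next I would check that $T$ intertwines the two left actions, i.e. $\deform{\iota^*}(h\star f)=h\bullet\deform{\iota^*}f$ for all $h\in\Cinfty(M)[[\lambda]]$ and $f\in\Cinfty_0(M)[[\lambda]]$. This is a two-line argument: by the homotopy identity \eqref{eq:HomotopyForKNull} one has $f-\prol(\deform{\iota^*}f)\in\deform{\mathcal{J}}_C$, and since $\deform{\mathcal{J}}_C=\ker\deform{\iota^*}$ is a left $\star$-ideal by Proposition~\ref{proposition:LeftModuleStructure}, $\deform{\iota^*}(h\star f)=\deform{\iota^*}\bigl(h\star\prol(\deform{\iota^*}f)\bigr)=h\bullet\deform{\iota^*}f$ by the very definition \eqref{eq:fbulletphi} of $\bullet$. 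Applied to $h$ in the $^*$-ideal $\Cinfty_0(M)[[\lambda]]$ this says $T\pi_{\omega_\mu}(h)\psi_f=h\bullet T\psi_f$, and since the GNS representation of the ideal extends canonically to $\Cinfty(M)[[\lambda]]$ and the identity above holds for all $h$ anyway, the intertwining holds for the full algebra.

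The genuinely delicate step, and the one I expect to be the main obstacle, is that $T$ is isometric, i.e. $\omega_\mu(\cc f\star g)=\SP{\deform{\iota^*}f,\deform{\iota^*}g}_\mu$, so that $\SP{\psi_f,\psi_g}=\SP{T\psi_f,T\psi_g}_\mu$. My plan is to repeat, now in sesquilinear form, the cut-off computation from the proof of Lemma~\ref{lemma:PositiveFunctionalAndGelfandIdeal}: choose $\chi=\cc\chi\in\Cinfty_0(C)$ with $\prol(\chi)\equiv 1$ on a neighbourhood of $\supp f\cup\supp g$ (possible in each order of $\lambda$ separately). Locality of $\star$ then yields $f\star\prol(\chi)=f$, $g\star\prol(\chi)=g$ and $(\cc f\star g)\star\prol(\chi)=\cc f\star g$, whence $\omega_\mu(\cc f\star g)=\int_C\deform{\iota^*}(\cc f\star g)\,\mu=\SP{\chi,(\cc f\star g)\bullet\chi}_\mu$; using that $\bullet$ is a left module and, by Proposition~\ref{proposition:SPmuIsScalarProduct}, a $^*$-representation with complex conjugation as $^*$-involution, this equals $\SP{\chi,\cc f\bullet(g\bullet\chi)}_\mu=\SP{f\bullet\chi,g\bullet\chi}_\mu=\SP{\deform{\iota^*}f,\deform{\iota^*}g}_\mu$. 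The real work is the bookkeeping of supports — the prolongation is a non-local operation, so one must argue order by order in $\lambda$ and use that $\prol$ only enlarges supports transversally to $C$; once this is in place the chain of equalities is a formal manipulation with the module structures, exactly as in the scalar case. Putting the three steps together, $T$ is the asserted unitary intertwiner, explicitly given by \eqref{eq:UnitaryIntertwiner}.
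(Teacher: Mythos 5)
Your proof is correct and takes essentially the same route as the paper: well-definedness and injectivity from the Gel'fand ideal Lemma~\ref{lemma:PositiveFunctionalAndGelfandIdeal}, surjectivity via a cut-off of the prolongation, isometry by repeating the cut-off computation of that lemma in sesquilinear form (reducing to zeroth-order $f,g$ and extending by $\mathbb{C}[[\lambda]]$-sesquilinearity), and the intertwining from the homotopy identity \eqref{eq:HomotopyForKNull} — your left-ideal phrasing is equivalent to the paper's direct use of $\deform{\iota^*}\deform{\koszul}_1=0$. The only cosmetic deviation is the cut-off for surjectivity: you take $\chi\in\Cinfty_0(M)$ equal to one near $\supp\phi$, while the paper takes $\chi$ equal to one near all of $C$ with compact support only in fibre directions; both choices work.
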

\begin{proof}
    By \eqref{eq:TheGelfandIdeal} it follows that
    \eqref{eq:UnitaryIntertwiner} is well-defined and injective. Now
    let $\chi \in \Cinfty(M)$ be a function such that $\chi$ is equal
    to one in an open neighbourhood of $C$ but has compact support in
    directions of the fibers of the tubular neighbourhood of $C$.
    Clearly, such a function exists (and can even be chosen to be
    $G$-invariant thanks to the properness of the action). It follows
    that $\chi \prol(\phi) \in \Cinfty_0(M)[[\lambda]]$ for $\phi \in
    \Cinfty_0(C)[[\lambda]]$. Moreover, the locality of
    $\deform{\iota^*}$ according to Lemma~\ref{lemma:RestrictionLocal}
    shows that $\deform{\iota^*}(\chi \prol(\phi)) = \phi$ proving the
    surjectivity of \eqref{eq:UnitaryIntertwiner}. Finally, let $f, g
    \in \Cinfty_0(M)$ and chose $\chi \in \Cinfty_0(C)$ such that
    $\prol(\chi)\at{\supp f \cup \supp g} = 1$. Again, such a $\chi$
    exists. Then we can proceed as in the proof of
    Lemma~\ref{lemma:PositiveFunctionalAndGelfandIdeal} and have
    \[
    \SP{\deform{\iota^*} f, \deform{\iota^*} g}_\mu
    =
    \SP{f \bullet \chi, g \bullet \chi}_\mu
    =
    \SP{\chi, (\cc{f} \star g) \bullet \chi}_\mu
    =
    \omega_\mu(\cc{f} \star g)
    =
    \SP{\psi_f, \psi_g}_{\omega_\mu}.
    \]
    This shows that \eqref{eq:UnitaryIntertwiner} is isometric on
    $\psi_f, \psi_g$ without higher orders of $\lambda$. By
    $\mathbb{C}[[\lambda]]$-sesquilinearity of both inner products
    this holds in general whence \eqref{eq:UnitaryIntertwiner} is
    unitary. But then by \eqref{eq:HomotopyForKNull}
    \begin{align*}
        \pi_{\omega_\mu} (f) \psi_g 
        \mapsto
        \deform{\iota^*}(f \star g)
        = \deform{\iota^*}\left(
            f \star \prol(\deform{\iota^*}g) + f \star
            \deform{\koszul}_1 (\deform{h}_0 g)
        \right)
        =
        f \bullet \deform{\iota^*}g
        +
        \deform{\iota^*}\left(\deform{\koszul}_1
            (f \star \deform{h}_0 g)
        \right)
        =
        f \bullet \deform{\iota^*}g,
    \end{align*}
    since $\deform{\iota^*} \deform{\koszul}_1 = 0$. Thus
    \eqref{eq:UnitaryIntertwiner} intertwines the GNS representation
    $\pi_{\omega_\mu}$ into $\bullet$ as claimed.
\end{proof}
\begin{remark}[Quantization of coisotropic submanifolds]
    \label{remark:QuiantizationCoisotropic}
    In \cite{bordemann:2005a} as well as in
    \cite{cattaneo.felder:2007a, cattaneo.felder:2004a} the question
    was raised whether the classical $\Cinfty(M)$-module structure of
    $\Cinfty(C)$ of a submanifold $\iota: C \longrightarrow M$ can be
    quantized with respect to a given star product. In general, $C$
    has to be coisotropic but there are still obstructions beyond this
    zeroth order condition, see e.g.~\cite{willwacher:2007a} for a
    simple counter-example. In view of Theorem~\ref{theorem:GNS} one
    can rephrase and sharpen this task as follows: one should try to
    find a positive density $\mu_0 > 0$ on $C$ such that the
    functional $\omega_0(f) = \int_C f \mu_0$ allows for a deformation
    into a positive functional with respect to $\star$ \emph{and} such
    that it yields a GNS pre Hilbert space isomorphic to
    $\Cinfty_0(C)[[\lambda]]$.  This way one could obtain a
    deformation of the classical module structure which is even a
    $^*$-representation. Note that in the zeroth order this is
    consistent: the Gel'fand ideal of the classical integration
    functional is precisely the vanishing ideal of $C$.  Note also,
    that \emph{every} classically positive functional can be deformed
    into a positive functional with respect to $\star$.  However, the
    behaviour of the Gel'fand ideal under this deformation is rather
    mysterious, see e.g. the discussion in \cite[Sect.~7.1.5 \&
    Sect.~7.2.4]{waldmann:2007a} for further details and
    references. In any case, Theorem~\ref{theorem:GNS} gives some hope
    that this might be a reasonable approach also in some greater
    generality.
\end{remark}

%
% The reduced $^*$-involution
%

\subsection{The reduced $^*$-involution}
\label{subsec:ReducedInvolution}

According to Proposition~\ref{proposition:ReducedInvolutionGeneral} we
have to show that the right $\bulletred$-multiplications are
adjointable with respect to $\SP{\cdot, \cdot}_\mu$. We prove a
slightly more general statement:
\begin{lemma}
    \label{lemma:DiffopsAdjointable}
    Let $P_r$ be bidifferential operators on $C$ and let $\mu_0 > 0$
    be an everywhere positive, smooth density on $C$.
    \begin{compactenum}
    \item \label{item:WellDefInnerProductP} Then the inner product
        \begin{equation}
            \label{eq:SPP}
            \SP{\phi, \psi}_P 
            =
            \int_C \left(
                \cc{\phi} \psi 
                + \sum_{r=1}^\infty \lambda^r P_r(\cc{\phi}, \psi)
            \right) \mu_0
        \end{equation}
        is well-defined for $\phi, \psi \in \Cinfty_0(C)[[\lambda]]$
        and non-degenerate.
    \item \label{item:DiffopAdjointable} Every formal series $D =
        \sum_{r=0}^\infty \lambda^r D_r$ of differential operators on
        $C$ is adjointable with respect to $\SP{\cdot, \cdot}_P$.
    \item \label{item:ZerothOrderOfAdjoint} The adjoint $D^+$ is again
        a formal series of differential operators and $D^+_0$
        coincides with the usual adjoint of $D_0$ with respect to the
        integration density $\mu_0$.
    \end{compactenum}
\end{lemma}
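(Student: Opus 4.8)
The plan is to handle the three assertions in turn. The first is a routine matter of supports and orders in $\lambda$: for $\phi=\sum_r\lambda^r\phi_r$ and $\psi=\sum_s\lambda^s\psi_s$ with $\phi_r,\psi_s\in\Cinfty_0(C)$, each $P_i(\cc{\phi_r},\psi_s)$ is smooth with support inside $\supp\phi_r\cap\supp\psi_s$, so the integrals in \eqref{eq:SPP} converge in every order of $\lambda$ and $\SP{\cdot,\cdot}_P$ is a well-defined $\mathbb{C}[[\lambda]]$-sesquilinear pairing (not Hermitian in general, since no symmetry of the $P_r$ is assumed). For non-degeneracy I would use the standard lowest-order argument: if $\phi\neq 0$ has leading coefficient $\phi_{r_0}$, then testing against a $\psi\in\Cinfty_0(C)$ of order zero, the $\lambda^{r_0}$-coefficient of $\SP{\phi,\psi}_P$ is $\int_C\cc{\phi_{r_0}}\,\psi\,\mu_0$, which can be made nonzero because $\mu_0>0$ is nowhere vanishing; the second slot is symmetric. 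This non-degeneracy is also what will make adjoints unique wherever they exist.

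The one genuinely substantive step, on which parts (ii) and (iii) both rest, is the following representation lemma, which I would establish first: for every bidifferential operator $B$ on $C$ there is a unique differential operator $\widehat{B}$ on $C$ with $\int_C B(\cc{\phi},\psi)\,\mu_0=\int_C\cc{\widehat{B}\phi}\,\psi\,\mu_0$ for all $\phi,\psi\in\Cinfty_0(C)$, and $B\mapsto\widehat{B}$ is $\mathbb{C}$-linear. To prove this I would write $B$ as a finite sum of tensor products $A_k\otimes B_k$ of differential operators, integrate by parts to move each $B_k$ off $\psi$ relative to the pairing $(f,g)\mapsto\int_C fg\,\mu_0$ (no boundary terms, $C$ having no boundary), and then conjugate coefficients to bring the resulting differential operator onto $\cc{\phi}$ on the correct side; uniqueness is again the nowhere-vanishing of $\mu_0$. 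This is the step I expect to be the main obstacle: it is where the differential-operator hypothesis is indispensable, and one must keep the $\mathbb{C}$-antilinear complex conjugation straight against the $\mathbb{C}$-bilinear $P_r$.

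Granting the lemma, I would construct the adjoint $D^{+}=\sum_n\lambda^n E_n$ of $D=\sum_r\lambda^r D_r$ recursively in $n$. With the abbreviation $Q_0(\alpha,\beta)=\alpha\beta$ and $Q_i=P_i$ for $i\ge 1$, the desired identity $\SP{\phi,D\psi}_P=\SP{D^{+}\phi,\psi}_P$ read off in order $\lambda^n$ amounts to requiring $\int_C\cc{E_n\phi}\,\psi\,\mu_0=\sum_{i+j=n}\int_C Q_i(\cc{\phi},D_j\psi)\,\mu_0-\sum_{i+k=n,\,k<n}\int_C Q_i(\cc{E_k}\,\cc{\phi},\psi)\,\mu_0$ for all $\phi,\psi\in\Cinfty_0(C)$; the right-hand side has the form $\int_C\mathcal{B}_n(\cc{\phi},\psi)\,\mu_0$ for a bidifferential operator $\mathcal{B}_n$ assembled from the $Q_i$, the $D_j$ with $j\le n$ and the previously constructed $E_k$ with $k<n$, so the representation lemma supplies a unique differential operator $E_n=\widehat{\mathcal{B}_n}$ doing the job. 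Since differential operators preserve compact supports, $D^{+}$ is a well-defined formal series of differential operators mapping $\Cinfty_0(C)[[\lambda]]$ to itself, and it is the unique adjoint of $D$ by part (i). Finally, reading the recursion in order $\lambda^0$ gives $\int_C\cc{E_0\phi}\,\psi\,\mu_0=\int_C\cc{\phi}\,(D_0\psi)\,\mu_0$, which says exactly that $E_0$ is the usual adjoint of $D_0$ with respect to the integration density $\mu_0$, establishing part (iii).
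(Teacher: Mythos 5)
Your proposal is correct and takes essentially the same route as the paper: an order-by-order construction of the adjoint whose key step is integration by parts against $\mu_0$ to move all derivatives in the bidifferential error terms onto $\cc{\phi}$ (including terms from derivatives of $\mu_0$), which then determines the next coefficient of $D^+$ uniquely by the non-degeneracy of the pairing. The only difference is organizational: you isolate this integration-by-parts step as a standalone representation lemma (where, strictly speaking, the decomposition of a bidifferential operator into tensor products should be done locally via a partition of unity, which changes nothing) and write out the recursion explicitly, whereas the paper performs the same cancellation inside the induction.
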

\begin{proof}
    The first part is clear. The second is shown order by
    order. Assume that we have found a differential operator
    $D^+_{(k)} = D^+_0 + \lambda D^+_1 + \cdots + \lambda^k D^+_k$
    such that
    \[
    \SP{\phi, D \psi}_P - \SP{D^+_{(k)} \phi, \psi}_P
    =
    \int_C \left(
        \sum_{r=k+1}^\infty \lambda^r E^{(k)}_r (\cc{\phi}, \psi)
    \right) \mu_0
    \]
    with some bidifferential operators $E^{(k)}_r$. For $k=0$ this is
    clearly achievable by the choice of $D^+_0$ as claimed in the
    third part. For a differential operator $D^+_{k+1}$ we have
    \begin{align*}
        &\SP{\phi, D\psi}_P 
        - \SP{
          \left(
              D^+_{(k)} + \lambda^{k+1} D^+_{k+1}
          \right) \phi,
          \psi}_P \\
        &\qquad=
        \lambda^{k+1} \int_C
        \left(
            E_{k+1}^{(k)} (\cc{\phi}, \psi) 
            -
            \cc{D^+_{k+1} \phi} \psi
        \right) \mu_0
        +
        \sum_{r=k+2}^\infty \lambda^r \int_C
        \left(
            E_r^{(k)}(\cc{\phi}, \psi)
            +
            P_{r-(k+1)}\left(\cc{D^+_{k+1} \phi}, \psi\right)
        \right) \mu_0.
    \end{align*}
    Integration by parts shows that we can arrange that all
    derivatives in $E^{(k)}_{k+1}(\cc{\phi}, \psi)$ are moved to
    $\cc{\phi}$ including terms coming from derivatives of $\mu_0$.
    But then we can chose $D^+_{k+1}$ to cancel the order
    $\lambda^{k+1}$. Since with this choice, $D^+_{k+1}$ is a
    differential operator itself, the error terms in higher orders are
    encoded by bidifferential operators again. Thus we can proceed by
    induction showing the second part. The third is clear from this
    construction.
\end{proof}
\begin{theorem}[Reduced $^*$-Involution]
    \label{theorem:ReducedStarInvolution}
    Let $u \in \Cinfty(M_\red)[[\lambda]]$. Then there exists a unique
    $u^* \in \Cinfty(M_\red)[[\lambda]]$ such that for all $\phi, \psi
    \in \Cinfty_0(C)[[\lambda]]$
    \begin{equation}
        \label{eq:ReducedStarInvolution}
        \SP{\phi, \psi \bulletred u}_\mu
        =
        \SP{\phi \bulletred u^*, \psi}_\mu.
    \end{equation}
    The map $u \mapsto u^*$ is a $^*$-involution for $\starred$ of the
    form
    \begin{equation}
        \label{eq:ReducedStarInvolutionExplicit}
        u^* = \cc{u} + \sum_{r=1}^\infty \cc{I_r(u)}
    \end{equation}
    with differential operators $I_r$ on $M_\red$.
\end{theorem}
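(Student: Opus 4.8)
The plan is to obtain $u^*$ as the function that, under the identification of Proposition~\ref{proposition:ReducedAlgebraBimodule} and Remark~\ref{remark:Endomorphism} between right $\bulletred$-multiplications and left $\star$-linear endomorphisms of $(\Cinfty(C)[[\lambda]],\bullet)$, corresponds to the \emph{adjoint} with respect to $\SP{\cdot,\cdot}_\mu$ of the right multiplication operator $R_u\colon\phi\mapsto\phi\bulletred u$. Everything then reduces to: $R_u$ is adjointable, and its adjoint is again a right $\bulletred$-multiplication.

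First I would note that, by the bidifferentiability of $\bulletred$ in Proposition~\ref{proposition:ReducedAlgebraBimodule} (concretely, via \eqref{eq:ReducedRightModule}, the locality of $\deform{\iota^*}$ from Lemma~\ref{lemma:RestrictionLocal}, bidifferentiability of $\star$, and the fact that transversal derivatives of prolongations vanish on $C$), the operator $R_u$ is a formal series of differential operators on $C$. I would also rewrite $\SP{\cdot,\cdot}_\mu$ in the shape $\SP{\cdot,\cdot}_P$ of Lemma~\ref{lemma:DiffopsAdjointable}: writing $\mu=(1+\sum_{s\ge1}\lambda^s\nu_s)\mu_0$ with real $\nu_s\in\Cinfty(C)$ and $\mu_0>0$, and using \eqref{eq:SPmuAlternative} together with the fact that $\cc{\prol(\phi)}\bullet\psi=\cc\phi\,\psi+\sum_{r\ge1}\lambda^rL_r(\cc{\prol\phi},\psi)$ is bidifferential in $(\cc\phi,\psi)$ on $C$ (Proposition~\ref{proposition:LeftModuleStructure}), one gets $\SP{\phi,\psi}_\mu=\int_C(\cc\phi\,\psi+\sum_{r\ge1}\lambda^rP_r(\cc\phi,\psi))\mu_0$ with bidifferential $P_r$. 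Lemma~\ref{lemma:DiffopsAdjointable} then provides an adjoint $R_u^+$, again a formal series of differential operators on $C$, whose zeroth order is the $\mu_0$-adjoint of multiplication by $\pi^*u_0$, i.e.\ multiplication by $\pi^*\cc{u_0}$.

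The core step is to show $R_u^+$ is again a right $\bulletred$-multiplication. Since $\bullet$ is a $^*$-representation of $(\Cinfty(M)[[\lambda]],\star)$ (Proposition~\ref{proposition:SPmuIsScalarProduct}), the left multiplications $L_f\colon\phi\mapsto f\bullet\phi$ satisfy $L_f^+=L_{\cc f}$, and $R_u$ commutes with every $L_f$ by the bimodule compatibility. The computation in the proof of Proposition~\ref{proposition:ReducedInvolutionGeneral} then yields $R_u^+L_{\cc f}=L_{\cc f}R_u^+$ on $\Cinfty_0(C)[[\lambda]]$; as both sides are differential operators this identity extends to all of $\Cinfty(C)[[\lambda]]$, so $R_u^+$ is a left $\star$-linear endomorphism of $(\Cinfty(C)[[\lambda]],\bullet)$. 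By Remark~\ref{remark:Endomorphism} it therefore equals $R_{u^*}$ for a unique $u^*\in\Cinfty(M_\red)[[\lambda]]$, and $\pi^*(u^*)=R_u^+(1)$ because $1\bulletred v=\pi^*v$. Now \eqref{eq:ReducedStarInvolution} is immediate, and a function satisfying it is unique since $\SP{\cdot,\cdot}_\mu$ is non-degenerate and $(\phi\bulletred v)(q)$ depends only on the jet of $\phi$ at $q$ (take $\phi\equiv1$ near $q$ to recover $(\pi^*v)(q)$). The $^*$-involution axioms follow from the operator calculus: $R_{\alpha u}=\alpha R_u$ and sesquilinearity give $(\alpha u)^*=\cc\alpha\,u^*$; $R_{u\starred v}=R_v\circ R_u$ by \eqref{eq:ReducedRightModule} together with $(R_vR_u)^+=R_u^+R_v^+$ gives $(u\starred v)^*=v^*\starred u^*$; and $(R_u^+)^+=R_u$ gives $u^{**}=u$.

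Finally, for the explicit form \eqref{eq:ReducedStarInvolutionExplicit} I would follow the order-by-order construction of the adjoint in Lemma~\ref{lemma:DiffopsAdjointable}: it uses only integration by parts, so each coefficient of $R_u^+$, hence of $\pi^*(u^*)=R_u^+(1)$, is a universal differential expression in the coefficients of $R_u$ — which depend differentially on $\pi^*u$ — and in finitely many derivatives of the $\nu_s$ and of a local potential for $\mu_0$. Moreover $R_u$ is $G$-invariant (Proposition~\ref{proposition:ReducedAlgebraBimodule}), and the modular weight \eqref{eq:muModularWeight} makes $\SP{\cdot,\cdot}_\mu$ transform by a character of $G$, whence the defining relation forces $R_u^+$ to be $G$-invariant as well; thus $u\mapsto\pi^*(u^*)$ is $\pi^*$ composed with a $G$-invariant differential operator on $C$, which descends along the submersion $\pi$ to a differential operator on $M_\red$ of the stated form \eqref{eq:ReducedStarInvolutionExplicit} (the complex conjugations placed so that $u\mapsto u^*$ is manifestly antilinear). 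I expect the main obstacle to be exactly this last part together with the passage from $\Cinfty_0(C)[[\lambda]]$ to $\Cinfty(C)[[\lambda]]$: one must check that the prolongation and the non-local homotopy leak no non-local operation into $R_u^+$, and that the modular weight genuinely makes $R_u^+$ equivariant so that the descent and the differential-operator form both go through — the bare existence and uniqueness of $u^*$ being then a rather direct synthesis of Lemma~\ref{lemma:DiffopsAdjointable} and the argument of Proposition~\ref{proposition:ReducedInvolutionGeneral}.
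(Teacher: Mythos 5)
Your proposal is correct and follows essentially the same route as the paper's proof: the bidifferentiability of $\bulletred$ and the rewriting of $\SP{\cdot,\cdot}_\mu$ in the form \eqref{eq:SPP} feed into Lemma~\ref{lemma:DiffopsAdjointable}, and Proposition~\ref{proposition:ReducedInvolutionGeneral} together with Remark~\ref{remark:Endomorphism} identifies the adjoint as a right $\bulletred$-multiplication, with the explicit form coming from the order-by-order integration-by-parts construction. You merely spell out details the paper leaves implicit (the extension from $\Cinfty_0(C)[[\lambda]]$ to $\Cinfty(C)[[\lambda]]$ and the $G$-equivariant descent yielding differential operators $I_r$ on $M_\red$), which is fine.
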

\begin{proof}
    From Proposition~\ref{proposition:ReducedAlgebraBimodule},
    \refitem{item:RightModuleStructure} we know that the map $\phi
    \mapsto \phi \bulletred u$ is a formal series of differential
    operators. Moreover, from the locality of $\star$ and
    $\deform{\iota^*}$ it is clear that
    $\deform{\iota^*}(\cc{\prol(\phi)} \star \prol{\psi}) =
    \cc{\phi}\psi + \sum_{r=1}^\infty \lambda^r \tilde{P}_r(\cc{\phi},
    \psi)$ with some bidifferential operators $\tilde{P}_r$. Since
    $\mu_0 > 0$ we can write $\mu = \frac{\mu}{\mu_0} \mu_0$ with some
    \emph{function} $\frac{\mu}{\mu_0} = 1 + \cdots \in
    \Cinfty(C)[[\lambda]]$. Resorting by powers of $\lambda$ we
    conclude that $\SP{\cdot, \cdot}_\mu$ is of the
    form~\eqref{eq:SPP}. Then Lemma~\ref{lemma:DiffopsAdjointable},
    \refitem{item:DiffopAdjointable} shows that $\phi \mapsto \phi
    \bulletred u$ is adjointable. By
    Proposition~\ref{proposition:ReducedInvolutionGeneral} we known
    that the adjoint is necessarily of the form $\phi \mapsto \phi
    \bulletred u^*$ with a unique $u^* \in
    \Cinfty(M_\red)[[\lambda]]$. Moreover, it is clear that $u \mapsto
    u^*$ is a $^*$-involution for $\starred$. Then $u^* = \cc{u} +
    \cdots$ follows from Lemma~\ref{lemma:DiffopsAdjointable},
    \refitem{item:ZerothOrderOfAdjoint}. Finally, since the
    construction of the adjoint as in
    Lemma~\ref{lemma:DiffopsAdjointable} consists in finitely many
    integrations by parts and multiplications by coefficient functions
    in each fixed order of $\lambda$, we conclude that the higher
    order corrections in \eqref{eq:ReducedStarInvolutionExplicit} are
    differential.
\end{proof}

We want to determine the $^*$-involution
\eqref{eq:ReducedStarInvolutionExplicit} more closely and relate it to
the complex conjugation, which is a $^*$-involution of $\starred$ as
well, see Proposition~\ref{proposition:StarredIsHermitian}. To this
end we consider the formal series of densities $\Omega \in
\Secinfty(\Density T^*M_\red)[[\lambda]]$ corresponding to $\mu$ under
the canonical isomorphism \eqref{eq:OmegaToMu}. To proceed locally, we
chose a small enough open subset $U \subseteq M_\red$ and a
trivialization $\Phi: U \times G \longrightarrow \pi^{-1}(U) \subseteq
C$ where we trivialize the principal bundle $C$ as a \emph{right}
principal bundle, i.e. $\Phi$ is equivariant for the right actions.
\begin{proposition}
    \label{proposition:KMSlikeStuff}
    Let $\Omega \in \Secinfty(\Density T^*M_\red)[[\lambda]]$ be the
    pre-image of $\mu$ under \eqref{eq:OmegaToMu}.
    \begin{compactenum}
    \item One has $\Omega_0 > 0$, $\cc{\Omega} = \Omega$ and locally
        $\Phi^* \left(\mu\at{\pi^{-1}(U)}\right) = \Omega\at{U}
        \boxtimes \Dleft g$.
    \item For $u, v \in \Cinfty_0 (M_\red)[[\lambda]]$ one has
        \begin{equation}
            \label{eq:KMSlikeProperty}
            \int_{M_\red} v \starred u \; \Omega
            =
            \int_{M_\red} \cc{u^*} \starred v \; \Omega.
        \end{equation}
    \end{compactenum}
\end{proposition}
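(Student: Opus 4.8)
The plan for part~\textit{i.)} is to read off $\Omega$ from the construction recalled in Appendix~\ref{sec:DensitiesPrincipalBundles}. Writing $C$ locally as a \emph{right} principal bundle via $\Phi\colon U\times G\longrightarrow\pi^{-1}(U)$, the density $\mu$ pulls back to $\Omega\at{U}\boxtimes\Dleft g$, and the point is that this prescription is consistent on overlaps precisely because $\mu$ carries the modular weight~\eqref{eq:muModularWeight} while the left Haar density $\Dleft g$ transforms with the modular function $\Delta$ under right translations, so the two behaviours compensate on the transition functions; hence $\Omega\in\Secinfty(\Density T^*M_\red)[[\lambda]]$ is globally well defined. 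Positivity $\Omega_0>0$ and reality $\cc\Omega=\Omega$ then follow at once from $\mu_0>0$, $\cc\mu=\mu$ and the fact that $\Dleft g$ is a positive real density on $G$.

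For part~\textit{ii.)} I would first use~\textit{i.)} to trade the integral over $M_\red$ for one over $C$: the fibre integration $\pi_*\colon\Cinfty_0(C)[[\lambda]]\longrightarrow\Cinfty_0(M_\red)[[\lambda]]$ determined by $\int_C\phi\,\mu=\int_{M_\red}(\pi_*\phi)\,\Omega$ satisfies the projection formula $\pi_*(\pi^*w\cdot\phi)=w\cdot\pi_*\phi$. Then I would fix a real $\phi_0\in\Cinfty_0(C)[[\lambda]]$ (or in $\Cinftycf(C)[[\lambda]]$), normalised order by order in $\lambda$ so that the functional $w\mapsto\SP{\phi_0,\phi_0\bulletred w}_\mu$ on $\Cinfty_0(M_\red)[[\lambda]]$ agrees with $w\mapsto\int_{M_\red}w\,\Omega$ near $\supp u\cup\supp v$; the zeroth order is any $\phi_0^{(0)}$ with $\pi_*\bigl(\cc{\phi_0^{(0)}}\phi_0^{(0)}\bigr)=1$ there, and the higher orders can be corrected because the discrepancy is, by locality of $\star$ and $\deform{\iota^*}$ and the bidifferentiability of $\bulletred$, the $\Omega$-integral of a bidifferential expression whose leading symbol may be prescribed. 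Since $\bulletred$ is a right module structure, this gives $\int_{M_\red}v\starred u\,\Omega=\SP{\phi_0,(\phi_0\bulletred v)\bulletred u}_\mu$ and likewise for $\cc{u^*}\starred v$, and now $u^*$ is available through the defining relation $\SP{\phi,\psi\bulletred u}_\mu=\SP{\phi\bulletred u^*,\psi}_\mu$ of Theorem~\ref{theorem:ReducedStarInvolution}.

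The remaining task is then to manipulate $\SP{\phi_0,(\phi_0\bulletred v)\bulletred u}_\mu$ into $\SP{\bigl(\prol(\pi^*\cc{u^*})\bigr)\bullet\phi_0,\phi_0\bulletred v}_\mu=\int_{M_\red}\cc{u^*}\starred v\,\Omega$. The tools are exactly those already exploited for Proposition~\ref{proposition:StarredIsHermitian}: the $^*$-representation property of $\bullet$ (Proposition~\ref{proposition:SPmuIsScalarProduct}), the reality $\cc{\SP{\phi,\psi}_\mu}=\SP{\psi,\phi}_\mu$, the Hermiticity of $\star$, the adjointness of the $\bulletred$-multiplications, and the behaviour of $\deform{\iota^*}$ under complex conjugation from Corollary~\ref{corollary:ccDeformedIota}; one moves $u$ across by adjointness, transfers the complex conjugation with Corollary~\ref{corollary:ccDeformedIota} and the Hermiticity of $\star$, and uses the normalisation of $\phi_0$ once more at the end. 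I expect the real obstacle to be precisely this transfer: the non-local correction operators $A^a$ and $B$ produced by Corollary~\ref{corollary:ccDeformedIota} have to cancel against the terms coming from $\Lie_{\xi_C}\mu=\Delta(\xi)\mu$ (i.e.\ the identity $\Delta(e_a)A^a=B$ used in Lemma~\ref{lemma:ScalarProductTechnicalities}), and it is again the modular weight~\eqref{eq:muModularWeight} together with the choice $\kappa=\tfrac12$ that forces this cancellation; carrying this bookkeeping through the $\phi_0$-sandwich is the step that needs genuine care, everything else being a formal rearrangement with the module identities established above, after which the local formula $\Phi^*\mu=\Omega\boxtimes\Dleft g$ of~\textit{i.)} is used only to reinterpret the result back on $M_\red$.
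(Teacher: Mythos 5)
Your treatment of part \textit{i.)} is fine: it is exactly the content of Proposition~\ref{proposition:DensityLifting} and \eqref{eq:muIsDleftTensorOmega}, which is all the paper invokes. For part \textit{ii.)}, however, your plan has a genuine gap at its pivot, the all-order normalisation of $\phi_0$. You need the exact identity of functionals $\SP{\phi_0,\phi_0\bulletred w}_\mu=\int_{M_\red}w\,\Omega$ in every order of $\lambda$ near $\supp u\cup\supp v$, but the only freedom you have at order $r$ is the choice of $\phi_0^{(r)}$, and this changes the functional only by a term $w\mapsto\int_{M_\red}\rho\,w\,\Omega_0$ where $\rho$ is the fibre integral of $\cc{\phi_0^{(0)}}\phi_0^{(r)}+\cc{\phi_0^{(r)}}\phi_0^{(0)}$, hence \emph{real}; the order-$r$ discrepancy, after integration by parts, is integration of $w$ against a function which carries the $\I$-laden corrections of $\star$, $\deform{\iota^*}$, $\mu$ and $\bulletred$ and has no reason to be real. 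Whether its imaginary part vanishes is a statement of the same KMS/reality type as \eqref{eq:KMSlikeProperty} itself (note that $w\mapsto\SP{\phi_0,\phi_0\bulletred w}_\mu$ is Hermitian with respect to the unknown involution $^*$, while $\tau_\Omega$ is real with respect to complex conjugation), so it cannot simply be assumed. Two further points: your final identification $\SP{\prol(\pi^*\cc{u^*})\bullet\phi_0,\phi_0\bulletred v}_\mu=\int_{M_\red}\cc{u^*}\starred v\,\Omega$ tacitly identifies the left $\bullet$-action of $\prol(\pi^*w)$ with the right $\bulletred$-action of $w$, which holds only in the classical limit, so the normalisation of $\phi_0$ (formulated for $\bulletred$) does not apply there; and the transfer of complex conjugation through $\deform{\iota^*}$, which you yourself flag as the step needing ``genuine care'', is left undone and is set up so that the $\Delta(e_a)A^a=B$ cancellation would have to be re-derived inside the $\phi_0$-sandwich rather than quoted.

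The paper's proof avoids both difficulties by a different choice of test elements. Working in a trivialisation $\Phi:U\times G\to\pi^{-1}(U)$, it takes $(1\otimes\chi)\bulletred v$ with a cutoff $\chi\in\Cinfty_0(G)$ normalised \emph{classically} by $\int_G\chi\,\Dleft g=1$, pairs it against $\pi^*u=1\bulletred u$ (Proposition~\ref{proposition:ReducedAlgebraBimodule},\refitem{item:EinsBulletu}), and evaluates the single inner product $\SP{\cc{(1\otimes\chi)\bulletred v},\pi^*u}_\mu$ in two ways. The $G$-invariance \eqref{eq:RightModuleGInvariant} allows the group integral to be pulled through $\bulletred$, where $\int_G\mathsf{L}^*_{g^{-1}}(1\otimes\chi)\,\Dleft g=1$ holds exactly, so no $\lambda$-corrected normalisation is ever needed; the involution $u^*$ enters through the adjointness of $\bulletred u$ (with a small remark because $1\notin\Cinfty_0(C)$), and the complex conjugation is moved through $\deform{\iota^*}$ under the integral simply by \eqref{eq:ccintCf}, i.e.\ the cancellation you worry about is used only in the form already established in Lemma~\ref{lemma:ScalarProductTechnicalities}. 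To salvage your route, replace the all-order normalisation of $\phi_0$ by this exact group-direction normalisation plus equivariance argument; as it stands, the proposal does not constitute a proof of \eqref{eq:KMSlikeProperty}.
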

\begin{proof}
    The first part is clear from \eqref{eq:muIsDleftTensorOmega}
    discussed in the appendix.  To prove \eqref{eq:KMSlikeProperty} it
    is clearly sufficient to assume $u, v \in \Cinfty_0(U)[[\lambda]]$
    by a partition of unity argument. Choose a $\chi \in \Cinfty_0(G)$
    with $\int_G \chi \Dleft g = 1$.  We use now the trivialization
    $\Phi: U \times G \longrightarrow \pi^{-1}(U)$ to identify
    functions on $U \times G$ with those on $\pi^{-1}(U) \subseteq C$
    without explicitly writing $\Phi^*$ to simplify our notation.
    Then we consider $(1 \otimes \chi) \bulletred v \in \Cinfty_0(U
    \times G)[[\lambda]]$. Moreover, we use $1 \bulletred u = \pi^*u$
    according to Proposition~\ref{proposition:ReducedAlgebraBimodule},
    \refitem{item:EinsBulletu}.  This allows to evaluate the inner
    product $\SP{\cc{(1 \otimes \chi) \bulletred v}, \pi^*u}_\mu$ in
    two ways. First we have
    \begin{align*}
        \SP{\cc{(1 \otimes \chi) \bulletred v},
          \pi^*u}_\mu
        &=
        \int_C \deform{\iota^*}\left(
            \cc{
              \prol\left(
                  \cc{(1 \otimes \chi) \bulletred v}
              \right)
            }
            \star
            \prol(\pi^*u)
        \right) \mu \\
        &=
        \int_C
        \left((1 \otimes \chi) \bulletred v\right) \bulletred u
        \; \mu \\
        &=
        \int_C (1 \otimes \chi) \bulletred (v \starred u) \; \mu \\
        &=
        \int_{U \times G}
        (1 \otimes \chi) \bulletred (v \starred u) 
        \; \Omega \boxtimes \Dleft g \\
        &=
        \int_U \left(
            p \mapsto
            \int_G \mathsf{L}^*_{g^{-1}}
            \left(
                (1 \otimes \chi) \bulletred (v \starred u)
            \right) \Dleft g
            \At{(p, e)}
        \right) \Omega \\
        &\stackrel{\mathclap{\eqref{eq:RightModuleGInvariant}}}{=} \;
        \int_U \left(
            p \mapsto
            \int_G \mathsf{L}^*_{g^{-1}} (1 \otimes \chi)
            \Dleft g
        \right)
        \bulletred (v \starred u)
        \At{(p, e)}
        \Omega \\
        &=
        \int_U (1 \bulletred (v \starred u))           
        \At{(p, e)}
        \Omega \\
        &=
        \int_U v \starred u \; \Omega,
    \end{align*}
    which is the left hand side of \eqref{eq:KMSlikeProperty}. Note
    that the integral $\int_G \mathsf{L}^*_{g^{-1}} (1 \otimes \chi)
    \Dleft g$ is still understood as a function on $U \times G$. The
    second way to compute the inner product is
    \begin{align*}
        \SP{\cc{(1 \otimes \chi) \bulletred v},
          \pi^*u}_\mu
        &=
        \SP{\cc{(1 \otimes \chi) \bulletred v}, 1 \bulletred u}_\mu \\
        &\stackrel{\mathclap{(*)}}{=}
        \SP{\cc{(1 \otimes \chi) \bulletred v} \bulletred u^*, 1}_\mu
        \\ 
        &=
        \int_C \deform{\iota^*} \left(
            \cc{
              \prol\left(
                  \cc{(1 \otimes \chi) \bulletred v} \bulletred u^*
              \right)
            }
            \star 1
        \right) \mu \\
        &=
        \int_C
        \cc{
          \cc{(1 \otimes \chi) \bulletred v} \bulletred u^*
        } \; \mu \\
        &=
        \int_C \cc{
          \deform{\iota^*} \left(
              \prol\left(
                  \cc{(1 \otimes \chi) \bulletred v}
              \right)
              \star
              \prol(\pi^*u^*)
          \right)
        } \mu \\
        &\stackrel{\mathclap{\eqref{eq:ccintCf}}}{=} \;
        \int_C \deform{\iota^*} \left(
            \cc{
              \prol\left(
                  \cc{(1 \otimes \chi) \bulletred v}
              \right)
              \star
              \prol(\pi^*u^*)
            }
        \right) \mu \\
        &=
        \int_C \deform{\iota^*} \left(
            \cc{\prol(\pi^*u^*)}
            \star
            \prol\left(
                (1 \otimes \chi) \bulletred v
            \right)
        \right) \mu \\
        &=
        \int_U \left(
            p \mapsto
            \int_G \mathsf{L}^*_{g^{-1}}
            \left(
                \deform{\iota^*} \left(
                    \prol(\pi^*\cc{u^*})
                    \star
                    \prol\left((1 \otimes \chi) \bulletred v\right)
                \right)
            \right)
            \Dleft g
            \At{(p, e)}
        \right) \Omega \\
        &\stackrel{\mathclap{\eqref{eq:RightModuleGInvariant}}}{=} \;
        \int_U \left(
            p \mapsto
            \deform{\iota^*} \left(
                \prol(\pi^*\cc{u^*})
                \star
                \prol\left(
                    \left(
                        1 \otimes
                        \int_G \mathsf{L}^*_{g^{-1}} \chi \Dleft g
                    \right)
                    \bulletred v
                \right)
            \right)
            \At{(p, e)}
        \right) \Omega \\
        &=
        \int_U \left(
            p \mapsto
            \deform{\iota^*} \left(
                \prol(\pi^*\cc{u^*})
                \star
                \prol(\pi^*v)
            \right)
            \At{(p, e)}
        \right) \Omega \\
        &=
        \int_U \cc{u^*} \starred v \; \Omega,
    \end{align*}
    where we have used in $(*)$ that the integrals are still
    well-defined even if one of the functions is not in
    $\Cinfty_0(C)[[\lambda]]$ but the other is. Moreover, we used the
    fact that all maps are $G$-equivariant. Thus the integral can be
    moved directly in front of $\chi$ where it gives the constant
    function $1$ on $U \times G$. Then $1 \bulletred v = \pi^* v$ can
    be applied once more.
\end{proof}
\begin{remark}[KMS functional]
    \label{remark:KMSFunctional}
    Since we already know that the complex conjugation is a
    $^*$-involution for $\starred$ as well, the map
    \begin{equation}
        \label{eq:ImuAutomorphism}
        I_\mu: u \mapsto \cc{u^*}
    \end{equation}
    is a $\mathbb{C}[[\lambda]]$-linear \emph{automorphism} of
    $\starred$. Then the result \eqref{eq:KMSlikeProperty} means that
    the functional
    \begin{equation}
        \label{eq:KMSFunctional}
        \tau_\Omega:
        \Cinfty_0(M_\red)[[\lambda]]
        \ni u
        \; \mapsto \; 
        \tau_\Omega(u) = \int_{M_\red} u \; \Omega
        \in \mathbb{C}[[\lambda]]
    \end{equation}
    is actually a \emph{KMS functional} with respect to the $I_\mu$,
    i.e. we have the KMS property
    \begin{equation}
        \label{eq:tauOmegaIsKMS}
        \tau_\Omega(v \starred u)
        =
        \tau_\Omega(I_\mu(u) \starred v),
    \end{equation}
    see \cite{basart.lichnerowicz:1985a} as well as
    \cite[Sect.~7.1.4]{waldmann:2007a} for a discussion of the KMS
    condition in the context of deformation quantization including a
    proof of the classification of KMS functionals.
\end{remark}

The construction of the $^*$-involution $^*$ depends on the choice of
$\mu$. Two such choices $\mu$ and $\mu'$ are related by a unique
function $\varrho = \cc{\varrho} \in \Cinfty(M_\red)[[\lambda]]$ with
$\varrho_0 > 0$ via $\mu' = \pi^*\varrho \mu$. The corresponding
densities on $M_\red$ are then related by $\Omega' = \varrho \Omega$.
To relate the $^*$-involutions $^*$ and $^{*'}$ corresponding to $\mu$
and $\mu'$, respectively, we consider the KMS functionals
$\tau_\Omega$ and $\tau_{\Omega'}$:
\begin{lemma}
    \label{lemma:KMSFunctionalsNotDisjoint}
    Let $\Omega' = \varrho \Omega$ be as above. Then there exists a
    unique $\deform{\varrho} \in \Cinfty(M_\red)[[\lambda]]$ with
    $\deform{\varrho}_0 = \varrho_0 > 0$ such that for all $u \in
    \Cinfty_0(M_\red)[[\lambda]]$ we have
    \begin{equation}
        \label{eq:tauOmegaPrimeTauOmega}
        \tau_{\Omega'} (u)
        = \tau_{\Omega} (\deform{\varrho} \starred u). 
    \end{equation}
\end{lemma}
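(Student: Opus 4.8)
The plan is to solve \eqref{eq:tauOmegaPrimeTauOmega} recursively, order by order in $\lambda$, using that $\starred$ is bidifferential and that $\Omega_0 > 0$ is a genuine smooth density. Write the unknown as $\deform{\varrho} = \sum_{r=0}^\infty \lambda^r \deform{\varrho}_r$ with $\deform{\varrho}_r \in \Cinfty(M_\red)$ to be determined. In zeroth order $\deform{\varrho} \starred u = \deform{\varrho}_0 u + O(\lambda)$, and comparing $\int_{M_\red} \deform{\varrho}_0\, u\, \Omega_0$ with $\tau_{\Omega'}(u) = \int_{M_\red}\varrho\, u\, \Omega$ in order $\lambda^0$ forces $\deform{\varrho}_0 = \varrho_0$, which in particular is positive as required.

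First I would note that for any $g \in \Cinfty(M_\red)[[\lambda]]$ the functional $u \mapsto \tau_\Omega(g \starred u)$ is again an integration functional against a formal series of smooth densities. Indeed, by Proposition~\ref{proposition:ReducedAlgebraBimodule},\refitem{item:ReducedStarProduct} the reduced product is bidifferential, so $g \starred u = D_g u$ for a formal series $D_g$ of differential operators whose coefficients depend differentially on $g$; integrating $D_g u$ against $\Omega$ and moving all derivatives off $u$ by integration by parts — there are no boundary terms since all supports are compact — yields $\tau_\Omega(g \starred u) = \int_{M_\red} A(g)\, u$ for a formal density $A(g) = g\,\Omega + O(\lambda)$ depending differentially on $g$ and $\Omega$. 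Likewise $\tau_{\Omega'}(u) = \int_{M_\red}\varrho\, u\, \Omega$ trivially has this form.

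Then the induction proceeds as follows. Suppose $\deform{\varrho}_0, \dots, \deform{\varrho}_{k-1}$ have been chosen so that, with $\deform{\varrho}^{(k-1)} = \sum_{r=0}^{k-1}\lambda^r\deform{\varrho}_r$, the difference $\tau_{\Omega'}(u) - \tau_\Omega(\deform{\varrho}^{(k-1)} \starred u)$ equals $\lambda^k \int_{M_\red}\theta_k\, u + O(\lambda^{k+1})$ for all $u \in \Cinfty_0(M_\red)[[\lambda]]$, with a well-defined \emph{smooth} density $\theta_k$; by the previous paragraph this difference is of exactly this shape, and for $k = 1$ the base case holds because $\deform{\varrho}_0 = \varrho_0$. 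Passing from $\deform{\varrho}^{(k-1)}$ to $\deform{\varrho}^{(k-1)} + \lambda^k\deform{\varrho}_k$ changes the order-$\lambda^k$ part of $\tau_\Omega(\deform{\varrho}\starred u)$ precisely by $\lambda^k\int_{M_\red}\deform{\varrho}_k\, u\, \Omega_0$, and only affects strictly higher orders otherwise. Since $\Omega_0 > 0$ by Proposition~\ref{proposition:KMSlikeStuff}, we must and may set $\deform{\varrho}_k := \theta_k / \Omega_0 \in \Cinfty(M_\red)$; this kills the order-$\lambda^k$ obstruction and produces a new smooth density $\theta_{k+1}$ feeding the next step. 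This determines $\deform{\varrho}$, and $\deform{\varrho}_0 = \varrho_0 > 0$.

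For the closed-form uniqueness assertion: if $\deform{\varrho}$ and $\deform{\varrho}'$ both satisfy \eqref{eq:tauOmegaPrimeTauOmega}, then $\tau_\Omega\big((\deform{\varrho} - \deform{\varrho}') \starred u\big) = 0$ for all $u \in \Cinfty_0(M_\red)[[\lambda]]$; were $\deform{\varrho} - \deform{\varrho}'$ nonzero, its lowest nonvanishing coefficient $\delta_m$ would satisfy $\int_{M_\red}\delta_m\, u\, \Omega_0 = 0$ for all $u \in \Cinfty_0(M_\red)$, hence $\delta_m = 0$, a contradiction. I do not expect a serious obstacle here; the only points needing a little care are the legitimacy of the integration by parts (guaranteed by bidifferentiality and compact supports) and the fact that each successive remainder is still an honest smooth density, which again follows from bidifferentiality of $\starred$ via Proposition~\ref{proposition:ReducedAlgebraBimodule},\refitem{item:ReducedStarProduct}. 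Alternatively one could try to deduce the statement from the classification of KMS functionals mentioned in Remark~\ref{remark:KMSFunctional}, but the recursion above is elementary and self-contained.
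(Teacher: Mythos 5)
Your proposal is correct and follows essentially the same route as the paper's proof: an order-by-order induction in $\lambda$, using the bidifferentiality of $\starred$ and integration by parts (with compact supports) to reduce each step to choosing $\deform{\varrho}_k$ against the positive density $\Omega_0$, and deducing uniqueness from the non-degeneracy of the integration. You merely spell out the details that the paper leaves implicit.
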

\begin{proof}
    This is shown inductively order by order in $\lambda$. Clearly,
    $\deform{\varrho}_0 = \varrho_0$ is the unique choice to satisfy
    \eqref{eq:tauOmegaPrimeTauOmega} in zeroth order. Then
    $\deform{\varrho}_1, \deform{\varrho}_2, \ldots$ are obtained by
    integration by parts, relying on the fact that $\starred$ is
    bidifferential. Uniqueness is clear from the non-degeneracy of the
    integration.
\end{proof}
\begin{theorem}
    \label{theorem:InvolutionInnerAuto}
    The $^*$-involutions $^*$ and $^{*'}$ obtained from different
    choices of $\mu$ and $\mu'$, respectively, are related by an inner
    automorphism
    \begin{equation}
        \label{eq:InvolutionInnerAuto}
        u^{*'} = 
        \cc{\deform{\varrho}} \starred
        u^*
        \starred \cc{\deform{\varrho}}^{-1},
    \end{equation}
    with $\deform{\varrho}$ as in
    Lemma~\ref{lemma:KMSFunctionalsNotDisjoint}.
\end{theorem}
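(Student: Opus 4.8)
The plan is to avoid the inner products altogether and to argue entirely at the level of the two KMS functionals. By Remark~\ref{remark:KMSFunctional} the involution $^*$ is equivalent to the $\starred$-automorphism $I_\mu\colon u\mapsto\cc{u^*}$, with respect to which the functional $\tau_\Omega$ of \eqref{eq:KMSFunctional} is KMS, i.e.\ satisfies \eqref{eq:tauOmegaIsKMS}. The density $\mu' = \pi^*\varrho\,\mu$, with $\varrho = \cc\varrho$ and $\varrho_0>0$, again fulfils all requirements of Section~\ref{subsec:PositiveFunctional} --- it is real, positive in leading order, and $\mathsf{L}^*_{g^{-1}}\mu' = \frac{1}{\Delta(g)}\mu'$ since $\pi^*\varrho$ is $G$-invariant --- so the whole construction of Section~\ref{subsec:PositiveFunctional} and Theorem~\ref{theorem:ReducedStarInvolution} applies verbatim to $\mu'$ and produces the involution $^{*'}$, the automorphism $I_{\mu'}\colon u\mapsto\cc{u^{*'}}$, and the corresponding KMS functional $\tau_{\Omega'}$, for which \eqref{eq:tauOmegaIsKMS} holds with $\Omega,I_\mu$ replaced by $\Omega',I_{\mu'}$. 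The one fact I will invoke is that $\tau_\Omega$ is \emph{faithful}: if $\tau_\Omega(a\starred v)=0$ for all $v\in\Cinfty_0(M_\red)[[\lambda]]$ then $a=0$ --- this is the same leading-order-plus-induction argument, relying on $\starred$ being bidifferential, that underlies the uniqueness claim in Lemma~\ref{lemma:KMSFunctionalsNotDisjoint}.

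Next I would evaluate $\tau_{\Omega'}(v\starred u)$ for $u,v\in\Cinfty_0(M_\red)[[\lambda]]$ in two ways. On the one hand, Lemma~\ref{lemma:KMSFunctionalsNotDisjoint}, then the KMS property \eqref{eq:tauOmegaIsKMS} of $\tau_\Omega$ (applied with $\deform\varrho\starred v$ in place of $v$), then associativity give
\[
\tau_{\Omega'}(v\starred u)
= \tau_\Omega\big((\deform\varrho\starred v)\starred u\big)
= \tau_\Omega\big(I_\mu(u)\starred(\deform\varrho\starred v)\big)
= \tau_\Omega\big((I_\mu(u)\starred\deform\varrho)\starred v\big).
\]
On the other hand, the KMS property of $\tau_{\Omega'}$ followed by Lemma~\ref{lemma:KMSFunctionalsNotDisjoint} give
\[
\tau_{\Omega'}(v\starred u)
= \tau_{\Omega'}\big(I_{\mu'}(u)\starred v\big)
= \tau_\Omega\big((\deform\varrho\starred I_{\mu'}(u))\starred v\big).
\]
Since the involutions are differential (Theorem~\ref{theorem:ReducedStarInvolution}) and $\starred$ is bidifferential, both $I_\mu(u)\starred\deform\varrho$ and $\deform\varrho\starred I_{\mu'}(u)$ are compactly supported, so faithfulness of $\tau_\Omega$ applied to the remaining factor $v$ yields $I_\mu(u)\starred\deform\varrho = \deform\varrho\starred I_{\mu'}(u)$. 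As $\deform\varrho_0=\varrho_0>0$, the element $\deform\varrho$ is $\starred$-invertible, hence $I_{\mu'}(u) = \deform\varrho^{-1}\starred I_\mu(u)\starred\deform\varrho$.

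Finally I would translate this into a statement about $^{*'}$. Applying complex conjugation and using that $\starred$ is Hermitian (Proposition~\ref{proposition:StarredIsHermitian}) --- so $\cc{a\starred b\starred c}=\cc c\starred\cc b\starred\cc a$, which in particular forces $\cc{\deform\varrho^{-1}}=\cc{\deform\varrho}^{-1}$ from $\cc{\deform\varrho^{-1}\starred\deform\varrho}=1$ --- together with $\cc{I_\mu(u)}=u^*$ and $\cc{I_{\mu'}(u)}=u^{*'}$ from \eqref{eq:ImuAutomorphism}, I obtain
\[
u^{*'} = \cc{I_{\mu'}(u)}
= \cc{\deform\varrho^{-1}\starred I_\mu(u)\starred\deform\varrho}
= \cc{\deform\varrho}\starred u^*\starred\cc{\deform\varrho}^{-1},
\]
which is \eqref{eq:InvolutionInnerAuto}. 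The only step that is not pure book-keeping is the passage from an equality of $\Omega$-integrals to an equality of the (not necessarily compactly supported) functions $I_\mu(u)\starred\deform\varrho$ and $\deform\varrho\starred I_{\mu'}(u)$, i.e.\ the faithfulness of $\tau_\Omega$; but this works order by order in $\lambda$ exactly as in the proof of Lemma~\ref{lemma:KMSFunctionalsNotDisjoint}, so I expect no real obstacle.
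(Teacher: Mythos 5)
Your proposal is correct and follows essentially the same route as the paper: the paper's proof is exactly this KMS-functional computation, chaining Lemma~\ref{lemma:KMSFunctionalsNotDisjoint} with the KMS property \eqref{eq:tauOmegaIsKMS} for both $\tau_\Omega$ and $\tau_{\Omega'}$ to get $\deform{\varrho}\starred\cc{u^{*'}}=\cc{u^*}\starred\deform{\varrho}$, and then invoking $\starred$-invertibility of $\deform{\varrho}$. You merely organize it as evaluating $\tau_{\Omega'}(v\starred u)$ in two ways and spell out the non-degeneracy (faithfulness) step and the final complex-conjugation/Hermiticity step that the paper leaves implicit.
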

\begin{proof}
    This is an easy computation. For $u, v \in
    \Cinfty_0(M_\red)[[\lambda]]$ we have
    \begin{align*}
        \tau_\Omega\left(
            \deform{\varrho} \starred \cc{u^{*'}} \starred v
        \right)
        &=
        \tau_{\Omega'} \left(
            \cc{u^{*'}} \starred v
        \right) \\
        &\stackrel{\mathclap{\eqref{eq:tauOmegaIsKMS}}}{=} \;
        \tau_{\Omega'} \left(
            v \starred u
        \right) \\
        &=
        \tau_\Omega\left(
            \deform{\varrho} \starred v \starred u
        \right) \\
        &\stackrel{\mathclap{\eqref{eq:tauOmegaIsKMS}}}{=} \;
        \tau_\Omega\left(
            \cc{u^*} \starred \deform{\varrho} \starred v
        \right),
    \end{align*}
    from which we deduce $\deform{\varrho} \starred \cc{u^{*'}} =
    \cc{u^*} \starred \deform{\varrho}$ as $v$ is arbitrary. Since
    $\deform{\varrho}$ starts with $\deform{\varrho}_0 = \varrho_0 >
    0$ it is $\starred$-invertible. This completes the proof.
\end{proof}
\begin{corollary}
    \label{corollary:TraceDensity}
    The $^*$-involution $^*$ coincides with the complex conjugation
    iff $\mu$ yields a trace density $\Omega$, i.e. $\tau_\Omega$ is a
    trace functional.
\end{corollary}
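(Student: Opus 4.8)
The plan is to deduce the equivalence immediately from the KMS property recorded in Remark~\ref{remark:KMSFunctional}, so that almost no new computation is needed. Recall that $I_\mu\colon u \mapsto \cc{u^*}$ is a $\mathbb{C}[[\lambda]]$-linear automorphism of $\starred$ by~\eqref{eq:ImuAutomorphism}, and that by~\eqref{eq:tauOmegaIsKMS} the functional $\tau_\Omega$ satisfies
\[
\tau_\Omega(v \starred u) = \tau_\Omega\big(I_\mu(u) \starred v\big)
\]
for all $u, v \in \Cinfty_0(M_\red)[[\lambda]]$. I would note at the outset that the $^*$-involution $^*$ equals complex conjugation precisely when $I_\mu = \id$, since $I_\mu(u) = u$ for all $u$ is the same as $\cc{u^*} = u$, i.e. $u^* = \cc{u}$.

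For the first implication I would assume $u^* = \cc{u}$, so $I_\mu = \id$, and read off from the displayed identity that $\tau_\Omega(v \starred u) = \tau_\Omega(u \starred v)$ for all $u, v$; this is exactly the statement that $\tau_\Omega$ is a trace functional, i.e. that $\Omega$ is a trace density. For the converse I would assume $\tau_\Omega$ is a trace, so $\tau_\Omega(u \starred v) = \tau_\Omega(v \starred u)$, and subtract this from the KMS identity to obtain $\tau_\Omega\big((u - I_\mu(u)) \starred v\big) = 0$ for all $u, v \in \Cinfty_0(M_\red)[[\lambda]]$. A non-degeneracy argument then forces $u = I_\mu(u)$ for all compactly supported $u$, hence $I_\mu = \id$ after extending by locality (using that $I_\mu$ is a formal series of differential operators, Theorem~\ref{theorem:ReducedStarInvolution}), and therefore $u^* = \cc{u}$.

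The one step requiring a word of care is this non-degeneracy argument in the converse direction, which I would carry out order by order in $\lambda$, exactly as in the proof of Lemma~\ref{lemma:KMSFunctionalsNotDisjoint}. In lowest order the condition $\int_{M_\red} (u - I_\mu(u))_0\, v_0\, \Omega_0 = 0$ for all $v_0 \in \Cinfty_0(M_\red)$ forces $(u - I_\mu(u))_0 = 0$ because $\Omega_0 > 0$; in higher orders one peels off one power of $\lambda$ at a time, using that $\starred$ is bidifferential. This is routine and I would not spell it out in detail.
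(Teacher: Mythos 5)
Your proposal is correct and follows essentially the same route as the paper: both directions are read off from the KMS property \eqref{eq:tauOmegaIsKMS}, with the converse reduced to $\tau_\Omega\big((u-I_\mu(u))\starred v\big)=0$ for all $v$ and concluded by the non-degeneracy of the integration against $\Omega$. Your order-by-order peeling and the remark on extending from $\Cinfty_0(M_\red)[[\lambda]]$ to all of $\Cinfty(M_\red)[[\lambda]]$ via the differential-operator form of $I_\mu$ merely spell out what the paper leaves implicit.
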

\begin{proof}
    If $u^* = \cc{u}$ then Proposition~\ref{proposition:KMSlikeStuff}
    gives the trace property immediately. Conversely, assume
    $\tau_\Omega$ is a trace. Then \eqref{eq:tauOmegaIsKMS} implies
    $\tau_\Omega(\cc{u^*} \starred v) = \tau_\Omega(u \starred v)$ for
    all $u, v \in \Cinfty_0(M_\red)[[\lambda]]$. But this gives
    $\cc{u^*} = u$ by the non-degeneracy of the integration.
\end{proof}
\begin{remark}[Unimodular Poisson structures]
    \label{remark:UnimodularPoisson}
    The \emph{existence} of a trace density for $\starred$ is
    non-trivial: the lowest order condition implies that $\Omega_0$ is
    a Poisson trace, i.e. the functional $\tau_{\Omega_0}$ vanishes on
    Poisson brackets. Thus the existence of such a $\Omega_0$ is
    equivalent to say that the Poisson structure of $M_\red$ is
    \emph{unimodular}, see e.g. \cite{weinstein:1997a}.
\end{remark}
\begin{remark}[Symplectic trace density]
    \label{remark:SymplecticTraceDensity}
    In the case where $M_\red$ is symplectic the Liouville volume
    density $\Omega_0 = |\omega_\red \wedge \cdots \wedge \omega_\red|
    \in \Secinfty(\Density T^*M_\red)$ is known to be (up to a
    normalization constant) the unique Poisson trace density.
    Moreover, in this case every star product $\starred$ allows a
    trace density $\Omega = \Omega_0 + \cdots$ which is again unique
    up to a normalization in $\mathbb{R}[[\lambda]]$. In fact, there
    is even a canonical way to fix the normalization, see e.g.
    \cite{karabegov:1998b, gutt.rawnsley:2002a,
      nest.tsygan:1995a}. Thus in the symplectic case there is a
    preferred choice for $\mu$ yielding the complex conjugation as
    $^*$-involution via Theorem~\ref{theorem:ReducedStarInvolution}.
\end{remark}

We can now give another interpretation of
Theorem~\ref{theorem:InvolutionInnerAuto}. Two choices of the density
$\mu$ (or equivalently, of $\Omega$) yield $^*$-involutions which are
related by an \emph{inner} automorphism. The question whether we can
modify $\Omega$ to get the complex conjugation directly boils down to
the question whether $I_\Omega = I_\mu$ from
\eqref{eq:ImuAutomorphism} is an inner automorphism or not. From
Theorem~\ref{theorem:ReducedStarInvolution} we known that
\begin{equation}
    \label{eq:ImuIOmega}
    I_\Omega = I_\mu = \id + \sum_{r=1}^\infty \lambda^r I_r
\end{equation}
with differential operators $I_r$ depending on the choice of
$\Omega$. Any automorphism starting with the identity in zeroth order
is necessarily of the form $I_\Omega = \exp(D_\Omega)$ with a
\emph{derivation}
\begin{equation}
    \label{eq:DOmega}
    D_\Omega = \sum_{r=1}^\infty \lambda^r D_\Omega^{(r)}
\end{equation}
of the star product $\starred$, see e.g.
\cite[Prop.~6.2.7]{waldmann:2007a} or
\cite[Lem.~5]{bursztyn.waldmann:2002a}. The automorphism $I_\Omega$
changes by the inner automorphism $\Ad(\cc{\deform{\varrho}})$ when
passing to $\Omega'$ according to
Theorem~\ref{theorem:InvolutionInnerAuto}. We arrive at the following
result:
\begin{proposition}[Modular class]
    \label{proposition:ModularClass}
    Let $D_\Omega$ be the derivation determined by $\Omega$ as above.
    \begin{compactenum}
    \item The first order term of $D_\Omega$ satisfies the classical
        infinitesimal KMS condition
        \begin{equation}
            \label{eq:InfinitesimalKMSClassical}
            \I \int_{M_\red} \{u, v\}_\red \Omega_0
            +
            \int_{M_\red} D_\Omega^{(1)} (u) v \Omega_0
            = 0
        \end{equation}
        for $u, v \in \Cinfty_0(M_\red)$.
    \item Denote the modular vector field of $M_\red$ with respect to
        $\Omega_0$ by $\Delta_{\Omega_0}$. Then we have
        \begin{equation}
            \label{eq:DOmegaIsModularVectorField}
            D_\Omega^{(1)} = \I \Delta_{\Omega_0}.
        \end{equation}
    \item For a different choice $\Omega'$ the difference $D_\Omega -
        D_{\Omega'}$ is an inner derivation of $\starred$. Hence the
        Hochschild cohomology class of $D_\Omega$ is independent of
        $\Omega$.
    \end{compactenum}
\end{proposition}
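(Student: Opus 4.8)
The plan is to extract the three assertions directly from the KMS property of $\tau_\Omega$ established in Remark~\ref{remark:KMSFunctional}, together with standard facts about derivations of star products recalled in the excerpt. For the first part, I would start from the KMS identity $\tau_\Omega(v \starred u) = \tau_\Omega(I_\mu(u) \starred v)$ with $I_\mu = I_\Omega = \exp(D_\Omega)$, expand both sides to first order in $\lambda$, and use $v \starred u = vu + \lambda C_1^\red(v,u) + \cdots$ together with $C_1^\red(v,u) - C_1^\red(u,v) = \I\{u,v\}_\red$ and $I_\Omega(u) = u + \lambda D_\Omega^{(1)}(u) + \cdots$. Since $\tau_{\Omega_0}$ is the integration against $\Omega_0$ and hence symmetric in the pointwise product, the zeroth order is trivial and the $\lambda^1$-term yields exactly $\int_{M_\red}\big(C_1^\red(v,u) - C_1^\red(u,v)\big)\Omega_0 + \int_{M_\red} D_\Omega^{(1)}(u)\,v\,\Omega_0 = 0$, which is \eqref{eq:InfinitesimalKMSClassical}. (One must keep track of which argument carries the extra derivative; this is a routine bookkeeping step.)

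For the second part, I would argue that \eqref{eq:InfinitesimalKMSClassical} characterizes $D_\Omega^{(1)}$ uniquely by non-degeneracy of the integration, so it suffices to check that $D_\Omega^{(1)} = \I\Delta_{\Omega_0}$ solves it. By definition the modular vector field $\Delta_{\Omega_0}$ satisfies $\int_{M_\red} \{u,v\}_\red\,\Omega_0 = -\int_{M_\red}\big(\Delta_{\Omega_0}(u)\,v + u\,\Delta_{\Omega_0}(v)\big)\Omega_0$ — this is just the statement that $\Delta_{\Omega_0}$ measures the failure of $\tau_{\Omega_0}$ to be a Poisson trace — and $\Delta_{\Omega_0}$ is itself a Poisson vector field (a derivation of $\{\cdot,\cdot\}_\red$), so $\int \Delta_{\Omega_0}(u)\,v\,\Omega_0 + \int u\,\Delta_{\Omega_0}(v)\,\Omega_0 = \int \Delta_{\Omega_0}(uv)\,\Omega_0 = -\int \{\,\cdot\,,\cdot\} \ldots$; combining these and multiplying by $\I$ gives precisely \eqref{eq:InfinitesimalKMSClassical}. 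I would also invoke the fact that $D_\Omega^{(1)}$, being the first-order term of a derivation of $\starred$ whose symbol solves the infinitesimal KMS condition, is automatically a Poisson vector field, so the comparison is legitimate.

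For the third part, Theorem~\ref{theorem:InvolutionInnerAuto} gives $u^{*'} = \cc{\deform\varrho}\starred u^* \starred \cc{\deform\varrho}^{-1}$, hence $I_{\Omega'}(u) = \cc{u^{*'}} = \deform\varrho^{\,-1}\starred \cc{u^*}\starred \deform\varrho$ (taking complex conjugates and using that $\cc{\deform\varrho}$ is real), i.e. $I_{\Omega'} = \Ad(\deform\varrho^{-1}) \circ I_\Omega$ up to the order of composition, which in any case differs from $I_\Omega$ by an inner automorphism. Writing $I_\Omega = \exp(D_\Omega)$ and $I_{\Omega'} = \exp(D_{\Omega'})$ and using that conjugation by an invertible element is $\exp$ of the corresponding inner derivation $\frac{1}{\I\lambda}\ad_\star(\log_\star \deform\varrho)$, the Baker–Campbell–Hausdorff formula shows $D_{\Omega'} - D_\Omega$ is inner (all BCH correction terms are commutators of derivations with an inner derivation, hence inner). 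Therefore the class of $D_\Omega$ in $HH^1(\Cinfty(M_\red)[[\lambda]],\starred)$ — equivalently its image in Poisson/de~Rham cohomology at lowest order — is independent of $\Omega$. The main obstacle is the second part: one has to match the purely algebraic first-order KMS datum $D_\Omega^{(1)}$ with the geometrically defined modular vector field and be careful that both the Poisson-trace defect identity and the Poisson-vector-field property of $\Delta_{\Omega_0}$ are used with the correct signs dictated by the conventions \eqref{eq:StarProduct} for $C_1$; the first and third parts are then essentially formal.
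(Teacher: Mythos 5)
Your overall strategy is the one the paper uses: part \textit{i.)} is the lowest non-vanishing order of the KMS identity \eqref{eq:tauOmegaIsKMS}, and part \textit{iii.)} is exactly the paper's argument via Theorem~\ref{theorem:InvolutionInnerAuto}, the $\starred$-logarithm of $\deform{\varrho}$ from Lemma~\ref{lemma:KMSFunctionalsNotDisjoint}, and the BCH series whose correction terms are inner. For part \textit{i.)} your displayed intermediate identity and your stated convention ($C_1^\red(v,u)-C_1^\red(u,v)=\I\{u,v\}_\red$, which contradicts \eqref{eq:StarProduct}) each carry a sign slip that happen to cancel; since you flag the bookkeeping yourself, this is harmless. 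A smaller slip in part \textit{iii.)}: conjugation by $\deform{\varrho}$ is $\exp\bigl(-\ad_{\starred}(\Log\deform{\varrho})\bigr)$ with \emph{no} prefactor $\frac{1}{\I\lambda}$; with your prefactor the exponent would start in order $\lambda^0$ and neither the exponential nor the BCH series would be $\lambda$-adically well-defined, whereas $\ad_{\starred}(w)$ itself already starts in order $\lambda$, which is what makes the paper's argument work.

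The genuine problem is the identity you use in part \textit{ii.)}: $\int_{M_\red}\{u,v\}_\red\,\Omega_0 = -\int_{M_\red}\bigl(\Delta_{\Omega_0}(u)\,v + u\,\Delta_{\Omega_0}(v)\bigr)\Omega_0$ is false. Its right-hand side equals $-\int_{M_\red}\Delta_{\Omega_0}(uv)\,\Omega_0$ by the Leibniz rule (this has nothing to do with $\Delta_{\Omega_0}$ being a Poisson vector field) and is \emph{symmetric} in $u \leftrightarrow v$, while the left-hand side is antisymmetric; so the identity could only hold if $\int_{M_\red}\{u,v\}_\red\,\Omega_0 = 0$ for all $u,v$, i.e.\ if the Poisson structure were unimodular with respect to $\Omega_0$ — precisely what cannot be assumed, cf.\ Remark~\ref{remark:UnimodularPoisson}. (Indeed $\int_{M_\red}\Delta_{\Omega_0}(uv)\,\Omega_0=0$ since the modular vector field preserves $\Omega_0$, so your identity literally asserts unimodularity.) The correct and much shorter step, which is what the paper's one-line proof does, is a single integration by parts using the definition $\Lie_{X_u}\Omega_0 = \Delta_{\Omega_0}(u)\,\Omega_0$: it gives $\int_{M_\red}\{u,v\}_\red\,\Omega_0 = \pm\int_{M_\red}\Delta_{\Omega_0}(u)\,v\,\Omega_0$ (the sign being fixed by the Hamiltonian vector field convention), and inserting this into \eqref{eq:InfinitesimalKMSClassical} together with the non-degeneracy of the integration pairing yields \eqref{eq:DOmegaIsModularVectorField} directly; neither the Poisson-vector-field property of $\Delta_{\Omega_0}$ nor that of $D_\Omega^{(1)}$ is needed.
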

\begin{proof}
    The first part is the lowest non-vanishing order of
    \eqref{eq:tauOmegaIsKMS}. For the second, recall that the modular
    vector field $\Delta_{\Omega_0}$ with respect to a positive
    density is defined by $\Lie_{X_u} \Omega_0 = \Delta_{\Omega_0}(u)
    \Omega_0$ where $u \in \Cinfty(M_\red)$. Then
    \eqref{eq:DOmegaIsModularVectorField} is clear from
    \eqref{eq:InfinitesimalKMSClassical}. Thus let $\Omega'$ be
    another choice and let $\deform{\varrho}$ be given as in
    Lemma~\ref{lemma:KMSFunctionalsNotDisjoint}. Then
    \eqref{eq:InvolutionInnerAuto} gives
    \[
    \exp(D_{\Omega'})(u)
    = I_{\Omega'}(u)
    = \deform{\varrho}^{-1} 
    \starred I_\Omega(u) \starred
    \deform{\varrho}
    = \exp\left(- \ad_{\starred} (\Log(\varrho))\right) 
    \left(\exp(D_\Omega)(u)\right),
    \]
    where $\Log(\deform{\varrho}) = \log(\deform{\varrho}_0) + \cdots
    \in \Cinfty(M_\red)[[\lambda]]$ is the $\starred$-logarithm of
    $\deform{\varrho}$. Indeed, this logarithm exists globally thanks
    to $\deform{\varrho}_0 > 0$ and it is unique up to constants in $2
    \pi \I \mathbb{Z}$, see \cite[Sect.~6.3.1]{waldmann:2007a} for a
    detailed discussion of the logarithm with respect to star
    products. Since both derivations $D_\Omega$ and $-
    \ad_{\starred}(\Log(\deform{\varrho}))$ start in first order,
    their BCH series is well-defined. Thus
    \[
    D_{\Omega'} 
    = \mathrm{BCH}(-
    \ad_{\starred}(\Log(\deform{\varrho})), D_\Omega)
    =
    D_\Omega + \ad_{\starred} (w)
    \]
    with some $w \in \Cinfty(M_\red)[[\lambda]]$ since the commutators
    in the BCH series are all \emph{inner} derivations.
\end{proof}
\begin{remark}
    \label{remark:ModularClass}
    On one hand, the proposition gives us a quantum analog of the
    modular class $[\Delta_{\Omega_0}]$ in the first Poisson
    cohomology as discussed in \cite{weinstein:1997a}. Indeed, the
    Hochschild cohomology class $[D_\Omega]$ of $D_\Omega$ is a
    deformation of $[\Delta_{\Omega_0}]$ in a very good sense and
    measures the analogous quantity, namely whether one can find a
    trace density. On the other hand, the proposition tells us that
    this \emph{modular class} $[D_\Omega]$ of $\starred$ is precisely
    the obstruction for our construction of $^*$ to yield the complex
    conjugation by a clever choice of $\mu$.
\end{remark}

%
% Construction of the inner product bimodule
%

\section{Construction of the inner product bimodule}
\label{sec:ConstructionInnerProduct}

Having constructed the reduced algebra $(\Cinfty(M_\red)[[\lambda]],
\starred)$ out of the algebra $(\Cinfty(M)[[\lambda]], \star)$ we want
to relate their representation theories, i.e. their categories of
modules, as well. From a physical point of view this is even crucial:
ultimately, we need representations on some pre Hilbert space in order
to establish the superposition principle, see e.g.
\cite[Chap.7]{waldmann:2007a} for a detailed discussion in the context
of deformation quantization.

The usual idea is to use a bimodule and the tensor product to pass
from modules of one algebra to modules of the other in a functorial
way. Since we have constructed a bimodule structure on
$\Cinfty(C)[[\lambda]]$ it is tempting to use this particular
bimodule. While from a ring-theoretic point of view this is already
interesting, we want to compare $^*$-representations of the
$^*$-algebras on pre-Hilbert spaces and more generally on
algebra-valued inner product modules. To this end, we want to add some
more specific structure to the bimodule and make it an inner product
bimodule with (ultimately) a completely positive inner product. The
latter positivity will be discussed in
Section~\ref{sec:StrongMoritaEquivalenceBimodule}, here we focus on
the remaining properties of the inner product.

%
% Algebra-valued inner products
%

\subsection{Algebra-valued inner products and $^*$-representations of
  $^*$-algebras} 
\label{subsec:AlgebraValuedInnerProducts}

In this short subsection we collect some basic facts and definitions
from \cite{bursztyn.waldmann:2005b}. One may recognize that all the
notions are transferred from the theory of Hilbert modules over
$C^*$-algebras to our more algebraic framework.

Let again $\ring{R}$ be an ordered ring and $\ring{C} = \ring{R}(\I)$
as in Section~\ref{subsec:AlgebraicPreliminaries} and consider a
$^*$-algebra $\mathcal{A}$ over $\ring{C}$. Then an
\emph{$\mathcal{A}$-valued inner product} $\SPA{\cdot, \cdot}$ on a
right $\mathcal{A}$-module $\EA$ is a map $\SPA{\cdot, \cdot}: \EA
\times \EA \longrightarrow \mathcal{A}$ which is $\ring{C}$-linear in
the second argument and satisfies $\SPA{x, y} = (\SPA{y, x})^*$ as well
as $\SPA{x, y \cdot a} = \SPA{x, y} a$. Moreover, we require
non-degeneracy, i.e. $\SPA{x, y} = 0$ for all $y$ implies $x =0$. Here
and in the following we always assume that every module over
$\mathcal{A}$ carries a compatible $\ring{C}$-module structure. If
$\EA$ is equipped with such an inner product then $(\EA, \SPA{\cdot,
  \cdot})$ is called an \emph{inner product right
  $\mathcal{A}$-module} . Inner product left $\mathcal{A}$-modules are
defined analogously, with the only difference that we require
$\ring{C}$-linearity and $\mathcal{A}$-linearity to the left in the
\emph{first} argument. For $\mathcal{A} = \ring{C}$ we get back the
usual notions of an inner product module over the scalars $\ring{C}$
as in Section~\ref{subsec:AlgebraicPreliminaries}.

A map $A: \EA \longrightarrow \EpA$ between inner product right
$\mathcal{A}$-modules is called \emph{adjointable} if there exists a
map $A^*: \EpA \longrightarrow \EA$ with $\SPEpA{Ax, y} = \SPEA{x,
  A^*y}$ for all $x \in \EA$ and $y \in \EpA$. If such an $A^*$ exists
it is unique. It follows that $A$ and $A^*$ are right
$\mathcal{A}$-linear and the adjointable maps form a
$\ring{C}$-submodule of $\Hom_{\ring{C}}(\EA, \EpA)$. Moreover, $A
\mapsto A^*$ is $\ring{C}$-antilinear and involutive. Finally, for
another adjointable map $B: \EpA \longrightarrow \EppA$ also $BA$ is
adjointable with adjoint $(BA)^* = A^* B^*$. The adjointable maps are
denoted by $\Bounded[\mathcal{A}](\EA, \EpA)$.

A particular example of an adjointable map is obtained as follows: for
$y \in \EpA$ and $x \in \EA$ we set
\begin{equation}
    \label{eq:RankOneOperator}
    \Theta_{y, x} (z) = y \cdot \SPEA{x, z}
\end{equation}
for all $z \in \EA$. This yields an adjointable operator $\Theta_{y,
  x}: \EA \longrightarrow \EpA$ with adjoint $\Theta_{y, x}^* =
\Theta_{x, y}$. The $\ring{C}$-linear span of all these \emph{rank one
  operators} are called the \emph{finite rank operators}. They will be
denoted by $\Finite[\mathcal{A}](\EA, \EpA)$. As usual, we set
$\Bounded[\mathcal{A}](\EA) = \Bounded[\mathcal{A}](\EA, \EA)$ and
$\Finite[\mathcal{A}](\EA) = \Finite[\mathcal{A}](\EA, \EA)$. It
follows that $\Finite[\mathcal{A}](\EA)$ is a $^*$-ideal in the unital
$^*$-algebra $\Bounded[\mathcal{A}](\EA)$.

If $\mathcal{B}$ is another $^*$-algebra then a
\emph{$^*$-representation} of $\mathcal{B}$ on an inner product right
$\mathcal{A}$-module $\EA$ is a $^*$-homomorphism $\pi: \mathcal{B}
\longrightarrow \Bounded[\mathcal{A}](\EA)$. This way, $\EA$ becomes a
$(\mathcal{B}, \mathcal{A})$-bimodule and sometimes we simply write $b
\cdot x = \pi(b)x$ if the map $\pi$ is clear from the context. An
\emph{intertwiner} $T$ between two such $^*$-representations $(\EA,
\pi)$ and $(\EpA, \pi')$ is a left $\mathcal{B}$-linear adjointable
map $T: \EA \longrightarrow \EpA$, and hence in particular a
$(\mathcal{B}, \mathcal{A})$-bimodule morphism. The category of
$^*$-representations of $\mathcal{B}$ on inner product right
$\mathcal{A}$-modules is denoted by $\smod[\mathcal{A}](\mathcal{B})$.

A $^*$-representation $\BEA \in \smod[\mathcal{A}](\mathcal{B})$ is
called \emph{strongly non-degenerate} if $\mathcal{B} \cdot \BEA =
\BEA$. In the unital case this is equivalent to $\Unit_{\mathcal{B}}
\cdot x = x$ for all $x \in \BEA$. The subcategory of strongly
non-degenerate $^*$-representations is then denoted by
$\sMod[\mathcal{A}](\mathcal{B})$. Such $^*$-representations will also
be referred to as \emph{inner product $(\mathcal{B},
  \mathcal{A})$-bimodules}.

%
% The definition of the inner product
%

\subsection{The definition of the inner product}
\label{subsec:DefinitionInnerProduct}

The $^*$-algebras in question will be the functions
$\Cinfty(M)[[\lambda]]$ with $\star$ and the complex conjugation on
one hand and $\Cinfty(M_\red)[[\lambda]]$ with $\starred$ and the
complex conjugation on the other hand. Even though for $\starred$ we
might also take the other $^*$-involutions we restrict ourselves to
the simplest case of the complex conjugation.

The first question is in which of the two $^*$-algebras the inner
product should take values. One option is ruled out by the following
proposition:
\begin{proposition}
    \label{proposition:NoMValuedInnerProduct}
    Assume $\codim C \ge 1$.
    \begin{compactenum}
    \item For the classical $\Cinfty(M)$-module structure of
        $\Cinfty_0(C)$ a $\Cinfty(M)$-valued inner product does not
        exist.
    \item For $\Cinfty_0(C)[[\lambda]]$ there is no
        $\Cinfty(M)[[\lambda]]$-valued inner product with respect to
        $\star$ and the left module structure $\bullet$.
    \end{compactenum}
\end{proposition}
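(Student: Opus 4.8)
The plan is to assume, in each case, that such an inner product $\SP{\cdot,\cdot}$ exists and to contradict its non-degeneracy. The common mechanism is that the left module structure in question kills a large supply of elements of the big algebra, while an $\mathcal{A}$-valued inner product on a left module is $\mathcal{A}$-linear to the left in the first argument; thus $\SP{f\cdot\phi,\psi}$ reduces to a (left) multiple of $\SP{\phi,\psi}$, and this vanishes whenever $f$ acts as zero. The only geometric ingredient is that $\codim C\ge 1$ makes $M\setminus C$ an open dense subset of $M$, so that a smooth function on $M$ vanishing on $M\setminus C$ vanishes identically; equivalently, if $f\,g=0$ in $\Cinfty(M)$ for every $f$ supported in $M\setminus C$, then $g=0$.

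For the first claim I would take $\SP{\cdot,\cdot}\colon\Cinfty_0(C)\times\Cinfty_0(C)\to\Cinfty(M)$ to be a $\Cinfty(M)$-valued inner product for the classical module structure $f\cdot\phi=\iota^*(f)\,\phi$. Whenever $f\in\mathcal{J}_C$ one has $f\cdot\phi=0$ for all $\phi$, hence $f\cdot\SP{\phi,\psi}=\SP{f\cdot\phi,\psi}=0$ in $\Cinfty(M)$ for all $\phi,\psi\in\Cinfty_0(C)$. Since every point of $M\setminus C$ carries a bump function supported in $M\setminus C$ (hence lying in $\mathcal{J}_C$) and nonzero at that point, this forces $\SP{\phi,\psi}$ to vanish on $M\setminus C$, hence on all of $M$ by density. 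As $\Cinfty_0(C)\ne\{0\}$, this contradicts non-degeneracy.

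For the second claim I would argue with the $\lambda$-adic filtration. Suppose $\SP{\cdot,\cdot}\colon\Cinfty_0(C)[[\lambda]]\times\Cinfty_0(C)[[\lambda]]\to\Cinfty(M)[[\lambda]]$ is a $\Cinfty(M)[[\lambda]]$-valued inner product for the left $(\Cinfty(M)[[\lambda]],\star)$-module $(\Cinfty_0(C)[[\lambda]],\bullet)$. By the locality of $\bullet$ along $\iota^*$, recorded in Proposition~\ref{proposition:LeftModuleStructure}, \refitem{item:ModuleStructureBidifferential} (see also Remark~\ref{remark:Endomorphism}), any $f\in\Cinfty(M)$ with $\supp f\cap C=\emptyset$ satisfies $f\bullet\phi=0$ for all $\phi$, so $f\star\SP{\phi,\psi}=\SP{f\bullet\phi,\psi}=0$. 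Fix $\phi,\psi$; if $\SP{\phi,\psi}\ne 0$, write it as $\lambda^k g_k+\lambda^{k+1}g_{k+1}+\cdots$ with $g_k\ne 0$. Since $f$ carries no higher $\lambda$-orders and $C_0$ is the pointwise product, the coefficient of $\lambda^k$ in $f\star\SP{\phi,\psi}$ is $f\,g_k$; as $f\star\SP{\phi,\psi}=0$ this gives $f\,g_k=0$ for every such $f$, and the density argument above yields $g_k=0$, a contradiction. Hence $\SP{\phi,\psi}=0$ for all $\phi,\psi$, again contradicting non-degeneracy.

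I do not expect a real obstacle; the argument is elementary once the right statements are in place. The only points deserving a line of care are the passage to the lowest non-vanishing $\lambda$-order in the deformed case (where $\star$ degenerates to the pointwise product, so the filtration argument closes up) and the observation that the separating cut-off functions used in the first part automatically belong to $\mathcal{J}_C$ because they are supported in the open set $M\setminus C$.
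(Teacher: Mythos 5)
Your argument is correct and essentially the paper's own: the paper likewise picks a point of $M\setminus C$ where $\SP{\phi,\psi}$ (or, in the deformed case, its lowest $\lambda$-order coefficient) would be nonzero, chooses $f$ vanishing near $C$ but not at that point, and contradicts $\SP{f\cdot\phi,\psi}=f\,\SP{\phi,\psi}$ resp.\ $\SP{f\bullet\phi,\psi}=f\star\SP{\phi,\psi}$ using $\iota^*f=0$ resp.\ the locality of $\star$ and $\deform{\iota^*}$. Your version merely reverses the order (quantify over bump functions supported off $C$, then invoke density of $M\setminus C$), which is the same mechanism.
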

\begin{proof}
    Assume there is such an inner product. Let $\phi, \psi \in
    \Cinfty_0(C)$ with $\SP{\phi, \psi} \ne 0$ be given. Then there is
    a point $p \in M \setminus C$ with $\SP{\phi, \psi}(p) \ne 0$.
    Choose $f \in \Cinfty(M)$ with $f(p) \ne 0$ but $f$ equal to zero
    in an open neighbourhood of $C$. Then we get a contradiction from
    $0 \ne f(p) \SP{\phi, \psi}(p) = \SP{f \cdot \phi, \psi} (p) =
    \SP{\iota^*f \phi, \psi}(p) = 0$ since $\iota^*f = 0$. This shows
    the first part. The second follows analogously, since
    $\deform{\iota^*}(f \star \prol(\phi)) = 0$ by the locality of
    $\star$ and Lemma~\ref{lemma:RestrictionLocal}.
\end{proof}

The other option of a $\Cinfty(M_\red)[[\lambda]]$-valued inner
product will be more promising. Before giving the definition we have
to specify the precise function space on $C$ for the module: as we
will need integrations, $\Cinfty(C)$ will be too large in general. On
the other hand, $\Cinfty_0(C)$ will work but is too small for purposes
of Morita theory in
Section~\ref{sec:StrongMoritaEquivalenceBimodule}. Thus we shall use
the following option: We define
\begin{equation}
    \label{eq:Cinftycf}
    \Cinftycf(C) =
    \left\{
        \phi \in \Cinfty(C)
        \; \big| \;
        \supp(\phi) \cap \pi^{-1}(K)
        \;
        \textrm{is compact for all compact}
        \;
        K \subseteq M_\red
    \right\},
\end{equation}
and call this subspace of $\Cinfty(C)$ the functions with
\emph{locally uniformly compact support in fiber directions}. Clearly
$\Cinfty_0(C) \subseteq \Cinftycf(C)$. If $G$ is compact then
$\Cinftycf(C) = \Cinfty(C)$ while $\Cinftycf(C) = \Cinfty_0(C)$ if $M$
is compact. The importance of the space $\Cinftycf(C)$ comes from the
following simple observation, which becomes trivial for a compact
group $G$.
\begin{lemma}
    \label{lemma:CinftycfProperties}
    \begin{compactenum}
    \item \label{item:CcfStableDiffOps} The subspace $\Cinftycf(C)
        \subseteq \Cinfty(C)$ is an ideal, stable under all
        differential operators, the $G$-action, and complex
        conjugation.
    \item \label{item:IntegrateCcf} For $\phi \in \Cinftycf(C)$ the
        function
        \begin{equation}
            \label{eq:IntegrateCcf}
            \int_G \mathsf{L}^*_{g^{-1}} \phi \Dleft g:
            c
            \; \mapsto \; 
            \int_G \phi(\mathsf{L}_{g^{-1}}(c)) \Dleft g
        \end{equation}
        is a smooth and invariant function on $C$.
    \item \label{item:FullnessFunction} There exists a function $0 \le
        \epsilon \in \Cinftycf(C)$ with
        \begin{equation}
            \label{eq:eIntegralEins}
            \int_G \mathsf{L}^*_{g^{-1}}  \epsilon \Dleft g = 1.
        \end{equation}
    \end{compactenum}
\end{lemma}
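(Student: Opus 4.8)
The plan is to dispatch the three assertions in turn. The first is a pure statement about supports; the third is a partition-of-unity construction resting on the second; and the only real work sits in the second assertion, where properness of the $G$-action on $C$ enters.

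For the first assertion the point is that none of the operations in question enlarges the support. Multiplication by an arbitrary $\psi \in \Cinfty(C)$, application of any differential operator $D$, and complex conjugation satisfy $\supp(\psi\phi) \subseteq \supp\phi$, $\supp(D\phi) \subseteq \supp\phi$, and $\supp\cc{\phi} = \supp\phi$, so the intersections of these supports with $\pi^{-1}(K)$ are closed subsets of the compact set $\supp\phi\cap\pi^{-1}(K)$, hence compact; together with the obvious stability under sums ($\supp(\phi+\phi')\subseteq\supp\phi\cup\supp\phi'$) this shows $\Cinftycf(C)$ is an ideal stable under differential operators and complex conjugation. For the $G$-action I would use that $\pi$ is $G$-invariant, so $\pi^{-1}(K)$ is $G$-stable; since $\supp(\mathsf{L}_g^*\phi) = \mathsf{L}_{g^{-1}}(\supp\phi)$ one gets $\supp(\mathsf{L}_g^*\phi)\cap\pi^{-1}(K) = \mathsf{L}_{g^{-1}}\big(\supp\phi\cap\pi^{-1}(K)\big)$, the image of a compact set under a diffeomorphism.

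For the second assertion I would first check that the integral makes sense pointwise. Fixing $c\in C$, the integrand $g\mapsto\phi(\mathsf{L}_{g^{-1}}(c))$ is supported in the preimage of $\supp\phi\cap\pi^{-1}(\{\pi(c)\})$ under $G\ni g\mapsto\mathsf{L}_{g^{-1}}(c)$, which — the action being free and proper — is a diffeomorphism of $G$ onto the orbit $\pi^{-1}(\{\pi(c)\})$; since $\supp\phi\cap\pi^{-1}(\{\pi(c)\})$ is compact by the defining property of $\Cinftycf(C)$, the integrand has compact support and is $\Dleft g$-integrable. Invariance of the resulting function is then the substitution $g\mapsto hg$, using left-invariance of $\Dleft g$. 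The crux is smoothness, which I would prove locally: given $c_0$, choose a relatively compact open neighbourhood $W$ of $c_0$ and put $L = \supp\phi\cap\pi^{-1}(\pi(\cc{W}))$, compact by the definition of $\Cinftycf(C)$. For $c\in W$, any $g$ with $\phi(\mathsf{L}_{g^{-1}}(c))\ne 0$ satisfies $\mathsf{L}_{g^{-1}}(c)\in L$ with $c\in\cc{W}$, hence lies in $Q = \{g\in G\mid\mathsf{L}_{g^{-1}}(\cc{W})\cap L\ne\emptyset\}$, which by properness of the action is compact and — crucially — does not depend on $c$. Thus on $W$ the function in question equals $\int_Q\phi(\mathsf{L}_{g^{-1}}(c))\,\Dleft g$ with smooth integrand over a fixed compact domain, and differentiation under the integral sign gives smoothness near $c_0$. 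This properness argument is the one genuinely non-formal step.

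For the third assertion I would exploit that $\pi\colon C\to M_\red$ is a principal $G$-bundle. I would choose a locally finite cover $\{U_\alpha\}$ of $M_\red$ by relatively compact open sets over which $C$ trivializes as a right principal bundle, $\Phi_\alpha\colon U_\alpha\times G\to\pi^{-1}(U_\alpha)$, a subordinate partition of unity $\{\rho_\alpha\}$ with $\rho_\alpha\ge 0$, $\sum_\alpha\rho_\alpha = 1$ and $\supp\rho_\alpha$ compact in $U_\alpha$, and for each $\alpha$ a function $\chi_\alpha\in\Cinfty_0(G)$ with $\chi_\alpha\ge 0$ and $\int_G\chi_\alpha\,\Dleft g = 1$. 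Let $\epsilon_\alpha\in\Cinfty_0(C)$ be the function equal to $\rho_\alpha(u)\chi_\alpha(g)$ in the chart $\Phi_\alpha$ and zero off $\pi^{-1}(U_\alpha)$, and set $\epsilon = \sum_\alpha\epsilon_\alpha$; this sum is locally finite, $\epsilon\ge 0$, and $\epsilon\in\Cinftycf(C)$ because for compact $K\subseteq M_\red$ only finitely many $U_\alpha$ meet $K$ and each $\supp\epsilon_\alpha$ is compact. In the chart $\Phi_\alpha$ the action $\mathsf{L}_{g^{-1}} = \mathsf{R}_g$ is right translation on the $G$-factor, so for $c = \Phi_\alpha(u, g_c)$ one computes $\int_G\epsilon_\alpha(\mathsf{L}_{g^{-1}}(c))\,\Dleft g = \rho_\alpha(u)\int_G\chi_\alpha(g_c g)\,\Dleft g = \rho_\alpha(u)$ by left-invariance of $\Dleft g$ and the normalization of $\chi_\alpha$ (and the integral vanishes when $\pi(c)\notin U_\alpha$); summing over $\alpha$ then yields $\int_G\mathsf{L}_{g^{-1}}^*\epsilon\,\Dleft g = \pi^*\!\big(\sum_\alpha\rho_\alpha\big) = 1$, which is the claim.
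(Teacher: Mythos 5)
Your argument is correct and follows essentially the same route as the paper's proof: support considerations for the first part, localization of the integration to a fixed compact subset of $G$ via properness (your $Q$ playing the role of the paper's $G_{U,\phi}$) followed by differentiation under the integral for the second, and local trivializations with a partition of unity on $M_\red$ times a normalized bump function on $G$, evaluated using left-invariance of $\Dleft g$, for the third. The only deviations — spelling out the pointwise integrability and invariance, and allowing an $\alpha$-dependent $\chi_\alpha$ instead of one fixed $\chi\in\Cinfty_0(G)$ — are cosmetic.
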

\begin{proof}
    The first part is trivial. For the second, let $U \subseteq C$ be
    an open pre-compact subset. Then $\phi(\mathsf{L}_{g^{-1}}(c)) =
    0$ for $c \in U$ unless $\mathsf{L}_{g^{-1}}(c) \in \supp
    \phi$. On the other hand we know $\mathsf{L}^*_{g^{-1}}(c) \in
    \pi^{-1}(\pi(U^\cl))$ and $\supp \phi \cap \pi^{-1}(\pi(U^\cl))$
    is compact thanks to $\phi \in \Cinftycf(C)$. Thus
    $\phi(\mathsf{L}^*_{g^{-1}}(c)) = 0$ for $c \in U$ unless $g \in
    G_{U, \phi}$ where
    \[
    G_{U, \phi} =
    \{
    g \in G \; | \;
    \textrm{there exists a}
    \;
    c \in U^\cl
    \;
    \textrm{with}
    \;
    \mathsf{L}_{g^{-1}}(c) \in \supp \phi \cap \pi^{-1}(\pi(U^\cl))
    \}.
    \]
    Hence we conclude that for $c \in U$ we have
    \[
    \int_G \phi(\mathsf{L}_{g^{-1}}(c)) \Dleft g
    =
    \int_{G_{U, \phi}} \phi(\mathsf{L}_{g^{-1}}(c)) \Dleft g.
    \]
    Since $G_{U, \phi}$ is \emph{compact} by the properness of the
    action we can deduce that \eqref{eq:IntegrateCcf} is well-defined
    and yields a smooth function on the open subset $U$ by applying
    the usual ``differentiation commutes with integration''
    techniques. But this implies smoothness everywhere. Clearly, the
    averaging integral yields an invariant function.  For the third
    part, we use an atlas of local trivializations $\{U_\alpha,
    \Phi_\alpha\}$ of the (right) principal bundle. Moreover, let $0
    \le \chi_\alpha \in \Cinfty_0(M_\red)$ be a locally finite
    partition of unity subordinate to this atlas. Finally, we choose
    $0 \le \chi \in \Cinfty_0(G)$ with $\int_G \chi(g) \Dleft g =
    1$. For $c \in C$ we define
    \[
    \epsilon(c) =
    \sum\nolimits_\alpha (\chi_\alpha \otimes \chi) \circ
    \Phi_\alpha^{-1}(c).
    \]
    It easily follows that $\epsilon \in \Cinftycf(C)$. Moreover, a
    simple computation shows that
    \[
    \int_G 
    (\chi_\alpha \otimes \chi) \circ \Phi_\alpha^{-1}
    (\mathsf{L}_{g^{-1}}(c))
    \Dleft g
    =
    \chi_\alpha (\pi(c))
    \]
    for all $\alpha$. Thus $\epsilon$ satisfies \eqref{eq:eIntegralEins}.
\end{proof}
\begin{corollary}
    \label{corollary:CcfSubModule}
    The $\mathbb{C}[[\lambda]]$-submodule $\Cinftycf(C)[[\lambda]]
    \subseteq \Cinfty(C)[[\lambda]]$ is a $\starred$-submodule with
    respect to $\bulletred$.
\end{corollary}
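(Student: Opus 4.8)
The plan is to deduce the statement directly from two ingredients already established: the bidifferentiality of the right module structure $\bulletred$ (Proposition~\ref{proposition:ReducedAlgebraBimodule}, \refitem{item:RightModuleStructure}) and the fact that $\Cinftycf(C)$ is stable under every differential operator (Lemma~\ref{lemma:CinftycfProperties}, \refitem{item:CcfStableDiffOps}). Since $\Cinftycf(C)[[\lambda]]$ is by definition a $\mathbb{C}[[\lambda]]$-submodule of $\Cinfty(C)[[\lambda]]$, the only thing left to prove is that $\phi \bulletred u$ again lies in $\Cinftycf(C)[[\lambda]]$ whenever $\phi \in \Cinftycf(C)[[\lambda]]$ and $u \in \Cinfty(M_\red)[[\lambda]]$.

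First I would fix $u = \sum_{s \ge 0} \lambda^s u_s \in \Cinfty(M_\red)[[\lambda]]$. By Proposition~\ref{proposition:ReducedAlgebraBimodule}, \refitem{item:RightModuleStructure}, the product $\bulletred$ is bidifferential, so $\phi \bulletred u = \phi \cdot \pi^* u_0 + \sum_{r \ge 1} \lambda^r(\textrm{terms differential in } \phi)$; spelling this out, the map $\phi \mapsto \phi \bulletred u$ is a formal series $\sum_{r \ge 0} \lambda^r D^u_r$ in which $D^u_0$ is multiplication by $\pi^* u_0$ and every $D^u_r$ is an honest differential operator on $C$ (its coefficient functions being assembled from those of $\bulletred$ and from finitely many jets of the $u_s$). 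This is exactly the observation already used in the proof of Theorem~\ref{theorem:ReducedStarInvolution}.

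Then I would invoke Lemma~\ref{lemma:CinftycfProperties}, \refitem{item:CcfStableDiffOps}: each differential operator $D^u_r$ maps $\Cinftycf(C)$ into itself (multiplication by $\pi^* u_0$ being the order-zero instance). Consequently, writing $\phi = \sum_{q \ge 0} \lambda^q \phi_q$ with all $\phi_q \in \Cinftycf(C)$, the coefficient of $\lambda^m$ in $\phi \bulletred u$ is the finite sum $\sum_{r + q = m} D^u_r \phi_q$, a finite linear combination of elements of $\Cinftycf(C)$, hence again in $\Cinftycf(C)$ because the latter is a linear subspace. Therefore $\phi \bulletred u \in \Cinftycf(C)[[\lambda]]$, which is the assertion of the corollary.

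I do not expect any genuine obstacle: everything is a bookkeeping consequence of results already in hand. Should one wish to avoid quoting bidifferentiality as a black box, the same conclusion follows from the support estimate $\supp\bigl((\phi \bulletred u)_m\bigr) \subseteq \bigcup_{q \le m} \supp \phi_q$ valid in every order $\lambda^m$, which one reads off the explicit formula $\phi \bulletred u = \deform{\iota^*}(\prol(\phi) \star \prol(\pi^* u))$ using the locality of $\star$ and the representation $\deform{\iota^*} = \iota^* \circ S_\kappa$ with $S_\kappa$ a differential operator (Lemma~\ref{lemma:RestrictionLocal}), together with the elementary fact $\supp\prol(\phi) \cap C = \supp\phi$; since a closed subset of the compact set $\bigcup_{q \le m}\bigl(\supp\phi_q \cap \pi^{-1}(K)\bigr)$ is compact for every compact $K \subseteq M_\red$, this again gives $(\phi \bulletred u)_m \in \Cinftycf(C)$. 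The only point requiring a little care in either route is the $\lambda$-grading, because $\prol$ and $\deform{\iota^*}$ are not local but only local up to the transverse directions of the tubular neighbourhood.
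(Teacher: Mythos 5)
Your proposal is correct and follows essentially the same route as the paper: the paper's (one-line) proof also combines the bidifferentiality of $\bulletred$ from Proposition~\ref{proposition:ReducedAlgebraBimodule} with the stability of $\Cinftycf(C)$ under differential operators from Lemma~\ref{lemma:CinftycfProperties}, \refitem{item:CcfStableDiffOps}. Your extra bookkeeping of the $\lambda$-orders and the alternative support-estimate argument are fine but not needed beyond what the paper states.
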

\begin{proof}
    Since $\bulletred$ acts via differential operators in each order
    of $\lambda$, this is clear from
    Lemma~\ref{lemma:CinftycfProperties},
    \refitem{item:CcfStableDiffOps}.
\end{proof}

It will be this submodule on which the algebra-valued inner product
can be defined.
\begin{definition}[Algebra-valued inner product]
    \label{definition:AlgebraValuedInnerProduct}
    Let $\phi, \psi \in \Cinftycf(C)[[\lambda]]$. Then one defines
    their $\Cinfty(M_\red)[[\lambda]]$-valued inner product $\SP{\phi,
      \psi}_\red$ pointwise by
    \begin{equation}
        \label{eq:SPred}
        \SP{\phi, \psi}_\red (\pi(c))
        =
        \int_G \left(
            \deform{\iota^*} \left(
                \cc{\prol(\phi)} \star \prol(\psi)
            \right)
        \right) (\mathsf{L}_{g^{-1}}(c))
        \Dleft g.
    \end{equation}
\end{definition}
\begin{lemma}
    \label{lemma:AlgebraValuedIPWelldef}
    The inner product $\SP{\cdot, \cdot}_\red$ is well-defined and
    $\mathbb{C}[[\lambda]]$-sesquilinear.
\end{lemma}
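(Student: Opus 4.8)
The plan is to reduce the statement to the locality of $\deform{\iota^*}$ and to Lemma~\ref{lemma:CinftycfProperties}. First I would note that, by \eqref{eq:fbulletphi}, $\deform{\iota^*}(\cc{\prol(\phi)} \star \prol(\psi)) = \cc{\prol(\phi)} \bullet \psi$, and that by Proposition~\ref{proposition:LeftModuleStructure},~\refitem{item:ModuleStructureBidifferential} (using $\iota^*\cc{\prol(\phi)} = \cc{\phi}$ from \eqref{eq:ProlisProl} and \eqref{eq:ccProl}) this has the form $\cc{\phi}\,\psi + \sum_{r\ge 1}\lambda^r \tilde{P}_r(\cc{\phi},\psi)$ with bidifferential operators $\tilde{P}_r$ on $C$ in which $\psi$ is differentiated. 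Since $\Cinftycf(C)$ is an ideal of $\Cinfty(C)$ stable under differential operators (Lemma~\ref{lemma:CinftycfProperties},~\refitem{item:CcfStableDiffOps}) and $\phi,\psi \in \Cinftycf(C)[[\lambda]]$, this shows $\cc{\prol(\phi)} \bullet \psi \in \Cinftycf(C)[[\lambda]]$.

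Next I would apply Lemma~\ref{lemma:CinftycfProperties},~\refitem{item:IntegrateCcf} order by order in $\lambda$ with $\chi = \cc{\prol(\phi)} \bullet \psi$. This gives that the $G$-average $c \mapsto \int_G (\cc{\prol(\phi)} \bullet \psi)(\mathsf{L}_{g^{-1}}(c))\,\Dleft g$ is well-defined (by properness of the action the integrand vanishes off a compact subset of $G$ when $c$ ranges over a precompact open set), smooth on $C$, and $G$-invariant in each order of $\lambda$, hence equal to $\pi^*$ of a uniquely determined element of $\Cinfty(M_\red)[[\lambda]]$. That element is by definition $\SP{\phi,\psi}_\red$; in particular the right-hand side of \eqref{eq:SPred} depends only on $\pi(c)$ and not on the chosen representative $c$ of the fibre, which is exactly the content of well-definedness as a $\Cinfty(M_\red)[[\lambda]]$-valued pairing.

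Finally, sesquilinearity would be purely formal: $\prol$, $\deform{\iota^*}$ and the $G$-integral are $\mathbb{C}[[\lambda]]$-linear and $\star$ is $\mathbb{C}[[\lambda]]$-bilinear, which gives $\mathbb{C}[[\lambda]]$-linearity of $\SP{\phi,\psi}_\red$ in $\psi$, while $\phi \mapsto \cc{\prol(\phi)} = \prol(\cc{\phi})$ is $\mathbb{C}[[\lambda]]$-antilinear by \eqref{eq:ccProl}, which gives antilinearity in $\phi$. I expect the only genuinely non-routine point to be the convergence and smoothness of the $G$-average; that is precisely where the choice of the function space $\Cinftycf(C)$ and the properness of the $G$-action enter, and it has already been packaged into Lemma~\ref{lemma:CinftycfProperties}.
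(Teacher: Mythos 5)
Your proposal is correct and follows essentially the same route as the paper: show the integrand lies in $\Cinftycf(C)[[\lambda]]$ and then invoke Lemma~\ref{lemma:CinftycfProperties} to get a well-defined, smooth, $G$-invariant average of the form $\pi^*\SP{\phi,\psi}_\red$, with sesquilinearity being formal. The only (harmless) difference is cosmetic: the paper checks membership in $\Cinftycf(C)[[\lambda]]$ via the support estimate $\supp\bigl(\deform{\iota^*}(\cc{\prol(\phi)}\star\prol(\psi))\bigr)\subseteq\supp\phi\cap\supp\psi$ coming from the locality of $\star$ and $\deform{\iota^*}$ and the fact that $\prol$ preserves supports in $C$-directions, whereas you obtain it from the bidifferential form of $\bullet$ along $\iota^*$ together with the ideal and stability properties of $\Cinftycf(C)$.
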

\begin{proof}
    The sesquilinearity is clear. Even though $\prol$ is a non-local
    operation it preserves the ``support in $C$-directions''. Using
    the locality of $\star$ and $\deform{\iota^*}$ we see that
    \[
    \supp \left(
        \deform{\iota^*}\left(
            \cc{\prol(\phi)} \star \prol(\psi)
        \right)
    \right)
    \subseteq
    \supp \phi \cap \supp \psi.
    \]
    Thus the integrand of \eqref{eq:SPred} is indeed in
    $\Cinftycf(C)[[\lambda]]$. By
    Lemma~\ref{lemma:CinftycfProperties} it follows that the right
    hand side of \eqref{eq:SPred} is well-defined and yields an
    invariant smooth function on $C$. Hence it is of the form $\pi^*
    \SP{\phi, \psi}_\red$ with $\SP{\phi, \psi}_\red \in
    \Cinfty(M_\red)[[\lambda]]$ as claimed.
\end{proof}
The next technical lemma shows alternative ways to compute $\SP{\phi,
  \psi}_\red$. Here again we rewrite the integral over $G$ as an
integral over a suitable compact subset.
\begin{lemma}
    \label{lemma:LocalizeSPred}
    Let $\phi, \psi \in \Cinftycf(C)$ and let $U \subseteq C$ be
    open and pre-compact.
    \begin{compactenum}
    \item \label{item:GUphipsiDef} $G_{U, \phi, \psi} = \{g \in G \; |
        \; \textrm{there exists a} \; c \in U^\cl \; \textrm{with} \;
        \mathsf{L}_{g^{-1}}(c) \in \supp \phi \cap \supp \psi \cap
        \pi^{-1}(\pi(U^\cl))\}$ is a compact subset of $G$.
    \item \label{item:SPredLocally} One has
        \begin{align}
            \label{eq:SPredLocalI}
            \pi^*\SP{\phi, \psi}_\red \At{U}
            &=
            \deform{\iota^*} \int_{G_{U, \phi, \psi}}
            \mathsf{L}^*_{g^{-1}} \left(
                \cc{\prol(\phi)} \star \prol(\psi)
            \right)
            \Dleft g \At{U} \\
            &=
            \deform{\iota^*} \int_{G_{U, \phi, \psi}}
            \cc{\prol(\mathsf{L}^*_{g^{-1}}\phi)}
            \star
            \prol(\mathsf{L}^*_{g^{-1}} \psi)
            \Dleft g \At{U}
            \label{eq:SPredLocalII}.
        \end{align}
    \end{compactenum}
\end{lemma}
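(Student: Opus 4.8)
The plan is to treat both parts as a refinement of the compactness argument already used in the proof of Lemma~\ref{lemma:CinftycfProperties}, \refitem{item:IntegrateCcf}, keeping careful track of supports and of the equivariance properties of $\star$, $\prol$, $\deform{\iota^*}$, and complex conjugation.

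For \refitem{item:GUphipsiDef} the point is properness of the action. Since $\mathsf{L}$ acts properly, the map $G \times C \longrightarrow C \times C$, $(g, c) \mapsto (\mathsf{L}_g(c), c)$, is proper, and composing with the homeomorphism $g \mapsto g^{-1}$ of $G$ shows that $\Psi \colon G \times C \longrightarrow C \times C$, $\Psi(g, c) = (\mathsf{L}_{g^{-1}}(c), c)$, is proper as well. I would then set $K = \supp\phi \cap \supp\psi \cap \pi^{-1}(\pi(U^\cl))$, which is compact: $\pi(U^\cl)$ is compact, hence $\supp\phi \cap \pi^{-1}(\pi(U^\cl))$ is compact since $\phi \in \Cinftycf(C)$, and intersecting with the closed set $\supp\psi$ keeps it compact. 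By construction $G_{U, \phi, \psi}$ is the image of $\Psi^{-1}(K \times U^\cl)$ under the projection $\pr_1 \colon G \times C \to G$, and $\Psi^{-1}(K \times U^\cl)$ is compact as the preimage of a compact set under a proper map; hence $G_{U, \phi, \psi}$ is compact.

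For \refitem{item:SPredLocally}, fix $c \in U$ and abbreviate $F = \deform{\iota^*}(\cc{\prol(\phi)} \star \prol(\psi)) \in \Cinftycf(C)[[\lambda]]$; as observed in the proof of Lemma~\ref{lemma:AlgebraValuedIPWelldef} one has $\supp F \subseteq \supp\phi \cap \supp\psi$ thanks to the locality of $\star$ and Lemma~\ref{lemma:RestrictionLocal}. Because $\pi \circ \mathsf{L}_g = \pi$, the point $\mathsf{L}_{g^{-1}}(c)$ lies in $\pi^{-1}(\pi(c)) \subseteq \pi^{-1}(\pi(U^\cl))$, so $F(\mathsf{L}_{g^{-1}}(c)) \neq 0$ forces $\mathsf{L}_{g^{-1}}(c) \in K$, i.e. $g \in G_{U, \phi, \psi}$. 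Hence in \eqref{eq:SPred} the integral over $G$ may be replaced by the integral over the compact set $G_{U, \phi, \psi}$. Since $\deform{\iota^*} = \iota^* \circ S_\kappa$ is, in each order of $\lambda$, the composition of a differential operator on $M$ with the restriction $\iota^*$ by Lemma~\ref{lemma:RestrictionLocal} and \eqref{eq:deformedIotaDifferential}, and since it is $G$-equivariant by Lemma~\ref{lemma:DeformedHomotopy}, I can commute $\deform{\iota^*}$ past both the $G$-action and the now compactly supported integration to obtain \eqref{eq:SPredLocalI}. Finally, \eqref{eq:SPredLocalII} follows by pulling the individual factors through $\mathsf{L}^*_{g^{-1}}$ inside the integral: the $G$-invariance \eqref{eq:StarGInvariant} of $\star$ gives $\mathsf{L}^*_{g^{-1}}(\cc{\prol(\phi)} \star \prol(\psi)) = (\mathsf{L}^*_{g^{-1}} \cc{\prol(\phi)}) \star (\mathsf{L}^*_{g^{-1}} \prol(\psi))$, and the equivariance \eqref{eq:prolEquivariant} of $\prol$ together with \eqref{eq:ccProl} turns these two factors into $\cc{\prol(\mathsf{L}^*_{g^{-1}} \phi)}$ and $\prol(\mathsf{L}^*_{g^{-1}} \psi)$, respectively.

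The only genuinely non-formal ingredient is the compactness in \refitem{item:GUphipsiDef}, which is exactly where properness of the action enters; the rest is bookkeeping of supports and repeated use of already established equivariance and invariance properties. The one subtlety to be careful about is that $\deform{\iota^*}$ is \emph{not} a local operator, so the exchange of $\deform{\iota^*}$ with the $G$-integral must be justified via its differential-operator presentation from Lemma~\ref{lemma:RestrictionLocal} rather than by a naive locality argument.
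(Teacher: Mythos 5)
Your proof is correct and follows essentially the same route as the paper: compactness of $G_{U,\phi,\psi}$ from properness together with the compactness of $\supp\phi\cap\supp\psi\cap\pi^{-1}(\pi(U^\cl))$, restriction of the $G$-integral via the support estimate for $\deform{\iota^*}(\cc{\prol(\phi)}\star\prol(\psi))$, and then commuting $\deform{\iota^*}$ with the group action (equivariance) and with the compact integration (via $\deform{\iota^*}=\iota^*\circ S$), with the $G$-invariance of $\star$ and $\prol$ giving the second identity. Your explicit bookkeeping of the support argument and of the properness map $(g,c)\mapsto(\mathsf{L}_{g^{-1}}(c),c)$ merely fills in details the paper leaves to the reader.
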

\begin{proof}
    By assumptions $\supp \phi \cap \supp \psi \cap
    \pi^{-1}(\pi(U^\cl))$ is compact. Then the properness of the
    action assures that $G_{U, \phi, \psi}$ is compact as well. A
    similar argument as in the proof of
    Lemma~\ref{lemma:CinftycfProperties} shows that
    \[
    \pi^* \SP{\phi, \psi}_\red \At{U}
    =
    \int_{G_{U, \phi, \psi}}
    \mathsf{L}^*_{g^{-1}} \left(
        \deform{\iota^*}\left(
            \cc{\prol(\phi)} \star \prol(\psi)
        \right)
    \right) \Dleft g \At{U}.
    \]
    Now by $G$-invariance of $\deform{\iota^*}$ we can exchange the
    action of $g \in G_{U, \phi, \psi}$ with $\deform{\iota^*}$.
    Moreover, since $\deform{\iota^*} = \iota^* \circ S$ is in each
    order of $\lambda$ a differential operator followed by $\iota^*$,
    see Lemma~\ref{lemma:RestrictionLocal}, the integration over the
    compact subset $G_{U, \phi, \psi}$ can be exchanged with
    $\deform{\iota^*}$. Thus \eqref{eq:SPredLocalI} follows. Since
    $\star$ and $\prol$ are $G$-invariant, \eqref{eq:SPredLocalII}
    follows as well.
\end{proof}
\begin{lemma}
    \label{lemma:InnerProductRightLinear}
    The inner product $\SP{\cdot, \cdot}_\red$ is right
    $\starred$-linear, i.e. we have
    \begin{equation}
        \label{eq:InnerProductRightLinear}
        \SP{\phi, \psi \bulletred u}_\red
        = \SP{\phi, \psi}_\red \starred u
    \end{equation}
    for all $\phi, \psi \in \Cinftycf(C)[[\lambda]]$ and $u \in
    \Cinfty(M_\red)[[\lambda]]$.
\end{lemma}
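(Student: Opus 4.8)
The plan is to unfold both sides of \eqref{eq:InnerProductRightLinear} via the explicit formulas \eqref{eq:SPred}, \eqref{eq:ReducedRightModule}, \eqref{eq:ReducedStarProduct}, and to simplify everything modulo the left ideal $\deform{\mathcal{J}}_C = \ker\deform{\iota^*} = \image\deform{\koszul}_1$ (Proposition~\ref{proposition:LeftModuleStructure}, \refitem{item:JCisLeftIdeal}, and \eqref{eq:HomotopyForKNull}). The single point that needs care is that $\deform{\mathcal{J}}_C$ is \emph{only} a left ideal; this is compensated by the fact that $\prol(\pi^*u)$ lies in the normalizer $\deform{\mathcal{B}}_C$ of $\deform{\mathcal{J}}_C$ (Proposition~\ref{proposition:ReducedAlgebraBimodule}, \refitem{item:fInBC}, since $\deform{\iota^*}\prol(\pi^*u) = \pi^*u$), so that
\[
\deform{\mathcal{J}}_C \star \prol(\pi^*u) \subseteq \deform{\mathcal{J}}_C ,
\]
because $j \star b = b\star j - [b,j]_\star$ and, for $b\in\deform{\mathcal{B}}_C$ and $j\in\deform{\mathcal{J}}_C$, both terms on the right lie in $\deform{\mathcal{J}}_C$ by \eqref{eq:DeformedBCDef} and the left ideal property.

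First I would set $\chi = \cc{\prol(\phi)} \bullet \psi = \deform{\iota^*}\!\big(\cc{\prol(\phi)} \star \prol(\psi)\big)$, which lies in $\Cinftycf(C)[[\lambda]]$ by the support estimate in the proof of Lemma~\ref{lemma:AlgebraValuedIPWelldef}, and note that by \eqref{eq:SPred} one has $\pi^*\SP{\phi,\psi}_\red = \int_G \mathsf{L}^*_{g^{-1}} \chi \, \Dleft g$. Using $\prol(\deform{\iota^*} f) \equiv f \pmod{\deform{\mathcal{J}}_C}$ (which is \eqref{eq:HomotopyForKNull}) applied to $f=\prol(\psi)\star\prol(\pi^*u)$ and to $f=\cc{\prol(\phi)}\star\prol(\psi)$, together with the left ideal property and $\deform{\mathcal{J}}_C \star \prol(\pi^*u) \subseteq \deform{\mathcal{J}}_C$, the integrand defining $\pi^*\SP{\phi,\psi\bulletred u}_\red$ collapses as
\[
\deform{\iota^*}\!\big(\cc{\prol(\phi)} \star \prol(\psi \bulletred u)\big)
= \deform{\iota^*}\!\big(\cc{\prol(\phi)} \star \prol(\psi) \star \prol(\pi^*u)\big)
= \deform{\iota^*}\!\big(\prol(\chi) \star \prol(\pi^*u)\big)
= \chi \bulletred u .
\]
Hence $\pi^*\SP{\phi,\psi\bulletred u}_\red = \int_G \mathsf{L}^*_{g^{-1}}(\chi \bulletred u)\,\Dleft g = \int_G (\mathsf{L}^*_{g^{-1}}\chi)\bulletred u \,\Dleft g$ by the $G$-invariance \eqref{eq:RightModuleGInvariant} of $\bulletred$.

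It then remains to interchange the operator $\,\cdot\,\bulletred u$ with the $G$-average. This operator is a formal series of differential operators in its first argument (Proposition~\ref{proposition:ReducedAlgebraBimodule}, \refitem{item:RightModuleStructure}, and Lemma~\ref{lemma:RestrictionLocal}), and — exactly as in Lemmas~\ref{lemma:CinftycfProperties} and~\ref{lemma:LocalizeSPred}, using $\chi\in\Cinftycf(C)[[\lambda]]$ and properness of the action — on every precompact open $U\subseteq C$ the integral over $G$ reduces to one over a \emph{compact} subset $G_U\subseteq G$, so differentiation commutes with integration. Since $\int_{G_U}\mathsf{L}^*_{g^{-1}}\chi\,\Dleft g$ agrees with $\pi^*\SP{\phi,\psi}_\red$ on $U$, we get on $U$
\[
\pi^*\SP{\phi,\psi\bulletred u}_\red
= \Big(\int_{G_U}\mathsf{L}^*_{g^{-1}}\chi\,\Dleft g\Big)\bulletred u
= \big(\pi^*\SP{\phi,\psi}_\red\big)\bulletred u
= \pi^*\big(\SP{\phi,\psi}_\red \starred u\big),
\]
the last equality by $1\bulletred(\cdot)=\pi^*(\cdot)$ (Proposition~\ref{proposition:ReducedAlgebraBimodule}, \refitem{item:EinsBulletu}) and associativity of the bimodule, equivalently by \eqref{eq:ReducedStarProduct}. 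As $U$ is arbitrary and $\pi^*$ is injective, \eqref{eq:InnerProductRightLinear} follows. The one genuine obstacle is the inclusion $\deform{\mathcal{J}}_C\star\prol(\pi^*u)\subseteq\deform{\mathcal{J}}_C$ used above: without the normalizer property of $\prol(\pi^*u)$ there would be no way to move $\prol(\pi^*u)$ past $\deform{\iota^*}$. The remaining work — in particular the exchange of the non-compact $G$-integral with $\,\cdot\,\bulletred u$ — is routine given the support/properness arguments already set up for $\Cinftycf(C)$.
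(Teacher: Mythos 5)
Your proposal is correct and follows essentially the same route as the paper's proof: rewrite the integrand of $\SP{\phi,\psi\bulletred u}_\red$ as $\left(\cc{\prol(\phi)}\bullet\psi\right)\bulletred u$, localize the $G$-integral to a compact subset $G_{U,\phi,\psi}$ as in Lemma~\ref{lemma:LocalizeSPred} using $\supp(\psi\bulletred u)\subseteq\supp\psi$, commute the (differential) right action of $u$ with the integration, and conclude via \eqref{eq:ReducedStarProduct} resp. $1\bulletred v=\pi^*v$. The only difference is cosmetic: you re-derive the bimodule compatibility from the left-ideal and normalizer properties of $\deform{\mathcal{J}}_C$ and $\prol(\pi^*u)$, which the paper simply cites as the established bimodule structure of Proposition~\ref{proposition:ReducedAlgebraBimodule}.
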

\begin{proof}
    First note that it suffices to consider $\phi, \psi \in
    \Cinftycf(C)$ and $u \in \Cinfty(M_\red)$. We evaluate
    \eqref{eq:InnerProductRightLinear} on an open subset $U \subseteq
    C$ after pulling it back to $C$. In addition, we can assume $U$ to
    be pre-compact. Then let $G_{U, \phi, \psi}$ be as in
    Lemma~\ref{lemma:LocalizeSPred}, \refitem{item:GUphipsiDef}. For
    the integrand we have
    \[
    \deform{\iota^*} \left(
        \cc{\prol(\phi)} \star \prol(\psi \bulletred u)
    \right)
    =
    \left(\cc{\prol(\phi)} \bullet \psi\right) \bulletred u
    =
    \deform{\iota^*} \left(
        \prol\left(
            \deform{\iota^*} \left(
                \cc{\prol(\phi)} \star \prol(\psi)
            \right)
        \right)
        \star
        \prol(\pi^*u)
    \right)
    \]
    by the bimodule properties as in
    Proposition~\ref{proposition:ReducedAlgebraBimodule}. Since
    $\supp(\psi \bulletred u) \subseteq \supp \psi$, by
    Lemma~\ref{lemma:LocalizeSPred} we get on the open subset $U$
    \begin{align*}
        \pi^* \SP{\phi, \psi \bulletred u}_\red \At{U}
        &=
        \deform{\iota^*}
        \int_{G_{U, \phi, \psi}}
        \mathsf{L}^*_{g^{-1}} \left(
            \prol\left(
                \deform{\iota^*}\left(
                    \cc{\prol(\phi)} \star \prol(\psi)
                \right)
            \right)
            \star \prol (\pi^*u)
        \right)
        \Dleft g \At{U} \\
        &=
        \deform{\iota^*}
        \left(
            \prol \left(
                \int_{G_{U, \phi, \psi}}
                \mathsf{L}^*_{g^{-1}} 
                \left(
                    \deform{\iota^*}
                    \left(
                        \cc{\prol(\phi)} \star \prol(\psi)
                    \right)
                \right)
                \Dleft g
            \right)
            \At{U}
            \star
            \prol(\pi^*u) \At{U}
        \right) \\
        &=
        \deform{\iota^*}
        \left(
            \prol \left(\pi^*\SP{\phi, \psi}_\red \At{U}\right)
            \star 
            \prol\left(\pi^*u\At{U}\right)
        \right) \\
        &=
        \pi^* \left(\SP{\phi, \psi}_\red \starred u\right) \At{U},
    \end{align*}
    where we have used that $\prol$ commutes with the integration
    thanks to the invariance. Moreover, we used the fact that we can
    restrict to open subsets on $C$: even though $\prol$ is non-local,
    the nice tubular neighbourhood shows that this is possible.
\end{proof}
\begin{lemma}
    \label{lemma:SPredSymmetrie}
    Let $\phi, \psi \in \Cinftycf(C)[[\lambda]]$. Then we have
    \begin{equation}
        \label{eq:SPredSymmetric}
        \cc{\SP{\phi, \psi}_\red} = \SP{\psi, \phi}_\red.
    \end{equation}
\end{lemma}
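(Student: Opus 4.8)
The plan is to verify the claimed identity after pulling back along the surjective submersion $\pi\colon C\to M_\red$: since $\pi^*$ is injective and commutes with complex conjugation, it suffices to prove $\cc{\pi^*\SP{\phi,\psi}_\red}=\pi^*\SP{\psi,\phi}_\red$. I would establish this by exhibiting $\pi^*\SP{\phi,\psi}_\red$ as $\deform{\iota^*}$ applied to a \emph{$G$-invariant} function, so that Corollary~\ref{corollary:ccIotaOnGInvariantFunctions} applies directly.

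Concretely, choose a $G$-invariant cutoff $\chi\in\Cinfty(M)$ which equals $1$ on an open neighbourhood of $C$ and has compact support in the fibre directions of the tubular neighbourhood $M_\nice$ of $C$, exactly as in the proof of Theorem~\ref{theorem:GNS}. Set $F:=\cc{\chi\prol(\phi)}\star(\chi\prol(\psi))$. Then $\supp F\subseteq\supp\chi$, so $F$ has compact support in fibre directions; moreover, since $\chi\equiv1$ near $C$ and $\star$ is bidifferential, $F$ agrees with $\cc{\prol(\phi)}\star\prol(\psi)$ on a neighbourhood of $C$, whence $\deform{\iota^*}F=\deform{\iota^*}(\cc{\prol(\phi)}\star\prol(\psi))$ by the locality in Lemma~\ref{lemma:RestrictionLocal}. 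Form $\hat F:=\int_G\mathsf{L}^*_{g^{-1}}F\,\Dleft g$; by properness of the $G$-action on $M$ and the fibrewise compactness of $\supp F$, the argument of Lemma~\ref{lemma:CinftycfProperties} shows $\hat F$ is a well-defined smooth function on $M_\nice$, and it is $G$-invariant because the left Haar measure is left-invariant. Using that $\deform{\iota^*}$ is $G$-equivariant and, being $\iota^*\circ S_\kappa$ with $S_\kappa$ differential, commutes with integration over the relevant compact subsets of $G$ (cf.\ the localization in Lemma~\ref{lemma:LocalizeSPred}), I obtain
\begin{equation*}
    \pi^*\SP{\phi,\psi}_\red
    =\int_G\mathsf{L}^*_{g^{-1}}\!\left(\deform{\iota^*}\left(\cc{\prol(\phi)}\star\prol(\psi)\right)\right)\Dleft g
    =\int_G\mathsf{L}^*_{g^{-1}}\!\left(\deform{\iota^*}F\right)\Dleft g
    =\deform{\iota^*}(\hat F).
\end{equation*}

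Now Corollary~\ref{corollary:ccIotaOnGInvariantFunctions} gives $\cc{\deform{\iota^*}(\hat F)}=\deform{\iota^*}(\cc{\hat F})$. Complex conjugation commutes with $\mathsf{L}^*_{g^{-1}}$ and with the (real) Haar integration, and since $\star$ is Hermitian, $\cc\chi=\chi$, and $\cc{\prol(\phi)}=\prol(\cc\phi)$, one gets $\cc{\hat F}=\int_G\mathsf{L}^*_{g^{-1}}\!\left(\cc{\chi\prol(\psi)}\star(\chi\prol(\phi))\right)\Dleft g$, which is precisely the $\hat F$ attached to the pair $(\psi,\phi)$. Hence $\cc{\pi^*\SP{\phi,\psi}_\red}=\deform{\iota^*}(\cc{\hat F})=\pi^*\SP{\psi,\phi}_\red$, and injectivity of $\pi^*$ completes the argument.

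The main obstacle is purely one of bookkeeping about supports: $\prol$ is non-local and spreads support transversally to $C$, so $\cc{\prol(\phi)}\star\prol(\psi)$ need not be a legitimate integrand over $G$; the fibrewise cutoff $\chi$ repairs this, and one then needs the localization of the $G$-integral over compact subsets (as in Lemma~\ref{lemma:LocalizeSPred}) to justify interchanging $\deform{\iota^*}$ with the averaging. Once this is in place the computation is immediate. Alternatively one can avoid $\chi$ and mimic the proof of \eqref{eq:ccintCf} in Lemma~\ref{lemma:ScalarProductTechnicalities}: apply Corollary~\ref{corollary:ccDeformedIota} to $\cc{\prol(\phi)}\star\prol(\psi)$, average over $G$, and use $\Delta(e_a)A^a=B$ together with the identity $\int_G\mathsf{L}^*_{g^{-1}}(\Lie_{(e_a)_C}w)\,\Dleft g=-\Delta(e_a)\int_G\mathsf{L}^*_{g^{-1}}w\,\Dleft g$ — the analogue for the Haar average of $\int_C\Lie_{(e_a)_C}w\,\mu=-\Delta(e_a)\int_C w\,\mu$, obtained by differentiating the modular behaviour of Haar measure under right translations — to cancel the correction terms.
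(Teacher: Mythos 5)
Your argument is correct, and it rests on the same two essential ingredients as the paper's proof, namely Corollary~\ref{corollary:ccIotaOnGInvariantFunctions} (which is where $\kappa=\tfrac{1}{2}$ enters) and the Hermiticity of $\star$; what differs is how you organize the support and invariance bookkeeping. The paper localizes on $C$: it restricts to a pre-compact $U\subseteq C$, replaces the $G$-average by an integral over the compact subset $G_{U,\phi,\psi}$ (Lemma~\ref{lemma:LocalizeSPred}), and then invokes \eqref{eq:ccIotaInvariantf}, which is slightly loose since the localized integral is only invariant in a neighbourhood-of-$U$ sense. You instead introduce a $G$-invariant transverse cutoff $\chi$ on $M$ and average $F=\cc{\chi\prol(\phi)}\star(\chi\prol(\psi))$ over all of $G$, producing an honestly $G$-invariant $\hat F\in\Cinfty(M)[[\lambda]]$ with $\pi^*\SP{\phi,\psi}_\red=\deform{\iota^*}\hat F$, so that the hypothesis of Corollary~\ref{corollary:ccIotaOnGInvariantFunctions} is literally satisfied; the price is that the averaging now takes place on $M$ rather than on $C$, and its well-definedness, smoothness, and the interchange of $\deform{\iota^*}=\iota^*\circ S$ with $\int_G$ require exactly the properness/locally-uniform-compactness argument of Lemmas~\ref{lemma:CinftycfProperties} and~\ref{lemma:LocalizeSPred}, using both the transverse compactness supplied by $\chi$ and the $\Cinftycf$-property of $\phi,\psi$ along $C$ --- you flag this, and it goes through. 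One caveat on your closing alternative (via Corollary~\ref{corollary:ccDeformedIota} and an averaged integration by parts): the sign in $\int_G\mathsf{L}^*_{g^{-1}}\bigl(\Lie_{(e_a)_C}w\bigr)\Dleft g=\pm\,\Delta(e_a)\int_G\mathsf{L}^*_{g^{-1}}w\Dleft g$ depends on the modular-function conventions and is not checked in your sketch, so the claimed cancellation of the $A^a$- and $B$-terms would need verification; this does not affect your main argument, which is complete as it stands.
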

\begin{proof}
    Again, it will be sufficient to consider $\phi, \psi \in
    \Cinftycf(C)$. Let $U \subseteq C$ be open and pre-compact and let
    $G_{U, \phi, \psi}$ be as in
    Lemma~\ref{lemma:LocalizeSPred},~\refitem{item:GUphipsiDef}. Then
    we compute
    \begin{align*}
        \cc{\SP{\phi, \psi}_\red}\At{U}
        &=
        \cc{
          \deform{\iota^*}
          \int_{G_{U, \phi, \psi}}
          \mathsf{L}^*_{g^{-1}}
          \left(
              \cc{\prol(\phi)} \star \prol(\psi)
          \right) \Dleft g
        } \At{U} \\
        &=
        \deform{\iota^*}
        \int_{G_{U, \phi, \psi}}
        \mathsf{L}^*_{g^{-1}}
        \left(
            \cc{
              \cc{\prol(\phi)} \star \prol(\psi)
            }
        \right) \Dleft g
        \At{U} \\
        &=
        \pi^*\SP{\psi, \phi}_\red \At{U},
    \end{align*}
    since first the integration yields some invariant functions
    allowing to use \eqref{eq:ccIotaInvariantf} and, second, the star
    product $\star$ is Hermitian.
\end{proof}
\begin{lemma}
    \label{lemma:NondegenerateAndClassicalLimitOfSPred}
    Let $\phi, \psi \in \Cinftycf(C)[[\lambda]]$ then $\SP{\phi,
      \phi}_\red = 0$ iff $\phi = 0$. Moreover, the classical limit of
    the inner product is
    \begin{equation}
        \label{eq:ClassicalLimit}
        \pi^*\SP{\phi, \psi}_\red
        =
        \int_G \mathsf{L}^*_{g^{-1}} (\cc{\phi}\psi) \Dleft g
        + \cdots. 
    \end{equation}
\end{lemma}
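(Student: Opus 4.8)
The plan is to treat the two assertions separately, in each case by a straightforward order-by-order analysis in the formal parameter $\lambda$, using the explicit formula \eqref{eq:SPred} for the inner product. I would do the classical limit first, since it also pins down the leading term needed for non-degeneracy.

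For the classical limit \eqref{eq:ClassicalLimit}, I would evaluate \eqref{eq:SPred} in zeroth order of $\lambda$: there $\star$ reduces to the pointwise product and, by Lemma~\ref{lemma:RestrictionLocal} (writing $\deform{\iota^*}=\iota^*\circ S$ with $S=\id+O(\lambda)$), the deformed restriction $\deform{\iota^*}$ reduces to $\iota^*$. Hence the integrand $\deform{\iota^*}\bigl(\cc{\prol(\phi)}\star\prol(\psi)\bigr)$ becomes $\iota^*\bigl(\cc{\prol(\phi)}\,\prol(\psi)\bigr)=(\iota^*\cc{\prol(\phi)})\,(\iota^*\prol(\psi))=\cc{\phi}\,\psi$ using \eqref{eq:ProlisProl} and \eqref{eq:ccProl}, and the $G$-average then gives exactly \eqref{eq:ClassicalLimit}. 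In particular this identifies the zeroth order of $\SP{\cdot,\cdot}_\red$ as the descent to $M_\red$ of the $G$-averaged pairing $(\phi,\psi)\mapsto\int_G\mathsf{L}^*_{g^{-1}}(\cc{\phi}\psi)\Dleft g$.

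For the non-degeneracy I would argue by contradiction; the implication $\phi=0\Rightarrow\SP{\phi,\phi}_\red=0$ being trivial, assume $\phi\neq0$ and write $\phi=\sum_{r\ge r_0}\lambda^r\phi_r$ with $\phi_{r_0}\neq0$. Since $\SP{\cdot,\cdot}_\red$ is $\mathbb{C}[[\lambda]]$-sesquilinear with zeroth order as just computed, the unique term of lowest total degree $\lambda^{2r_0}$ in $\SP{\phi,\phi}_\red$ is $\lambda^{2r_0}$ times the descent to $M_\red$ of $\int_G\mathsf{L}^*_{g^{-1}}\bigl(\cc{\phi_{r_0}}\phi_{r_0}\bigr)\Dleft g$; note $\cc{\phi_{r_0}}\phi_{r_0}\in\Cinftycf(C)$ because $\Cinftycf(C)$ is an ideal stable under complex conjugation, so this $G$-average is a well-defined smooth function by Lemma~\ref{lemma:CinftycfProperties}. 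Picking $c_0\in C$ with $\phi_{r_0}(c_0)\neq0$, the non-negative continuous function $g\mapsto|\phi_{r_0}(\mathsf{L}_{g^{-1}}(c_0))|^2$ is strictly positive at $g=e$, hence bounded below by a positive constant on an open neighbourhood of $e$, which carries positive left Haar measure; therefore the integral is strictly positive at $c_0$. Thus the $\lambda^{2r_0}$-coefficient of $\SP{\phi,\phi}_\red$ does not vanish at $\pi(c_0)$, contradicting $\SP{\phi,\phi}_\red=0$, and I conclude $\phi=0$.

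The only non-formal point — and hence the main obstacle, such as it is — is the convergence and smoothness of the $G$-integrals appearing throughout. This is exactly what the support condition defining $\Cinftycf(C)$ together with the properness of the $G$-action buys us, as already established in Lemma~\ref{lemma:CinftycfProperties} and used in Lemma~\ref{lemma:AlgebraValuedIPWelldef}; once those facts are invoked the argument is entirely routine.
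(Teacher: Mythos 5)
Your argument is correct and follows essentially the same route as the paper: the classical limit is read off from $\deform{\iota^*}=\iota^*\circ S$ with $S=\id+O(\lambda)$ together with $\iota^*\prol=\id$, and non-degeneracy is then deduced from the strict positivity of the $G$-averaged integral of $|\phi_{r_0}|^2$ at the lowest non-vanishing order of $\phi$ (the paper phrases this as an induction on that order, which is the same mechanism as your contradiction argument).
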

\begin{proof}
    The classical limit~\eqref{eq:ClassicalLimit} is clear. From this
    we also conclude the first statement by induction on the lowest
    non-vanishing order of $\phi$.
\end{proof}
We can now collect these results in the following theorem:
\begin{theorem}
    \label{theorem:AlgebraValuedInnerProduct}
    The inner product $\SP{\cdot, \cdot}_\red$ turns
    $\Cinftycf(C)[[\lambda]]$ into an inner product right module over
    $(\Cinfty(M_\red)[[\lambda]], \starred)$.
\end{theorem}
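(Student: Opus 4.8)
The plan is to assemble the properties established in Lemmas~\ref{lemma:AlgebraValuedIPWelldef}, \ref{lemma:InnerProductRightLinear}, \ref{lemma:SPredSymmetrie}, and \ref{lemma:NondegenerateAndClassicalLimitOfSPred} and to check them against the definition of an inner product right module from Section~\ref{subsec:AlgebraValuedInnerProducts}, with $\mathcal{A} = (\Cinfty(M_\red)[[\lambda]], \starred)$ and the $^*$-involution given by the complex conjugation. First I would recall that, by Corollary~\ref{corollary:CcfSubModule}, the space $\Cinftycf(C)[[\lambda]]$ is a right $(\Cinfty(M_\red)[[\lambda]], \starred)$-module via $\bulletred$, so the underlying module structure is in place; it also carries the obvious compatible $\mathbb{C}[[\lambda]]$-module structure required in that section.

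Next I would run through the three axioms of an $\mathcal{A}$-valued inner product. The $\mathbb{C}[[\lambda]]$-linearity in the second argument, together with the facts that $\SP{\cdot,\cdot}_\red$ is well defined on $\Cinftycf(C)[[\lambda]]$ and takes values in $\Cinfty(M_\red)[[\lambda]]$, is exactly Lemma~\ref{lemma:AlgebraValuedIPWelldef}. The Hermitian symmetry $\SP{\phi,\psi}_\red = \cc{\SP{\psi,\phi}_\red}$, which for the complex-conjugation involution is precisely the requirement $\SPA{x,y} = (\SPA{y,x})^*$, is Lemma~\ref{lemma:SPredSymmetrie}. Right $\starred$-linearity $\SP{\phi,\psi\bulletred u}_\red = \SP{\phi,\psi}_\red \starred u$ is Lemma~\ref{lemma:InnerProductRightLinear}. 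It remains to record non-degeneracy: the definition asks that $\SP{\phi,\psi}_\red = 0$ for all $\psi$ imply $\phi = 0$, and this is immediate from the stronger statement of Lemma~\ref{lemma:NondegenerateAndClassicalLimitOfSPred} by taking $\psi = \phi$.

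I do not expect any genuine obstacle in the theorem itself: it is essentially a bookkeeping statement, and the substantive work has already been carried out in the preceding lemmas. In particular, the one delicate point --- that the $G$-average in \eqref{eq:SPred} is well defined and produces a smooth invariant function on $C$, hence an element of $\Cinfty(M_\red)[[\lambda]]$ --- rests on the properness of the action (which allows one to localise the integral over $G$ to an integral over a compact subset, cf.\ Lemma~\ref{lemma:LocalizeSPred}) and on the locality of $\deform{\iota^*}$ from Lemma~\ref{lemma:RestrictionLocal}; all of this is already available. If anything needs a word of care, it is only the reduction of the ``for all $\psi$'' form of non-degeneracy to the case $\psi = \phi$, which one may alternatively read off directly from the classical limit~\eqref{eq:ClassicalLimit} by an order-by-order argument on the lowest non-vanishing power of $\lambda$.
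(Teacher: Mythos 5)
Your proposal is correct and follows exactly the paper's route: the theorem is stated there as a pure collection of Lemma~\ref{lemma:AlgebraValuedIPWelldef}, Lemma~\ref{lemma:InnerProductRightLinear}, Lemma~\ref{lemma:SPredSymmetrie}, and Lemma~\ref{lemma:NondegenerateAndClassicalLimitOfSPred}, together with the module structure from Corollary~\ref{corollary:CcfSubModule}, and your observation that the ``for all $\psi$'' non-degeneracy follows from the stronger diagonal statement is exactly the right bookkeeping.
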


%
% Further properties of $\SP{\cdot, \cdot}_\red$
%

\subsection{Further properties of $\SP{\cdot, \cdot}_\red$}
\label{subsec:FurtherPropertiesOfSPred}

Since also the action of $\Cinfty(M)[[\lambda]]$ via $\bullet$ on
$\Cinfty(C)[[\lambda]]$ is by differential operators in each order,
$\Cinftycf(C)[[\lambda]]$ is also a left $\star$-submodule of
$\Cinfty(C)[[\lambda]]$ and hence a $(\star, \starred)$-bimodule. The
following proposition shows that the action is by adjointable
operators and yields thus a $^*$-representation:
\begin{proposition}
    \label{proposition:LeftModuleIsRepresentation}
    The left module structure $\bullet$ is a $^*$-representation of
    $(\Cinfty(M)[[\lambda]], \star)$ on the inner product right
    $(\Cinfty(M_\red)[[\lambda]], \starred)$-module
    $\Cinftycf(C)[[\lambda]]$, i.e. we have for all $\phi, \psi \in
    \Cinftycf(C)[[\lambda]]$ and $f \in \Cinfty(M)[[\lambda]]$
    \begin{equation}
        \label{eq:fbulletIsAdjointable}
        \SP{\phi, f \bullet \psi}_\red
        =
        \SP{\cc{f} \bullet \phi, \psi}_\red.
    \end{equation}
\end{proposition}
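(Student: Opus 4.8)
The plan is to reduce, after a localisation step, to exactly the chain of algebraic manipulations already carried out in the proof of Proposition~\ref{proposition:SPmuIsScalarProduct}, with the scalar integral $\int_C(\,\cdot\,)\,\mu$ there replaced here by the $\Cinfty(M_\red)[[\lambda]]$-valued $G$-average defining $\SP{\cdot,\cdot}_\red$. Since both sides of \eqref{eq:fbulletIsAdjointable} are $\mathbb{C}[[\lambda]]$-sesquilinear in $(\phi,\psi)$ and $\mathbb{C}[[\lambda]]$-linear in $f$, I would first reduce to $\phi,\psi\in\Cinftycf(C)$ and $f\in\Cinfty(M)$ with no higher orders of $\lambda$, and then use the symmetry Lemma~\ref{lemma:SPredSymmetrie} to rephrase \eqref{eq:fbulletIsAdjointable} as $\SP{\phi, f\bullet\psi}_\red = \cc{\SP{\psi, \cc{f}\bullet\phi}_\red}$. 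As these are smooth functions on $M_\red$, it suffices to verify the identity after pulling back along $\pi$ and restricting to $\pi^{-1}(U)$ for an arbitrary pre-compact open $U\subseteq C$.

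Fixing such a $U$, the key device is Lemma~\ref{lemma:LocalizeSPred}: since $\supp(f\bullet\psi)\subseteq\supp\psi$ and $\supp(\cc{f}\bullet\phi)\subseteq\supp\phi$ by the locality of $\star$ and $\deform{\iota^*}$ (Lemma~\ref{lemma:RestrictionLocal}), the single compact set $G_{U,\phi,\psi}\subseteq G$ controls on $U$ the $G$-integrals occurring in $\SP{\phi, f\bullet\psi}_\red$, in $\SP{\phi,\psi}_\red$ and in $\SP{\psi,\cc{f}\bullet\phi}_\red$ at once. Using the definition of $\bullet$ (Definition~\ref{definition:DeformedLeftModule}), that $\bullet$ is a left $\star$-module structure, and the Hermiticity of $\star$ --- precisely as in the proof of Proposition~\ref{proposition:SPmuIsScalarProduct} --- one gets the pointwise identity $\deform{\iota^*}\big(\cc{\prol(\phi)}\star\prol(f\bullet\psi)\big) = \deform{\iota^*}\big(\cc{(\cc{f}\star\prol(\phi))}\star\prol(\psi)\big)$. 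Because $\deform{\iota^*} = \iota^*\circ S$ with $S$ a formal series of $G$-invariant differential operators (Lemma~\ref{lemma:RestrictionLocal}), it commutes with the integration over the compact set $G_{U,\phi,\psi}$ and with $\mathsf{L}^*_{g^{-1}}$; hence $\pi^*\SP{\phi, f\bullet\psi}_\red\at{U}$ equals $\deform{\iota^*}$ applied to the $G$-average of $\cc{(\cc{f}\star\prol(\phi))}\star\prol(\psi)$.

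The crucial point --- the reduced analogue of the identity \eqref{eq:ccintCf} used in the $\mu$-case --- is that on $U$ this integral coincides with the full $G$-average, which is an \emph{invariant} function there; Corollary~\ref{corollary:ccIotaOnGInvariantFunctions} then allows me to pull the complex conjugation through $\deform{\iota^*}$. Conjugating the integrand via the Hermiticity of $\star$ and associativity turns $\cc{(\cc{f}\star\prol(\phi))}\star\prol(\psi)$ into $\cc{\prol(\psi)}\star(\cc{f}\star\prol(\phi)) = \cc{f\star\prol(\psi)}\star\prol(\phi)$, and running Lemma~\ref{lemma:LocalizeSPred} backwards identifies the outcome with $\cc{\pi^*\SP{\psi,\cc{f}\bullet\phi}_\red}\at{U}$. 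Since $U$ was arbitrary this gives $\SP{\phi, f\bullet\psi}_\red = \cc{\SP{\psi,\cc{f}\bullet\phi}_\red}$, and one last application of Lemma~\ref{lemma:SPredSymmetrie} rewrites the right-hand side as $\SP{\cc{f}\bullet\phi,\psi}_\red$, which is \eqref{eq:fbulletIsAdjointable}.

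The only real obstacle is the bookkeeping forced by the non-locality of $\prol$ and $\deform{\iota^*}$ together with the fact that the $G$-integration is only locally, not globally, an integral over a compact domain; this is exactly what the localisation to pre-compact $U$ and Lemma~\ref{lemma:LocalizeSPred} are designed to handle. The conceptually essential input is that $G$-averaging produces $G$-invariant functions, so that the easy Corollary~\ref{corollary:ccIotaOnGInvariantFunctions} applies rather than the more involved Corollary~\ref{corollary:ccDeformedIota} --- morally the same cancellation of modular terms, hinging on the choice $\kappa=\frac{1}{2}$, that already appeared in Lemma~\ref{lemma:ScalarProductTechnicalities}.
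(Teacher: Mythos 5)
Your proposal is correct and follows essentially the same route as the paper's proof: localization to a pre-compact $U \subseteq C$ via Lemma~\ref{lemma:LocalizeSPred}, the rewriting of the integrand using the left $\star$-module property of $\bullet$ and the Hermiticity of $\star$ exactly as in Proposition~\ref{proposition:SPmuIsScalarProduct}, and Corollary~\ref{corollary:ccIotaOnGInvariantFunctions} to pull the complex conjugation through $\deform{\iota^*}$ on the $G$-invariant averaged function, finishing with Lemma~\ref{lemma:SPredSymmetrie}. The only cosmetic slip is the phrase ``restricting to $\pi^{-1}(U)$'' for $U \subseteq C$, where you simply mean restricting $\pi^*(\cdot)$ to $U$ itself, as in the paper.
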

\begin{proof}
    Again, we consider $\phi, \psi \in \Cinftycf(C)$ and an open and
    pre-compact $U \subseteq C$ with $G_{U, \phi, \psi}$ as in
    Lemma~\ref{lemma:LocalizeSPred}, \refitem{item:GUphipsiDef}. Since
    $f$ acts by differential operators we have $\supp(f \bullet \psi)
    \subseteq \supp \psi$ as well as $\supp(\cc{f} \bullet \phi)
    \subseteq \supp \phi$. Analogously to the proof of
    Lemma~\ref{lemma:InnerProductRightLinear} we first compute
    \begin{align*}
        \deform{\iota^*} \left(
            \cc{\prol(\phi)} \star \prol (f \bullet \psi)
        \right)
        &=
        \cc{\prol(\phi)} \bullet (f \bullet \psi) \\
        &=
        \left(
            \cc{\prol(\phi)} \star f
        \right)
        \bullet \psi \\
        &=
        \left(
            \cc{\cc{f} \star \prol(\phi)}
        \right)
        \bullet \psi \\
        &=
        \deform{\iota^*}
        \left(
            \left(
                \cc{\cc{f} \star \prol(\phi)}
            \right)
            \star \prol(\psi)
        \right) \\
        &=
        \deform{\iota^*}
        \left(
            \cc{
              \cc{\prol(\psi)}
              \star
              \left(
                  \cc{f} \star \prol(\phi)
              \right)
            }
        \right).
    \end{align*}
    Using this we can take out the complex conjugation under the
    averaging integral since we have invariant functions thanks to
    \eqref{eq:ccIotaInvariantf}. This gives
    \[
    \pi^*\SP{\phi, f \bullet \psi}_\red \At{U}
    =
    \cc{
      \int_{G_{U, \phi, \psi}}
      \mathsf{L}^*_{g^{-1}} \deform{\iota^*}
      \left(
          \cc{\prol(\psi)} \star (\cc{f} \star \prol(\phi))
      \right)
      \Dleft g
    } \At{U},
    \]
    where we again used Lemma~\ref{lemma:LocalizeSPred}. Now
    analogously one shows that
    \[
    \deform{\iota^*} \left(
        \cc{\prol(\psi)} \star \prol(\cc{f} \bullet \phi)
    \right)
    =
    \deform{\iota^*} \left(
        \cc{\prol(\psi)} \star
        (\cc{f} \star \prol(\phi))
    \right),
    \]
    using the definition of the left module structure. Putting these
    together shows \eqref{eq:fbulletIsAdjointable}.
\end{proof}

The last feature of $\SP{\cdot, \cdot}_\red$ we want to discuss is the
$G$-invariance. In fact, the $G$-action of $\Cinftycf(C)[[\lambda]]$
turns out to be unitary with respect to $\SP{\cdot, \cdot}_\red$ up to
the modular function:
\begin{proposition}
    \label{proposition:GactsUnitarily}
    Let $\phi, \psi \in \Cinftycf(C)[[\lambda]]$ and $g \in G$ then
    \begin{equation}
        \label{eq:GactsAlmostUnitarily}
        \SP{\mathsf{L}^*_{g^{-1}} \phi,
          \mathsf{L}^*_{g^{-1}} \psi}_\red
        =
        \Delta(g) \SP{\phi, \psi}_\red.
    \end{equation}
\end{proposition}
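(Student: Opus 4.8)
The plan is to reduce the identity to the local computation using a pre-compact open set $U \subseteq C$ and the compact subset $G_{U,\phi,\psi}$ from Lemma~\ref{lemma:LocalizeSPred}, exactly as in the proofs of Lemma~\ref{lemma:InnerProductRightLinear} and Lemma~\ref{lemma:SPredSymmetrie}. First I would note that it suffices to treat $\phi, \psi \in \Cinftycf(C)$ without higher orders of $\lambda$, since both sides are $\mathbb{C}[[\lambda]]$-sesquilinear and $\Delta(g)$ is a scalar. Then I would fix $g \in G$ and a pre-compact open $U$, and use \eqref{eq:SPredLocalI} to write $\pi^*\SP{\mathsf{L}^*_{g^{-1}}\phi, \mathsf{L}^*_{g^{-1}}\psi}_\red\at{U}$ as $\deform{\iota^*}$ applied to an integral over a suitable compact subset of $G$ of $\mathsf{L}^*_{h^{-1}}\bigl(\cc{\prol(\mathsf{L}^*_{g^{-1}}\phi)} \star \prol(\mathsf{L}^*_{g^{-1}}\psi)\bigr)$ in $h$.

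The key step is then a change of variables in the $G$-integral. By $G$-invariance of $\prol$ (equation~\eqref{eq:prolEquivariant}) and of $\star$ (equation~\eqref{eq:StarGInvariant}), the integrand equals $\mathsf{L}^*_{h^{-1}}\mathsf{L}^*_{g^{-1}}\bigl(\cc{\prol(\phi)} \star \prol(\psi)\bigr) = \mathsf{L}^*_{(gh)^{-1}}\bigl(\cc{\prol(\phi)} \star \prol(\psi)\bigr)$, using that $\mathsf{L}$ is a left action so $\mathsf{L}^*_{h^{-1}} \circ \mathsf{L}^*_{g^{-1}} = \mathsf{L}^*_{(gh)^{-1}}$. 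Substituting $k = gh$ in the integral over $h$, the left Haar measure transforms as $\Dleft h = \Dleft(g^{-1}k)$; here the modular function of $G$ enters, since left translation is not measure-preserving for left Haar measure --- precisely, $\int_G F(g^{-1}k)\,\Dleft k = \Delta(g)\int_G F(h)\,\Dleft h$ with our convention for $\Delta$ (this is the standard relation $\Dright g = \Delta(g)\,\Dleft g$ combined with left-invariance, and one should double-check the exponent against the convention \eqref{eq:muModularWeight} fixed earlier). Carrying this out, the integral over $k$ reproduces $\pi^*\SP{\phi,\psi}_\red$ up to the factor $\Delta(g)$, and since $U$ was arbitrary pre-compact the global identity \eqref{eq:GactsAlmostUnitarily} follows.

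The main obstacle I expect is bookkeeping with the modular function: getting the direction of the inequality right --- i.e. whether the factor is $\Delta(g)$ or $\Delta(g)^{-1}$ --- requires care about whether we integrate over $h$ or over $gh$, about the left-versus-right Haar conventions, and about the sign convention in $\Delta(\xi) = \tr\ad(\xi)$ versus its group-level exponentiation $\Delta: G \to \mathbb{R}^+$. The cleanest way to pin this down is to differentiate at $g = \exp(t\xi)$: the infinitesimal version of \eqref{eq:GactsAlmostUnitarily} reads $\SP{\Lie_{\xi_C}\phi, \psi}_\red + \SP{\phi, \Lie_{\xi_C}\psi}_\red = \Delta(\xi)\SP{\phi,\psi}_\red$, which can be verified directly from the definition \eqref{eq:SPred} by moving $\Lie_{\xi_C}$ past the averaging integral and using that the left action of $\xi$ on the Haar measure produces exactly $\Delta(\xi)$ (compare the identity $\Lie_{\xi_C}\mu = \Delta(\xi)\mu$ used in Lemma~\ref{lemma:ScalarProductTechnicalities}); this fixes the convention unambiguously and, since $G$ is connected, the finite statement follows by integrating the flow. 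A minor secondary point is that one must make sure the change of variables is legitimate at the level of the $\Cinftycf$-support condition, but this is immediate because $\mathsf{L}_g$ maps $\Cinftycf(C)$ to itself and $G_{U,\phi,\psi}$ is compact, so all integrals in sight are honest integrals over compact sets.
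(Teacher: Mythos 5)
Your overall strategy is the paper's own: since $\prol$, $\star$ and $\deform{\iota^*}$, and hence the integrand in \eqref{eq:SPred}, are $G$-equivariant, the claim reduces to the same change of variables in the averaging integral as for the classical limit \eqref{eq:ClassicalLimit}, with the localization to the compact set $G_{U,\phi,\psi}$ taking care of convergence. However, as literally written your key step would \emph{not} produce the factor $\Delta(g)$: for a left action one has $\mathsf{L}^*_{h^{-1}}\mathsf{L}^*_{g^{-1}} = \mathsf{L}^*_{(hg)^{-1}}$, not $\mathsf{L}^*_{(gh)^{-1}}$, so the substitution you need is $k = hg$, a \emph{right} translation of the integration variable; and, contrary to what you assert, the left Haar measure $\Dleft g$ \emph{is} invariant under left translations --- it is the right translation that fails to preserve it and is responsible for the modular factor, $\int_G F(hg)\,\Dleft h = \Delta(g)\int_G F(k)\,\Dleft k$ with the paper's convention for $\Delta$. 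With your composition $(gh)^{-1}$ and the true left invariance of $\Dleft g$, the integral would come out unchanged, i.e.\ you would ``prove'' that $\mathsf{L}^*_{g^{-1}}$ is unitary with no factor $\Delta(g)$ at all, which is false unless $G$ is unimodular.

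Your fallback of fixing the convention by the infinitesimal statement is sensible, but the identity you propose also carries the wrong sign: with the paper's conventions ($\xi_C$ the fundamental vector field of the \emph{left} action, $\Delta(\xi) = \tr\ad(\xi)$ and $\Delta: G \to \mathbb{R}^+$ its exponentiation), differentiating \eqref{eq:GactsAlmostUnitarily} at $g = \exp(t\xi)$ gives
\begin{equation*}
    \SP{\Lie_{\xi_C}\phi, \psi}_\red + \SP{\phi, \Lie_{\xi_C}\psi}_\red
    = - \Delta(\xi)\, \SP{\phi, \psi}_\red,
\end{equation*}
not $+\Delta(\xi)$; this is exactly what one gets by combining \eqref{eq:Jxibulletphi} for $\kappa = \tfrac{1}{2}$ with the adjointability \eqref{eq:fbulletIsAdjointable} applied to the real functions $J_\xi$, which is the cleanest cross-check. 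So the route is correct and easily repaired, but the bookkeeping that you yourself identified as the main obstacle is carried out incorrectly at both the finite and the infinitesimal level.
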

\begin{proof}
    Using the $G$-equivariance of all operators involved in the
    definition of $\SP{\cdot, \cdot}_\red$ the computation is
    analogous to the one for the classical
    limit~\eqref{eq:ClassicalLimit} which is clear.
\end{proof}

Thus we obtain a unitary (left) representation of $G$ with respect to
$\SP{\cdot, \cdot}_\red$ if we set
\begin{equation}
    \label{eq:UnitaryRepresentationOfG}
    U(g) \phi = \frac{1}{\sqrt{\Delta(g)}} \mathsf{L}^*_{g^{-1}} \phi.
\end{equation}
The infinitesimal version of this action is given by the left
multiplication with the components of the momentum map as in
\eqref{eq:Jxibulletphi}: here we again see that $\kappa = \frac{1}{2}$
is the good choice to get this unitarity. From
Proposition~\ref{proposition:LeftModuleStructure},
\refitem{item:LeftModuleGInvariant} we see that the left module
structure is covariant:
\begin{corollary}
    \label{corollary:LeftModuleCovariant}
    The $^*$-representation $\bullet$ if $(\Cinfty(M)[[\lambda]],
    \star)$ on $\Cinftycf(C)[[\lambda]]$ is $G$-covariant with respect
    to the unitary representation $U$ of $G$ as in
    \eqref{eq:UnitaryRepresentationOfG}, i.e.
    \begin{equation}
        \label{eq:CovariantRepresentation}
        U(g) (f \bullet \phi)
        =
        (\mathsf{L}^*_{g^{-1}} f) \bullet (U(g)\phi)
    \end{equation}
    for $f \in \Cinfty(M)[[\lambda]]$, $g \in G$, and $\phi \in
    \Cinftycf(C)[[\lambda]]$.
\end{corollary}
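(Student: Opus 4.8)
The plan is to reduce the claim directly to the $G$-invariance of the left module structure $\bullet$ already recorded in Proposition~\ref{proposition:LeftModuleStructure},~\refitem{item:LeftModuleGInvariant}, namely to equation~\eqref{eq:LeftModuleGInvariant}. There is no genuine obstacle here; the only thing one has to keep track of is the scalar normalization $1/\sqrt{\Delta(g)}$ occurring in the definition~\eqref{eq:UnitaryRepresentationOfG} of $U(g)$, which is a positive real (in particular central) constant and therefore interacts trivially with the $\mathbb{C}[[\lambda]]$-bilinear map $\bullet$.

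Concretely, I would first replace $g$ by $g^{-1}$ in \eqref{eq:LeftModuleGInvariant}, obtaining $\mathsf{L}^*_{g^{-1}}(f \bullet \phi) = (\mathsf{L}^*_{g^{-1}} f) \bullet (\mathsf{L}^*_{g^{-1}} \phi)$ for all $f \in \Cinfty(M)[[\lambda]]$ and $\phi \in \Cinfty(C)[[\lambda]]$; this restricts to $\phi \in \Cinftycf(C)[[\lambda]]$ because that subspace is $G$-stable by Lemma~\ref{lemma:CinftycfProperties},~\refitem{item:CcfStableDiffOps}. Multiplying both sides by the scalar $1/\sqrt{\Delta(g)}$ and using $\mathbb{C}[[\lambda]]$-linearity of $\bullet$ in the second argument to move that scalar inside the $\bullet$-product on the right, one reads the two sides as
\[
U(g)(f \bullet \phi) = \tfrac{1}{\sqrt{\Delta(g)}}\,\mathsf{L}^*_{g^{-1}}(f \bullet \phi) = (\mathsf{L}^*_{g^{-1}} f)\bullet\bigl(\tfrac{1}{\sqrt{\Delta(g)}}\,\mathsf{L}^*_{g^{-1}}\phi\bigr) = (\mathsf{L}^*_{g^{-1}} f)\bullet(U(g)\phi),
\]
which is exactly \eqref{eq:CovariantRepresentation}.

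For completeness I would also remark, as the surrounding discussion suggests, that $U$ as in \eqref{eq:UnitaryRepresentationOfG} genuinely is a unitary representation of $G$ on the inner product right module $\Cinftycf(C)[[\lambda]]$: the representation property $U(gh) = U(g)U(h)$ follows from $\mathsf{L}^*_{(gh)^{-1}} = \mathsf{L}^*_{h^{-1}}\mathsf{L}^*_{g^{-1}}$ together with the multiplicativity $\Delta(gh) = \Delta(g)\Delta(h)$ of the modular function, while unitarity with respect to $\SP{\cdot,\cdot}_\red$ is Proposition~\ref{proposition:GactsUnitarily}, i.e. \eqref{eq:GactsAlmostUnitarily}, after dividing by $\Delta(g)$. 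Thus the covariance of $\bullet$ fits into the standard framework of covariant $^*$-representations, and no further argument is needed.
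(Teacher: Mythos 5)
Your proof is correct and follows exactly the route the paper takes: the corollary is stated as an immediate consequence of the $G$-invariance \eqref{eq:LeftModuleGInvariant} from Proposition~\ref{proposition:LeftModuleStructure}, with the scalar $1/\sqrt{\Delta(g)}$ passing through the $\mathbb{C}[[\lambda]]$-bilinear module structure, which is precisely what you do. Your additional check that $U$ is indeed a unitary representation (via multiplicativity of $\Delta$ and Proposition~\ref{proposition:GactsUnitarily}) is consistent with the surrounding text and adds nothing that conflicts with the paper.
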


%
% A strong Morita equivalence bimodule
%

\section{A strong Morita equivalence bimodule}
\label{sec:StrongMoritaEquivalenceBimodule}

In this section we establish that the bimodule structure and inner
product $\SP{\cdot, \cdot}_\red$ on $\Cinftycf(C)[[\lambda]]$ actually
gives a strong Morita equivalence bimodule between
$\Cinfty(M_\red)[[\lambda]]$ and the finite rank operators on
$\Cinftycf(C)[[\lambda]]$.

%
% Strong Morita equivalence of $^*$-algebras
%

\subsection{Strong Morita equivalence of $^*$-algebras}
\label{subsec:StrongMoritaEquivalence}

There are many approaches to Morita theory of $^*$-algebras, see
e.g. \cite{ara:1999a, bursztyn.waldmann:2005b} for a detailed
discussion and further references. We recall the basic notions: If
$\BEA \in \smod[\mathcal{A}](\mathcal{B})$ and $\CFB \in
\smod[\mathcal{B}](\mathcal{C})$ are $^*$-representations on inner
product modules then on their algebraic tensor product one defines an
$\mathcal{A}$-valued inner product by
\begin{equation}
    \label{eq:RieffelInnerProduct}
    \SPFEA{\phi \otimes x, \psi \otimes y}
    = \SPEA{x, \SPFB{\phi, \psi} \cdot y}
\end{equation}
and extends this by $\ring{C}$-sesquilinearity to $\mathcal{F}
\tensor[\mathcal{B}] \mathcal{E}$. It follows that this is again an
inner product except, however, it might be degenerate. Thus one
considers the quotient
\begin{equation}
    \label{eq:InternalTensorProduct}
    \CFB \itensor[\mathcal{B}] \BEA
    =
    \CFB \tensor[\mathcal{B}] \BEA 
    \big/ \left(\CFB \tensor[\mathcal{B}] \BEA\right)^\bot,
\end{equation}
and obtains a $^*$-representation of $\mathcal{C}$ on an inner product
right $\mathcal{A}$-module, called the \emph{internal tensor product}
$\CFB \itensor[\mathcal{B}] \BEA$. It turns out that $\itensor$ gives
a functor
\begin{equation}
    \label{eq:itensorFunctor}
    \itensor:
    \smod[\mathcal{B}](\mathcal{C}) 
    \times
    \smod[\mathcal{A}](\mathcal{B}) 
    \longrightarrow
    \smod[\mathcal{A}](\mathcal{C}),
\end{equation}
which preserves strongly non-degenerate $^*$-representations. This
allows for the following definition: a $^*$-representation $\BEA \in
\sMod[\mathcal{A}](\mathcal{B})$ is called a \emph{$^*$-equivalence
  bimodule} if there exists another $^*$-representation $\AtEB \in
\smod[\mathcal{B}](\mathcal{A})$ such that 
\begin{equation}
    \label{eq:StarEquivalence}
    \AtEB \itensor[\mathcal{B}] \BEA \cong \mathcal{A}
    \quad
    \textrm{and}
    \quad
    \BEA \itensor[\mathcal{A}] \AtEB \cong \mathcal{B},
\end{equation}
where isomorphism is understood as unitary intertwiner of
$^*$-representations with the algebras being equipped with the
canonical inner product, i.e. $\SPA{a, a'} = a^*a'$. In order to have
$\mathcal{A} \in \smod[\mathcal{A}](\mathcal{A})$ we have to restrict
ourselves to \emph{non-degenerate} $^*$-algebras, i.e. $ab = 0$ for
all $b$ implies $a = 0$. Moreover, one has to require the
$^*$-algebras to be \emph{idempotent}, i.e. $\mathcal{A} \cdot
\mathcal{A} = \mathcal{A}$. Every unital $^*$-algebra is both,
non-degenerate and idempotent. For this class of $^*$-algebras it can
be checked that the existence of a $^*$-equivalence bimodule defines
indeed an equivalence relation, called \emph{$^*$-Morita equivalence}.

The following characterization of $^*$-equivalence bimodules will be
very useful for us: $\BEA \in \smod[\mathcal{A}](\mathcal{B})$ is a
$^*$-equivalence bimodule iff the following holds: First, $\mathcal{B}
\cdot \mathcal{E} = \mathcal{E} = \mathcal{E} \cdot \mathcal{A}$, i.e.
both module structures are strongly non-degenerate. Second, there is a
$\mathcal{B}$-valued inner product $\BSP{\cdot, \cdot}$ with $\BSP{x
  \cdot a, y} = \BSP{x, y \cdot a^*}$ for all $x, y \in \mathcal{E}$
and $a \in \mathcal{A}$. Third, both inner products are \emph{full},
i.e. $\SPA{\mathcal{E}, \mathcal{E}} = \mathcal{A}$ and
$\BSP{\mathcal{E}, \mathcal{E}} = \mathcal{B}$. Last, the inner
products are compatible, i.e. $\BSP{x, y} \cdot z = x \cdot \SPA{y,
  z}$ for all $x, y, z \in \mathcal{E}$. In this case, the ``inverse''
bimodule $\AtEB$ can be chosen to be the complex conjugate bimodule
$\AccEB$, defined in an obvious way. Moreover, $\mathcal{B}$ turns out
to be $^*$-isomorphic to the finite rank operators
$\Finite[\mathcal{A}](\EA)$ via the left module structure and
$\BSP{\cdot, \cdot}$ is just $\Theta_{\cdot, \cdot}$ as in
\eqref{eq:RankOneOperator} under this identification, see
\cite{bursztyn.waldmann:2005b} for a detailed discussion.

In fact, we only have to find a strongly non-degenerate right
$\mathcal{A}$-module $\EA$ with a full inner product $\SPA{\cdot,
  \cdot}$ then $\mathcal{B} = \Finite[\mathcal{A}](\EA)$ is
$^*$-Morita equivalent to $\mathcal{A}$ via $\BEA$ where the
$\Finite[\mathcal{A}](\EA)$-valued inner product is $\Theta_{\cdot,
  \cdot}$. All $^*$-Morita equivalent $^*$-algebras to $\mathcal{A}$
arise this way up to $^*$-isomorphism.

Let us finally recall the notion of strong Morita equivalence. First
recall that $a \in \mathcal{A}$ is called \emph{positive} if
$\omega(a) \ge 0$ for all positive linear functionals $\omega:
\mathcal{A} \longrightarrow \ring{C}$. This allows to define an inner
product $\SPA{\cdot, \cdot}$ on a right $\mathcal{A}$-module $\EA$ to
be \emph{completely positive} if the matrix $(\SPA{x_i, x_j}) \in
M_n(\mathcal{A})$ is positive for all $x_1, \ldots, x_n \in \EA$ and
$n \in \mathbb{N}$.  This gives a refined notion of inner product
modules: an inner product right $\mathcal{A}$-module is called
\emph{pre Hilbert right $\mathcal{A}$-module} if the inner product is
completely positive. The category of $^*$-representations of
$\mathcal{B}$ on pre Hilbert right $\mathcal{A}$-modules is denoted by
$\rep[\mathcal{A}](\mathcal{B})$ and the sub-category of strongly
non-degenerate ones by $\Rep[\mathcal{A}](\mathcal{B})$. It can be
shown that the internal tensor product $\itensor$ preserves complete
positivity of the inner products. Moreover, the canonical inner
product on $\mathcal{A}$ is easily shown to be completely
positive. This allows to define a $^*$-equivalence bimodule to be a
\emph{strong equivalence bimodule} if both inner products are
completely positive. This way one arrives at the notion of
\emph{strong Morita equivalence}.

One of the most important consequences of $^*$-Morita equivalence and
strong Morita equivalence is that the $^*$-representation theories
$\sMod[\mathcal{D}](\cdot)$ and $\Rep[\mathcal{D}](\cdot)$ of
equivalent $^*$-algebras are equivalent, even for arbitrary
``coefficient $^*$-algebra'' $\mathcal{D}$. In fact, every equivalence
bimodule $\BEA$ provides an equivalence of categories by the internal
tensor product
\begin{equation}
    \label{eq:RieffelInductions}
    \BEA \itensor[\mathcal{A}]:
    \sMod[\mathcal{D}](\mathcal{A})
    \longrightarrow
    \sMod[\mathcal{D}](\mathcal{B})
    \quad
    \textrm{and}
    \quad
    \BEA \itensor[\mathcal{A}]:
    \Rep[\mathcal{D}](\mathcal{A})
    \longrightarrow
    \Rep[\mathcal{D}](\mathcal{B}),
\end{equation}
in the case of strong Morita equivalence, respectively. We refer to
\cite{bursztyn.waldmann:2005b} for further details on strong Morita
equivalence of $^*$-algebras.

%
% Fullness and the finite rank operators
%

\subsection{Fullness and the finite rank operators}
\label{subsec:FullnessFiniteRankOperators}

We want to investigate the inner product bimodule
$\Cinftycf(C)[[\lambda]]$ from the point of view of Morita theory. The
first result is the fullness based on the following lemma:
\begin{lemma}
    \label{lemma:Fullness} There exists a function $\deform{e} \in
    \Cinftycf(C)[[\lambda]]$ such that
    \begin{equation}
        \label{eq:SPeeIsEins}
        \SP{\deform{e}, \deform{e}}_\red = 1.
    \end{equation}
\end{lemma}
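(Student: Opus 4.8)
The plan is to start from the function $\epsilon \in \Cinftycf(C)$ produced by Lemma~\ref{lemma:CinftycfProperties},~\refitem{item:FullnessFunction}, which satisfies $\int_G \mathsf{L}^*_{g^{-1}} \epsilon \, \Dleft g = 1$, and to use it to build the desired $\deform{e}$ order by order in $\lambda$. The natural first guess is $\deform{e} = \sqrt{\epsilon}$ (or rather a smooth function whose square is $\epsilon$ near its support), but this need not be smooth where $\epsilon$ vanishes; a cleaner approach is to choose $0 \le \chi_0 \in \Cinftycf(C)$ with $\int_G \mathsf{L}^*_{g^{-1}} \chi_0^2 \, \Dleft g = 1$ directly, which is possible by the same partition-of-unity construction as in the proof of Lemma~\ref{lemma:CinftycfProperties}: on each local trivialization $\pi^{-1}(U_\alpha) \cong U_\alpha \times G$ put $(\sqrt{\chi_\alpha} \otimes \sqrt{\chi})\circ \Phi_\alpha^{-1}$ and sum (using that the $\chi_\alpha$ have locally finite supports so the sum is locally finite and the cross terms can be controlled, or simply work with a single trivialization patch when $M_\red$ admits one, then patch). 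By Lemma~\ref{lemma:NondegenerateAndClassicalLimitOfSPred} the classical limit of $\SP{\chi_0, \chi_0}_\red$ is then $\pi^*$ of the constant function $1$, i.e.\ $\SP{\chi_0, \chi_0}_\red = 1 + \lambda r_1 + \lambda^2 r_2 + \cdots$ for some $r_j \in \Cinfty(M_\red)$.

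The second step is to correct the higher orders. Since $\SP{\cdot,\cdot}_\red$ is right $\starred$-linear by Lemma~\ref{lemma:InnerProductRightLinear} and $\Hermitian$-symmetric by Lemma~\ref{lemma:SPredSymmetrie}, we look for $\deform{e} = \chi_0 \bulletred u$ with $u = 1 + \lambda u_1 + \cdots \in \Cinfty(M_\red)[[\lambda]]$ real, so that
\begin{equation*}
    \SP{\deform{e}, \deform{e}}_\red
    = \SP{\chi_0 \bulletred u, \chi_0 \bulletred u}_\red
    = \cc{u} \starred \SP{\chi_0, \chi_0}_\red \starred u .
\end{equation*}
Writing $S = \SP{\chi_0, \chi_0}_\red = 1 + \lambda r_1 + \cdots$, which is a real central-in-zeroth-order element, we need $\cc{u} \starred S \starred u = 1$. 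Since $S$ is a positive invertible element starting at $1$, it has a $\starred$-square root: more precisely, one solves $\cc{u}\starred S\starred u = 1$ recursively in $\lambda$, the zeroth order being $u_0 = 1$ and the order-$\lambda^k$ equation determining the real part of $u_k$ (or simply $u_k$ if one restricts to real $u$) from already-known lower-order data, since the leading symbol of the map $u_k \mapsto u_k + \cc{u_k} = 2u_k$ is invertible. Hence a real solution $u \in \Cinfty(M_\red)[[\lambda]]$ exists.

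Finally, $\deform{e} = \chi_0 \bulletred u$ lies in $\Cinftycf(C)[[\lambda]]$ by Corollary~\ref{corollary:CcfSubModule}, and by construction $\SP{\deform{e}, \deform{e}}_\red = \cc{u}\starred S \starred u = 1$, which is the claim. I expect the only genuine subtlety to be the smoothness of the starting function $\chi_0$ together with the locally-finite patching across trivializations (ensuring $\chi_0 \in \Cinftycf(C)$ and that the average of $\chi_0^2$ is exactly $1$ rather than merely invertible) — everything after that is the routine formal recursion for a $\starred$-square root, which works because $\starred$ is a Hermitian star product quantizing $M_\red$.
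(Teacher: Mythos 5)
Your proposal is essentially the paper's proof: start from a non-negative averaging function, observe that its $\SP{\cdot,\cdot}_\red$-square has strictly positive classical limit, and then correct it by acting from the right (via $\bulletred$) with the inverse of a Hermitian $\starred$-square root, using right $\starred$-linearity and the symmetry of the inner product. The one place you deviate is the attempt to normalize the classical part to be \emph{exactly} $1$, i.e.\ to find $\chi_0\ge 0$ with $\int_G \mathsf{L}^*_{g^{-1}}\chi_0^2\,\Dleft g = 1$; as sketched this does not quite work, since on overlaps of different trivializations the cross terms $(\chi_\alpha\otimes\chi)\circ\Phi_\alpha^{-1}\cdot(\chi_\beta\otimes\chi)\circ\Phi_\beta^{-1}$ are non-negative but generally non-zero, so the $G$-average of $\chi_0^2$ comes out $\ge 1$ rather than $=1$ (and pointwise square roots of a partition of unity need not be smooth, though a quadratic partition of unity fixes that). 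Fortunately this exactness is not needed at all: your own recursion for $\cc{u}\starred S\starred u = 1$ only requires the zeroth order $S_0$ to be everywhere positive (take $u_0 = S_0^{-1/2}$), which is precisely how the paper proceeds -- it uses the function $\epsilon$ from Lemma~\ref{lemma:CinftycfProperties} directly, notes $\SP{\epsilon,\epsilon}_\red = u_0 + \cdots$ with $u_0>0$ and real, and sets $\deform{e} = \epsilon \bulletred \bigl(\sqrt[\starred]{\SP{\epsilon,\epsilon}_\red}\bigr)^{-1}$. So drop the exact-normalization step and your argument is complete and coincides with the paper's.
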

\begin{proof}
    We consider the function $\epsilon \in \Cinftycf(C)$ from
    Lemma~\ref{lemma:CinftycfProperties},
    \refitem{item:FullnessFunction}. Since each term $\Phi_\alpha^*
    (\chi_\alpha \otimes \chi) \ge 0$ is already non-negative, we
    obtain
    \[
    \pi^*\SP{\epsilon, \epsilon}_\red
    =
    \sum_{\alpha, \beta} \int_G 
    \mathsf{L}^*_{g^{-1}} 
    \left(
        \cc{\Phi_\alpha^*(\chi_\alpha \otimes \chi)}
        \Phi_\beta^*(\chi_\beta \otimes \chi)
    \right)
    \Dleft g + \cdots
    \]
    by \eqref{eq:ClassicalLimit}. Now all integrands are non-negative
    and since $\int_G \mathsf{L}^*_{g^{-1}} \epsilon \Dleft g = 1$ we
    see that already the diagonal terms give a strictly positive
    contribution.  It follows that $\SP{\epsilon, \epsilon}_\red = u_0
    + \cdots \in \Cinfty(M_\red)[[\lambda]]$ with $u_0 > 0$. Since in
    addition $\cc{\SP{\epsilon, \epsilon}_\red} = \SP{\epsilon,
      \epsilon}_\red$ is Hermitian, we can take a (Hermitian) square
    root with respect to $\starred$ of the form
    $\sqrt[\starred]{\SP{\epsilon, \epsilon}} = \sqrt{u_0} +
    \cdots$. Clearly, this is invertible hence $\deform{e} = \epsilon
    \starred \frac{1}{\sqrt[\starred]{\SP{\epsilon, \epsilon}}}$ will
    do the job.
\end{proof}
Note that if $G$ is compact and $\Dleft g$ normalized to volume $1$
then $\deform{e} = 1 \in \Cinftycf(C)$ would be a canonical choice.
\begin{proposition}[$^*$-Equivalence bimodule]
    \label{proposition:StarEquivalenceBimodule}
    For $\Cinftycf(C)[[\lambda]]$ we have:
    \begin{compactenum}
    \item \label{item:Fullness} The inner product $\SP{\cdot,
          \cdot}_\red$ is full.
    \item \label{item:StarEquivalenceBimodule} The inner product right
        $\Cinfty(M_\red)[[\lambda]]$-module $\Cinftycf(C)[[\lambda]]$
        becomes a $^*$-equivalence bimodule for the finite rank
        operators $\Finite(\Cinftycf(C)[[\lambda]])$ acting from the
        left as usual with $\Theta_{\cdot, \cdot}$ as inner product.
    \item \label{item:Cyclic} As left
        $\Finite(\Cinftycf(C)[[\lambda]])$-module,
        $\Cinftycf(C)[[\lambda]]$ is cyclic with cyclic vector
        $\deform{e}$. Moreover, for all $\phi \in
        \Cinftycf(C)[[\lambda]]$
        \begin{equation}
            \label{eq:DualBasis}
            \phi = \Theta_{\phi, \deform{e}} (\deform{e}).
        \end{equation}
    \item \label{item:HermitianDualBasis} The pair $(\deform{e},
        \deform{e})$ constitutes a Hermitian dual basis hence
        $\Cinftycf(C)[[\lambda]]$ is finitely generated (by
        $\deform{e}$) and projective over
        $\Finite(\Cinftycf(C)[[\lambda]])$.
    \item \label{item:ThetaIsCompletelyPositive} The inner product
        $\Theta_{\cdot, \cdot}$ is completely positive.
    \end{compactenum}
\end{proposition}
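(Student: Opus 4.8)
The plan is to obtain parts~\refitem{item:Fullness}, \refitem{item:Cyclic} and \refitem{item:HermitianDualBasis} directly from Lemma~\ref{lemma:Fullness}, to derive part~\refitem{item:StarEquivalenceBimodule} from the general characterization of $^*$-equivalence bimodules recalled in Section~\ref{subsec:StrongMoritaEquivalence}, and to treat part~\refitem{item:ThetaIsCompletelyPositive} as the only substantial point.

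First I would fix the vector $\deform{e} \in \Cinftycf(C)[[\lambda]]$ with $\SP{\deform{e}, \deform{e}}_\red = 1$ from Lemma~\ref{lemma:Fullness}. Since $\phi \bulletred 1 = \deform{\iota^*}(\prol(\phi) \star 1) = \phi$, the right $\starred$-module $\Cinftycf(C)[[\lambda]]$ is unital, hence strongly non-degenerate, and the right $\starred$-linearity of Lemma~\ref{lemma:InnerProductRightLinear} gives $u = \SP{\deform{e}, \deform{e}}_\red \starred u = \SP{\deform{e}, \deform{e} \bulletred u}_\red$ for all $u$, so $\SP{\cdot,\cdot}_\red$ is full; this is part~\refitem{item:Fullness}. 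For part~\refitem{item:Cyclic} I would simply compute, using \eqref{eq:RankOneOperator}, $\Theta_{\phi, \deform{e}}(\deform{e}) = \phi \bulletred \SP{\deform{e}, \deform{e}}_\red = \phi \bulletred 1 = \phi$, which both proves \eqref{eq:DualBasis} and shows $\deform{e}$ is cyclic for the left $\Finite(\Cinftycf(C)[[\lambda]])$-action. The same identity says that $(\deform{e},\deform{e})$ is a Hermitian dual basis in the sense of \cite{bursztyn.waldmann:2005b}; then $p := \Theta_{\deform{e},\deform{e}}$ satisfies $p^* = p$ and, by the composition rule $\Theta_{y_1,x_1}\Theta_{y_2,x_2} = \Theta_{y_1 \bulletred \SP{x_1,y_2}_\red,\, x_2}$ together with $\SP{\deform{e},\deform{e}}_\red = 1$, also $p^2 = p$, and $A \mapsto A(\deform{e})$ is an isomorphism of left modules from $\Finite(\Cinftycf(C)[[\lambda]]) \cdot p$ onto $\Cinftycf(C)[[\lambda]]$; this gives part~\refitem{item:HermitianDualBasis}. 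Part~\refitem{item:StarEquivalenceBimodule} is then immediate from the characterization in Section~\ref{subsec:StrongMoritaEquivalence}: $\Cinftycf(C)[[\lambda]]$ is a strongly non-degenerate right module over the unital --- hence idempotent and non-degenerate --- $^*$-algebra $(\Cinfty(M_\red)[[\lambda]],\starred)$ with a non-degenerate (Theorem~\ref{theorem:AlgebraValuedInnerProduct}) and full inner product, whence it is a $^*$-equivalence bimodule for $\Finite(\Cinftycf(C)[[\lambda]])$ acting from the left, with $\Theta_{\cdot,\cdot}$ as the $\Finite$-valued inner product. The required compatibilities $\Theta_{x,y}(z) = x \bulletred \SP{y,z}_\red$ and $\Theta_{x \bulletred u, y} = \Theta_{x, y \bulletred u^*}$ follow from the definition of $\Theta$ and from Lemma~\ref{lemma:SPredSymmetrie} together with the Hermitian property of $\starred$.

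The hard part is part~\refitem{item:ThetaIsCompletelyPositive}, which by the general theory of \cite{bursztyn.waldmann:2005b} follows once we know that $\SP{\cdot,\cdot}_\red$ itself is completely positive; the classical limit $\int_G \mathsf{L}^*_{g^{-1}}(\cc{\phi}\psi)\,\Dleft g$ of Lemma~\ref{lemma:NondegenerateAndClassicalLimitOfSPred} is a $G$-averaged Gram form and hence obviously completely positive in zeroth order, but the formal statement has to be proven by hand. My plan for it is: (i) using sesquilinearity together with Lemmas~\ref{lemma:InnerProductRightLinear} and \ref{lemma:SPredSymmetrie}, rephrase positivity of the matrices $(\SP{\phi_i,\phi_j}_\red)$ --- and, more generally, the complete positivity needed to compare $^*$-representations over an arbitrary auxiliary coefficient $^*$-algebra $\mathcal{D}$ over $\mathbb{C}[[\lambda]]$ --- as positivity of $\SP{\psi,\psi}_\red$ for $\psi$ in the $\mathcal{D}$-valued version of the module; (ii) localize via a trivialization $\pi^{-1}(U) \cong U \times G$ and formula~\eqref{eq:SPredLocalII}, so that on $U$ the inner product becomes $\deform{\iota^*}\int_{G_{U,\psi,\psi}} \cc{\prol(\mathsf{L}^*_{g^{-1}}\psi)} \star \prol(\mathsf{L}^*_{g^{-1}}\psi)\,\Dleft g$; (iii) integrate against $\pi^*\rho\,\mu$ for $0 \le \rho \in \Cinfty_0(M_\red)$: this reproduces $\int_{M_\red} \rho \cdot \SP{\psi,\psi}_\red\, \Omega = \SP{\psi,\psi}_{\mu'}$ with $\mu' = \pi^*\rho\,\mu$, which is $\ge 0$ by the (semidefinite version of the) argument of Proposition~\ref{proposition:SPmuIsScalarProduct} and Lemma~\ref{lemma:PositiveFunctionalAndGelfandIdeal}, run with the $\mathcal{D}$-valued coefficients throughout. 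The main obstacle I anticipate is exactly this upgrade from plain to complete positivity: one must check that the density construction of the functionals $\omega_\mu$ still detects positivity after tensoring with an arbitrary coefficient algebra $\mathcal{D}$, so that testing $\SP{\cdot,\cdot}_\red$ against all such GNS representations of $(\Cinfty(M_\red)[[\lambda]],\starred)$ genuinely forces complete positivity --- which is why the whole construction of Section~\ref{subsec:PositiveFunctional} has to be re-examined with coefficients.
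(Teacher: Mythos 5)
Parts \refitem{item:Fullness}--\refitem{item:HermitianDualBasis} of your proposal follow essentially the paper's route: fullness and \eqref{eq:DualBasis} from $\SP{\deform{e},\deform{e}}_\red = 1$ together with unitality of $\starred$, and part \refitem{item:StarEquivalenceBimodule} from the general characterization of $^*$-equivalence bimodules via $\Finite$; your explicit idempotent $p = \Theta_{\deform{e},\deform{e}}$ is just a slightly more detailed version of the paper's ``Hermitian dual basis'' remark. The divergence is in part \refitem{item:ThetaIsCompletelyPositive}, and there your plan has a genuine gap. The paper does \emph{not} need, and at this stage does not have, complete positivity of $\SP{\cdot,\cdot}_\red$ (that is the much harder Theorem \ref{theorem:CPOfAllInnerProducts}, proved later by the local analysis of Section \ref{subsec:CompletePositivity}). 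Instead it uses a purely algebraic trick based on $\deform{e}$: for $\phi_1,\dots,\phi_n$ the matrix $\Phi = (\Theta_{\phi_i,\phi_j})$ factorizes as $\Phi = \Psi^*\Psi$, where $\Psi$ has $\Theta_{\deform{e},\phi_i}$ in its first row and zeros elsewhere, since $\Theta_{\phi_i,\deform{e}}\,\Theta_{\deform{e},\phi_j} = \Theta_{\phi_i \bulletred \SP{\deform{e},\deform{e}}_\red,\,\phi_j} = \Theta_{\phi_i,\phi_j}$ by \eqref{eq:SPeeIsEins}. Hence $\Phi$ is positive by definition, and no positivity property of $\SP{\cdot,\cdot}_\red$ enters at all. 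This is the key idea you are missing.

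Your substitute argument for complete positivity of $\SP{\cdot,\cdot}_\red$ would not work as sketched. Positivity of the element $\SP{\psi,\psi}_\red$ in $(\Cinfty(M_\red)[[\lambda]],\starred)$ means nonnegativity under \emph{all} positive $\mathbb{C}[[\lambda]]$-linear functionals of the \emph{deformed} algebra; testing only against the classical integration functionals $u \mapsto \int_{M_\red}\rho\,u\,\Omega$ (your step (iii)) establishes nothing of the sort, and these functionals are in general not even positive for $\starred$. Moreover $\deform{\iota^*}$ is not a positive map, so the Gram-type integrand in \eqref{eq:SPredLocalII} cannot be exploited directly, and the ``semidefinite version'' of the $\omega_{\mu'}$ argument with $\mu'_0 = \rho_0\mu_0 \geq 0$ breaks the order-by-order induction used in Proposition \ref{proposition:SPmuIsScalarProduct} once the lowest-order integral vanishes. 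Also, the reduction ``CP of $\SP{\cdot,\cdot}_\red$ implies CP of $\Theta$'' is not something the paper invokes and would itself need justification in this non-unital algebraic setting. In short: even if your program could be completed, you would be proving the later Theorem \ref{theorem:CPOfAllInnerProducts} (which requires the machinery of deformed vertical differential operators and local isometries) in place of the two-line factorization that actually proves part \refitem{item:ThetaIsCompletelyPositive} here.
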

\begin{proof}
    Since $\Cinfty(M_\red)[[\lambda]]$ is unital,
    \eqref{eq:SPeeIsEins} is sufficient to conclude fullness. Then the
    second part is clear by the general structure of $^*$-equivalence
    bimodules. Now \eqref{eq:DualBasis} is just a computation using
    \eqref{eq:SPeeIsEins}. This means that $\deform{e}$ is cyclic for
    the action of $\Finite(\Cinftycf(C)[[\lambda]])$. Even more, since
    the inner product $\Theta_{\phi, \deform{e}}$ is
    $\Finite(\Cinftycf(C)[[\lambda]])$-linear to the left in the first
    argument $\phi$, we have a dual basis $(\deform{e}, \Theta_{\cdot,
      \deform{e}})$ for the left
    $\Finite(\Cinftycf(C)[[\lambda]])$-module
    $\Cinftycf(C)[[\lambda]]$. Since the linear form $\Theta_{\cdot,
      \deform{e}}$ is obviously an inner product by some vector,
    namely $\deform{e}$, this is even a Hermitian dual basis, see
    \cite{bursztyn.waldmann:2005b}. The existence of such a Hermitian
    dual basis is true in general since the other algebra is unital.
    The remarkable point is that we only need one vector $\deform{e}$.
    For the last part, let $\phi_1, \ldots, \phi_n \in
    \Cinftycf(C)[[\lambda]]$ be given and consider the matrix $\Phi =
    (\Theta_{\phi_i, \phi_j}) \in
    M_n(\Finite(\Cinftycf(C)[[\lambda]]))$. From \eqref{eq:SPeeIsEins}
    it immediately follows that $\Phi = \Psi^*\Psi$ where $\Psi$ is
    the matrix with $\Theta_{\deform{e}, \phi_i}$ in the first row and
    zeros elsewhere. Thus $\Phi$ is clearly positive proving the last
    part.
\end{proof}
\begin{remark}
    \label{remark:NoUnit}
    Assume that $G$ is not finite, which we always can assume in the
    context of phase space reduction. Then the finite rank operators
    do \emph{not} have a unit. Otherwise, the module would be also
    finitely generated and projective as right
    $\Cinfty(M_\red)[[\lambda]]$-module. This is clearly not the case
    as the finitely projective modules over star product algebras are
    known to be deformations of sections of vector bundles over the
    base manifold. Thus we have a first non-trivial example of a
    $^*$-equivalence bimodule for star product algebras going beyond
    the unital case studied in \cite{bursztyn.waldmann:2002a}.
\end{remark}
\begin{remark}
    \label{remark:MNotFiniteRank}
    Note also that the $^*$-algebra $(\Cinfty(M)[[\lambda]], \star)$
    does \emph{not} act via finite rank operators on
    $\Cinftycf(C)[[\lambda]]$. The reason is that the finite rank
    operators are \emph{non-local} as they involve the integration
    along the fibers in the definition of $\SP{\cdot, \cdot}_\red$.
    However, we know that $\phi \mapsto f \bullet \phi$ is a formal
    series of differential and hence local operators. Of course, we
    can not expect $(\Cinfty(M)[[\lambda]], \star)$ and
    $(\Cinftycf(M_\red)[[\lambda]], \starred)$ to be $^*$-equivalent
    as in this case the classical limit of this bimodule would be
    still an equivalence bimodule and thus $M$ is necessarily
    diffeomorphic to $M_\red$, see
    \cite[Cor.~7.8]{bursztyn.waldmann:2001a}.
\end{remark}

In the rest of this subsection we discuss the classical limit of the
$^*$-equivalence bimodule and the finite rank operators: we consider
$\Cinftycf(C)$ as right $\Cinfty(M_\red)$-module with inner product
\begin{equation}
    \label{eq:SPredclDef}
    \SP{\phi, \psi}^\cl_\red
    =
    \int_G \mathsf{L}^*_{g^{-1}} (\cc{\phi}\psi) \Dleft g.
\end{equation}
Since we do not rely on phase space reduction, the Lie group $G$ needs
not to be connected in the following theorem:
\begin{theorem}
    \label{theorem:ClassicalLimitBimodule}
    Let $C \circlearrowright G \longrightarrow M_\red$ be a principal
    bundle.
    \begin{compactenum}
    \item \label{item:ClassicalInnerProductFullAndNice} The inner
        product $\SP{\cdot, \cdot}_\red^\cl$ is full, non-degenerate
        and completely positive.
    \item \label{item:ClassicalSMEBimodule} The pre Hilbert right
        $\Cinfty(M_\red)$-module $\Cinftycf(C)$ becomes a strong
        Morita equivalence bimodule for the finite rank operators
        $\Finite(\Cinftycf(C))$ acting from the left with the
        canonical inner product $\Theta_{\cdot, \cdot}$.
    \end{compactenum}
\end{theorem}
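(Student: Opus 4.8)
The plan is to verify directly the list of properties characterizing a strong Morita equivalence bimodule recalled in Section~\ref{subsec:StrongMoritaEquivalence}, treating this as the $\lambda = 0$ specialization of what we have already established in Section~\ref{sec:ConstructionInnerProduct} and Proposition~\ref{proposition:StarEquivalenceBimodule}, but now without assuming $G$ connected. First I would observe that all the lemmas of Section~\ref{subsec:DefinitionInnerProduct} establishing that $\SP{\cdot,\cdot}_\red$ is a well-defined, right $\Cinfty(M_\red)$-linear, symmetric, non-degenerate inner product on $\Cinftycf(C)$ (Lemmas~\ref{lemma:AlgebraValuedIPWelldef}--\ref{lemma:NondegenerateAndClassicalLimitOfSPred}, in their classical incarnation via the formula~\eqref{eq:SPredclDef}) use only the properness of the $G$-action and the local compactness results of Lemma~\ref{lemma:CinftycfProperties}; none of them needs connectedness. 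This already gives that $\Cinftycf(C)$ is an inner product right $\Cinfty(M_\red)$-module, so part~\refitem{item:ClassicalInnerProductFullAndNice} reduces to fullness and complete positivity.

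For fullness: I would reuse verbatim the argument of Lemma~\ref{lemma:Fullness}, which produces a function $\epsilon \in \Cinftycf(C)$ built from a partition of unity $\chi_\alpha$ on $M_\red$ and a bump function $\chi$ on $G$ with $\int_G \chi\,\Dleft g = 1$, and shows that $\SP{\epsilon,\epsilon}_\red^\cl = \sum_\alpha \int_G \mathsf{L}^*_{g^{-1}}\bigl((\chi_\alpha\otimes\chi)^2 \circ \Phi_\alpha^{-1}\bigr)\Dleft g$ is everywhere strictly positive because the diagonal terms already are; normalizing by its (ordinary pointwise, here) square root gives a vector $e$ with $\SP{e,e}_\red^\cl = 1$, whence fullness since $\Cinfty(M_\red)$ is unital. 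Complete positivity of $\SP{\cdot,\cdot}_\red^\cl$ is the genuinely positive input: for $\phi_1,\dots,\phi_n \in \Cinftycf(C)$ the matrix $\bigl(\SP{\phi_i,\phi_j}_\red^\cl\bigr)_{ij} \in M_n(\Cinfty(M_\red))$ is pointwise over $M_\red$ of the form $\int_G \overline{\Psi(g)}^{\,T}\Psi(g)\,\Dleft g$ where $\Psi(g)$ is the row vector $(\mathsf{L}^*_{g^{-1}}\phi_i)$ evaluated at a chosen point in the fiber, an integral of manifestly positive semidefinite matrices, hence positive; one then invokes the standard fact that pointwise positivity of a Hermitian matrix over $\Cinfty(M_\red)$ is equivalent to positivity in $M_n(\Cinfty(M_\red))$ in the sense of evaluation at positive functionals.

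For part~\refitem{item:ClassicalSMEBimodule}: having a strongly non-degenerate right $\Cinfty(M_\red)$-module (strong non-degeneracy follows since $1 \in \Cinfty(M_\red)$ acts as the identity) with a full, completely positive inner product, the general theory recalled in Section~\ref{subsec:StrongMoritaEquivalence} yields immediately that $\Finite(\Cinftycf(C))$, acting from the left with $\Theta_{\cdot,\cdot}$ as its inner product, is strongly Morita equivalent to $\Cinfty(M_\red)$ via $\Cinftycf(C)$; the compatibility and fullness of $\Theta_{\cdot,\cdot}$ are automatic, and its complete positivity is proved exactly as in Proposition~\ref{proposition:StarEquivalenceBimodule},~\refitem{item:ThetaIsCompletelyPositive}, writing $\bigl(\Theta_{\phi_i,\phi_j}\bigr) = \Psi^*\Psi$ with $\Psi$ the matrix having $\Theta_{e,\phi_i}$ in its first row and zeros elsewhere, using $\SP{e,e}_\red^\cl = 1$. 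I expect the only real subtlety — the ``main obstacle'' — to be bookkeeping around the support conditions defining $\Cinftycf(C)$ when $G$ is allowed to be non-compact and disconnected: one must check that the averaging integral $\int_G \mathsf{L}^*_{g^{-1}}(\cdot)\,\Dleft g$ still lands in smooth functions and that the relevant subsets $G_{U,\phi,\psi}$ are compact, but this is precisely what Lemma~\ref{lemma:CinftycfProperties} and Lemma~\ref{lemma:LocalizeSPred} provide, and their proofs use properness only, so the extension is routine. $\qed$
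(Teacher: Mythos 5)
Your proposal is correct and follows essentially the same route as the paper's proof: non-degeneracy is immediate, fullness comes from the averaged bump function $\epsilon$ normalized to a vector $e$ with $\SP{e,e}_\red^\cl = 1$, complete positivity of $\SP{\cdot,\cdot}_\red^\cl$ is reduced to pointwise positivity of the Gram-type matrices using the characterization of positive elements of $M_n(\Cinfty(M_\red))$, and complete positivity of $\Theta_{\cdot,\cdot}$ follows from the factorization $\bigl(\Theta_{\phi_i,\phi_j}\bigr) = \Psi^*\Psi$ built from $e$, exactly as in Proposition~\ref{proposition:StarEquivalenceBimodule}. The only cosmetic slip is your displayed formula for $\SP{\epsilon,\epsilon}_\red^\cl$, which omits the (nonnegative) cross terms $\alpha \neq \beta$; this is harmless since the strict positivity argument via the diagonal terms is unaffected.
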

\begin{proof}
    Clearly, $\SP{\cdot, \cdot}_\red^\cl$ is a well-defined
    $\Cinfty(M_\red)$-valued inner product which is non-degenerate.
    Analogously to the construction of $\deform{e}$ we find a function
    $e \in \Cinftycf(C)$ with $\SP{e, e}_\red^\cl = 1$ showing
    fullness. For $\phi_1, \ldots, \phi_n \in \Cinftycf(C)$ the matrix
    $(\SP{\phi_i, \phi_j}_\red^\cl) \in M_n(\Cinfty(M_\red))$ is
    pointwise positive. But this is precisely the characterization of
    positive elements in $M_n(\Cinfty(M_\red))$ resulting from the
    general algebraic definition, see also
    \cite[App.~B]{bursztyn.waldmann:2001a}. Finally, the same argument
    as in the proof of
    Proposition~\ref{proposition:StarEquivalenceBimodule} using $e$
    instead of $\deform{e}$ shows the complete positivity for
    $\Theta_{\cdot, \cdot}$ also in this case. Thus the second
    statement follows.
\end{proof}

In order to get a more geometric description of the finite rank
operators we consider the ``extended'' principal bundle
\begin{equation}
    \label{eq:ExtendedPrincipalBundle}
    \pi_e: C_e = C \times_{M_\red} C \longrightarrow M_\red,
\end{equation}
which is indeed a $G \times G$ principal bundle over $M_\red$.  Denote
by $\pr_1, \pr_2: C \times_{M_\red} C \longrightarrow C$ the
projections onto the first and second factor of the fiber product,
respectively.  We can define $\Cinftycf(C_e)$ analogously to the case
of $\Cinftycf(C)$. On this space we define a ``matrix multiplication''
by
\begin{equation}
    \label{eq:ConvolutionProduct}
    (\Psi \conv \Xi)(c, c') = 
    \int_G
    \Psi(c, \mathsf{L}_{g^{-1}}(\tilde{c}))
    \Xi(\mathsf{L}_{g^{-1}}(\tilde{c}), c')
    \Dleft g
\end{equation}
for $(c, c') \in C_e$ and $\Phi, \Xi \in \Cinftycf(C_e)$ where
$\tilde{c}$ is an arbitrary point in the same fiber as $c$ and
$c'$. Moreover, one defines an action of $\Cinftycf(C_e)$ on
$\Cinftycf(C)$ by
\begin{equation}
    \label{eq:ClassicalLeftAction}
    (\Psi \cdot \phi)(c) 
    = \int_G
    \Psi(c, \mathsf{L}_{g^{-1}}(\tilde{c}))
    \phi(\mathsf{L}_{g^{-1}}(\tilde{c}))
    \Dleft g,
\end{equation}
where now $\Psi \in \Cinftycf(C_e)$, $\phi \in \Cinftycf(C)$, and
$\tilde{c}$ is again an arbitrary point in the same fiber as
$c$. These definitions turn out to make sense and have the following
properties:
\begin{theorem}
    \label{theorem:ClassicalKompactOperators}
    Let $\Psi, \Xi \in \Cinftycf(C_e)$ and $\phi \in \Cinftycf(C)$.
    \begin{compactenum}
    \item \label{item:ConvolutionWellDefined} The product
        \eqref{eq:ConvolutionProduct} is well-defined, independent on
        the choice of $\tilde{c}$ and yields a smooth function $\Psi
        \conv \Xi \in \Cinftycf(C_e)$ with
        \begin{equation}
            \label{eq:SupportConvolution}
            \supp(\Psi \conv \Xi) \subseteq
            \pr_1(\supp \Psi) \times_{M_\red} \pr_2 (\supp \Xi).
        \end{equation}
    \item \label{item:StarInvolutionForConvolution} Together with the
        $^*$-involution defined for $\Psi \in \Cinftycf(C_e)$ by
        \begin{equation}
            \label{eq:InvolutionForConvolutionAlgebra}
            \Psi^*(c, c') = \cc{\Psi(c', c)}
        \end{equation}
        the product $\conv$ turns
        $\Cinftycf(C_e)$ into a $^*$-algebra over $\mathbb{C}$.
    \item \label{item:WellDefinedLeftActionClassicallly} The
        definition \eqref{eq:ClassicalLeftAction} is independent on
        $\tilde{c}$ and yields a smooth function $\Psi \cdot \phi \in
        \Cinftycf(C)$. This way, $\Cinftycf(C)$ becomes a left
        $\Cinftycf(C_e)$-module. The map $\Psi \mapsto (\phi \mapsto
        \Psi \cdot \phi)$ is injective.
    \item \label{item:ClassicalModuleIsPreHilbert} One obtains a pre
        Hilbert $(\Cinftycf(C_e), \Cinfty(M_\red))$-bimodule with
        respect to the $\Cinfty(M_\red)$-valued inner product
        $\SP{\cdot, \cdot}_\red^\cl$.
    \item \label{item:FiniteRankToConvolutionAlgebra} The linear map
        determined by
        \begin{equation}
            \label{eq:FiniteRankToCinftycfCe}
            \Finite(\Cinftycf(C)) 
            \ni \Theta_{\phi, \psi} \; \mapsto \;
            \phi \otimes \cc{\psi} = 
            ((c, c') \mapsto \phi(c) \cc{\psi(c')})
            \in \Cinftycf(C_e)
        \end{equation}
        yields an injective $^*$-algebra homomorphism such that the
        left module multiplication \eqref{eq:ClassicalLeftAction} of
        the images under \eqref{eq:FiniteRankToCinftycfCe} coincides
        with the canonical action of the finite rank operators.
    \end{compactenum}
\end{theorem}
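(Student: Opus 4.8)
The plan is to reduce all five parts to two recurring manipulations of the integrals over $G$. First, for $c$ ranging over a pre-compact open subset of $C$, each such integral can be localised to a \emph{compact} subset of $G$ by combining the properness of the action with the defining support condition of $\Cinftycf$, exactly as in the proofs of Lemma~\ref{lemma:CinftycfProperties} and Lemma~\ref{lemma:LocalizeSPred}; this yields both convergence and, via differentiation under the integral, smoothness. Second, the resulting finite integrals are handled by Fubini and by left-translation substitutions, using the left-invariance of $\Dleft g$. With this in mind, part\refitem{item:ConvolutionWellDefined} goes as follows: independence of the auxiliary point $\tilde c$ follows from $\mathsf{L}_{g^{-1}}(\mathsf{L}_{h^{-1}}(\tilde c)) = \mathsf{L}_{(hg)^{-1}}(\tilde c)$ together with the substitution $g \mapsto h^{-1}g$ and the left-invariance of $\Dleft g$; the support estimate \eqref{eq:SupportConvolution} is immediate pointwise, since $(\Psi\conv\Xi)(c,c')\neq 0$ forces $c\in\pr_1(\supp\Psi)$, $c'\in\pr_2(\supp\Xi)$ and $\pi(c)=\pi(c')$; and $\Psi\conv\Xi\in\Cinftycf(C_e)$ then follows because $\pr_1$ and $\pr_2$ send a $\Cinftycf$-type support to a $\Cinftycf$-type subset of $C$ and the fibre product of two compacts over $M_\red$ is compact. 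The analogous statements in part\refitem{item:WellDefinedLeftActionClassicallly} — independence of $\tilde c$, smoothness, membership in $\Cinftycf(C)$, and the module axiom $(\Psi\conv\Xi)\cdot\phi=\Psi\cdot(\Xi\cdot\phi)$ — are obtained by the same arguments, the last being the same double-integral identity as associativity.

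The one genuinely new point is the injectivity of $\Psi\mapsto(\phi\mapsto\Psi\cdot\phi)$ in part\refitem{item:WellDefinedLeftActionClassicallly}. If $\Psi(c_0,c_0')\neq 0$, choose $g_0$ with $\mathsf{L}_{g_0^{-1}}(c_0)=c_0'$ — freeness of the action makes $g\mapsto\mathsf{L}_{g^{-1}}(c_0)$ a parametrisation of the fibre through $c_0$ — and pick $0\le\phi\in\Cinfty_0(C)\subseteq\Cinftycf(C)$ supported in a small neighbourhood $V$ of $c_0'$ on which $\mathrm{Re}\big(\cc{\Psi(c_0,c_0')}\,\Psi(c_0,\cdot)\big)>0$ and with $\phi(c_0')>0$; then, taking $\tilde c=c_0$, the quantity $\mathrm{Re}\big(\cc{\Psi(c_0,c_0')}\,(\Psi\cdot\phi)(c_0)\big)$ is an integral of a non-negative integrand which is strictly positive for $g$ near $g_0$, so $\Psi\cdot\phi\neq 0$. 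For part\refitem{item:StarInvolutionForConvolution}, associativity is the Fubini-plus-substitution computation: taking the common auxiliary point in the fibre of $c$ and expanding $((\Psi\conv\Xi)\conv\Theta)(c,c')$ as a double integral over $G\times G$, a left-translation substitution in one of the two variables turns it into $(\Psi\conv(\Xi\conv\Theta))(c,c')$; the identity $(\Psi\conv\Xi)^*=\Xi^*\conv\Psi^*$ comes from conjugating and rewriting with no substitution needed, and $^*$ is visibly conjugate-linear and involutive, so $\Cinftycf(C_e)$ becomes a $^*$-algebra over $\mathbb{C}$.

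For part\refitem{item:ClassicalModuleIsPreHilbert} I would verify the adjointability relation $\SP{\Psi\cdot\phi,\psi}^\cl_\red=\SP{\phi,\Psi^*\cdot\psi}^\cl_\red$ by writing both sides, after choosing $\tilde c=c$ throughout, as the single double integral $\int_G\int_G \Psi^*(\mathsf{L}_{h^{-1}}(c),\mathsf{L}_{g^{-1}}(c))\,\cc{\phi(\mathsf{L}_{h^{-1}}(c))}\,\psi(\mathsf{L}_{g^{-1}}(c))\,\Dleft h\,\Dleft g$ and invoking Fubini; together with part\refitem{item:WellDefinedLeftActionClassicallly} this shows that the left action is a $^*$-representation of $\Cinftycf(C_e)$ by adjointable operators on $\Cinftycf(C)$. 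The left and right module structures commute because $\pi^*u$ is $G$-invariant and pulls out of the averaging integral, and non-degeneracy and complete positivity of $\SP{\cdot,\cdot}^\cl_\red$ are already available from Theorem~\ref{theorem:ClassicalLimitBimodule}; hence $(\Cinftycf(C),\SP{\cdot,\cdot}^\cl_\red)$ is a pre Hilbert $(\Cinftycf(C_e),\Cinfty(M_\red))$-bimodule.

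Finally, part\refitem{item:FiniteRankToConvolutionAlgebra} rests on the observation that $\Theta_{\phi,\psi}$ acts on $z\in\Cinftycf(C)$ exactly as the function $\phi\otimes\cc{\psi}$ does through \eqref{eq:ClassicalLeftAction}, since $\Theta_{\phi,\psi}(z)(c)=\phi(c)\,\SP{\psi,z}^\cl_\red(\pi(c))=\int_G(\phi\otimes\cc{\psi})(c,\mathsf{L}_{g^{-1}}(c))\,z(\mathsf{L}_{g^{-1}}(c))\,\Dleft g=\big((\phi\otimes\cc{\psi})\cdot z\big)(c)$; combined with the injectivity from part\refitem{item:WellDefinedLeftActionClassicallly}, this makes the assignment $\sum_i\Theta_{\phi_i,\psi_i}\mapsto\sum_i\phi_i\otimes\cc{\psi_i}$ both well defined and injective and simultaneously identifies the two module actions. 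Multiplicativity then reduces to the direct computation $(\phi\otimes\cc{\psi})\conv(\phi'\otimes\cc{\psi'})=\big(\phi\cdot\SP{\psi,\phi'}^\cl_\red\big)\otimes\cc{\psi'}$, which matches $\Theta_{\phi,\psi}\circ\Theta_{\phi',\psi'}=\Theta_{\Theta_{\phi,\psi}(\phi'),\psi'}$, and $^*$-compatibility is $(\phi\otimes\cc{\psi})^*=\psi\otimes\cc{\phi}$ matching $\Theta_{\phi,\psi}^*=\Theta_{\psi,\phi}$. The main obstacle throughout is the bookkeeping of the $G$-integrals — establishing convergence via the properness/$\Cinftycf$-support localisation and then legitimately interchanging integrations and performing the left-invariance substitutions; beyond that essentially routine work, the only substantially new ingredient is the bump-function separation argument for injectivity in part\refitem{item:WellDefinedLeftActionClassicallly}.
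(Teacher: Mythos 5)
Your proposal is correct and follows essentially the same route as the paper's proof: localization of the $G$-integrals to compact subsets via properness and the $\Cinftycf$-support condition for convergence and smoothness, left-invariance of $\Dleft g$ for independence of $\tilde{c}$, Fubini for associativity, the module property and adjointability, and the pointwise computation $((\phi\otimes\cc{\psi})\cdot\chi)(c)=\phi(c)\SP{\psi,\chi}^\cl_\red(\pi(c))$ for the last part. The only addition is your explicit bump-function argument for the injectivity of $\Psi\mapsto(\phi\mapsto\Psi\cdot\phi)$, which the paper simply declares clear; it is a valid filling-in of that step rather than a different approach.
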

\begin{proof}
    For the pointwise existence we note that those $g \in G$ with
    $\mathsf{L}_{g^{-1}}(\tilde{c}) \in \supp \Psi(c, \cdot)$ or
    $\mathsf{L}_{g^{-1}}(\tilde{c})\in \supp \Xi(\cdot, c')$,
    respectively, are compact by the properness of the action and the
    assumption $\Psi, \Xi \in \Cinftycf(C_e)$. Thus the integral
    \eqref{eq:ConvolutionProduct} only uses the $g \in G$ in the
    intersection of these two compact subsets. This shows convergence
    of the integral. The independence on $\tilde{c}$ follows from the
    left invariance of $\Dleft g$ at once. To show smoothness of
    \eqref{eq:ConvolutionProduct} we need a locally uniform compact
    domain of integration. Thus let $U_e \subseteq C_e$ be open and
    pre-compact and assume that $\pi_e(U_e) \subseteq M_\red$ allows
    for a trivialization of $C$ over this open subset. Thus we can
    choose a smooth local section $\tilde{c}: \pi_e(U_e)
    \longrightarrow \pi^{-1}(\pi_e(U_e)) \subseteq C$. By assumption,
    the subsets $\supp \Psi \cap \pi_e^{-1}(\pi_e(U_e^\cl))$ and
    $\supp \Xi \cap \pi_e^{-1}(\pi_e(U_e^\cl))$ are compact in
    $C_e$. Thus also $K_\Psi = \pr_2 (\supp \Psi \cap
    \pi_e^{-1}(\pi_e(U_e^\cl)))$ and $K_\Xi = \pr_1 (\supp \Xi \cap
    \pi_e^{-1}(\pi_e(U_e^\cl)))$ are compact subsets of $C$ projecting
    into $\pi_e(U_e^\cl)$. Then by the properness of the action we see
    that $G_\Psi = \{g \in G \; | \; \textrm{there exists a} \; c \in
    \pr_1(U^\cl) \; \textrm{with} \;
    \mathsf{L}_{g^{-1}}(\tilde{c}(\pi(c))) \in K_\Psi\}$ as well as
    $G_\Xi = \{g \in G \; | \; \textrm{there exists a} \; c' \in
    \pr_2(U^\cl) \; \textrm{with} \;
    \mathsf{L}_{g^{-1}}(\tilde{c}(\pi(c'))) \in K_\Xi\}$ are compact
    subsets of $G$. By construction, the integration only needs the $g
    \in G_\Psi \cap G_\Xi$ for all $(c, c') \in U_e$. Thus on $U_e$,
    we have a uniform compact domain of integration. Then the
    smoothness follows easily. The
    statement~\eqref{eq:SupportConvolution} is clear from which we
    also deduce that $\Psi \conv \Xi \in\Cinftycf(C_e)$, showing the
    first part. For the second part we note that
    \eqref{eq:InvolutionForConvolutionAlgebra} is clearly an
    involutive and anti-linear endomorphism of $\Cinftycf(C_e)$. A
    simple computation shows that this indeed gives an
    anti-automorphism of $\conv$. It remains to show the associativity
    of the product $\conv$ which is an easy consequence of Fubini's
    theorem as locally we only have to integrate over compact subsets
    of $G$. For the third part, one proceeds analogously to show $\Psi
    \cdot \phi \in \Cinftycf(C)$, independently on the choice of
    $\tilde{c}$. The module property is again an application of
    Fubini's theorem. The injectivity is clear. For the fourth part we
    have to show that \eqref{eq:ClassicalLeftAction} is a
    $^*$-representation. Thus we compute
    \begin{align*}
        \SP{\phi, \Xi \cdot \psi}_\red^\cl (\pi(c))
        &=
        \int_G
        \cc{\phi(\mathsf{L}_{g^{-1}}(c))}
        (\Xi \cdot \psi)(\mathsf{L}_{g^{-1}}(c))
        \Dleft g \\
        &=
        \int_G \cc{\phi(\mathsf{L}_{g^{-1}}(c))} 
        \int_G
        \Xi(\mathsf{L}_{g^{-1}}(c), \mathsf{L}_{h^{-1}}(\tilde{c}))
        \psi(\mathsf{L}_{h^{-1}}(\tilde{c}))
        \Dleft h \Dleft g \\
        &=
        \int_G \cc{
          \int_G
          \Xi^*(\mathsf{L}_{h^{-1}}(\tilde{c}),
          \mathsf{L}_{g^{-1}}(c))
          \phi(\mathsf{L}_{g^{-1}}(c)) 
          \Dleft g
        }
        \:
        \psi(\mathsf{L}_{h^{-1}}(\tilde{c})
        \Dleft h \\
        &=
        \SP{\Xi^* \cdot \phi, \psi}_\red^\cl (\pi(\tilde{c})),
    \end{align*}
    using once again Fubini's theorem. Since $\pi(\tilde{c}) = \pi(c)$
    the fourth part follows. For the last part we first note that
    clearly $\phi \otimes \cc{\psi} \in \Cinftycf(C_e)$. We compute
    \[
    ((\phi \otimes \cc{\psi}) \cdot \chi)(c)
    =
    \int_G 
    \phi(c)
    \cc{\psi(\mathsf{L}_{g^{-1}}(\tilde{c}))}
    \chi(\mathsf{L}_{g^{-1}}(\tilde{c}))
    \Dleft g
    =
    \phi(c) \SP{\psi, \chi}_\red^\cl (\pi(c))
    =
    \left(\Theta_{\phi, \psi} \chi\right)(c).
    \]
    This shows that under \eqref{eq:FiniteRankToCinftycfCe} the usual
    action of finite rank operators is turned into
    \eqref{eq:ClassicalLeftAction}. By the injectivity statement in
    the third part, \eqref{eq:FiniteRankToCinftycfCe} is necessarily
    injective and a $^*$-homomorphism.
\end{proof}
\begin{remark}
    \label{remark:CinftycfCompactOperators}
    The $^*$-algebra $\Cinftycf(C_e)$ is typically strictly larger
    than the image of $\Finite(\Cinftycf(C))$ under the
    embedding~\eqref{eq:FiniteRankToCinftycfCe}. Nevertheless, with
    respect to a suitable locally convex topology, the finite rank
    operators are dense in $\Cinftycf(C_e)$. Morally, $\Cinftycf(C_e)$
    corresponds to ``Hilbert-Schmidt''-like operators. Note that the
    product $\conv$ can be viewed as a ``matrix-multiplication'' of
    matrices with components labeled by the continuous index $g \in
    G$. Similarly, the left module structure
    \eqref{eq:ClassicalLeftAction} is the application of a matrix to a
    vector whose components are labelled by the continuous index $g
    \in G$. Finally, the $^*$-involution is the usual
    ``matrix-adjoint''.
\end{remark}
\begin{remark}
    \label{remark:GeometryOfCe}
    Geometrically, the bundle $C_e = C \times_{M_\red} C
    \longrightarrow M_\red$ is diffeomorphic to $C \times G$ since for
    $(c, c') \in C_e$ there exists a unique $g \in G$ with $c' =
    \mathsf{L}_{g^{-1}}(c)$. However, this diffeomorphism will destroy
    the simple form of the matrix-multiplication formulas
    \eqref{eq:ConvolutionProduct} and \eqref{eq:ClassicalLeftAction}.
    Nevertheless, rewriting things this way, one recognizes the usual
    crossed product construction, here in its ``smooth'' version: the
    smooth functions on the reduced space $M_\red$ are $^*$-Morita
    equivalent to the crossed product of the functions on $C$ with the
    group $G$. Of course, since $G$ is non-compact, some care has to
    be taken and the above function space provides a good notion for
    the crossed product in the smooth situation, see
    \cite{rieffel:1974b} for the original version of this statement in
    the $C^*$-algebraic category.
\end{remark}

%
% Complete positivity
%

\subsection{Complete positivity}
\label{subsec:CompletePositivity}

Before showing the complete positivity of $\SP{\cdot, \cdot}_\red$ we
recall some facts on deformation quantization of principal bundles $C
\circlearrowright G \longrightarrow M_\red$ from
\cite{bordemann.neumaier.waldmann.weiss:2007a:pre}: it can be shown
that $\Cinfty(C)[[\lambda]]$ can always be equipped with a right
module structure $\bulletred$ with respect to a given star product
$\starred$ on $M_\red$. Moreover, $\bulletred$ is \emph{unique} up to
equivalence, i.e. up to a module isomorphism of the form $\id +
\sum_{r=1}^\infty \lambda^r T_r$ with $T_r \in \Diffop(C)$. Moreover,
it is known that the module endomorphisms of such a deformation inside
the differential operators $\Diffop(C)[[\lambda]]$ are obtained from a
deformation of the \emph{vertical} differential operators
$\Diffop_\ver(C)[[\lambda]]$, now equipped with a new, deformed
composition law $\star'$ and a deformed action $\bullet'$ on
$\Cinfty(C)[[\lambda]]$. Again, $\star'$ and $\bullet'$ are uniquely
determined by $\starred$ up to equivalence. We shall use these
results, in particular the uniqueness statements, later on.

\begin{remark}[Positive algebra elements]
    \label{remark:PositiveElementsForStarProducts}
    We have to make an additional requirement on the positive linear
    functionals $\omega = \sum_{r=0}^\infty \lambda^r \omega_r:
    \Cinfty(M_\red)[[\lambda]] \longrightarrow \mathbb{C}[[\lambda]]$
    for the following. While the algebraic definition
    \eqref{eq:omegaPositive} allows for $\omega_r$ in the full
    algebraic dual of $\Cinfty(M_\red)$ we have to restrict to
    \emph{distributions}, i.e. continuous linear functionals with
    respect to the canonical Fr\'echet topology of
    $\Cinfty(M_\red)$. It is easy to construct (algebraically)
    positive linear functionals where the higher orders are not of
    this form. However, this restriction seems to be reasonable as
    long as we work with smooth functions. Potentially, this will
    result in \emph{more} positive algebra elements.
\end{remark}

We start now with the local situation: we consider an open and small
enough subset $U \subseteq M_\red$ and assume that $\pi^{-1}(U) \cong
U \times G$ is trivial. Following the principal bundle tradition, the
group acts from the \emph{right} by right multiplications denoted by
$\mathsf{r}_g: U \times G \longrightarrow U \times G$. The
corresponding left action is therefor given by $\mathsf{L}_g =
\mathsf{r}_{g^{-1}}$, and \emph{not} by the left multiplication
$\mathsf{l}_g$.

The star product $\starred$ extends canonically to $\Cinfty(U \times
G)[[\lambda]]$ yielding a star product, still denoted by $\starred$,
for the Poisson structure on $U \times G$ which is the flat horizontal
lift of the one on $U$. This way,
\begin{equation}
    \label{eq:piAlgebraHomomorphism}
    \pi^*:
    \left(
        \Cinfty(U)[[\lambda]], \starred
    \right)
    \longrightarrow
    \left(
        \Cinfty(U \times G)[[\lambda]], \starred
    \right)
\end{equation}
is a $^*$-algebra homomorphism. Thus we also obtain a canonical right
$\starred$-module structure $\bulletcan$ on $\Cinfty(U \times
G)[[\lambda]]$ using \eqref{eq:piAlgebraHomomorphism}. For this
particular right module structure we can define a very simple inner
product. Indeed, for $\phi, \psi \in \Cinftycf(U \times G)[[\lambda]]$
we set
\begin{equation}
    \label{eq:CanonicalInnerProduct}
    \pi^*\SP{\phi, \psi}_\can
    = 
    \int_G \mathsf{L}^*_{g^{-1}} (\cc{\phi} \starred \psi) \Dleft g
    =
    \int_G \mathsf{r}^*_{g} (\cc{\phi} \starred \psi) \Dleft g.
\end{equation}
It is clear that this gives a well-defined right $\starred$-linear
$\Cinfty(M_\red)[[\lambda]]$-valued non-degenerate and full inner
product.
\begin{proposition}
    \label{proposition:SPcanIsCompletePositive}
    The canonical inner product $\SP{\cdot, \cdot}_\can$ is completely
    positive.
\end{proposition}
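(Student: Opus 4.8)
The goal is to show that the canonical inner product $\SP{\cdot, \cdot}_\can$ on $\Cinftycf(U \times G)[[\lambda]]$ is completely positive, i.e.\ that for $\phi_1, \dots, \phi_n \in \Cinftycf(U \times G)[[\lambda]]$ the matrix $\big(\SP{\phi_i, \phi_j}_\can\big) \in M_n(\Cinfty(U)[[\lambda]], \starred)$ is a positive algebra element. The plan is to reduce this statement to a known positivity result for the canonical star product $\starG$ on $T^*G$ (equivalently, the fact that the Schrödinger-type functional $\phi \mapsto \int_G \cc{\phi} \starG \phi$ is positive, or that the relevant representation exists). The key structural observation is that $\starred$ on $\Cinfty(U \times G)[[\lambda]]$ is, by construction, the ``horizontal lift'' product: it is the given $\starred$ on the $U$-factor tensored with the \emph{undeformed} pointwise product on the $G$-factor --- the Poisson structure on $U\times G$ being the flat horizontal lift. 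So $\Cinfty(U\times G)[[\lambda]]$ with $\starred$ is (a completion-free version of) $(\Cinfty(U)[[\lambda]], \starred) \otimes (\Cinfty(G)[[\lambda]], \cdot)$. This product structure is what makes the matrix positivity tractable.

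**Main steps.**

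First I would unwind the definition: $\pi^*\SP{\phi, \psi}_\can = \int_G \mathsf{r}_g^*(\cc\phi \starred \psi)\,\Dleft g$, where the $\starred$ here acts only in the $U$-direction. Writing a general $\phi \in \Cinftycf(U\times G)[[\lambda]]$ and using that $\mathsf{r}_g$ acts only on the $G$-variable, the integrand $\cc\phi \starred \psi$ is a $\Cinfty(U)[[\lambda]]$-valued function of $(g', g)\in G\times G$ (before integrating), and the $G$-integration implements a convolution-type averaging. Concretely, I would fix a (locally finite) partition of unity argument to reduce to $\phi_i$ supported in one chart and then exploit that on such a chart $\cc{\phi_i} \starred \phi_j$ can be expanded with all $\starred$-derivatives landing in the $U$-direction only. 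The second step is to recognize the matrix $\big(\SP{\phi_i,\phi_j}_\can\big)$ as arising from a GNS-type construction: it is $\big(\SP{\Phi_i, \Phi_j}\big)$ for the $\mathbb{C}[[\lambda]]$-linear map $\Phi \mapsto (\text{left-regular-type action})$, i.e.\ it factors through a $^*$-representation. The cleanest route: exhibit a positive linear functional $\tau$ on $M_n(\Cinfty(U)[[\lambda]],\starred)$ and show $\tau\big((\SP{\phi_i,\phi_j}_\can)\big)\ge 0$ for \emph{all} positive $\tau$; equivalently, show $(\SP{\phi_i,\phi_j}_\can) = A^* A$ in $M_n(\Cinfty(U\times G)[[\lambda]], \starred)$ after an appropriate ``square root'' / integral-kernel manipulation, using that $\int_G(\cdot)\Dleft g$ of a $\starred$-positive element (pointwise in $g$) is $\starred$-positive.

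**The heart of the argument.**

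The essential input is positivity in the $G$-direction, and this is where the undeformedness of the $G$-factor product is crucial: since the multiplication in the $G$-variable is the \emph{commutative pointwise} one, the integration $\int_G \mathsf{L}^*_{g^{-1}}(\cc\phi\starred\psi)\,\Dleft g$ is, order by order in $\lambda$, a classical $L^2(G)$-type pairing twisted by a $\starred$-bidifferential operator acting only in $U$. I would make this precise: write $\cc\phi\starred\psi = \sum_r \lambda^r C_r(\cc\phi,\psi)$ where each $C_r$ differentiates only the $U$-slots; then $\int_G$ commutes with these $U$-differentiations, so $\SP{\phi,\psi}_\can$ is obtained by applying a \emph{fixed} formal series of bidifferential operators on $U$ to the classically-averaged kernel $\int_G \mathsf{L}^*_{g^{-1}}(\cc\phi\,\psi)\,\Dleft g$. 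One then invokes the abstract fact (see \cite[Chap.~7]{waldmann:2007a}, e.g.\ \cite[Prop.~7.1.51]{waldmann:2007a} and the discussion of positivity under deformation) that a positive functional in the classical limit with an appropriate deformation stays positive; combined with the hermiticity of $\starred$ this gives complete positivity at each order. The main obstacle --- and the step I expect to require real care --- is controlling the \emph{order-by-order induction}: the classical limit of $\SP{\cdot,\cdot}_\can$ is manifestly completely positive (it is a positive-kernel $L^2(G)$ pairing), but to promote this to all orders of $\lambda$ one must show that the $\lambda$-corrections, which couple the $U$-deformation to the $G$-integration, do not spoil the matrix positivity; I would handle this by expressing $\big(\SP{\phi_i,\phi_j}_\can\big)$ as a genuine $\starred$-square $\Psi^*\starred\Psi$ (in $M_n$ over a suitable auxiliary algebra encompassing the $G$-integration, in the spirit of the $\Theta$-operator computation in Proposition~\ref{proposition:StarEquivalenceBimodule}) rather than arguing perturbatively --- this makes positivity structural rather than order-dependent. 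The uniqueness results for $\bulletred$, $\star'$, $\bullet'$ recalled just before the proposition are exactly what let one identify the abstract deformed module with the concrete horizontal-lift picture, closing the argument.
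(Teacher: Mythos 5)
Your structural observations are the right ones, and they coincide with the skeleton of the paper's proof: on the local model $\starred$ differentiates only in the $U$-directions, so $(\cc{\phi_i}\starred\phi_j)(\cdot,g)=\cc{\phi_i(\cdot,g)}\starred\phi_j(\cdot,g)$, and for each fixed $g\in G$ the matrix $\bigl(\cc{\phi_i}(\cdot,g)\starred\phi_j(\cdot,g)\bigr)_{ij}$ is a Hermitian square $\Psi_g^*\starred\Psi_g$ in $M_n(\Cinfty(U)[[\lambda]],\starred)$, hence positive. The genuine gap is the step you only assert: that the $G$-average of these pointwise positive elements is again positive, equivalently that an arbitrary positive $\mathbb{C}[[\lambda]]$-linear functional $\Omega=\sum_r\lambda^r\Omega_r$ on $M_n(\Cinfty(M_\red)[[\lambda]],\starred)$ can be pulled inside $\int_G(\cdot)\,\Dleft g$. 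In the purely algebraic framework this interchange is not available: positivity of $\Omega$ does not force any continuity of the higher orders $\Omega_r$, and the paper explicitly notes that algebraically positive functionals whose higher orders are \emph{not} distributions exist. This is precisely why Remark~\ref{remark:PositiveElementsForStarProducts} imposes the standing convention that all $\Omega_r$ are distributions; with that convention the paper's proof is exactly the two lines you would like to write: apply $\Omega$, exchange it with the $G$-integration ``by continuity'', and observe that the integrand is $\Omega$ of a Hermitian square for every $g$, hence nonnegative. Your proposal never invokes this convention, and without it the decisive step is unjustified.

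Your proposed fallback does not repair this. A factorization $\bigl(\SP{\phi_i,\phi_j}_\can\bigr)=\Psi^*\starred\Psi$ inside $M_n(\Cinfty(U)[[\lambda]],\starred)$ cannot be expected: the inner product is a continuous superposition of squares indexed by $g\in G$, not a finite algebraic sum of squares --- this is exactly what makes complete positivity nontrivial here. If instead you factorize in ``a suitable auxiliary algebra encompassing the $G$-integration'', you must still push positivity down along the averaging map $\int_G\mathsf{L}^*_{g^{-1}}(\cdot)\,\Dleft g$, i.e.\ prove that this map is (completely) positive, which is the same interchange problem in disguise and again needs the distributionality of the $\Omega_r$. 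Likewise, the order-by-order argument via ``positivity survives deformation'' is not conclusive: positivity in $\mathbb{R}[[\lambda]]$ is a statement about the lowest non-vanishing order, not an order-by-order property, and the results you cite (e.g.\ \cite[Prop.~7.1.51]{waldmann:2007a} and the deformability of classically positive functionals) concern functionals, not the matrix positivity of a given deformed inner product. Finally, the uniqueness results for $\bulletred$, $\star'$, $\bullet'$ play no role in this proposition; they are needed only later, in Lemma~\ref{lemma:BidiffInnerProductLocally} and Theorem~\ref{theorem:CPOfAllInnerProducts}.
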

\begin{proof}
    Let $\phi_1, \ldots, \phi_n \in \Cinftycf(U \times G)[[\lambda]]$
    and let $\Omega = \Omega_0 + \lambda \Omega_1 + \cdots$ be a
    positive linear functional of $M_n(\Cinfty(M_\red)[[\lambda]],
    \starred)$ such that each $\Omega_r$ is a distribution according
    to our convention in
    Remark~\ref{remark:PositiveElementsForStarProducts}. Then by
    continuity
    \begin{align*}
        \Omega\left(\SP{\phi_i, \phi_j}_\can\right)
        &=
        \Omega\left(
            p \mapsto
            \int_G \left(\cc{\phi_i} \starred \phi_j\right)(p, g)
            \Dleft g
        \right) \\
        &=
        \int_G \Omega\left(
            \cc{\phi_i}(\cdot, g) \starred
            \phi_j(\cdot, g)
        \right) \Dleft g \\
        &\ge 0,
    \end{align*}
    since the lowest non-vanishing order of the integrand is positive
    for every $g \in G$.
\end{proof}

To proceed, we need a more explicit description of $\star'$ and
$\bullet'$ for our local model. Let $D \in \Diffop_\ver^k(U \times G)$
be a vertical differential operator on $U \times G$. Then for a chosen
basis $e_1, \ldots, e_N \in \lie{g}$ we have uniquely determined
functions $D^{i_1 \ldots i_N} \in \Cinfty(U \times G)$ such that
\begin{equation}
    \label{eq:VerticalDiffop}
    D
    = \sum_{|I| \le k} D^{i_1 \cdots i_N} 
    \left(\Lie_{(e_1)_{U \times G}}\right)^{i_1} \cdots
    \left(\Lie_{(e_N)_{U \times G}}\right)^{i_N}.
\end{equation}
Since the fundamental vector fields do not commute in general,
\eqref{eq:VerticalDiffop} can be viewed as a standard-ordered calculus
with respect to the chosen basis. For abbreviation we write
$e_{\vec{I}}$ for the ordered sequence of Lie derivatives in
\eqref{eq:VerticalDiffop} and set also $D^I = D^{i_1 \ldots i_N}$ for
a multiindex $I \in \mathbb{N}_0^N$. A formal series $D \in
\Diffop_\ver (U \times G)[[\lambda]]$ can then be written as formal
series $D = \sum_I D^I e_{\vec{I}}$ with $D^I \in \Cinfty(U \times
G)[[\lambda]]$ such that in each order of $\lambda$ only finitely many
differentiations occur. Up to here, this is even possible for an
arbitrary principal bundle, we do not yet need the trivialization.

Only in our local model we can define now 
\begin{equation}
    \label{eq:DeformedActionVertialDiffop}
    D \bullet' \phi
    =
    \sum\nolimits_I D^I \starred e_{\vec{I}} \, \phi
\end{equation}
for $D \in \Diffop(U \times G)[[\lambda]]$ and $\phi \in \Cinfty(U
\times G)[[\lambda]]$. For this action we have the following
properties:
\begin{lemma}
    \label{lemma:DeformedVerticalOperators}
    Let $D, \tilde{D} \in \Diffop_\ver(U \times G)[[\lambda]]$, $\phi
    \in \Cinfty(U \times G)[[\lambda]]$ and $u \in
    \Cinfty(M_\red)[[\lambda]]$.
    \begin{compactenum}
    \item \label{item:DeformedVerticalRightLinear} The definition
        \eqref{eq:DeformedActionVertialDiffop} yields a formal series
        of differential operators with
        \begin{equation}
            \label{eq:DCommutesWithu}
            (D \bullet' \phi) \bullet_\can u 
            =
            D \bullet' (\phi \bullet_\can u).
        \end{equation}
        Moreover, $\bullet'$ deforms the usual action of vertical
        differential operators.
    \item \label{item:StarPrime} There exists a unique $D \star'
        \tilde{D} \in \Diffop_\ver(U \times G)[[\lambda]]$ such that
        \begin{equation}
            \label{eq:DstarprimetildeD}
            (D \star' \tilde{D}) \bullet' \phi
            =
            D \bullet' (\tilde{D} \bullet' \phi).
        \end{equation}
    \item \label{item:StarPrimeDeformation} The product $\star'$ is
        the unique associative deformation of $\Diffop_\ver(U \times
        G)[[\lambda]]$ with the unique left module structure
        $\bullet'$ up to equivalence such that $\Cinfty(U \times
        G)[[\lambda]]$ becomes a $(\star', \starred)$-bimodule.
    \item \label{item:UniqueInvolution} There exists a uniquely
        determined $^*$-involution $D \mapsto D^*$ with respect to
        $\star'$ such that $\bullet'$ becomes a $^*$-representation of
        the pre Hilbert module $(\Cinftycf(U \times G)[[\lambda]],
        \SP{\cdot, \cdot}_\can)$.
    \end{compactenum}
\end{lemma}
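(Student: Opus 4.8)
The plan is to treat the four parts in order, the first three by direct bookkeeping with the explicit formulas and the last one by an order-by-order adjointing argument.

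For the first part I would use that the canonical extension of $\starred$ to $U\times G$ only differentiates in the $U$-directions, so that the vertical fundamental vector fields $(e_a)_{U\times G}$ of the right action commute with all bidifferential operators of $\starred$ and each $\Lie_{(e_a)_{U\times G}}$ is a derivation of $\starred$. Hence $D\bullet'\,\cdot=\sum_I D^I\starred e_{\vec I}\,\cdot$ is a composition of the differential operators $e_{\vec I}$ with left $\starred$-multiplications by the $D^I$, which is a formal series of differential operators in which only finitely many differentiations occur in each order of $\lambda$. Since $\pi^*u$ is constant along the fibers, $e_{\vec I}(\phi\bulletcan u)=e_{\vec I}(\phi\starred\pi^*u)=(e_{\vec I}\phi)\starred\pi^*u$ by the Leibniz rule, and associativity of $\starred$ then gives \eqref{eq:DCommutesWithu}. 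In zeroth order $\starred$ is the pointwise product and $e_{\vec I}$ the classical iterated derivative, so $\bullet'$ reduces to the usual action of the operator \eqref{eq:VerticalDiffop}; thus $\bullet'$ deforms the classical action.

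For the second and third parts I would first note that $D\mapsto(\phi\mapsto D\bullet'\phi)$ is a $\mathbb{C}[[\lambda]]$-linear injection of $\Diffop_\ver(U\times G)[[\lambda]]$ into $\Diffop(U\times G)[[\lambda]]$: if $D\bullet'\,\cdot=0$ and $D=\lambda^kD_k+\cdots$ with $D_k\ne0$, the lowest order forces $D_k=0$. Existence of $D\star'\tilde D$ amounts to showing that $\phi\mapsto D\bullet'(\tilde D\bullet'\phi)$ lies in the image of this injection. This can be checked by expanding $e_{\vec I}(\tilde D^J\starred e_{\vec J}\phi)$ with the Leibniz rule (the $\Lie_{(e_a)}$ being $\starred$-derivations) and re-expressing the resulting products of the $e_{\vec I}$ in standard-ordered form $\sum_K c_K e_{\vec K}$ with constants $c_K$, which produces coefficient functions with $D\bullet'(\tilde D\bullet'\phi)=\sum_K E^K\starred e_{\vec K}\phi$; alternatively, by the first part $D\bullet'(\tilde D\bullet'\,\cdot)$ is a differential operator commuting with $\bulletcan$, and the structural description of such right $\starred$-linear operators from \cite{bordemann.neumaier.waldmann.weiss:2007a:pre} identifies it with some $E\bullet'\,\cdot$. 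One then sets $D\star'\tilde D$ to be the unique preimage; associativity of $\star'$ follows from associativity of composition together with the injectivity, and, since $\bullet'$ deforms the classical action, $(\Diffop_\ver(U\times G)[[\lambda]],\star',\bullet')$ is one of the structures covered by the uniqueness statement of \cite{bordemann.neumaier.waldmann.weiss:2007a:pre} recalled at the start of this subsection, giving the third part.

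For the last part I would build $D^*$ order by order so that $\phi\mapsto D\bullet'\phi$ is adjoint to $\phi\mapsto D^*\bullet'\phi$ with respect to $\SP{\cdot,\cdot}_\can$, in the spirit of Lemma~\ref{lemma:DiffopsAdjointable} and Theorem~\ref{theorem:ReducedStarInvolution}. In zeroth order one integrates $D_0$ by parts along the fibers against $\Dleft g$: since $\Dleft g$ is left Haar this gives the usual formal adjoint on $G$ corrected by the modular function of $G$, it is again vertical (an adjoint of a right $\Cinfty(M_\red)$-linear operator with respect to a right $\Cinfty(M_\red)$-linear inner product is right $\Cinfty(M_\red)$-linear), and it is smooth in the base point. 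Given $D^*_{(k)}$, the error $\SP{\phi,D\bullet'\psi}_\can-\SP{D^*_{(k)}\bullet'\phi,\psi}_\can$ is $O(\lambda^{k+1})$ and, by the first part together with the right $\starred$-linearity of $\SP{\cdot,\cdot}_\can$, right $\starred$-linear in $\psi$; its order-$\lambda^{k+1}$ coefficient is a fiberwise integral of a bidifferential expression whose $\psi$-derivatives can be moved onto $\phi$ by integrating by parts in the fiber directions — the right linearity is precisely what prevents uncompensated base derivatives of $\psi$ — and the resulting coefficient operator defines the vertical $D^*_{k+1}$. Uniqueness of $D^*$ follows from non-degeneracy of $\SP{\cdot,\cdot}_\can$ and the injectivity above, while $\mathbb{C}[[\lambda]]$-antilinearity, involutivity and $(D\star'\tilde D)^*=\tilde D^*\star'D^*$ are formal consequences of the uniqueness of adjoints.

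I expect the main obstacle to be this last part: one must verify in every order that the integration by parts stays within the fiber directions, so that $D^*$ genuinely remains in $\Diffop_\ver(U\times G)[[\lambda]]$, and one must track the modular function of $G$ consistently. A secondary technical nuisance is the closedness under composition of the image of $\bullet'$ needed for the second part; the standard-ordered computation settles it but is tedious, which is why invoking the structural input of \cite{bordemann.neumaier.waldmann.weiss:2007a:pre} is the cleaner route.
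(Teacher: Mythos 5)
Your treatment of parts \textit{i.)}--\textit{iii.)} is essentially the paper's own proof: the fundamental vector fields are $\starred$-derivations on $U\times G$ and annihilate $\pi^*u$, giving \eqref{eq:DCommutesWithu} by associativity; the Leibniz rule plus reordering of the Lie derivatives with \emph{constant} coefficients produces $D\star'\tilde D$, unique by injectivity of $D\mapsto D\bullet'$; and \textit{iii.)} is the appeal to \cite{bordemann.neumaier.waldmann.weiss:2007a:pre}. Nothing to add there.

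For part \textit{iv.)} you take a genuinely different route, and this is where your proposal has a gap. The paper does not construct the adjoint order by order; it computes it on generators: averaging against the left Haar measure gives $\SP{\phi, \Lie_{\xi_{U\times G}}\psi}_\can = \SP{(-\Delta(\xi)-\Lie_{\xi_{U\times G}})\bullet'\phi, \psi}_\can$ (the modular correction you worry about appears exactly here), and for a coefficient function $D^I$ the Hermiticity of $\starred$ gives the adjoint $\cc{D^I}$; since by the relations $\Lie_{(e_i)_{U\times G}}\star'\Lie_{(e_j)_{U\times G}}=\Lie_{(e_i)_{U\times G}}\Lie_{(e_j)_{U\times G}}$ for $i\le j$ and $D^I\star' e_{\vec{I}}=D^I e_{\vec{I}}$ (established in the proof of \textit{ii.)}) these elements generate $\Diffop_\ver(U\times G)[[\lambda]]$ via $\star'$ up to $\lambda$-adic completion, every $D$ is adjointable with adjoint again vertical, and uniqueness is the non-degeneracy argument you give. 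This sidesteps entirely the point you flag as the main obstacle. In your inductive scheme that obstacle is not actually closed by the mechanism you offer: the right $\starred$-linearity in $\psi$ only rules out uncompensated base derivatives of $\psi$ in the order-$\lambda^{k+1}$ error (the argument of Lemma~\ref{lemma:BidiffInnerProductLocally}, \refitem{item:BIJ}); the verticality of the candidate $D^*_{k+1}$ concerns base derivatives acting on $\phi$, i.e.\ the \emph{other} slot. It does follow, but from the analogous linearity in the first argument: since $D^*_{(k)}\bullet'$ commutes with $\bulletcan$ and $\SP{\phi\bulletcan u,\psi}_\can=\cc{u}\starred\SP{\phi,\psi}_\can$, the error is at lowest order also $\Cinfty(U)$-linear in $\phi$, so only vertical derivatives of $\phi$ can contribute and the fiberwise integration by parts stays in $\Diffop_\ver(U\times G)[[\lambda]]$. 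With that supplement your induction is sound; the paper's generator argument buys the same conclusion without induction and with the modular term made explicit.
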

\begin{proof}
    The property \eqref{eq:DCommutesWithu} is obvious by the
    associativity of $\starred$. Also $\bullet'$ deforms the usual
    action. For the second part we note that
    \[
    D \bullet' (\tilde{D} \bullet' \phi)
    =
    \sum_{I, J} D^I \starred e_{\vec{I}} 
    \left(\tilde{D}^J \starred e_{\vec{J}} \, \phi\right).
    \]
    Since the fundamental vector fields $\xi_{U \times G}$ are
    derivations of $\starred$ on $U \times G$, we have a Leibniz rule
    allowing to redistribute the $e_{\vec{I}}$ on the two factors
    $\tilde{D}^J$ and $e_{\vec{J}} \, \phi$. In the second result we
    have to reorder the Lie derivatives which gives after Lie
    algebraic combinatorics again linear combinations of $e_{\vec{K}}
    \, \phi$ with \emph{constant} coefficients. These can be viewed as
    acting by $\starred$ from the left whence in total we have by the
    associativity of $\starred$ a new $D \star' \tilde{D}$ acting via
    $\bullet'$ as wanted. Since by the first part the map $D \mapsto D
    \bullet'$ is injective, $D \star' \tilde{D}$ is uniquely
    determined. Note that for $i \le j$ we have
    \[
    \Lie_{(e_i)_{U \times G}} \star' \Lie_{(e_j)_{U \times G}} =
    \Lie_{(e_i)_{U \times G}} \Lie_{(e_j)_{U \times G}}
    \quad
    \textrm{and}
    \quad
    D^I \star' e_{\vec{I}} = D^I e_{\vec{I}}.
    \]
    Hence the fundamental vector fields and the functions $D^I \in
    \Cinfty(U \times G)[[\lambda]]$ generate (up to $\lambda$-adic
    completion) via $\star'$ all of $\Diffop_\ver(U \times
    G)[[\lambda]]$. The third part is clear from general
    considerations on principal bundles
    \cite{bordemann.neumaier.waldmann.weiss:2007a:pre}. For the last
    part we have to show that $D$ is adjointable with respect to
    $\SP{\cdot, \cdot}_\can$. We do this first for the generators. If
    $\xi \in \lie{g}$ then
    \begin{align*}
        \pi^* \SP{\phi, \Lie_{\xi_{U \times G}} \psi}_\can
        &=
        \int_G \mathsf{L}_{g^{-1}} \left(
            \cc{\phi} \starred \Lie_{\xi_{U \times G}} \psi
        \right)
        \Dleft g \\
        &=
        \int_G \mathsf{L}_{g^{-1}} \left(
            \Lie_{\xi_{U \times G}}
            \left(\cc{\phi} \starred \psi\right)
            -
            \cc{\Lie_{\xi_{U \times G}} \phi} \starred \psi
        \right)
        \Dleft g \\
        &=
        \int_G \mathsf{L}_{g^{-1}} \left(
            \cc{-\Delta(\xi) \phi - \Lie_{\xi_{U \times G}} \phi}
            \starred \psi
        \right)
        \Dleft g \\
        &=
        \pi^*\SP{
          \left(-\Delta(\xi) - \Lie_{\xi_{U \times G}}\right)
          \bullet' \phi,
          \psi
        }_\can.
    \end{align*}
    Thus we have an adjoint in $\Diffop_\ver(U \times
    G)[[\lambda]]$. Analogously, we have for a function $D^I \in
    \Cinfty(U \times G)[[\lambda]]$
    \begin{align*}
        \pi^*\SP{\phi, D^I \bullet' \psi}_\can
        &=
        \int_G \mathsf{L}_{g^{-1}} \left(
            \cc{\phi} \starred (D^I \starred \psi)
        \right)
        \Dleft g \\
        &=
        \int_G \mathsf{L}_{g^{-1}} \left(
            \cc{(\cc{D^I} \starred \phi)} \starred \psi
        \right)
        \Dleft g \\
        &=
        \pi^*\SP{\cc{D^I} \bullet' \phi, \psi}_\can,
    \end{align*}
    since $\starred$ is Hermitian. Successively using these two
    statements and the fact that these generate all vertical
    differential operators, proves that all $D \in \Diffop_\ver(U
    \times G)[[\lambda]]$ have an adjoint in the vertical differential
    operators. Thus the last part follows.
\end{proof}

On $C$ we consider now the following type of inner product: let
$\SP{\cdot, \cdot}'$ be a $\Cinfty(M_{\red})[[\lambda]]$-valued inner
product on $\Cinftycf(C)[[\lambda]]$ such that there exists a formal
series $B = B_0 + \lambda B_1 + \cdots$ of bidifferential operators on
$C$ with
\begin{equation}
    \label{eq:DeformationInnerProduct}
    \pi^* \SP{\phi, \psi}'
    = \int_G \mathsf{L}_{g^{-1}} 
    \left(B(\cc{\phi}, \psi)\right)
    \Dleft g,
\end{equation}
and $B_0(\cc{\phi}, \psi) = \cc{\phi}\psi$. In this case we call
$\SP{\cdot, \cdot}'$ a \emph{bidifferential deformation} of the
canonical classical inner product \eqref{eq:SPredclDef}. Note that $B$
is not uniquely determined by \eqref{eq:DeformationInnerProduct} since
we can still perform integrations by parts. For our local situation we
have now the following result:
\begin{lemma}
    \label{lemma:BidiffInnerProductLocally}
    Let $\SP{\cdot, \cdot}'$ be a bidifferential deformation of the
    canonical classical inner product on $\Cinftycf(U \times
    G)[[\lambda]]$.
    \begin{compactenum}
    \item \label{item:BIJ} There exist $B^{IJ} \in \Cinfty(U \times
        G)[[\lambda]]$ such that
        \begin{equation}
            \label{eq:BidiffInnerProductBIJ}
            \pi^*\SP{\phi, \psi}'
            =
            \sum_{I, J} \int_G \mathsf{L}_{g^{-1}}
            \left(
                e_{\vec{I}} \, (\cc{\phi}) 
                \starred B^{IJ}
                \starred e_{\vec{J}} \, (\psi)
            \right)
            \Dleft g,
        \end{equation}
        where the sum is infinite but in each order of $\lambda$ we
        have only finitely many differentiations.
    \item \label{item:SPprimeHSPcan} There exists a vertical
        differential operator $H \in \Diffop_\ver(U \times
        G)[[\lambda]]$ such that $H = H^* = \id + \cdots$ and
        \begin{equation}
            \label{eq:SPprimeHSPcan}
            \SP{\phi, \psi}' = \SP{\phi, H \bullet' \psi}_\can.
        \end{equation}
    \item \label{item:SPprimeIsometricSPcan} $\SP{\cdot, \cdot}'$ is
        isometric to $\SP{\cdot, \cdot}_\can$.
    \end{compactenum}
\end{lemma}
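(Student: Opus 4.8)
The plan is to prove \refitem{item:SPprimeHSPcan} first, since it carries the whole content; \refitem{item:BIJ} and \refitem{item:SPprimeIsometricSPcan} then follow quickly. For \refitem{item:SPprimeHSPcan} I would build the right $\starred$-module endomorphism $\mathcal{H}$ of $\Cinftycf(U\times G)[[\lambda]]$ that will turn out to be $H\bullet'$, constructing it order by order in $\lambda$. For fixed $\psi$ the map $\phi\mapsto\SP{\psi,\phi}'$ is a differential operator in $\phi$ followed by the fibre integration, so one looks for $\chi_\psi$, depending $\mathbb{C}[[\lambda]]$-linearly and differentially on $\psi$, with $\SP{\chi_\psi,\phi}_\can=\SP{\psi,\phi}'$; since $B_0(\cc\phi,\psi)=\cc\phi\psi$ one starts from $\chi_\psi=\psi+O(\lambda)$. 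Assume inductively that a formal series of differential operators $\mathcal{H}_{(k-1)}=\id+\cdots$ satisfies $\SP{\mathcal{H}_{(k-1)}\psi,\phi}_\can-\SP{\psi,\phi}'=O(\lambda^k)$; its order-$\lambda^k$ coefficient has the form $\int_G\mathsf{L}_{g^{-1}}(E_k(\cc\psi,\phi))\Dleft g$ for a bidifferential operator $E_k$ on $U\times G$. Since $\SP{\cdot,\cdot}_\can$ and $\SP{\cdot,\cdot}'$ are both right $\starred$-linear (with respect to $\bulletcan$), so is this defect; comparing its order-$\lambda^k$ value on $\phi\bulletcan u$ with the appropriate multiple of its value on $\phi$, and using that on $U\times G$ the star product $\starred$ (the horizontal lift of the Poisson structure on $U$) differentiates only transversally to the fibres while $\pi^*u$ is constant along them but otherwise arbitrary, forces $E_k$ to carry only \emph{vertical} derivatives in its second argument, $E_k(\cc\psi,\phi)=\sum c\,(D_1\cc\psi)(e_{\vec I}\phi)$. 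These fibre derivatives can then be integrated by parts along $G$, exactly as in the proof of Lemma~\ref{lemma:DeformedVerticalOperators}, \refitem{item:UniqueInvolution} (the fundamental vector fields are derivations of $\starred$ on $U\times G$ and $\int_G\mathsf{L}_{g^{-1}}(\Lie_{\xi_{U\times G}}F)\Dleft g=-\Delta(\xi)\int_G\mathsf{L}_{g^{-1}}(F)\Dleft g$), giving $\int_G\mathsf{L}_{g^{-1}}(E_k(\cc\psi,\phi))\Dleft g=\int_G\mathsf{L}_{g^{-1}}(\cc{\delta_k\psi}\cdot\phi)\Dleft g$ for a single differential operator $\delta_k$ on $U\times G$. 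Putting $\mathcal{H}_{(k)}=\mathcal{H}_{(k-1)}-\lambda^k\delta_k$ kills the order-$\lambda^k$ defect, and the $\lambda$-adic limit $\mathcal{H}=\id-\sum_{k\ge1}\lambda^k\delta_k$ is a formal series of differential operators with $\SP{\mathcal{H}\psi,\phi}_\can=\SP{\psi,\phi}'$.

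To conclude \refitem{item:SPprimeHSPcan}: non-degeneracy and Hermiticity of $\SP{\cdot,\cdot}_\can$ and right $\starred$-linearity of $\SP{\cdot,\cdot}'$ force $\mathcal{H}(\psi\bulletcan u)=(\mathcal{H}\psi)\bulletcan u$, so $\mathcal{H}$ is a differential right $\starred$-module endomorphism of $\Cinftycf(U\times G)[[\lambda]]$; by the description of such endomorphisms recalled at the beginning of Section~\ref{subsec:CompletePositivity} and in Lemma~\ref{lemma:DeformedVerticalOperators} (cf.~\cite{bordemann.neumaier.waldmann.weiss:2007a:pre}) it equals $H\bullet'$ for a unique $H\in\Diffop_\ver(U\times G)[[\lambda]]$, with $H=\id+\cdots$ from the zeroth order. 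Hermiticity of both inner products then gives $\SP{\phi,H\bullet'\psi}_\can=\SP{\phi,\psi}'=\cc{\SP{\psi,\phi}'}=\cc{\SP{\psi,H\bullet'\phi}_\can}=\SP{H\bullet'\psi,\phi}_\can$, and comparison with $\SP{H\bullet'\psi,\phi}_\can=\SP{\psi,H^*\bullet'\phi}_\can$ (the adjoint of $H$ with respect to $\bullet'$ and $\SP{\cdot,\cdot}_\can$ from Lemma~\ref{lemma:DeformedVerticalOperators}, \refitem{item:UniqueInvolution}) forces $H=H^*$ by non-degeneracy. Now \refitem{item:BIJ} is immediate: with $H=\sum_J H^J e_{\vec J}$, substituting \eqref{eq:DeformedActionVertialDiffop} into $\pi^*\SP{\phi,\psi}'=\int_G\mathsf{L}_{g^{-1}}(\cc\phi\starred(H\bullet'\psi))\Dleft g$ gives \eqref{eq:BidiffInnerProductBIJ} with $B^{0J}=H^J$ and $B^{IJ}=0$ for $I\ne0$, the per-order finiteness of the differentiations being inherited from $H$. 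For \refitem{item:SPprimeIsometricSPcan}, since $H=H^*=\id+\cdots$ it has a Hermitian $\star'$-square root $\sqrt[\star']{H}=\id+\cdots$ (the $\lambda$-adically convergent binomial series), and $U:=\sqrt[\star']{H}\bullet'$ is a $\mathbb{C}[[\lambda]]$-linear bijection commuting with $\bulletcan$ (Lemma~\ref{lemma:DeformedVerticalOperators}, \refitem{item:DeformedVerticalRightLinear}); using that $\bullet'$ is a $^*$-representation one gets $\SP{U\phi,U\psi}_\can=\SP{\phi,((\sqrt[\star']{H})^*\star'\sqrt[\star']{H})\bullet'\psi}_\can=\SP{\phi,H\bullet'\psi}_\can=\SP{\phi,\psi}'$, so $U$ is a unitary right $\starred$-module isomorphism exhibiting the claimed isometry.

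The hard part is the inductive step in \refitem{item:SPprimeHSPcan}, and within it the passage from right $\starred$-linearity of the order-$\lambda^k$ defect to the absence of horizontal derivatives in the second argument of $E_k$: this is exactly where one must exploit the special geometry of the local model, namely that $\starred$ differentiates only transversally to the fibres whereas $\pi^*u$ is constant along the fibres but otherwise arbitrary, together with the careful bookkeeping of the fibrewise integrations by parts (which produce modular-function factors) so that the leftover correction is a genuine single differential operator $\delta_k$. Everything else — existence of the $\star'$-square root, identification of $\mathcal{H}$ with an element of $\Diffop_\ver(U\times G)[[\lambda]]$, and self-adjointness of $H$ — is routine given the results already in the paper.
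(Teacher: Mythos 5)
Your route is viable, but it is genuinely different from the paper's, and it hinges on one step that you assert rather than prove. The paper proves \refitem{item:BIJ} \emph{first}: it evaluates the defining kernel $B$ on factorizing functions $u \otimes \chi = (1 \otimes \chi) \bulletcan u$, uses right $\starred$-linearity (and Hermiticity) to pull $\pi^*(\cc{u})$ and $\pi^*v$ out of the integral, observes that on $1 \otimes \cc{\chi}$, $1 \otimes \tilde{\chi}$ only the vertical-derivative components of $B$ can contribute (these functions are constant in the $U$-directions), pushes $\pi^*\cc u$, $\pi^*v$ back in using that the fundamental vector fields are $\starred$-derivations, and then extends to all functions by a density argument in each order of $\lambda$; part \refitem{item:SPprimeHSPcan} is then a one-liner, $H = \sum_{I,J} e_{\vec I}^* \star' (B^{IJ} e_{\vec J})$, using only the adjoints from Lemma~\ref{lemma:DeformedVerticalOperators}, with no induction and no appeal to the classification of module endomorphisms. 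You reverse the order: an order-by-order correction scheme produces a differential right-module endomorphism $\mathcal{H}$, which you then identify with $H \bullet'$ via the statement (recalled at the beginning of Section~\ref{subsec:CompletePositivity} from \cite{bordemann.neumaier.waldmann.weiss:2007a:pre}) that differential $\bulletcan$-endomorphisms are exactly $\Diffop_\ver(U\times G)[[\lambda]]$ acting via $\bullet'$. That citation is fair game since the paper invokes it, but it is heavier machinery than the paper needs here; in exchange your argument makes the endomorphism character of the comparison map explicit and yields \refitem{item:BIJ} for free with $B^{IJ}=0$ for $I \neq 0$. Parts \refitem{item:SPprimeHSPcan} (self-adjointness) and \refitem{item:SPprimeIsometricSPcan} (square root) coincide with the paper.

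The step you must actually prove is the claim that the order-$\lambda^k$ defect kernel ``carries only vertical derivatives in its second argument.'' As literally stated this is not correct: $E_k$ is only determined up to bidifferential operators killed by the fibre integration, so the correct assertion is that a \emph{representative} can be chosen with no horizontal derivatives on $\phi$, and this needs an argument, not just the remark that $\pi^*u$ is fibrewise constant. Two ways to supply it: either repeat the paper's device at order $k$ (evaluate on $(1\otimes\tilde\chi)\bulletcan v$, strip off $v$ by the classical $\Cinfty(U)$-linearity which the defect enjoys at its lowest non-vanishing order, note that horizontal derivatives of $1\otimes\tilde\chi$ vanish identically, and conclude by density), or compare coefficients of the derivatives $\partial^\beta u$ in the identity $F_k(\psi,\phi\,\pi^*u) = F_k(\psi,\phi)\,u$: for each top horizontal multi-index $\gamma$ this forces the operator $\phi \mapsto \int_G \mathsf{L}^*_{g^{-1}}\bigl(E_k^{\gamma I}\, e_{\vec I}\,\phi\bigr) \Dleft g$ to vanish identically, and applying it to $\partial^\gamma\phi$ (horizontal coordinate derivatives commute with the $e_{\vec I}$ in the trivialization) shows the top horizontal terms may be dropped from the kernel without changing $F_k$, so one descends by induction on the horizontal order. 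With that inserted, your fibrewise integration by parts (producing the modular factors as in Lemma~\ref{lemma:DeformedVerticalOperators}) and the rest of your argument go through.
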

\begin{proof}
    For the first part consider $\phi = u \otimes \chi$ and $\psi = v
    \otimes \tilde{\chi}$ with $u, v \in \Cinfty(U)$ and $\chi,
    \tilde{\chi} \in \Cinfty_0(G)$. Using the right
    $\starred$-linearity of $\SP{\cdot, \cdot}'$ we get
    \[
    \int_G \mathsf{L}_{g^{-1}}
    \left(B(\cc{u \otimes \chi}, v \otimes \tilde{\chi})\right)
    \Dleft g
    =
    \pi^*(\cc{u}) \starred 
    \int_G \mathsf{L}_{g^{-1}}
    \left(B(1 \otimes \cc{\chi}, 1 \otimes \tilde{\chi})\right)
    \Dleft g
    \starred \pi^*(v).
    \]
    Now in $B(1 \otimes \cc{\chi}, 1 \otimes \tilde{\chi})$ only
    vertical differentiations can contribute. Hence we have
    \[
    B(1 \otimes \cc{\chi}, 1 \otimes \tilde{\chi})
    =
    \sum_{I, J} e_{\vec{I}} \, (1 \otimes \cc{\chi})
    B^{IJ}
    e_{\vec{J}} \, (1 \otimes \tilde{\chi})
    \]
    with formal series $B^{IJ} \in \Cinfty(U \times G)[[\lambda]]$
    such that in each order of $\lambda$ only finitely many
    differentiations occur. Since $\pi^*(\cc{u})$ and $\pi^*v$ do not
    depend on the group variables and since the fundamental vector
    fields are derivations of $\starred$ we arrive at the formula
    \[
    \pi^*\SP{u \otimes \chi, v \otimes \tilde{\chi}}'
    =
    \sum_{I, J} 
    \int_G \mathsf{L}_{g^{-1}} \left(
        \cc{e_{\vec{I}} \, (u \otimes \chi)}
        \starred B^{IJ} \starred
        e_{\vec{J}} \, (v \otimes \tilde{\chi})
    \right)
    \Dleft g.
    \]
    Now in each order of $\lambda$ we have an integration and
    bidifferential operators. By the usual continuity and density
    argument, they are already determined on their values on
    factorizing functions $u \otimes \chi$ and $v \otimes
    \tilde{\chi}$, respectively. Thus \eqref{eq:BidiffInnerProductBIJ}
    holds in general showing the first part. Since the $e_{\vec{I}}$
    are real differential operators, we can rewrite this as
    \[
    \SP{\phi, \psi}'
    = \sum_{I, J} 
    \SP{e_{\vec{I}} \bullet' \phi,
      (B^{IJ} e_{\vec{J}}) \bullet' \psi}_\can
    =
    \SP{\phi, 
      \left(
          \sum_{I, J} e_{\vec{I}}^* \star' (B^{IJ} e_{\vec{J}})
      \right)
      \bullet' \psi
    }_\can.
    \]
    This yields the vertical differential operator $H \in
    \Diffop_\ver(U \times G)[[\lambda]]$. From $\SP{\phi, H \bullet'
      \psi}_\can = \SP{\phi, \psi}' = \cc{\SP{\psi, \phi}'} =
    \cc{\SP{\psi, H \bullet' \phi}_\can} = \SP{H \bullet' \phi,
      \psi}_\can$ we see $H = H^*$. Finally, $H = \id + \cdots$ is
    clear giving the second part. The third part follows as we have a
    Hermitian $\star'$-square root $\sqrt[\star']{H} \star'
    \sqrt[\star']{H} = H$ which implements the unitary map between the
    two inner products by left $\bullet'$-multiplication.
\end{proof}
\begin{corollary}
    \label{corollary:LocallyAllCompletelyPositive}
    Every bidifferential deformation of $\SP{\cdot, \cdot}_\can^\cl$
    on $\Cinftycf(U \times G)[[\lambda]]$ is completely positive.
\end{corollary}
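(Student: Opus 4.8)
The plan is to reduce Corollary~\ref{corollary:LocallyAllCompletelyPositive} to the two results already established in this subsection: Proposition~\ref{proposition:SPcanIsCompletePositive}, that $\SP{\cdot, \cdot}_\can$ is completely positive, and Lemma~\ref{lemma:BidiffInnerProductLocally}, \refitem{item:SPprimeIsometricSPcan}, that every bidifferential deformation $\SP{\cdot, \cdot}'$ of the canonical classical inner product on $\Cinftycf(U \times G)[[\lambda]]$ is isometric to $\SP{\cdot, \cdot}_\can$. Since complete positivity is manifestly preserved under an isometry of inner product right modules, the corollary follows at once; the only point deserving a word of care is that ``isometric'' is used here in the strong sense of an \emph{invertible}, inner-product-preserving, right-module map, which is precisely what Lemma~\ref{lemma:BidiffInnerProductLocally} provides.

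Concretely, I would first observe that $\SP{\cdot, \cdot}_\can$ is itself a bidifferential deformation of $\SP{\cdot, \cdot}_\can^\cl$: because $\cc{\phi} \starred \psi = \cc{\phi}\psi + \sum_{r \ge 1} \lambda^r C_r(\cc{\phi}, \psi)$ with bidifferential $C_r$, formula \eqref{eq:CanonicalInnerProduct} has the shape \eqref{eq:DeformationInnerProduct} with $B(\cc{\phi},\psi) = \cc{\phi}\starred\psi$ and leading term $B_0(\cc{\phi},\psi) = \cc{\phi}\psi$. Hence both the given $\SP{\cdot, \cdot}'$ and $\SP{\cdot, \cdot}_\can$ fall under Lemma~\ref{lemma:BidiffInnerProductLocally}, and by part \refitem{item:SPprimeHSPcan} there is a vertical differential operator $H = H^* = \id + \cdots$ with $\SP{\phi, \psi}' = \SP{\phi, H \bullet' \psi}_\can$. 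Its Hermitian $\star'$-square root $\sqrt[\star']{H}$ starts with $\id$, hence is $\star'$-invertible, so $U \colon \phi \mapsto \sqrt[\star']{H} \bullet' \phi$ is a bijection of $\Cinftycf(U \times G)[[\lambda]]$ which is right $\starred$-linear by Lemma~\ref{lemma:DeformedVerticalOperators}, \refitem{item:DeformedVerticalRightLinear}, and unitary, since $\SP{U\phi, U\psi}_\can = \SP{\phi, \sqrt[\star']{H} \star' \sqrt[\star']{H} \bullet' \psi}_\can = \SP{\phi, \psi}'$ using that $\bullet'$ is a $^*$-representation for the involution of Lemma~\ref{lemma:DeformedVerticalOperators}, \refitem{item:UniqueInvolution}.

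Then I would finish by transporting positivity: for $\phi_1, \dots, \phi_n \in \Cinftycf(U \times G)[[\lambda]]$ the Gram matrix obeys $\bigl(\SP{\phi_i, \phi_j}'\bigr)_{i,j} = \bigl(\SP{U\phi_i, U\phi_j}_\can\bigr)_{i,j} \in M_n\bigl(\Cinfty(M_\red)[[\lambda]]\bigr)$, and the right-hand side is positive by Proposition~\ref{proposition:SPcanIsCompletePositive} applied to the vectors $U\phi_1, \dots, U\phi_n$. Thus $\SP{\cdot, \cdot}'$ is completely positive. I do not expect any genuine obstacle in this corollary itself: the real work has been done in Lemma~\ref{lemma:BidiffInnerProductLocally} (the reduction to standard-ordered form \eqref{eq:BidiffInnerProductBIJ} and the extraction of a self-adjoint, identity-leading $H$ admitting a $\star'$-square root) and in Proposition~\ref{proposition:SPcanIsCompletePositive} (the fibrewise positivity of the integrand against an arbitrary distributional positive functional on $M_n$ of the reduced algebra).
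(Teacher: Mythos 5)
Your proposal is correct and is exactly the argument the paper intends: the corollary is stated immediately after Lemma~\ref{lemma:BidiffInnerProductLocally}, whose part \refitem{item:SPprimeIsometricSPcan} (isometry to $\SP{\cdot,\cdot}_\can$ via left $\bullet'$-multiplication by $\sqrt[\star']{H}$) combines with Proposition~\ref{proposition:SPcanIsCompletePositive} precisely as you describe. Your extra care about the isometry being an invertible, right $\starred$-linear map and the transport of Gram matrices is a faithful spelling-out of the same reasoning, not a different route.
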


After these local constructions we shall now pass to the global
situation. The next proposition gives the existence of
$\Cinfty(M_\red)[[\lambda]]$-valued inner products which deform the
canonical classical one in a bidifferential way. Of course, our inner
product $\SP{\cdot, \cdot}_\red$ is of this form. However, we give an
independent proof not relying on phase space reduction thereby
including non-connected Lie groups $G$ as well.
\begin{proposition}
    \label{proposition:ExistenceInnerProducts}
    Let $C \circlearrowright G \longrightarrow M_\red$ be an arbitrary
    principal bundle. Then there exists a bidifferential deformation
    $\SP{\cdot, \cdot}$ of the canonical classical inner product on
    $\Cinftycf(C)[[\lambda]]$ with the additional feature
    \begin{equation}
        \label{eq:ExistenceOfGinvariantInnerProduct}
        \SP{\mathsf{L}_{g^{-1}} \phi, \mathsf{L}_{g^{-1}} \psi}
        =
        \Delta(g) \SP{\phi, \psi}
    \end{equation}
    for all $\phi, \psi \in \Cinftycf(C)[[\lambda]]$ and $g \in G$.
\end{proposition}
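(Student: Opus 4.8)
The plan is to build $\SP{\cdot,\cdot}$ by gluing the local canonical inner products \eqref{eq:CanonicalInnerProduct} over trivializing charts with a partition of unity, inserting the partition functions \emph{inside} the local star products rather than multiplying the local inner products by them; this is the device that avoids spoiling right $\starred$-linearity, Hermiticity and the modular behaviour all at once. First I would fix a locally finite cover $\{U_\alpha\}$ of $M_\red$ by trivializing open subsets, with a subordinate partition of unity $\chi_\alpha\geq0$, $\supp\chi_\alpha$ compact in $U_\alpha$, $\sum_\alpha\chi_\alpha=1$. Over each $U_\alpha$ I use the right-principal trivialization $\pi^{-1}(U_\alpha)\cong U_\alpha\times G$, extend $\starred$ canonically to $\Cinfty(U_\alpha\times G)[[\lambda]]$ for the flat $G$-invariant horizontal lift of the Poisson structure on $U_\alpha$, and write $\bulletcan$ for the associated canonical right $\starred$-module structure. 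Since $\bulletred$ is a deformation of the classical module structure and is unique up to equivalence by \cite{bordemann.neumaier.waldmann.weiss:2007a:pre}, and since we may take $\bulletred$ to be $G$-invariant, I fix for each $\alpha$ a $G$-equivariant module equivalence $T_\alpha=\id+\sum_{r\geq1}\lambda^rT_{\alpha,r}$ with differential $T_{\alpha,r}$ intertwining $\bulletred$ restricted to $\pi^{-1}(U_\alpha)$ with $\bulletcan$.

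Then, for $\phi,\psi\in\Cinftycf(C)[[\lambda]]$, I would define
\[
\pi^*\SP{\phi,\psi} := \sum_\alpha \int_G \mathsf{L}^*_{g^{-1}}\Big(\cc{T_\alpha(\phi\at{\pi^{-1}(U_\alpha)})}\starred \pi^*\chi_\alpha\starred T_\alpha(\psi\at{\pi^{-1}(U_\alpha)})\Big)\Dleft g .
\]
For each $\alpha$ the integrand is supported in $\supp\phi\cap\supp\psi\cap\pi^{-1}(\supp\chi_\alpha)$, which is compact since $\phi,\psi\in\Cinftycf(C)[[\lambda]]$ and $\supp\chi_\alpha$ is compact; hence, exactly as in Lemma~\ref{lemma:CinftycfProperties}, properness of the $G$-action turns the fibre integral into an integral over an effectively compact subset of $G$, so it converges and yields a smooth $G$-invariant function on $C$, i.e. the pull-back of an element of $\Cinfty(M_\red)[[\lambda]]$ supported in $\supp\chi_\alpha$; the locally finite sum over $\alpha$ then defines $\SP{\phi,\psi}\in\Cinfty(M_\red)[[\lambda]]$.

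It then remains to verify the asserted properties. $\mathbb{C}[[\lambda]]$-sesquilinearity is immediate. Writing each summand as $\int_G\mathsf{L}^*_{g^{-1}}(B_\alpha(\cc\phi,\psi))\Dleft g$ with $B_\alpha(\cc\phi,\psi)=\cc{T_\alpha\phi}\starred\pi^*\chi_\alpha\starred T_\alpha\psi$ bidifferential and $(B_\alpha)_0=\chi_\alpha\,\cc\phi\psi$, the sum $B=\sum_\alpha B_\alpha$ is bidifferential with $B_0=\cc\phi\psi$, so $\SP{\cdot,\cdot}$ is a bidifferential deformation of \eqref{eq:SPredclDef} in the sense of \eqref{eq:DeformationInnerProduct}. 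Right $\starred$-linearity follows from $T_\alpha((\psi\bulletred u)\at{\pi^{-1}(U_\alpha)})=(T_\alpha\psi)\bulletcan u=(T_\alpha\psi)\starred\pi^*u$ together with the $G$-invariance of $\pi^*u$, which lets it $\starred$-commute past $\mathsf{L}^*_{g^{-1}}$ and be pulled out of the integral on the right. Hermiticity is seen from $\cc{\cc{T_\alpha\phi}\starred\pi^*\chi_\alpha\starred T_\alpha\psi}=\cc{T_\alpha\psi}\starred\pi^*\chi_\alpha\starred T_\alpha\phi$, using that $\starred$ is Hermitian, that $\cc{\pi^*\chi_\alpha}=\pi^*\chi_\alpha$, and that $\mathsf{L}^*_{g^{-1}}$ is real so the fibre integral commutes with complex conjugation (the analogue of \eqref{eq:ccintCf}); summing over $\alpha$ gives $\cc{\SP{\phi,\psi}}=\SP{\psi,\phi}$. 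Non-degeneracy follows by induction on the lowest non-vanishing order of $\phi$, since the zeroth order is the non-degenerate classical inner product of Theorem~\ref{theorem:ClassicalLimitBimodule}, exactly as in Lemma~\ref{lemma:NondegenerateAndClassicalLimitOfSPred}. Finally, for \eqref{eq:ExistenceOfGinvariantInnerProduct}: since $T_\alpha$ is $G$-equivariant and $\pi^*\chi_\alpha$ and the extended $\starred$ on $U_\alpha\times G$ are $G$-invariant, one has $\cc{T_\alpha(\mathsf{L}^*_{h^{-1}}\phi)}\starred\pi^*\chi_\alpha\starred T_\alpha(\mathsf{L}^*_{h^{-1}}\psi)=\mathsf{L}^*_{h^{-1}}\big(\cc{T_\alpha\phi}\starred\pi^*\chi_\alpha\starred T_\alpha\psi\big)$, so the $\alpha$-summand for $(\mathsf{L}^*_{h^{-1}}\phi,\mathsf{L}^*_{h^{-1}}\psi)$ equals $\int_G\mathsf{L}^*_{(gh)^{-1}}(\cdots)\Dleft g$; substituting $g\mapsto gh$ the left Haar measure picks up precisely the modular factor $\Delta(h)$ (consistently with \eqref{eq:muModularWeight} and Proposition~\ref{proposition:GactsUnitarily}), and summing over $\alpha$ gives \eqref{eq:ExistenceOfGinvariantInnerProduct}.

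The main obstacle is the gluing step itself. Multiplying the local inner products $\SP{\cdot,\cdot}_{\can,\alpha}$ by $\pi^*\chi_\alpha$ does not work: left $\starred$-multiplication by $\chi_\alpha$ is right $\starred$-linear but not $^*$-compatible, whereas symmetrizing it restores Hermiticity at the cost of right $\starred$-linearity, since the relevant commutators $[\chi_\alpha,\cdot]_\starred$ are of order $\lambda$ and do not cancel. The resolution — and the nontrivial point — is that inserting the real, $G$-invariant functions $\pi^*\chi_\alpha$ \emph{between} the two module arguments inside the local star products leaves Hermiticity, right $\starred$-linearity and the modular transformation simultaneously intact. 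The remaining care is bundle-theoretic: one must work with a $G$-invariant representative of $\bulletred$ and choose the local equivalences $T_\alpha$ to the canonical model $G$-equivariantly, invoking the uniqueness results of \cite{bordemann.neumaier.waldmann.weiss:2007a:pre}, and, as throughout the paper, one repeatedly uses properness of the $G$-action to make the fibre integrals over $G$ well-defined.
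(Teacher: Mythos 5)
Your construction is correct and essentially the paper's own proof: a locally finite atlas of trivializations, $G$-equivariant module equivalences $T_\alpha$ to the canonical local model, and a partition-of-unity gluing of the canonical inner products, with the same verifications of right $\starred$-linearity, Hermiticity, the classical limit, and the modular factor $\Delta(g)$ via the substitution $g \mapsto gh$ in the left Haar integral. The only cosmetic difference is that the paper takes a quadratic partition of unity $\sum_\alpha \cc{\chi_\alpha}\chi_\alpha = 1$ and applies $\pi^*\chi_\alpha \starred T_\alpha$ to \emph{both} arguments of $\SP{\cdot,\cdot}_\can$, which, after using that $\starred$ is Hermitian, is precisely your ``insert the cutoff in the middle'' device with $\cc{\chi_\alpha} \starred \chi_\alpha$ in place of your $\chi_\alpha$.
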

\begin{proof}
    Let $\{U_\alpha, \Phi_\alpha\}$ be again a locally finite atlas of
    trivializations and let $\{\chi_\alpha\}$ be a subordinate
    quadratic partition of unity on $M_\red$, i.e. $\supp \chi_\alpha
    \subseteq U_\alpha$ and $\sum_\alpha \cc{\chi_\alpha} \chi_\alpha
    = 1$. The global right module structure $\bulletred$ of
    $\Cinfty(C)[[\lambda]]$ restricts to $\pi^{-1}(U_\alpha)$ and, via
    $\Phi_\alpha$ we obtain a right module structure $\bullet_\alpha$
    on each $\Cinfty(U_\alpha \times G)[[\lambda]]$, i.e. we have
    \[
    \phi \bullet_\alpha u = 
    (\Phi_\alpha^*)^{-1}\left(\Phi_\alpha^* \phi \bulletred u\right)
    \]
    for $\phi \in \Cinfty(U_\alpha \times G)[[\lambda]]$ and $u \in
    \Cinfty(U_\alpha)[[\lambda]]$. By the uniqueness of the right
    module structure we find a $G$-equivariant formal series of
    differential operators $T_\alpha = \id + \sum_{r=1}^\infty
    \lambda^r T_\alpha^{(r)}$ on $U_\alpha \times G$ such that
    \[
    T_\alpha (\phi \bullet_\alpha u)
    = T_\alpha (\phi) \starred \pi^*u.
    \]
    Here we use again $\starred$ also for $\Cinfty(U_\alpha \times
    G)[[\lambda]]$ making $\pi^*$ a star product homomorphism as in
    \eqref{eq:piAlgebraHomomorphism}. This way, we define an inner
    product on $\Cinftycf(C)[[\lambda]]$ by
    \[
    \SP{\phi, \psi}
    =
    \sum_\alpha
    \SP{
      \pi^* \chi_\alpha 
      \starred T_\alpha ((\Phi_\alpha^*)^{-1}\phi),
      \pi^* \chi_\alpha 
      \starred T_\alpha ((\Phi_\alpha^*)^{-1}\psi)
    }_\can.
    \tag{$*$}
    \]
    Indeed, $\SP{\phi, \psi} \in \Cinfty(M_\red)[[\lambda]]$ is
    well-defined since $T_\alpha ((\Phi_\alpha^*)^{-1} \phi)$,
    $T_\alpha ((\Phi_\alpha^*)^{-1} \psi) \in \Cinftycf(U_\alpha
    \times G)[[\lambda]]$ become globally defined functions on $M_\red
    \times G$ after multiplying with $\pi^*\chi_\alpha$ thanks to
    $\supp \chi_\alpha \subseteq U_\alpha$ and the fact that
    $\starred$ is bidifferential. Then each term in the above sum has
    support in the appropriate $U_\alpha$. By the local finiteness of
    the cover, ($*$) is well-defined and smooth in each order of
    $\lambda$. The $\mathbb{C}[[\lambda]]$-sesquilinearity and the
    symmetry under complex conjugation is clear as $\SP{\cdot,
      \cdot}_\can$ has these features and all involved maps are
    $\mathbb{C}[[\lambda]]$-linear. Now let $u \in
    \Cinfty(M_\red)[[\lambda]]$ then we have
    \begin{align*}
        \pi^* \chi_\alpha \starred
        T_\alpha \left((\Phi_\alpha^*)^{-1} (\psi \bulletred u)\right)
        &=
        \pi^* \chi_\alpha \starred 
        T_\alpha
        \left((\Phi_\alpha^*)^{-1} \psi \bullet_\alpha u)\right) \\
        &=
        \pi^* \chi_\alpha \starred 
        \left(
            T_\alpha ((\Phi_\alpha^*)^{-1} \psi) \starred \pi^*u
        \right) \\
        &=
        \left(
            \pi^* \chi_\alpha \starred 
            T_\alpha ((\Phi_\alpha^*)^{-1} \psi)
        \right)
        \starred \pi^*u.
    \end{align*}
    Since $\SP{\cdot, \cdot}_\can$ is right $\starred$-linear in the
    second argument we deduce $\SP{\phi, \psi \bulletred u} =
    \SP{\phi, \psi} \starred u$. Thus $\SP{\cdot, \cdot}$ is indeed a
    valid inner product. We compute its classical limit. Since
    $T_\alpha$ is the identity in the zeroth order of $\lambda$ we get
    \begin{align*}
        \SP{\phi, \psi}
        &=
        \sum_\alpha
        \SP{\pi^* \chi_\alpha (\Phi_\alpha^*)^{-1} \phi,
          \pi^* \chi_\alpha (\Phi_\alpha^*)^{-1} \psi}_\can + \cdots
        \\
        &=
        \sum_\alpha
        \int_G \mathsf{L}_{g^{-1}}
        \left(
            (\Phi_\alpha^*)^{-1} \left(
                \cc{\tilde{\chi}_\alpha \phi}
                \tilde{\chi}_\alpha \psi
            \right)
        \right)
        \Dleft g
        + \cdots \\
        &=
        \sum_\alpha (\Phi_\alpha^*)^{-1}
        \left(
            \int_G \mathsf{L}_{g^{-1}}
            \left(
                \cc{\tilde{\chi}_\alpha \phi}
                \tilde{\chi}_\alpha \psi
            \right)
            \Dleft g
        \right)
        + \cdots \\
        &=
        \int_G \mathsf{L}_{g^{-1}}
        \left(
            \cc{\phi} \psi
        \right)
        \Dleft g
        + \cdots,
    \end{align*}
    where we set $\tilde{\chi}_\alpha = \Phi_\alpha^* \pi^*
    \chi_\alpha$ which yields a partition of unity on $C$ subordinate
    to the cover $\{\pi^{-1}(U_\alpha)\}$ with $\sum_\alpha
    \cc{\tilde{\chi}_\alpha} \tilde{\chi}_\alpha = 1$. The last
    equation follows since the integral is already an invariant
    function which can directly be identified with a function on
    $U_\alpha$, not needing the trivialization anymore. Thus we have a
    deformation as wanted. Finally, let $g \in G$. Since all the maps
    $T_\alpha$ and $\Phi_\alpha^*$ are equivariant and
    $\mathsf{L}_{g^{-1}} \pi^* \chi_\alpha = \pi^* \chi_\alpha$, we
    get
    \[
    \pi^*\chi_\alpha \starred
    T_\alpha ((\Phi_\alpha^*)^{-1}(\mathsf{L}_{g^{-1}} \psi))
    =
    \pi^*\chi_\alpha \starred
    \left(
        \mathsf{L}_{g^{-1}} T_\alpha ((\Phi_\alpha^*)^{-1}\phi)
    \right)
    =
    \mathsf{L}_{g^{-1}} \left(
        \pi^*\chi_\alpha \starred
        T_\alpha ((\Phi_\alpha^*)^{-1}\phi)
    \right).
    \]
    Now the locally defined $\SP{\cdot, \cdot}_\can$ has the property
    \eqref{eq:ExistenceOfGinvariantInnerProduct} and hence $\SP{\cdot,
      \cdot}$ inherits this since every term in ($*$) satisfies
    \eqref{eq:ExistenceOfGinvariantInnerProduct}.
\end{proof}

In the last step, we show three things: for a given bidifferential
deformation $\SP{\cdot, \cdot}$ of the classical canonical inner
product the vertical differential operators act in an adjointable way,
all such deformations are completely positive and isometric:
\begin{theorem}
    \label{theorem:CPOfAllInnerProducts}
    Let $C \circlearrowright G \longrightarrow M_\red$ be an arbitrary
    principal bundle. Moreover, let
    \begin{equation}
        \label{eq:ArbitrarySP}
        \SP{\cdot, \cdot}:
        \Cinftycf(C)[[\lambda]] \times \Cinftycf(C)[[\lambda]]
        \longrightarrow
        \Cinfty(M_\red)[[\lambda]]
    \end{equation}
    be a bidifferential deformation of the canonical classical inner
    product with respect to a given right module structure
    $\bulletred$. Moreover, let $\star'$ be a corresponding choice of
    a deformation of the vertical differential operators
    $\Diffop_\ver(C)[[\lambda]]$ with left module structure
    $\bullet'$.
    \begin{compactenum}
    \item \label{item:UniqueInvolutionForDiffopVer} There exists a
        unique $^*$-involution for $(\Diffop_\ver(C)[[\lambda]],
        \star')$ deforming the classical one such that $\bullet'$
        becomes a $^*$-representation with respect to $\SP{\cdot,
          \cdot}$.
    \item \label{item:AllInnerProductsAreCP} The inner product
        $\SP{\cdot, \cdot}$ is completely positive.
    \item \label{item:AllInnerProductsIsometric} Any two deformations
        are isometrically isomorphic via the left
        $\bullet'$-multiplication of some $V = \id + \sum_{r=1}^\infty
        \lambda^r V_r \in \Diffop_\ver(C)[[\lambda]]$.
    \end{compactenum}
\end{theorem}
\begin{proof}
    As before, we choose a locally finite atlas $\{(U_\alpha,
    \Phi_\alpha)\}$ of trivializations and a subordinate partition of
    unity $\{\chi_\alpha\}$. For $D \in \Diffop_\ver(C)[[\lambda]]$
    and $\phi, \psi \in \Cinftycf(C)[[\lambda]]$ we have
    \[
    \pi^* \SP{\phi, D \bullet' \psi}
    =
    \int_G \mathsf{L}^*_{g^{-1}}
    \left(B(\cc{\phi}, D \bullet' \psi)\right) \Dleft g
    =
    \sum_\alpha \int_G \mathsf{L}^*_{g^{-1}}
    \left(B
        \left(
            \cc{\phi}, (\pi^*\chi_\alpha D) \bullet' \psi
        \right)
    \right)
    \Dleft g.
    \]
    Since $B$ is bidifferential and $\bullet'$ is also local, we have
    \[
    \supp B(\cc{\phi}, (\pi^*\chi_\alpha D) \bullet' \psi)
    \subseteq
    \supp \pi^*\chi_\alpha
    \subseteq \pi^{-1} (U_\alpha).
    \]
    It follows that $\SP{\phi, (\pi^*\chi_\alpha D) \bullet' \psi}$ is
    given by the restriction of $\SP{\cdot, \cdot}$ to
    $\Cinftycf(\pi^{-1}(U_\alpha))[[\lambda]]$ evaluated on the
    restrictions of $\phi$ and $(\pi^*\chi_\alpha D) \bullet' \psi$,
    respectively. Here we can apply
    Lemma~\ref{lemma:BidiffInnerProductLocally},
    \refitem{item:SPprimeIsometricSPcan}, and find an isometry
    $V_\alpha = \id + \sum_{r=1}^\infty \lambda^r V_\alpha^{(r)} \in
    \Diffop_\ver(\pi^{-1}(U_\alpha))[[\lambda]]$ such that
    \[
    \SP{\phi, (\pi^*\chi_\alpha D) \bullet' \psi}
    =
    \SP{
      V_\alpha \bullet' \phi,
      V_\alpha \bullet' ((\pi^*\chi_\alpha D) \bullet' \psi)
    }_\can.
    \]
    With respect to the locally defined canonical inner product, the
    action of the vertical differential operators is adjointable
    according to Lemma~\ref{lemma:DeformedVerticalOperators}: there we
    have shown this for a particular choice of $\bullet'$ but all
    these choices are equivalent which allows to transport the
    $^*$-involution from the particular choice to any other $\star'$
    and $\bullet'$. This way, we get a locally defined $^*$-involution
    $^{*_\alpha}$ for $\star'$ compatible with $\bullet'$ and
    $\SP{\cdot, \cdot}_\can$. Using the invertibility of $H_\alpha =
    V_\alpha^{*_\alpha} \star' V_\alpha = \id + \cdots$ as before, we
    get
    \begin{align*}
        \SP{\phi, (\pi^*\chi_\alpha D) \bullet' \psi}
        &=
        \SP{V_\alpha \bullet' \phi,
          V_\alpha \bullet' (\pi^*\chi_\alpha D)\bullet' \psi}_\can
        \\
        &=
        \SP{H_\alpha \bullet' \phi,
          (\pi^*\chi_\alpha D)\bullet' \psi}_\can
        \\ 
        &=
        \SP{
          \left(
              (\pi^*\chi_\alpha D)^{*_\alpha} \star' H_\alpha
          \right) \bullet' \phi,
          \psi}_\can
        \\
        &=
        \SP{V_\alpha \bullet' \left(
              H_\alpha^{-1}
              \star' (\pi^*\chi_\alpha D)^{*_\alpha}
              \star' H_\alpha
          \right)
          \bullet' \phi, 
          V_\alpha \bullet' \psi}_\can
        \\
        &=
        \SP{
          \left(
              H_\alpha^{-1}
              \star' (\pi^*\chi_\alpha D)^{*_\alpha}
              \star' H_\alpha
          \right)
          \bullet' \phi, 
          \psi}.
    \end{align*}
    Since all the operations $\star'$ and $\bullet'$ preserve the
    supports we can finally take the sum over all $\alpha$ and get
    \[
    \SP{\phi, D \bullet' \psi}
    =
    \sum_\alpha \SP{\phi, (\pi^*\chi_\alpha D) \bullet' \psi}
    =
    \SP{\left(
          \sum_\alpha
          H_\alpha^{-1} \star' (\pi^*\chi_\alpha D) \star' H_\alpha
      \right) \bullet' \phi, \psi}
    =
    \SP{D^* \bullet \phi, \psi},
    \]
    with $D^* \in \Diffop_\ver(C)[[\lambda]]$ according to the term
    before. This shows that we indeed obtain an adjoint for the left
    action of $D$. Since $\SP{\cdot, \cdot}$ is in zeroth order just
    the canonical classical inner product, the classical limit of the
    $^*$-involution is the classical $^*$-involution. Since
    $\SP{\cdot, \cdot}$ is non-degenerate and $D \mapsto (\phi \mapsto
    D \bullet' \phi)$ is injective, the $^*$-involution is necessarily
    unique, proving the first part. The second part is now very easy:
    using a quadratic partition of unity $\sum_\alpha \cc{\chi_\alpha}
    \chi_\alpha = 1$ subordinate to the above atlas, we obtain
    vertical differential operators $\deform{\chi}_\alpha \in
    \Diffop_\ver(C)[[\lambda]]$ with
    \[
    \sum_\alpha \deform{\chi}_\alpha^* \star' \deform{\chi}_\alpha
    = \id,
    \]
    with $\supp \deform{\chi}_\alpha \subseteq \pi^{-1}(U_\alpha)$ and
    $\deform{\chi}_\alpha = \pi^* \chi_\alpha + \cdots$. Indeed, the
    vertical differential operator $X = \sum_\alpha
    (\pi^*\chi_\alpha)^* \star' \pi^*\chi_\alpha = \id + \cdots$ is
    Hermitian and starts with the identity, since the classical limit
    of the $^*$-involution is just the complex conjugation on
    $\pi^*\chi_\alpha$, viewed as vertical differential operator. Thus
    $\sqrt[\star']{X} = \id + \cdots$ is well-defined and invertible.
    Then $\deform{\chi}_\alpha = \pi^*\chi_\alpha \star'
    \frac{1}{\sqrt[\star']{X}}$ will do the job. Using this, we get
    for $\phi, \psi \in \Cinftycf(C)[[\lambda]]$
    \[
    \SP{\phi, \psi}
    =
    \SP{\phi, \sum_\alpha
      \left(
          \deform{\chi}_\alpha^* \star' \deform{\chi}_\alpha
      \right)
      \bullet' \psi}
    =
    \sum_\alpha
    \SP{\deform{\chi}_\alpha \bullet \phi,
      \deform{\chi}_\alpha \bullet \psi}
    \tag{$*$}
    \]
    with $\deform{\chi}_\alpha \bullet \phi, \deform{\chi}_\alpha
    \bullet \psi \in \Cinftycf(\pi^{-1}(U_\alpha))[[\lambda]]$. Here
    we can apply
    Corollary~\ref{corollary:LocallyAllCompletelyPositive} to get the
    complete positivity locally, and, since we have a (locally finite)
    convex sum in ($*$), also globally. Thus the second part follows.
    For the third, let $\SP{\cdot, \cdot}'$ be another inner product.
    Then they are isometric on $\pi^{-1}(U_\alpha)$ via some isometry
    $V_\alpha = \id + \cdots \in
    \Diffop_\ver(\pi^{-1}(U_\alpha))[[\lambda]]$, i.e.  $\SP{\phi,
      \psi} = \SP{V_\alpha \bullet' \phi, V_\alpha \bullet' \psi}'$
    for all $\phi, \psi \in \Cinftycf(\pi^{-1}(U_\alpha))[[\lambda]]$,
    according to Lemma~\ref{lemma:BidiffInnerProductLocally}. We apply
    this to ($*$) and get for arbitrary $\phi, \psi \in
    \Cinftycf(C)[[\lambda]]$
    \begin{align*}
        \SP{\phi, \psi}
        &=
        \sum_\alpha
        \SP{\deform{\chi}_\alpha \bullet' \phi,
          \deform{\chi}_\alpha \bullet' \psi} \\
        &=
        \sum_\alpha \SP{
          V_\alpha \bullet' (\deform{\chi}_\alpha \bullet' \phi),
          V_\alpha \bullet' (\deform{\chi}_\alpha \bullet' \psi)
        }' \\
        &=
        \SP{
          \phi,
          \left(
              \sum_\alpha
              \deform{\chi}_\alpha^{*'} \star' V_\alpha^{*'}
              \star' V_\alpha \star' \deform{\chi}_\alpha
          \right)
          \bullet'
          \psi}' \\
        &=
        \SP{\phi, H \bullet' \psi}',
    \end{align*}
    with some $H \in \Diffop_\ver(C)[[\lambda]]$ given explicitly by
    the locally finite sum
    \[
    H = \sum_\alpha
    \deform{\chi}_\alpha^{*'} \star' V_\alpha^{*'}
    \star' V_\alpha \star' \deform{\chi}_\alpha,
    \]
    where $^{*'}$ denotes the $^*$-involution induced by $\SP{\cdot,
      \cdot}'$ according to the first part. From the construction it
    is clear that $H = \id + \cdots$. Thus we have $H = V^{*'} \star'
    V$ with some $V = \id + \cdots \in \Diffop_\ver(C)[[\lambda]]$
    which is the isometry we are looking for.
\end{proof}
\begin{remark}
    \label{remark:Rigidity}
    This result, together with the existence according to
    Proposition~\ref{proposition:ExistenceInnerProducts}, can be seen
    as an extension of the (rigidity) results from
    \cite{bordemann.neumaier.waldmann.weiss:2007a:pre} on the
    existence and uniqueness of the right module structure
    $\bulletred$: also the canonical classical inner product allows
    for an essentially unique deformation preserving complete
    positivity and the adjointability of the vertical differential
    operators. Note also, that the above construction is independent
    of the phase space reduction approach, which also gives existence
    of an inner product but no proof for uniqueness. Moreover, in the
    phase space reduction approach we are restricted to principal
    bundles arising from \emph{connected} groups.
\end{remark}

%
% A strong Morita equivalence bimodule
%

\subsection{A strong Morita equivalence bimodule}
\label{subsec:StrongMoritaBimodule}

We can now formulate the main result of this section, the quantized
version of Theorem~\ref{theorem:ClassicalLimitBimodule}:
\begin{theorem}
    \label{theorem:QuantizedSMEBimodule} Let $C \circlearrowright G
    \longrightarrow M_\red$ be an arbitrary principal bundle and
    $\SP{\cdot, \cdot}$ a bidifferential deformation of $\SP{\cdot,
      \cdot}_\red^\cl$.
    \begin{compactenum}
    \item \label{item:FullCPInnerProduct} The inner product
        $\SP{\cdot, \cdot}$ is full, completely positive, and there is
        a $\deform{e} \in \Cinftycf(C)[[\lambda]]$ with
        \begin{equation}
            \label{eq:SPdeformedeformeEins}
            \SP{\deform{e}, \deform{e}} = 1.
        \end{equation}
    \item \label{item:ThetaCPAsWell} The canonical inner product
        $\Theta_{\cdot, \cdot}$ with values in the finite rank
        operators $\Finite(\Cinftycf(C)[[\lambda]])$ is completely
        positive as well.
    \item \label{item:CinftycfSMEBimodule} $\Cinftycf(C)[[\lambda]]$
        is a strong Morita equivalence bimodule for
        $\Finite(\Cinftycf(C)[[\lambda]])$ and
        $\Cinfty(M_\red)[[\lambda]]$ deforming the classical strong
        Morita equivalence bimodule $\Cinftycf(C)$ from
        Theorem~\ref{theorem:ClassicalLimitBimodule}.
    \end{compactenum}
\end{theorem}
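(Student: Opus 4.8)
The plan is to assemble the three claims from the machinery already in place, the key point being that $\SP{\cdot,\cdot}_\red$ is, by its very definition \eqref{eq:SPred} together with the locality of $\star$ and $\deform{\iota^*}$ (Lemma~\ref{lemma:RestrictionLocal}), a bidifferential deformation of the canonical classical inner product $\SP{\cdot,\cdot}_\red^\cl$ from \eqref{eq:SPredclDef}; hence everything proved in Section~\ref{subsec:CompletePositivity} for arbitrary such deformations applies verbatim. First I would invoke Theorem~\ref{theorem:CPOfAllInnerProducts}, \refitem{item:AllInnerProductsAreCP}, to conclude that $\SP{\cdot,\cdot}$ (in particular $\SP{\cdot,\cdot}_\red$ itself) is completely positive. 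For fullness and the existence of $\deform{e}$ with $\SP{\deform{e},\deform{e}}=1$, I would run exactly the argument of Lemma~\ref{lemma:Fullness}: start from the function $\epsilon\in\Cinftycf(C)$ of Lemma~\ref{lemma:CinftycfProperties}, \refitem{item:FullnessFunction}, observe via the classical limit \eqref{eq:ClassicalLimit} that $\SP{\epsilon,\epsilon}=u_0+\cdots$ with $u_0>0$ (the diagonal terms of the sum give a strictly positive contribution because $\int_G \mathsf{L}^*_{g^{-1}}\epsilon\,\Dleft g=1$), take a Hermitian $\starred$-square root, and set $\deform{e}=\epsilon\starred(\sqrt[\starred]{\SP{\epsilon,\epsilon}})^{-1}$. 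Since $\Cinfty(M_\red)[[\lambda]]$ is unital, $\SP{\deform{e},\deform{e}}=1$ already forces $\SP{\Cinftycf(C)[[\lambda]],\Cinftycf(C)[[\lambda]]}_\red=\Cinfty(M_\red)[[\lambda]]$, i.e.\ fullness. This proves part \refitem{item:FullCPInnerProduct}.

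For part \refitem{item:ThetaCPAsWell} and \refitem{item:CinftycfSMEBimodule} I would then follow the general structure theory of $^*$-equivalence bimodules recalled in Section~\ref{subsec:StrongMoritaEquivalence}. Concretely: $\Cinftycf(C)[[\lambda]]$ is a right $(\Cinfty(M_\red)[[\lambda]],\starred)$-module (Corollary~\ref{corollary:CcfSubModule}) which is strongly non-degenerate, since $1\bulletred u=\pi^*u$ (Proposition~\ref{proposition:ReducedAlgebraBimodule}, \refitem{item:EinsBulletu}) shows the module action already hits everything; the inner product is full by part \refitem{item:FullCPInnerProduct}. By the criterion quoted in Section~\ref{subsec:StrongMoritaEquivalence}, a strongly non-degenerate full inner product right $\mathcal{A}$-module $\EA$ makes $\mathcal{B}=\Finite[\mathcal{A}](\EA)$ $^*$-Morita equivalent to $\mathcal{A}$ via $\BEA$, with the $\Finite[\mathcal{A}](\EA)$-valued inner product given by $\Theta_{\cdot,\cdot}$. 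Complete positivity of $\Theta_{\cdot,\cdot}$ follows exactly as in the last part of the proof of Proposition~\ref{proposition:StarEquivalenceBimodule}: for $\phi_1,\dots,\phi_n\in\Cinftycf(C)[[\lambda]]$ the matrix $\Phi=(\Theta_{\phi_i,\phi_j})$ equals $\Psi^*\Psi$, where $\Psi$ has $\Theta_{\deform{e},\phi_i}$ in its first row and zeros elsewhere — using $\phi=\Theta_{\phi,\deform{e}}(\deform{e})$, which is immediate from $\SP{\deform{e},\deform{e}}=1$. Since $\SP{\cdot,\cdot}$ is also completely positive we are in the case of a \emph{strong} equivalence bimodule, giving part \refitem{item:CinftycfSMEBimodule}; that it deforms the classical bimodule of Theorem~\ref{theorem:ClassicalLimitBimodule} is clear from \eqref{eq:ClassicalLimit}, as the classical limit of every structure map ($\bulletred$, $\SP{\cdot,\cdot}$, $\Theta_{\cdot,\cdot}$) is the corresponding classical one.

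The one genuinely non-trivial input — and the step I would flag as the main obstacle — is the complete positivity of $\SP{\cdot,\cdot}$, i.e.\ Theorem~\ref{theorem:CPOfAllInnerProducts}, \refitem{item:AllInnerProductsAreCP}. All the rest is either formal nonsense from the Morita dictionary or a verbatim repeat of the rank-one-operator computation. So in writing the proof I would keep parts \refitem{item:FullCPInnerProduct}--\refitem{item:CinftycfSMEBimodule} short, citing Theorem~\ref{theorem:CPOfAllInnerProducts}, Lemma~\ref{lemma:Fullness} (or rather its proof, applied to the present $\deform{e}$), Proposition~\ref{proposition:StarEquivalenceBimodule}, and the general theory of Section~\ref{subsec:StrongMoritaEquivalence}, and reserve detail only for spelling out that $\SP{\cdot,\cdot}_\red$ does indeed meet the hypothesis ``bidifferential deformation of $\SP{\cdot,\cdot}_\red^\cl$'' of Theorem~\ref{theorem:CPOfAllInnerProducts}. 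Additionally I would note, as in Remark~\ref{remark:NoUnit}, that when $G$ is not finite $\Finite(\Cinftycf(C)[[\lambda]])$ is non-unital, so this is a genuinely new example beyond \cite{bursztyn.waldmann:2002a}, but that is a remark rather than part of the proof.
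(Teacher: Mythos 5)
Your proposal is correct and follows essentially the same route as the paper, which likewise proves part \refitem{item:FullCPInnerProduct} by combining Theorem~\ref{theorem:CPOfAllInnerProducts}, \refitem{item:AllInnerProductsAreCP}, with the fullness argument of Lemma~\ref{lemma:Fullness}/Proposition~\ref{proposition:StarEquivalenceBimodule}, and then obtains parts \refitem{item:ThetaCPAsWell} and \refitem{item:CinftycfSMEBimodule} exactly as in Proposition~\ref{proposition:StarEquivalenceBimodule} via the rank-one identity $\Phi=\Psi^*\Psi$ and the general Morita dictionary. You have merely spelled out in more detail what the paper compresses into a few citations, and your identification of complete positivity as the only genuinely non-trivial input matches the paper's structure.
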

\begin{proof}
    The first part is now clear from
    Theorem~\ref{theorem:CPOfAllInnerProducts},
    \refitem{item:AllInnerProductsAreCP}, and an argument analogous to
    the one in Proposition~\ref{proposition:StarEquivalenceBimodule}.
    Then the second part follows as in
    Proposition~\ref{proposition:StarEquivalenceBimodule}, too, which
    gives the last part immediately.
\end{proof}

The deformed vertical differential operators
$\Diffop_\ver(C)[[\lambda]]$ are not (strongly) Morita equivalent to
$\Cinfty(M_\red)[[\lambda]]$, neither is $(\Cinfty(M)[[\lambda]],
\star)$, see Remark~\ref{remark:MNotFiniteRank}, in the case of phase
space reduction. On the other hand, these algebras are not very far
away from being strongly Morita equivalent to
$\Cinfty(M_\red)[[\lambda]]$, since we have a strong Morita
equivalence bimodule and a $^*$-homomorphism into the adjointable
operators. The only flaw is that this $^*$-homomorphism does not map
into the finite rank operators. Note that in the case of
$\Diffop_\ver(C)[[\lambda]]$ it is even injective, while for
$\Cinfty(M)[[\lambda]]$ we clearly loose the functions with vanishing
infinite jet at $C$.

Again, we have a very rigid situation for the deformation of the inner
products and the bimodule structure as already for the strong Morita
equivalence bimodules in deformation quantization of \emph{unital}
algebras, see \cite{bursztyn.waldmann:2002a}. In our case, the crucial
new feature is that one of the algebras is non-unital.

\begin{remark}[Rieffel induction]
    \label{remark:RieffelInduction}
    Having the strong Morita equivalence bimodule we obtain by
    \emph{Rieffel induction} an equivalence of categories
    \begin{equation}
        \label{eq:EquivalenceByRieffelInduction}
        \begin{split}
            &\left(\Cinftycf(C)[[\lambda]], \bulletred\right)
            \itensor_{\left(\Cinfty(M_\red)[[\lambda]], \starred\right)}
            \; \cdot\; : \\
            &\qquad\qquad\qquad
            \Rep[\mathcal{D}]
            \left(\Cinfty(M_\red)[[\lambda]], \starred\right)
            \longrightarrow
            \Rep[\mathcal{D}]
            \left(
                \Finite\left(
                    \Cinftycf(C)[[\lambda]], \bulletred
                \right)
            \right)
        \end{split}
    \end{equation}
    for every coefficient $^*$-algebra $\mathcal{D}$, see
    \eqref{eq:RieffelInductions}. Moreover, since also the
    $^*$-algebra $(\Cinfty(M)[[\lambda]], \star)$ acts on
    $\Cinftycf(C)[[\lambda]]$ via $\bullet$ in an adjointable way
    thanks to Proposition~\ref{proposition:LeftModuleIsRepresentation}
    we obtain also a Rieffel induction functor
    \begin{equation}
        \label{eq:RieffelInduction}
        \begin{split}
            &\left(\Cinftycf(C)[[\lambda]], \bulletred\right)
            \itensor_{\left(\Cinfty(M_\red)[[\lambda]], \starred\right)}
            \; \cdot\; : \\
            &\qquad\qquad\qquad
            \Rep[\mathcal{D}]
            \left(\Cinfty(M_\red)[[\lambda]], \starred\right)
            \longrightarrow
            \Rep[\mathcal{D}]
            \left(\Cinfty(M)[[\lambda]], \star\right).
        \end{split}
    \end{equation}
    However, in general this will not be an equivalence of categories
    anymore. The reason is clear from geometric considerations:
    Indeed, the image of a $^*$-representation of
    $\Cinfty(M_\red)[[\lambda]]$ under \eqref{eq:RieffelInduction} is
    somehow located on $C$, in the sense that if $f$ vanishes on $C$
    up to infinite order, then the action of $f$ in an induced
    representation is necessarily trivial. This is clear from the
    bidifferentiality of the left module structure $\bullet$. On the
    other hand, $\Cinfty(M)[[\lambda]]$ does have non-trivial
    $^*$-representations located away from $C$: we can take any
    $\delta$-functional at $p \in M \setminus C$ and deform it into a
    positive functional $\omega_p$ with support still be given by $p$.
    Then the GNS representation $\pi_{\omega_p}$ of $\omega_p$ is not
    the trivial representation. In fact, since the deformation
    $\omega_p = \delta_p \circ S$ is obtained by means of a formal
    series $S_p = \id + \sum_{r=1}^\infty \lambda^r S_r$ with
    differential operators $S_r$ vanishing on constants, a function $f
    \in \Cinfty(M)[[\lambda]]$ which is $1$ in an open neighbourhood
    of $p$ acts as identity operator on the GNS pre Hilbert space of
    $\omega_p$. However, considering a function $f$ which vanishes up
    to infinite order on $C$, we conclude that the GNS representation
    can not be in the image of \eqref{eq:RieffelInduction} up to
    unitary equivalence. This shows that \eqref{eq:RieffelInduction}
    will not be an equivalence of categories, see also
    Remark~\ref{remark:MNotFiniteRank}.
\end{remark}

%
% An Example
%

\section{An example}
\label{sec:Example}

In this concluding section we consider the geometrically trivial
situation $M = M_\red \times T^*G$ where on $M_\red$ a Poisson bracket
and a corresponding star product $\starred$ is given while on $T^*G$
we use the canonical symplectic Poisson structure and the canonical
star product $\starG$ from \cite{gutt:1983a}. Then $M$ carries the
star product $\star = \starred \tensor \starG$. Classically, the phase
space reduction for the constraint hypersurface $C = M_\red \times G$
will just omit the factor $T^*G$ and reproduces $M_\red$.

%
% The reduction from $M_\red \times T^*G$ to $M_\red$
%

\subsection{The reduction from $M_\red \times T^*G$ to $M_\red$}
\label{subsec:ReductionFromMredtimesTstarGtoMred}

Let $\iota: G \longrightarrow T^*G$ denote the zero section of the
cotangent bundle and $\pr: T^*G \longrightarrow G$ the bundle
projection. We use the same symbols for the corresponding maps $\iota:
C = M_\red \times G \longrightarrow M$ and $\pr: M \longrightarrow
M_\red \times G$. Then $C$ is clearly coisotropic in $M$ with
corresponding orbit space $M_\red$, reproducing the given Poisson
structure.  In principle, one does not need the group structure of $G$
for this coisotropic reduction; it would work literally the same for
any cotangent bundle. However, in view of our previous framework, we
shall outline the underlying symmetry structure.

In order to be conform with the local models described in
Section~\ref{sec:StrongMoritaEquivalenceBimodule} and the appendix we
choose the \emph{right} multiplications $\mathsf{r}: G \times G
\longrightarrow G$ as group action of $G$ on itself. The canonical
lift to a \emph{left} action on $T^*G$ is then denoted by $\mathsf{L}:
G \times T^*G \longrightarrow T^*G$, i.e. $\mathsf{L}_g = T^*
\mathsf{r}_g$. This extends to $M$ in the usual way yielding a Poisson
action of $G$ on $M$. The fundamental vector fields of $\mathsf{r}$
are the left invariant vector fields $X_\xi(g) = T_e \mathsf{l}_g
(\xi)$ for $\xi \in \lie{g}$ and $g \in G$. More precisely, $\xi_G =
\frac{\D}{\D t}\at{t=0} \mathsf{r}_{\exp(t\xi)}^{-1} = - X_\xi$ since
we defined the fundamental vector field with respect to the left
action, see \eqref{eq:FundamentalVectorField}.

An effective description of the corresponding fundamental vector
fields on $T^*G$ and $M$ are obtained as follows. To every vector
field $X \in \Secinfty(TG)$ we assign a fiberwisely linear function
$\mathcal{J}(X) \in \Pol^1(T^*G)$ on $T^*G$ by
$\mathcal{J}(X)(\alpha_g) = \alpha_g(X(g))$ where $\alpha_g \in
T^*_gG$. Then the canonical Poisson bracket on $T^*G$ of such linear
functions in the ``momenta'' is $\{\mathcal{J}(X), \mathcal{J}(Y)\} =
- \mathcal{J}([X, Y])$, see also
\cite[Sect.~3.3.1]{waldmann:2007a}. Then the fundamental vector fields
of the left action $\mathsf{L} = T^*\mathsf{r}$ are given by the
Hamiltonian vector fields $\xi_{T^*G} = - X_{\mathcal{J}(X_\xi)}$.
Thus the momentum map is given by $J(\xi) = - \mathcal{J}(X_\xi)$,
which induces also the trivialization of the global tubular
neighbourhood $M_\red \times T^*G$ of $M_\red \times G$.  The
prolongation with respect to this tubular neighbourhood according to
\eqref{eq:Prol} is then just the pull-back $\prol = \pr^*$.

To describe the classical Koszul operator and the homotopy more
explicitly, we make use of a vector space basis $e_1, \ldots, e_N \in
\lie{g}$ as before. We have the corresponding left invariant vector
fields $X_a = X_{e_a}$ yielding the linear functions $J_a = - P_a = -
\mathcal{J}(X_a) \in \Pol^1(T^*G)$ in the momenta.  For a one-form
$\theta \in \Secinfty(T^*G)$ we have the vertical lift $\theta^\ver
\in \Secinfty(T(T^*G))$ to a vertical vector field on $T^*G$. In
particular, the left invariant one-forms $\theta^a \in
\Secinfty(T^*G)$ with value $e^a$ at $e \in G$ lift to vertical vector
fields denoted by $\frac{\partial}{\partial J_a} = -
\frac{\partial}{\partial P_a} = - (\theta^a)^\ver$. Indeed, we have
$\frac{\partial}{\partial J_a} J_b = \delta^a_b$, explaining our
notation. The funny minus sign is due to our previous convention on
fundamental vector fields.

The classical Koszul operator will then be given by $\koszul x =
\ins(e^a)x J_a$ as before and the homotopy $h_0$ is explicitly and
globally given by
\begin{equation}
    \label{eq:ClassicalhNullGlobal}
    (h_0 f)(p, \alpha_g)
    =
    e^a \int_0^1 \frac{\partial f}{\partial J_a}(p, t\alpha_g) \D t,
\end{equation}
where $p \in M_\red$ and $\alpha_g \in T^*_gG$ as before.

%
% The canonical star product $\starG$ and its Schrödinger representation
%

\subsection{The canonical star product $\starG$ and its Schrödinger
  representation}
\label{subsec:CanonicalStarProductSchroedingerRepresentation}

On $T^*G$ there is a canonical star product $\starG$ which can be
obtained as follows, see \cite{gutt:1983a} as well as
\cite{bordemann.neumaier.waldmann:1998a}.

For left invariant vector fields $X_{\xi_1}, \ldots X_{\xi_k} \in
\Secinfty(T^*G)$ and a function $\phi \in \Cinfty(G)$ we define the
standard ordered quantization map $\stdrep$ by
\begin{equation}
    \label{eq:stdrepphiJxis}
    \stdrep\left(
        \pr^* \phi
        \mathcal{J}(X_{\xi_1}) \cdots \mathcal{J}(X_{\xi_k})
    \right) \psi
    =
    \frac{1}{k!} \left(\frac{\lambda}{\I}\right)^r \phi
    \sum_{\sigma \in S_k}
    \Lie_{X_{\xi_{\sigma(1)}}} \cdots \Lie_{X_{\xi_{\sigma(k)}}} \psi,
\end{equation}
where $\psi \in \Cinfty(G)$, and extend this to a
$\mathbb{C}[[\lambda]]$-linear map
\begin{equation}
    \label{eq:stdrep}
    \stdrep:
    \Pol^\bullet(T^*G)[[\lambda]]
    \longrightarrow
    \Diffop(G)[[\lambda]].
\end{equation}
Clearly, at this stage for polynomial functions we have convergence in
$\lambda$ for trivial reasons. Setting $\lambda = \hbar > 0$ yields a
symbol calculus for differential operators on $G$ and symbols on
$T^*G$ which are polynomial in the fibers.

Alternatively, the above quantization can also be written as
\begin{equation}
    \label{eq:stdredExplicit}
    \stdrep(f) \psi
    =
    \sum_{r=0}^\infty \frac{1}{r!} \left(\frac{\lambda}{\I}\right)^r
    \sum_{a_1, \ldots, a_r}
    \iota^*\left(
        \frac{\partial^r f}{\partial P_{a_1} \cdots \partial P_{a_r}}
    \right)
    \Lie_{X_{a_1}} \cdots \Lie_{X_{a_r}} \psi.
\end{equation}
Now it is clear that $\stdrep$ extends to a
$\mathbb{C}[[\lambda]]$-linear map $\stdrep: \Cinfty(T^*G)[[\lambda]]
\longrightarrow \Diffop(G)[[\lambda]]$ by the very same formula as
above.
\begin{remark}
    \label{remark:AlsoByConnection}
    The above symbol calculus can also be obtained from a covariant
    derivative, namely the ``half-commutator connection'' on $G$ which
    is defined by $\nabla_{X_\xi} X_\eta = \frac{1}{2} [X_\xi,
    X_\eta]$ on left invariant vector fields. This point of view was
    taken in \cite{bordemann.neumaier.waldmann:1998a}.
\end{remark}

Clearly, for $f, g \in \Pol^\bullet(T^*G)[[\lambda]]$ there is a
unique $f \starstd g \in \Pol^\bullet(T^*G)[[\lambda]]$ with
\begin{equation}
    \label{eq:stdstarDef}
    \stdrep(f \starstd g) = \stdrep(f) \stdrep(g).
\end{equation}
Moreover, this extends to a bidifferential star product for arbitrary
$f, g \in \Cinfty(T^*G)[[\lambda]]$ preserving
\eqref{eq:stdstarDef}. This star product is \emph{standard-ordered} in
the sense that $\pr^*\phi \starstd f = \pr^*\phi f$ for arbitrary
$\phi \in \Cinfty(G)[[\lambda]]$.

The only flaw of $\starstd$ is that it is not Hermitian. This can be
understood and cured as follows. First we introduce the differential
operator
\begin{equation}
    \label{eq:DeltaNull}
    \Delta_0 =  \Lie_{X_{P_a}} \Lie_{\frac{\partial}{\partial P_a}}
\end{equation}
acting on functions on $T^*G$, where as before $X_{P_a}$ is the
Hamiltonian vector field of the global momentum function
$P_a$. Clearly, this operator is independent of the chosen basis.
Moreover, we need the vertical lift of the modular one-form $\Delta$
which yields the vector field $\Delta^\ver = C_{ab}^b
\frac{\partial}{\partial P_a} \in \Secinfty(T(T^*G))$. Following
\cite{bordemann.neumaier.waldmann:1998a} we consider the formal series
of differential operators
\begin{equation}
    \label{eq:NeumaierOperator}
    N = \exp\left(
        \frac{\lambda}{2\I}\left(\Delta_0 - \Delta^\ver\right)
    \right)
\end{equation}
acting on $\Cinfty(T^*G)[[\lambda]]$. A non-trivial integration by
parts (even possible for arbitrary cotangent bundles
\cite{bordemann.neumaier.waldmann:1998a}) yields then the result
\begin{equation}
    \label{eq:AdjointOfStdRep}
    \int_G \cc{\phi} \stdrep(f) \psi \Dleft g
    =
    \int_G \cc{\stdrep(N^2\cc{f}) \phi} \: \psi \Dleft g
\end{equation}
for $\phi, \psi \in \Cinfty_0(G)[[\lambda]]$. From this failure of
$\stdrep$ being compatible with complex conjugation we see that the
definition
\begin{equation}
    \label{eq:StarG}
    f \starG g = N^{-1}(Nf \starstd Ng)
\end{equation}
yields again a bidifferential star product for which
\begin{equation}
    \label{eq:SchroedingerRep}
    \weylrep(f)\psi
    =
    \stdrep(Nf) \psi
    =
    \iota^*\left(Nf \starstd \pr^*\phi\right)
\end{equation}
defines a $^*$-representation on $\Cinfty_0(G)[[\lambda]]$. This is
the canonical Hermitian star product on $G$, originally constructed in
\cite{gutt:1983a}: there, $\starG$ was obtained from the observation
that $\Pol^\bullet(T^*G)^G \cong \Pol^\bullet(\lie{g}^*) \cong
\Sym^\bullet(\lie{g})$, using the PBW isomorphism to the universal
enveloping algebra of $\lie{g}$, and pulling back the product. In
fact, $\starG$ turns out to be strongly invariant and
$\Pol^\bullet(T^*G)^G[[\lambda]]$ forms a sub-algebra being isomorphic
to the ``formal'' universal enveloping algebra.  The representation
$\weylrep$ is also called the Schrödinger representation in Weyl
ordering since for the Lie group $G = \mathbb{R}^n$ this indeed
reproduces the usual canonical quantization in Weyl ordering.

%
% The bimodule structure on $\Cinftycf(M_\red \times G)[[\lambda]]$
%

\subsection{The bimodule structure on $\Cinftycf(M_\red \times
  G)[[\lambda]]$} 
\label{subsec:BimoduleStructureMredTimesG}

We will now use the strongly invariant star product $\star = \starred
\tensor \starG$ on $M = M_\red \times T^*G$. For the quantized Koszul
operator we will have the following result:
\begin{lemma}
    \label{lemma:iotaNqkoszulNull}
    Let $x \in \Cinfty(M, \Anti_{\mathbb{C}}^1 \lie{g})[[\lambda]]$.
    Then we have
    \begin{equation}
        \label{eq:iotaNqkoszulNull}
        \iota^* N \deform{\koszul} x = 0.
    \end{equation}
\end{lemma}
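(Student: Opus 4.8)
The plan is to convert the left-hand side of \eqref{eq:iotaNqkoszulNull} into a standard-ordered symbol and then use that $\iota^*$, expressed through $\stdrep$, annihilates the momentum functions. First I evaluate $\deform{\koszul}$ on a homogeneous element $x = x^a e_a$ of antisymmetric degree one. Then $\ins(e^a)\ins(e^b)x = 0$, so the middle term of \eqref{eq:QuantizedKoszul} drops, while $\ins(\Delta)x = \Delta_a x^a$ with $\Delta_a := C_{ab}^b$; since here $\kappa = \tfrac12$ this gives
\[
  \deform{\koszul} x = x^a \star J_a + \tfrac{\I\lambda}{2}\,\Delta_a\, x^a .
\]

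Next I compute $N J_a$. Because $J_a = -P_a$ with $P_a = \mathcal{J}(X_a)$ linear in the fibres, one checks $\Delta_0 P_a = 0$ and $\Delta^\ver P_a = \Delta_a$, so $(\Delta_0 - \Delta^\ver)P_a = -\Delta_a$ is a constant, which is then annihilated by a further application of $\Delta_0 - \Delta^\ver$; hence the exponential series defining $N$ truncates and $N J_a = J_a - \tfrac{\I\lambda}{2}\Delta_a$. The key structural fact is that $N = \id_{M_\red}\tensor N_{T^*G}$ intertwines $\star = \starred\tensor\starG$ with the standard-ordered product $\star_0 := \starred\tensor\starstd$ on $M$, i.e. $N(f\star g) = Nf\star_0 Ng$: on factorising functions this is immediate from $N_{T^*G}(a\starG b) = N_{T^*G}a\starstd N_{T^*G}b$, which is \eqref{eq:StarG} rearranged, and the general case follows by $\mathbb{C}[[\lambda]]$-bilinearity. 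Applying $N$ to the expression for $\deform{\koszul}x$ and using that $\Delta_a$ is a constant, the two $\Delta_a$-terms cancel and I obtain $N\deform{\koszul}x = N x^a \star_0 J_a$.

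Finally I apply $\iota^*$. Since $P_a$ is constant along the $M_\red$-factor and $\iota\colon M_\red\times G\hookrightarrow M_\red\times T^*G$ is $\id$ times the zero section, it is enough to note that $\stdrep(P_a)1 = \stdrep(\mathcal{J}(X_a))1 = \tfrac{\lambda}{\I}\Lie_{X_a}1 = 0$, so that, using $\iota^* = \stdrep(\,\cdot\,)1$ (read off from \eqref{eq:stdredExplicit}) and the homomorphism property of $\stdrep$,
\[
  \iota^*(h\starstd P_a) = \stdrep(h)\,\stdrep(P_a)\,1 = 0
\]
for every $h\in\Cinfty(T^*G)[[\lambda]]$. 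Hence $\iota^* N\deform{\koszul}x = \iota^*(N x^a\star_0 J_a) = 0$, which is \eqref{eq:iotaNqkoszulNull}. The only point that needs real care is the intertwining identity $N(f\star g) = Nf\star_0 Ng$ together with keeping the sign conventions $J_a = -P_a$, $\partial/\partial J_a = -\partial/\partial P_a$, $\Delta = C_{ab}^b e^a$ consistent throughout; the rest is substitution.
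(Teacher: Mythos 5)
Your proof is correct and follows essentially the same route as the paper: compute $\deform{\koszul}x$ in degree one with $\kappa=\tfrac12$, compute $NJ_a=J_a-\tfrac{\I\lambda}{2}C_{ab}^b$, use the intertwining of $N$ with the standard-ordered product from \eqref{eq:StarG} so that the $\Delta$-terms cancel, and then kill the remaining term $Nx^a\starstd J_a$ under $\iota^*$ via the standard-ordered symbol calculus. Your formulation of the last step, $\iota^*(h\starstd P_a)=\stdrep(h)\stdrep(P_a)1=0$, is just a cleaner way of stating the paper's observation that this term ``has no degree-zero contribution in the momenta''.
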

\begin{proof}
    First it is clear that the $M_\red$-components do not enter at
    all. Thus we can compute the left hand side of
    \eqref{eq:iotaNqkoszulNull} on $T^*G$ alone.  We have $\Delta_0
    J_a = 0$ and $\Delta^\ver J_a = - C_{ab}^b$ by the explicit form
    for $\Delta^\ver$ and $J_a = - P_a$. Thus $N J_a = J_a -
    \frac{\I\lambda}{2} C_{ab}^b$. Using $\deform{\koszul} x = x^a
    \starG J_a + \frac{\I\lambda}{2} C_{ab}^b x^a$ according to
    \eqref{eq:QuantizedKoszul} we get
    \[
    N \deform{\koszul} x
    = (Nx^a) \starstd (NJ_a) + \frac{\I\lambda}{2} C_{ab}^b Nx^a
    = (Nx^a) \starstd J_a.
    \]
    Note that at this point our choice $\kappa = \frac{1}{2}$ in
    \eqref{eq:QuantizedKoszul} enters again. Now $\stdrep$ is a symbol
    calculus where $f \in \Pol^\bullet(T^*G)$ corresponds to a
    differential operator $\stdrep(f)$ with $\stdrep(f) 1 = 0$ iff $f$
    has no contributions from polynomial degree $0$. This means that
    $(Nx^a) \starstd J_a$ is at least linear in the momenta, no matter
    what $x^a \in \Cinfty(T^*G)[[\lambda]]$ is. Thus
    \eqref{eq:iotaNqkoszulNull} follows.
\end{proof}
\begin{corollary}
    \label{corollary:DeformedRestrictionMapExplicit}
    For the deformed restriction map $\deform{\iota^*}$ we have
    \begin{equation}
        \label{eq:DeformedRestrictionMapExplicit}
        \deform{\iota^*}
        = \iota^* \circ \left(\id +
            \left(\deform{\koszul}_1- \koszul_1\right) h_0
        \right)^{-1}
        = \iota^* \circ N.
    \end{equation}
\end{corollary}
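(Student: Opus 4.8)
The plan is to prove the two equalities of \eqref{eq:DeformedRestrictionMapExplicit} separately. The first, $\deform{\iota^*} = \iota^* \circ (\id + (\deform{\koszul}_1 - \koszul_1) h_0)^{-1}$, is nothing but the definition \eqref{eq:deformiota} of $\deformiotak$ specialized to the preferred value $\kappa = \frac{1}{2}$ of \eqref{eq:kappaIsFracEinsZwei}, with the subscript $\kappa$ suppressed as usual; so there is nothing to do there. The real content is the identity $\deform{\iota^*} = \iota^* \circ N$, and I would deduce it from the Hodge-type splitting of $\Cinfty(M)[[\lambda]]$ furnished by the deformed Koszul resolution together with Lemma~\ref{lemma:iotaNqkoszulNull}.

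In detail: the homotopy equation \eqref{eq:HomotopyForKNull} writes every $f \in \Cinfty(M)[[\lambda]]$ as $f = \prol(\deform{\iota^*} f) + \deform{\koszul}_1(\deform{h}_0 f)$, with the first summand in $\image \prol$ and the second in $\deform{\mathcal{J}}_C = \image \deform{\koszul}_1$. Apply $\iota^* N$ to this. Since $\deform{h}_0 f \in \Cinfty(M, \Anti_{\mathbb{C}}^1 \lie{g})[[\lambda]]$, Lemma~\ref{lemma:iotaNqkoszulNull} gives $\iota^* N \deform{\koszul}_1(\deform{h}_0 f) = 0$, so that term drops and we are left with $\iota^* N f = \iota^* N \prol(\deform{\iota^*} f)$. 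Hence it is enough to show $\iota^* N \circ \prol = \id_{\Cinfty(C)[[\lambda]]}$, for then $\iota^* N f = \deform{\iota^*} f$ for all $f$.

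For this last point I would use the concrete geometry of the example. Here $\prol = \pr^*$ is the pullback along $\pr \colon M_\red \times T^*G \to M_\red \times G$, so $\prol(\phi)$ is independent of the momenta $P_a$. On the other hand $N = \exp\big(\frac{\lambda}{2\I}(\Delta_0 - \Delta^\ver)\big)$ with $\Delta_0 = \Lie_{X_{P_a}} \Lie_{\frac{\partial}{\partial P_a}}$ and $\Delta^\ver = C_{ab}^b \frac{\partial}{\partial P_a}$, and both of these operators are built out of the vertical derivatives $\frac{\partial}{\partial P_a}$; hence they annihilate every momentum-independent function. Therefore $N\prol(\phi) = \prol(\phi)$, and $\iota^* N \prol(\phi) = \iota^* \prol(\phi) = \phi$ by \eqref{eq:ProlisProl}. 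Combining the three observations proves $\deform{\iota^*} = \iota^* \circ N$.

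The argument is essentially bookkeeping, so there is no serious obstacle; the only points to keep straight are that Lemma~\ref{lemma:iotaNqkoszulNull} is being invoked in antisymmetric degree one, which matches $\deform{h}_0 f \in \Cinfty(M, \lie{g})[[\lambda]]$, and that the operator $N$, originally defined on $\Cinfty(T^*G)[[\lambda]]$, is extended to $\Cinfty(M_\red \times T^*G)[[\lambda]]$ by acting as the identity in the $M_\red$-directions. This causes no ambiguity because $\deform{\koszul}$, $\Delta_0$ and $\Delta^\ver$ all involve only the $T^*G$-directions, exactly as exploited in the proof of Lemma~\ref{lemma:iotaNqkoszulNull}.
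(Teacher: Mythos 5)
Your argument is correct. Both equalities hold as you claim: the first is indeed just the definition \eqref{eq:deformiota} at $\kappa=\tfrac{1}{2}$, and your derivation of $\deform{\iota^*}=\iota^*\circ N$ goes through: applying $\iota^*N$ to the decomposition $f=\prol(\deform{\iota^*}f)+\deform{\koszul}_1(\deform{h}_0 f)$ from \eqref{eq:HomotopyForKNull}, the second term dies by Lemma~\ref{lemma:iotaNqkoszulNull} (correctly invoked in antisymmetric degree one), and $N\prol(\phi)=\prol(\phi)$ because $\Delta_0$ and $\Delta^\ver$ both differentiate in the momentum directions, so $\iota^*N\prol=\iota^*\prol=\id$ by \eqref{eq:ProlisProl}. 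The paper argues slightly differently: it invokes the uniqueness statement from \cite[Prop.~25]{bordemann.herbig.waldmann:2000a}, namely that $\deform{\iota^*}$ is characterized by having classical limit $\iota^*$, satisfying $\deform{\iota^*}\deform{\koszul}_1=0$, and $\deform{\iota^*}\prol=\id$, and then checks these three properties for $\iota^*\circ N$ — the second via the same Lemma~\ref{lemma:iotaNqkoszulNull} and the third via the same observation that the exponent of $N$ vanishes on pull-backs. So the verifications you perform coincide with the paper's, but your route replaces the citation of the external uniqueness result by a direct computation from the homotopy identity, which makes the proof self-contained within this paper and dispenses with the classical-limit check; the paper's route is shorter on the page but leans on the quoted characterization.
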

\begin{proof}
    By the general argument from
    \cite[Prop.~25]{bordemann.herbig.waldmann:2000a} we know that the
    deformed restriction map $\deform{\iota^*}$ is uniquely
    characterized by the following three properties: its classical
    limit is $\iota^*$, $\deform{\iota^*} \deform{\koszul}_1 = 0$ and
    $\deform{\iota^*} \prol = \id$. Clearly, $\iota^* \circ N$
    fulfills the first requirement. Also the last requirement is clear
    as the exponent of $N$ differentiates in momenta direction and
    hence vanishes on pull-backs $\pr^*\phi$. Finally, the second
    requirements is fulfilled by Lemma~\ref{lemma:iotaNqkoszulNull}.
\end{proof}

Thus we have computed the formal series of differential operators from
Lemma~\ref{lemma:RestrictionLocal} explicitly in this situation. Of
course, handling a formal series of differential operators like $N$ is
much easier that the non-local operator $(\id + (\deform{\koszul}_1 -
\koszul1))^{-1}$. We arrive at the following statement:
\begin{theorem}
    \label{theorem:BimoduleStructureExplicitly}
    Let $f \in \Cinfty(M_\red \times T^*G)[[\lambda]]$, $\phi \in
    \Cinfty(M_\red \times G)[[\lambda]]$, and $u, v \in
    \Cinfty(M_\red)[[\lambda]]$ be given.
    \begin{compactenum}
    \item \label{theorem:LeftModuleExplicit} The left module structure
        \eqref{eq:fbulletphi} is explicitly given by
        \begin{equation}
            \label{eq:LeftModuleExplicit}
            f \bullet \phi
            =
            \iota^*\left(
                Nf (\starred \tensor \starstd) \prol(\phi)
            \right)
            =
            \sum_{r=0}^\infty \frac{1}{r!}
            \left(\frac{\lambda}{\I}\right)^r
            \iota^*\left(
                \frac{\partial^r f}
                {\partial P_{a_1} \cdots \partial P_{a_r}}
            \right)
            \starred
            \Lie_{X_{a_1}} \cdots \Lie_{X_{a_r}} \phi,
        \end{equation}
        where $\starred$ is extended to $M_\red \times G$ as usual.
    \item \label{eq:ReducedStarProductExplicit} The reduced star
        product \eqref{eq:ReducedStarProduct} reproduces $\starred$ on
        $M_\red$. The right module structure
        \eqref{eq:ReducedRightModule} is explicitly given by
        \begin{equation}
            \label{eq:RightModuleExplicit}
            \phi \bulletred u = \phi \starred \pi^*u.
        \end{equation}
    \item \label{theorem:InnerProductExplicit} The inner product
        \eqref{eq:SPred} is explicitly given by
        \begin{equation}
            \label{eq:InnerProductExplicit}
            \SP{\phi, \psi}_\red(p) =
            \int_G
            \left(\cc{\phi} \starred \psi\right)(p, g)
            \Dleft g,
        \end{equation}
        where $p \in M_\red$ and $\phi, \psi \in \Cinftycf(M_\red
        \times G)[[\lambda]]$.
    \end{compactenum}
\end{theorem}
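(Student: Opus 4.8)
The plan is to prove the three parts by unwinding the definitions using the explicit formula $\deform{\iota^*} = \iota^* \circ N$ from Corollary~\ref{corollary:DeformedRestrictionMapExplicit} and exploiting the fact that $\star = \starred \tensor \starG$ factorizes and that $\starG = N^{-1}(N\cdot \starstd N\cdot)$. Throughout, the $M_\red$-factor carries $\starred$ and the $T^*G$-factor carries $\starG$, and the operator $N$ acts only in the $T^*G$-directions (it differentiates purely in the momenta $P_a$), so $N$ commutes with everything coming from the $M_\red$-factor. This separation is what makes all three formulas fall out.

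For part~\refitem{theorem:LeftModuleExplicit}: start from $f \bullet \phi = \deform{\iota^*}(f \star \prol(\phi))$ with $\prol = \pr^*$. Since $N$ acts only in the $T^*G$-directions and $\prol(\phi) = \pr^*\phi$ is a pull-back (hence killed by momenta derivatives), we have $N(f \star \prol(\phi)) = N(N^{-1}(Nf \,\tilde\star\, N\prol(\phi)))$ where $\tilde\star = \starred\tensor\starstd$; but $N\prol(\phi) = \prol(\phi)$, so $N(f\star\prol(\phi)) = Nf \,(\starred\tensor\starstd)\, \prol(\phi)$. Applying $\iota^*$ and using the explicit standard-ordered formula \eqref{eq:stdredExplicit} — which expresses $\iota^*(g \starstd \pr^*\phi)$ as the finite/formal sum $\sum_r \frac{1}{r!}(\frac{\lambda}{\I})^r \iota^*(\partial^r g/\partial P_{a_1}\cdots\partial P_{a_r})\,\Lie_{X_{a_1}}\cdots\Lie_{X_{a_r}}\phi$ — together with the fact that in the $\starred\tensor\starstd$ product the $M_\red$-variables only see $\starred$, gives exactly \eqref{eq:LeftModuleExplicit}. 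Here one must be a little careful: $\stdrep$ in \eqref{eq:stdredExplicit} was written for functions on $T^*G$, so I would note that the derivation $\partial/\partial P_a$ is still well-defined on $\Cinfty(M_\red\times T^*G)$ and the left factor of the $\iota^*(\cdots)$ lands in $\Cinfty(M_\red)[[\lambda]]$, which is then $\starred$-multiplied with $\Lie_{X_{a_1}}\cdots\Lie_{X_{a_r}}\phi$.

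For part~\refitem{eq:ReducedStarProductExplicit}: by \eqref{eq:ReducedStarProduct}, $\pi^*(u \starredk v) = \deform{\iota^*}(\prol(\pi^*u)\star\prol(\pi^*v)) = \iota^* N(\pr^*(\pi^*u)\star\pr^*(\pi^*v))$. Both factors are pull-backs along $\pr$, so $N$ acts as the identity and the $\starstd$/$\starG$-contributions in the $T^*G$-direction vanish on these (momentum-independent) functions, leaving $\iota^*(\pr^*(\pi^*u)\,\starred\,\pr^*(\pi^*v)) = \pi^*(u\starred v)$; this identifies the reduced star product with $\starred$. The right module formula \eqref{eq:RightModuleExplicit} follows from \eqref{eq:ReducedRightModule} by the same mechanism: $\phi \bulletred u = \deform{\iota^*}(\prol(\phi)\star\prol(\pi^*u)) = \iota^* N(\prol(\phi)\,(\starred\tensor\starstd)\,\pr^*\pi^*u)$, and since $\pr^*\pi^*u$ is momentum-independent, $\starstd$ reduces to pointwise multiplication in the $T^*G$-factor and $N$ acts trivially on the product, giving $\iota^*(\prol(\phi)\cdot\cc{})\starred$-multiplied structure that collapses to $\phi \starred \pi^*u$ after $\iota^*$.

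For part~\refitem{theorem:InnerProductExplicit}: in the definition \eqref{eq:SPred} the integrand is $\deform{\iota^*}(\cc{\prol(\phi)}\star\prol(\psi))$; by part~\refitem{theorem:LeftModuleExplicit}-type reasoning this equals $\cc{\prol(\phi)}\bullet\psi$, and since for factorizing inputs $\cc{\prol(\phi)}$, $\prol(\psi)$ are both pull-backs with $N$ acting trivially, one gets $\iota^*(\cc{\prol(\phi)}(\starred\tensor\starstd)\prol(\psi)) = \cc{\phi}\starred\psi$ (the $\starstd$ in the fiber direction again being pointwise on pull-backs). Plugging into \eqref{eq:SPred} and recalling $\mathsf{L}_{g^{-1}}$ is the right translation on the $G$-factor and leaves $M_\red$ fixed yields $\SP{\phi,\psi}_\red(p) = \int_G (\cc{\phi}\starred\psi)(p,g)\,\Dleft g$. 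The main obstacle I anticipate is purely bookkeeping: making rigorous the repeated claim that ``$N$ acts trivially on pull-backs along $\pr$'' and that ``$\starstd$ reduces to pointwise multiplication when one argument is momentum-independent'' — both are immediate from the definitions \eqref{eq:NeumaierOperator}, \eqref{eq:DeltaNull} and the standard-ordered property $\pr^*\phi \starstd f = \pr^*\phi\, f$, but one should state them cleanly once and for all (perhaps as a short preliminary observation) so the three computations above become one-liners rather than repeating the argument three times.
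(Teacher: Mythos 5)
Your proposal is correct and takes essentially the same route as the paper's proof: it rests on $\deform{\iota^*}=\iota^*\circ N$ from Corollary~\ref{corollary:DeformedRestrictionMapExplicit}, the intertwining $N(f\starG g)=Nf\starstd Ng$, the triviality of $N$ and of the fiberwise products on momentum-independent pull-backs along $\pr$, and the explicit standard-ordered formula \eqref{eq:stdredExplicit}. Only take care to phrase the ``pointwise'' claim one-sidedly (the standard-ordered property is $\pr^*\phi\starstd f=\pr^*\phi\,f$ for the \emph{left} pull-back argument); since in all your applications both fiber arguments are momentum-independent, this does not affect the argument.
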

\begin{proof}
    For the first part we compute
    \begin{align*}
        f \bullet \phi
        &=
        \deform{\iota^*}(f \star \prol(\phi)) \\
        &=
        \iota^* N (f (\starred \tensor \starG) \prol(\phi)) \\
        &=
        \iota^*
        \left(Nf (\starred \tensor \starstd) \prol(\phi)\right) \\
        &=
        \sum_{r=0}^\infty \frac{1}{r!}
        \left(\frac{\lambda}{\I}\right)^r
        \iota^*\left(
            \frac{\partial^r f}
            {\partial P_{a_1} \cdots \partial P_{a_r}}
        \right)
        \starred
        \Lie_{X_{a_1}} \cdots \Lie_{X_{a_r}} \phi.
    \end{align*}
    The second part is clear since $\prol(\pi^*u) \star \prol(\pi^*v)
    = \prol(\pi^*) (\starred \tensor \starG) \prol(\pi^*v) =
    \prol(\pi^*(u \starred v))$. Indeed, the standard-ordered product
    as well as the canonical star product reduce to the pointwise
    product if both functions are independent of the momenta. A
    further application of $N$ yields nothing new for the same reason
    showing that our general construction reproduces $\starred$ as
    expected. For the right module structure we can argue
    similarly. Finally, for the third part we have
    \begin{align*}
        \pi^* \SP{\phi, \psi}_\red
        &=
        \int_G \mathsf{L}_{g^{-1}}^* \deform{\iota^*}
        \left(
            \cc{\prol(\phi)} \star \prol(\psi)
        \right) \Dleft g \\
        &=
        \int_G \mathsf{L}_{g^{-1}}^* \iota^* N
        \left(
            \prol\left(\cc{\phi} \starred \psi\right)
        \right) \Dleft g \\
        &=
        \int_G \mathsf{L}_{g^{-1}}^*
        \left(
            \cc{\phi} \starred \psi 
        \right) \Dleft g,
    \end{align*}
    since again $N$ and $\starG$ act trivially on functions not
    depending on the momenta.
\end{proof}

%
% Rieffel induction
%

\subsection{Rieffel induction}
\label{subsec:RieffelInduction}

Having an explicit description of the bimodule structure and the inner
product we can compute the result of the corresponding Rieffel
induction as well.

To simplify things slightly, we will restrict to the following unital
$^*$-subalgebra $(\Cinfty(M_\red) \tensor[\mathbb{C}]
\Cinfty(T^*G))[[\lambda]]$ of $\Cinfty(M_\red \times
T^*G)[[\lambda]]$. Thanks to the factorization of the star product,
this is indeed a subalgebra. Moreover, it acts on $(\Cinfty(M_\red)
\tensor[\mathbb{C}] \Cinfty_0(G))[[\lambda]]$ of $\Cinftycf(M_\red
\times G)[[\lambda]]$ which becomes a bimodule for $(\Cinfty(M_\red)
\tensor[\mathbb{C}] \Cinfty(T^*G))[[\lambda]]$ from the left and
$\Cinfty(M_\red)[[\lambda]]$ from the right as before. Clearly, all
our previous results restrict well to this situation. Note that the
$\Cinfty(M_\red)[[\lambda]]$-valued inner product is still \emph{full}
when restricted to $(\Cinfty(M_\red) \tensor[\mathbb{C}]
\Cinfty_0(G))[[\lambda]]$. The reason why we restrict to this
subalgebra and this submodule is that the Rieffel induction functor
will have a very nice end explicit form here.
\begin{remark}
    \label{remark:StuffIsDense}
    The natural locally convex topologies of smooth functions (with
    compact support) make $\Cinfty(M_\red) \tensor[\mathbb{C}]
    \Cinfty_0(G)$ a dense subspace of $\Cinftycf(M_\red \times G)$ and
    similarly for $\Cinfty(M_\red) \tensor[\mathbb{C}]
    \Cinfty(T^*G)$. Thus, morally, the above restriction is not
    severe: as soon as one enters a more topological framework all the
    (hopefully continuous) structure maps should be determined by
    their behaviour on these dense subspaces. Of course, the
    $\lambda$-adic topology does not fit together well with the smooth
    function topology, at least in a naive way. Nevertheless, we
    consider this to be a technicality which may only cause artificial
    difficulties but no conceptual ones.
\end{remark}

The above simplification allows to re-interpret the factorizing case
in the following, purely algebraic way. Assume that $\mathcal{A}_\red$
and $\mathcal{B}$ are unital $^*$-algebras over $\ring{C}$ and
$\mathcal{A} = \mathcal{B} \tensor[\ring{C}] \mathcal{A}_\red$ is
their algebraic tensor product, again endowed with its canonical
unital $^*$-algebra structure. Assume moreover, that $\mathcal{B}_0
\subseteq \mathcal{B}$ is a $^*$-ideal and
\begin{equation}
    \label{eq:omegaOnBNull}
    \omega: \mathcal{B}_0 \longrightarrow \ring{C}
\end{equation}
is a positive linear functional with Gel'fand ideal
$\mathcal{J}_\omega \subseteq \mathcal{B}_0$. Then it is well-known
that the GNS representation of $\mathcal{B}_0$ on
$\mathcal{B}_0 \big/ \mathcal{J}_\omega$ extends to a
$^*$-representation of $\mathcal{B}$ on $\mathcal{B}_0 \big/
\mathcal{J}_\omega$ in the canonical way.
\begin{remark}
    \label{remark:OurExample}
    In our example we have $\mathcal{A}_\red =
    \Cinfty(M_\red)[[\lambda]]$ with $\starred$ and $\mathcal{B} =
    \Cinfty(T^*G)[[\lambda]]$ as well as $\mathcal{B}_0 =
    \Cinfty_0(T^*G)[[\lambda]]$. The positive functional $\omega$ is
    then the Schrödinger functional
    \begin{equation}
        \label{eq:SchroedingerFunctional}
        \omega (f)
        = \int_G \iota^*f \Dleft g
        \stackrel{(*)}{=} \SP{1, \weylrep(f) 1},
    \end{equation}
    see \cite[Prop.~7.1.35]{waldmann:2007a} for the justification of
    ($*$). Moreover, one knows that the GNS representation
    corresponding to $\omega$ reproduces the Schrödinger
    representation $\weylrep$ on $\Cinfty_0(G)[[\lambda]]$ with the
    usual $L^2$-inner product, see e.g.
    \cite[Satz~7.2.26]{waldmann:2007a} for a discussion and further
    references.
\end{remark}

We will now make use of the external tensor product of pre Hilbert
modules \cite[Sect.~4]{bursztyn.waldmann:2005b}: for two $^*$-algebras
$\mathcal{A}_i$ with $i = 1, 2$ and corresponding pre Hilbert right
$\mathcal{A}_i$-modules $\mathcal{E}_i$ one defines on $\mathcal{E}_1
\tensor \mathcal{E}_2$ an inner product by
\begin{equation}
    \label{eq:ExternalInnerProduct}
    \IP{x \tensor x', y \tensor y'}
    {}
    {\mathcal{E}_1 \tensor \mathcal{E}_2}
    {\mathcal{A}_1 \tensor \mathcal{A}_2}
    =
    \IP{x, y}{}{\mathcal{E}_1}{\mathcal{A}_1}
    \tensor
    \IP{x', y'}{}{\mathcal{E}_2}{\mathcal{A}_2}
\end{equation}
with values in the $^*$-algebra $\mathcal{A}_1 \tensor
\mathcal{A}_2$. It turns out that \eqref{eq:ExternalInnerProduct} is
again completely positive once both inner products $\IP{\cdot,
  \cdot}{}{\mathcal{E}_i}{\mathcal{A}_i}$ were completely positive,
see \cite[Remark~4.12]{bursztyn.waldmann:2005b}. However, it might
happen that \eqref{eq:ExternalInnerProduct} is degenerate. Thus the
external tensor product is defined analogously to the internal tensor
product \eqref{eq:InternalTensorProduct} as the quotient
\begin{equation}
    \label{eq:ExternalTensorProduct}
    \mathcal{E}_1 \extensor \mathcal{E}_2
    =
    \mathcal{E}_1 \tensor \mathcal{E}_2
    \big/ \left(
        \mathcal{E}_1 \tensor \mathcal{E}_2
    \right)^\bot
\end{equation}
in order to get again a non-degenerate inner product. Needless to say,
the construction of $\extensor$ is functorial in a good sense
similarly to the internal tensor product.

Now we take $\mathcal{A}_\red$ as a right $\mathcal{A}_\red$-module
with its canonical completely positive inner product $\SP{a, a'} =
a^*a'$. Then we can form the external tensor product with
$\mathcal{B}_0 \big/ \mathcal{J}_\omega$ endowed with its pre Hilbert
space structure. Thus we consider
\begin{equation}
    \label{eq:TheE}
    \mathcal{E}
    =
    \left(\mathcal{B}_0 \big/ \mathcal{J}_\omega\right)
    \extensor
    \mathcal{A}_\red.
\end{equation}
The completely positive inner product \eqref{eq:ExternalInnerProduct}
becomes on factorizing representatives in $\left(\mathcal{B}_0 \big/
    \mathcal{J}_\omega\right) \tensor \mathcal{A}_\red$
\begin{equation}
    \label{eq:ExternalInnerProductExplicit}
    \SP{[b] \tensor a, [b'] \tensor a'}_\omega
    =
    \omega(b^*b') a^*a',
\end{equation}
where $b, b' \in \mathcal{B}_0$ and $a, a' \in
\mathcal{A}_\red$. Typically, the degeneracy space of
\eqref{eq:ExternalInnerProductExplicit} will be trivial already whence
the quotient \eqref{eq:ExternalTensorProduct} is unnecessary.
\begin{lemma}
    \label{lemma:External}
    Let $\mathcal{A}_\red$ and $\mathcal{B}$ be unital $^*$-algebras,
    $\mathcal{B}_0 \subseteq \mathcal{B}$ a $^*$-ideal, $\omega:
    \mathcal{B}_0 \longrightarrow \ring{C}$ a positive linear
    functional with Gel'fand ideal $\mathcal{J}_\omega$, and
    $\mathcal{E} = \left(\mathcal{B}_0 \big/ \mathcal{J}_\omega\right)
    \extensor \mathcal{A}_\red$.
    \begin{compactenum}
    \item \label{item:ExternalIsStarRepForA} The pre Hilbert right
        $\mathcal{A}_\red$-module $\mathcal{E}$ carries a
        $^*$-representation of $\mathcal{A} = \mathcal{B} \tensor
        \mathcal{A}_\red$ coming from the canonical $\mathcal{A}$-left
        module structure on $\left(\mathcal{B}_0 \big/
            \mathcal{J}_\omega\right) \tensor \mathcal{A}_\red$.
    \item \label{item:ExternalIsFullAgain} If
        $\image(\omega\at{\mathcal{B}_0 \cdot \mathcal{B}_0}) =
        \ring{C}$ then \eqref{eq:ExternalInnerProductExplicit} is
        full.
    \end{compactenum}
\end{lemma}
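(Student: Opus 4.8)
The plan is to treat the two parts separately. For the first part, I would start from the fact that $\mathcal{B}_0 \big/ \mathcal{J}_\omega$ carries the GNS $^*$-representation of $\mathcal{B}_0$, which — since $\mathcal{B}_0$ is a $^*$-ideal in the unital $^*$-algebra $\mathcal{B}$ — extends canonically to a $^*$-representation of all of $\mathcal{B}$ on the same pre Hilbert space (this is the standard GNS extension statement for $^*$-ideals, cf.\ \cite[Lem.~7.2.18]{waldmann:2007a} which was already invoked before Theorem~\ref{theorem:GNS}). On the other side, $\mathcal{A}_\red$ is the canonical right $\mathcal{A}_\red$-module with left multiplication as a $^*$-representation of $\mathcal{A}_\red$ on itself. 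The external tensor product construction of \cite[Sect.~4]{bursztyn.waldmann:2005b} is functorial, so the external tensor product of the $\mathcal{B}$-representation on $\mathcal{B}_0\big/\mathcal{J}_\omega$ with the $\mathcal{A}_\red$-representation on $\mathcal{A}_\red$ yields a $^*$-representation of $\mathcal{B}\tensor\mathcal{A}_\red = \mathcal{A}$ on $\mathcal{E}$. Concretely, one checks on factorizing elements that $(b\tensor a)\cdot([b']\tensor a') = [bb']\tensor aa'$ descends to the quotient $\mathcal{E}$ and is adjointable with adjoint given by $b^*\tensor a^*$, using the explicit form \eqref{eq:ExternalInnerProductExplicit} of the inner product together with the fact that the GNS representation of $\mathcal{B}$ and the left regular representation of $\mathcal{A}_\red$ are each $^*$-representations. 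This is essentially a routine verification once the extension of the GNS representation to $\mathcal{B}$ is in place.

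For the second part, I would argue as follows. Fullness means $\SP{\mathcal{E},\mathcal{E}}_\omega = \mathcal{A}_\red$. Pick $a, a' \in \mathcal{A}_\red$ arbitrary; we want to realize $a^*a'$, and more generally an arbitrary element of $\mathcal{A}_\red$, as a finite sum of inner products $\SP{[b_i]\tensor c_i, [b_i']\tensor c_i'}_\omega = \omega(b_i^*b_i')\, c_i^* c_i'$. By hypothesis $\image(\omega\at{\mathcal{B}_0\cdot\mathcal{B}_0}) = \ring{C}$, so there are finitely many $b_i, b_i' \in \mathcal{B}_0$ with $\sum_i \omega(b_i^* b_i') = 1 \in \ring{C}$ (using that $\ring{C}$ is spanned by $1$, so any scalar — in particular $1$ — lies in the image). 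Then for any $u \in \mathcal{A}_\red$, writing $u = \sum_j \alpha_j\, c_j^* c_j'$ as a finite $\ring{C}$-linear combination of products $c_j^* c_j'$ — which is possible because $\mathcal{A}_\red$ is unital, e.g.\ $u = \Unit^* u = \Unit^*\Unit \cdot u$, or more symmetrically via polarization — one obtains $u = \sum_{i,j} \omega(b_i^* b_i')\,\alpha_j\, c_j^* c_j' = \SP{\sum_{i,j}[b_i]\tensor \cc{\alpha_j} c_j\,,\, \sum_{i,j}[b_i']\tensor c_j'}_\omega$ after collecting terms appropriately. Hence every element of $\mathcal{A}_\red$ is an inner product, proving fullness.

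The main obstacle I anticipate is purely bookkeeping in the second part: one must be careful that the sums are genuinely \emph{finite} (so that the expressions live in the algebraic external tensor product, not a completion) and that the $\ring{C}$-bilinearity versus sesquilinearity conventions are handled correctly — the inner product \eqref{eq:ExternalInnerProductExplicit} is antilinear in the first slot, so scalars must be moved with care. Writing $u = \Unit \cdot u$ and using that $\Unit = \sum_i \omega(b_i^*b_i')\Unit$ lets one absorb the $\mathcal{B}_0$-factor cleanly: set $x = \sum_i [b_i]\tensor \Unit$ and $y = \sum_i [b_i']\tensor u$, then $\SP{x,y}_\omega = \sum_{i} \omega(b_i^* b_i')\, \Unit^* u = u$, which is the slickest route and avoids the polarization step entirely. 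Everything else reduces to the functoriality of $\extensor$ and the standard GNS extension lemma, both of which are available from the cited references.
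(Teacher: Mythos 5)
Your part \textit{i.)} is essentially the paper's own (very terse) argument: the GNS representation of $\mathcal{B}_0$ extends canonically to $\mathcal{B}$ because $\mathcal{B}_0$ is a $^*$-ideal, and the external tensor product with the left regular $^*$-representation of $\mathcal{A}_\red$ on itself is functorial and compatible with passing to the quotient by the degeneracy space; nothing to object to there.

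In part \textit{ii.)}, however, there is a concrete slip: sesquilinearity produces cross terms that your computation ignores. With $x = \sum_i [b_i] \tensor \Unit$ and $y = \sum_j [b_j'] \tensor u$ one gets $\SP{x, y}_\omega = \sum_{i,j} \omega(b_i^* b_j')\, u$ and \emph{not} $\sum_i \omega(b_i^* b_i')\, u$; the hypothesis $\image(\omega\at{\mathcal{B}_0 \cdot \mathcal{B}_0}) = \ring{C}$ only provides finitely many $b_i, b_i' \in \mathcal{B}_0$ with $\sum_i \omega(b_i b_i') = 1$ (and, since $\mathcal{B}_0$ is a $^*$-ideal, one may rewrite $b_i b_i' = (b_i^*)^* b_i'$), so it controls the ``diagonal'' sum but says nothing about $\omega(b_i^* b_j')$ for $i \ne j$. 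Hence the displayed identity $\SP{x,y}_\omega = u$ is false in general, and the same cross-term problem affects your earlier ``collecting terms'' step; in fact the stronger claim that \emph{every} element of $\mathcal{A}_\red$ is a \emph{single} inner product is more than fullness requires and is not obviously achievable here (in contrast to the situation of Proposition~\ref{proposition:StarEquivalenceBimodule}, where a single vector $\deform{e}$ with $\SP{\deform{e},\deform{e}}_\red = 1$ exists). The repair is immediate: fullness in this algebraic setting means that the $\ring{C}$-linear span of the inner products is all of $\mathcal{A}_\red$, so for $u \in \mathcal{A}_\red$ simply keep the finite sum $\sum_i \SP{[b_i^*] \tensor \Unit, [b_i'] \tensor u}_\omega = \sum_i \omega(b_i b_i')\, u = u$, each summand being an honest inner product of elements of $\mathcal{E}$ (right $\mathcal{A}_\red$-linearity lets you place $u$ in the second slot). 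With this correction your argument does prove part \textit{ii.)}, in line with the paper's remark that it is ``clear''.
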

\begin{proof}
    The first statement is part of the functoriality of the external
    tensor product and in fact easy to verify. The second part is
    clear.
\end{proof}
\begin{remark}
    \label{remark:OurE}
    In our example, after the usual identification, we have
    \begin{equation}
        \label{eq:EIsInTensorProductMredG}
        \mathcal{E}
        = \Cinfty_0(G)[[\lambda]] \extensor \Cinfty(M_\red)[[\lambda]]
        \subseteq
        \left(\Cinfty_0(G) \tensor \Cinfty(M_\red)\right)[[\lambda]],
    \end{equation}
    with the inner product being precisely $\SP{\cdot, \cdot}_\red$
    from \eqref{eq:InnerProductExplicit}. In fact, $\mathcal{E}$ as
    constructed in \eqref{eq:TheE} needs not to be $\lambda$-adically
    complete in general but it will be dense in the right hand side of
    \eqref{eq:EIsInTensorProductMredG}. Note that $\omega$ fulfills
    the hypothesis of Lemma~\ref{lemma:External},
    \refitem{item:ExternalIsFullAgain}.
\end{remark}

We can now use the bimodule $\AEAred$ to induce $^*$-representations.
Thus let $\mathcal{D}$ be an auxiliary $^*$-algebra over $\ring{C}$
for the coefficients.
\begin{proposition}
    \label{proposition:RieffelInduction}
    We have a natural equivalence
    \begin{equation}
        \label{eq:RieffelEquivalenceOfFunctors}
        \mathsf{R}_{\mathcal{E}} (\cdot) \cong
        \left(\mathcal{B}_0\big/\mathcal{J}_\omega\right)
        \extensor \cdot
        :
        \Rep[\mathcal{D}](\mathcal{A}_\red)
        \longrightarrow
        \Rep[\mathcal{D}](\mathcal{A}).
    \end{equation}
\end{proposition}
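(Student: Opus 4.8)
The plan is to show that the Rieffel induction functor $\mathsf{R}_{\mathcal{E}} = \AEAred \itensor_{\mathcal{A}_\red} \cdot$ is naturally equivalent to the much more explicit functor $\left(\mathcal{B}_0\big/\mathcal{J}_\omega\right) \extensor \cdot$ obtained from the external tensor product. The key observation is that in the factorizing situation $\mathcal{A} = \mathcal{B} \tensor_{\ring C} \mathcal{A}_\red$ the equivalence bimodule itself factorizes: by Remark~\ref{remark:OurE} we have $\mathcal{E} = \left(\mathcal{B}_0\big/\mathcal{J}_\omega\right) \extensor \mathcal{A}_\red$ as a pre Hilbert right $\mathcal{A}_\red$-module, with $\mathcal{A} = \mathcal{B} \tensor \mathcal{A}_\red$ acting via the left $\mathcal{B}$-action on $\mathcal{B}_0\big/\mathcal{J}_\omega$ tensored with the left $\mathcal{A}_\red$-action on $\mathcal{A}_\red$.

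First I would make precise what ``$\AEAred$'' means here: it is the inner product $(\mathcal{A}, \mathcal{A}_\red)$-bimodule $\mathcal{E}$ from Lemma~\ref{lemma:External}, which is strongly non-degenerate as a right module (it is full by Lemma~\ref{lemma:External}~\refitem{item:ExternalIsFullAgain} and unital on the right since $\mathcal{A}_\red$ has a unit), and whose left $\mathcal{A}$-action is the one just described. Then for any $\BEA[\mathcal{D}]{\mathcal{A}_\red} \in \Rep[\mathcal{D}](\mathcal{A}_\red)$, by definition of the internal tensor product,
\begin{equation}
    \mathsf{R}_{\mathcal{E}}(\EA[\mathcal{A}_\red])
    =
    \mathcal{E} \itensor_{\mathcal{A}_\red} \EA[\mathcal{A}_\red]
    =
    \left(\left(\mathcal{B}_0\big/\mathcal{J}_\omega\right)
      \extensor \mathcal{A}_\red\right)
    \tensor_{\mathcal{A}_\red} \EA[\mathcal{A}_\red]
    \big/ (\cdots)^\bot.
\end{equation}
The plan is to exhibit the canonical $\mathcal{D}$-linear map
\begin{equation}
    \left(\left(\mathcal{B}_0\big/\mathcal{J}_\omega\right)
      \tensor \mathcal{A}_\red\right)
    \tensor_{\mathcal{A}_\red} \EA[\mathcal{A}_\red]
    \ni
    ([b] \tensor a) \tensor x
    \;\longmapsto\;
    [b] \tensor (a \cdot x)
    \in
    \left(\mathcal{B}_0\big/\mathcal{J}_\omega\right)
    \tensor \EA[\mathcal{A}_\red]
\end{equation}
and show it descends to a unitary intertwiner onto $\left(\mathcal{B}_0\big/\mathcal{J}_\omega\right) \extensor \EA[\mathcal{A}_\red]$. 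This is the standard ``associativity of tensor products'' isomorphism $\left(P \tensor \mathcal{A}_\red\right) \tensor_{\mathcal{A}_\red} Q \cong P \tensor Q$ for a $\ring C$-module $P$; one checks it is well-defined, surjective, intertwines the left $\mathcal{A} = \mathcal{B} \tensor \mathcal{A}_\red$-actions (the $\mathcal{B}$-part acts on the first tensor leg, the $\mathcal{A}_\red$-part moves across the balanced tensor product onto $Q$), and is isometric: comparing \eqref{eq:RieffelInnerProduct} with \eqref{eq:ExternalInnerProduct}, $\SPFEA{([b]\tensor a)\tensor x, ([b']\tensor a')\tensor x'} = \SP{x, \langle [b]\tensor a, [b']\tensor a'\rangle \cdot x'} = \SP{x, \omega(b^*b')\, a^* a' \cdot x'}$ on one side, and on the other side $\langle [b]\tensor (a\cdot x), [b']\tensor (a'\cdot x')\rangle = \omega(b^*b')\,\SP{a\cdot x, a'\cdot x'} = \omega(b^*b')\,\SP{x, a^*a'\cdot x'}$ — the two agree, so the map passes to the degeneracy quotients on both sides isometrically and bijectively. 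Finally I would note naturality: for an intertwiner $T\colon \EA[\mathcal{A}_\red] \to \EpA[\mathcal{A}_\red]$ the square with $\id \tensor T$ on one side and $\id \itensor T$ on the other commutes by construction, and strong non-degeneracy is preserved because $\mathcal{A}_\red$ is unital.

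The main obstacle is essentially bookkeeping rather than depth: one must be careful that the degeneracy quotients defining $\itensor$ and $\extensor$ are handled consistently — i.e. that the isomorphism of the un-quotiented spaces really carries one null space onto the other — and that all three module actions ($\mathcal{D}$ on the right for coefficients, $\mathcal{A}_\red$ balanced, $\mathcal{B}$ on the left) are tracked through the identification. The only genuinely structural input needed is that $\mathcal{A}_\red$ is unital, so that $\mathcal{A}_\red \tensor_{\mathcal{A}_\red} \EA[\mathcal{A}_\red] = \EA[\mathcal{A}_\red]$ strictly and no completion issues intrude; everything else, including the complete positivity needed for $\Rep$ (as opposed to $\sMod$), is already guaranteed since $\itensor$ and $\extensor$ both preserve complete positivity, as recalled in Section~\ref{subsec:StrongMoritaEquivalence}.
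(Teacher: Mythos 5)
Your proposal is correct and follows essentially the same route as the paper: the canonical ``associativity'' map $([b]\tensor a)\tensor x \mapsto [b]\tensor (a\cdot x)$, checked to be isometric by comparing \eqref{eq:RieffelInnerProduct} with \eqref{eq:ExternalInnerProduct}, which therefore descends to the degeneracy quotients, is surjective because $\mathcal{A}_\red$ is unital, and is natural with respect to intertwiners. Your write-up even makes the isometry computation and the $\mathcal{A}$-equivariance more explicit than the paper does, but the argument is the same.
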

\begin{proof}
    On objects, i.e. on a strongly non-degenerate $^*$-representation
    $\AredHD \in \Rep[\mathcal{D}](\mathcal{A}_\red)$ the Rieffel
    induction is given by
    \[
    \AEAred \itensor[\mathcal{A}_\red] \AredHD
    =
    \left(
        \left(\mathcal{B}_0 \big/\mathcal{J}_\omega\right)
        \extensor
        \mathcal{A}_\red
    \right)
    \itensor[\mathcal{A}_\red]
    \AredHD.
    \]
    This motivates to use the ``associativity'' of the tensor product
    to implement the natural equivalence
    \eqref{eq:RieffelEquivalenceOfFunctors}. Due to the presence of
    the two quotient procedures in $\extensor$ and
    $\itensor[\mathcal{A}_\red]$ we have to be slightly
    careful. Nevertheless, the $\ring{C}$-linear map defined by
    \[
    \mathsf{a}:
    \left(
        \left(\mathcal{B}_0 \big/\mathcal{J}_\omega\right)
        \tensor
        \mathcal{A}_\red
    \right)
    \tensor[\mathcal{A}_\red]
    \AredHD
    \ni
    ([b] \tensor a) \tensor \phi
    \; \mapsto \;
    [b] \tensor (a \cdot \phi)
    \in
    \left(\mathcal{B}_0 \big/\mathcal{J}_\omega\right)
    \tensor
    \AredHD
    \tag{$*$}
    \]
    turns out to be isometric with respect to the inner products on
    both sides. Thus it passes to the quotients and yields an
    isometric and now injective map
    \[
    \mathsf{a}:
    \left(
        \left(\mathcal{B}_0 \big/\mathcal{J}_\omega\right)
        \extensor
        \mathcal{A}_\red
    \right)
    \itensor[\mathcal{A}_\red]
    \AredHD
    \longrightarrow
    \left(\mathcal{B}_0 \big/\mathcal{J}_\omega\right)
    \extensor
    \AredHD.
    \tag{$**$}
    \]
    Since $\mathcal{A}_\red$ is unital and $\Unit_{\mathcal{A}_\red}
    \cdot \phi = \phi$ for all $\phi \in \AredHD$ by assumption, we
    see that ($*$) and hence also ($**$) is surjective. Thus
    $\mathsf{a}$ is unitary. It is now easy to check that $\mathsf{a}$
    is compatible with intertwiners and hence natural as claimed.
\end{proof}
\begin{remark}
    \label{remark:AtLast}
    From this proposition we arrive at the following picture for our
    example: up to the completion issues the Rieffel induction with
    $\Cinftycf(M_\red \times G)[[\lambda]]$ simply consists in
    tensoring the given $^*$-representation of
    $\Cinfty(M_\red)[[\lambda]]$ with the Schrödinger representation
    \eqref{eq:SchroedingerRep} on $\Cinfty_0(G)[[\lambda]]$. Note that
    once the $^*$-representation of $\Cinfty(M_\red)[[\lambda]]$ is
    specified we have everywhere very explicit formulas.
\end{remark}

%
% Appendices
%

\appendix

%
% Appendix: densities on principal bundles
%

\section{Densities on principal bundles}
\label{sec:DensitiesPrincipalBundles}

In this appendix we collect some well-known basic facts on densities
on a principal bundle. The principal bundle will be denoted by $\pi: C
\circlearrowright G \longrightarrow M_\red$ as before. We follow the
tradition that the group acts from the right, $g$ acting via
$\mathsf{R}_g: C \longrightarrow C$. The corresponding left action, as
we used it throughout the main text, is then $\mathsf{L}_g =
\mathsf{R}_{g^{-1}}$.

We fix once and for all a normalization of the constant positive
density $|\D^N x|$ on the vector space $\lie{g}$. Moreover, we
consider a \emph{horizontal lift}
\begin{equation}
    \label{eq:HorizontalLift}
    ^\hor: \Secinfty(TM_\red) \longrightarrow \Secinfty(TC),
\end{equation}
which can e.g. be obtained from a principal connection. For a density
$\Omega \in \Secinfty(\Density T^*M_\red)$ we can define a new density
$\mu \in \Secinfty(\Density T^*C)$ as follows: for $c \in C$ we choose
a basis $X_1, \ldots, X_n \in T_{\pi(c)}M_\red$ and define
\begin{equation}
    \label{eq:LiftOmegatomu}
    \mu_c
    \left(
        X_1^\hor(c), \ldots, X_n^\hor(c), (e_1)_C, \ldots, (e_N)_C
    \right)
    =
    \Omega_{\pi(c)}(X_1, \ldots, X_n) |\D^N x|(e_1, \ldots, e_N).
\end{equation}
This yields indeed a smooth density $\mu$ on $C$ which has the
following properties:
\begin{proposition}
    \label{proposition:DensityLifting}
    Let $C \circlearrowright G \longrightarrow M_\red$ be a principal
    bundle.
    \begin{compactenum}
    \item \label{item:LiftIsSmoothAndIndependent} The definition
        \eqref{eq:LiftOmegatomu} yields a smooth, well-defined density
        $\mu \in \Secinfty(\Density T^*C)$ which is independent on the
        choice of the horizontal lift.
    \item \label{item:LiftPreservesPositivity} Let $c \in C$ then
        $\mu_c$ is positive iff $\Omega_{\pi(c)}$ is positive.
    \item \label{item:LIftModularWeight} For all $g \in G$ one has
        \begin{equation}
            \label{eq:gactsmuDeltagmu}
            \mathsf{R}^*_g \mu = \frac{1}{\Delta(g)} \mu.
        \end{equation}
    \item \label{item:LiftIsomorphism} The map
        \begin{equation}
            \label{eq:OmegaToMu}
            \Secinfty(\Density T^*M_\red)
            \ni \Omega \; \mapsto \; \mu \in
            \Secinfty(\Density T^*C)
        \end{equation}
        is a $\Cinfty(M_\red)$-module monomorphism which is surjective
        onto those densities satisfying \eqref{eq:gactsmuDeltagmu}.
    \end{compactenum}
\end{proposition}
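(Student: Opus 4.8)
The plan is to verify the four claims of Proposition~\ref{proposition:DensityLifting} essentially by unwinding Definition~\eqref{eq:LiftOmegatomu} and using standard facts about densities and principal connections. First I would treat \refitem{item:LiftIsSmoothAndIndependent}. Given a principal connection with horizontal lift $^\hor$, the vector fields $X_1^\hor,\ldots,X_n^\hor$ together with the fundamental vector fields $(e_1)_C,\ldots,(e_N)_C$ form, at each point $c\in C$, a frame of $T_cC$, since the horizontal and vertical subspaces are complementary and the fundamental vector fields trivialise the vertical bundle (the action is free). Hence \eqref{eq:LiftOmegatomu} prescribes the value of $\mu$ on one particular frame and so determines a density; smoothness follows because locally one may choose the $X_i$ to be coordinate vector fields on $M_\red$, making both sides smooth in $c$. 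For independence of the connection: two horizontal lifts differ by a vertical correction, $X_i^{\hor'} = X_i^{\hor} + V_i$ with $V_i$ a section of the vertical bundle; expanding the density on the new frame and using multilinearity plus antisymmetry, the correction terms involve $\mu_c$ evaluated on a frame with a repeated vertical direction among the $(e_a)_C$ slots, hence vanish. So $\mu$ is well defined.

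Next, \refitem{item:LiftPreservesPositivity} is immediate: positivity of a density at a point means it is nonzero and has a definite sign on all frames of a given orientation class; since the right-hand side of \eqref{eq:LiftOmegatomu} is $\Omega_{\pi(c)}(X_1,\ldots,X_n)$ times the fixed positive number $|\D^N x|(e_1,\ldots,e_N)$, the sign of $\mu_c$ on the distinguished frame is exactly that of $\Omega_{\pi(c)}$ on $(X_1,\ldots,X_n)$, and the equivalence follows. For \refitem{item:LIftModularWeight} I would compute $(\mathsf{R}_g^*\mu)_c$ on the frame $\bigl(X_1^\hor(c),\ldots,X_n^\hor(c),(e_1)_C(c),\ldots,(e_N)_C(c)\bigr)$. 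Because a principal connection is equivariant, $T\mathsf{R}_g$ maps horizontal lifts of $X_i$ at $c$ to horizontal lifts of $X_i$ at $\mathsf{R}_g(c)$ (the base point $\pi(c)=\pi(\mathsf{R}_g c)$ is unchanged), while on the vertical part $T\mathsf{R}_g\bigl((e_a)_C(c)\bigr) = (\mathrm{Ad}_{g^{-1}}e_a)_C(\mathsf{R}_g c)$ by the standard transformation law for fundamental vector fields under the right action. Plugging into \eqref{eq:LiftOmegatomu} at $\mathsf{R}_g(c)$, the horizontal part reproduces $\Omega_{\pi(c)}(X_1,\ldots,X_n)$ and the vertical part contributes $|\D^N x|(\mathrm{Ad}_{g^{-1}}e_1,\ldots,\mathrm{Ad}_{g^{-1}}e_N) = |\det\mathrm{Ad}_{g^{-1}}|\,|\D^N x|(e_1,\ldots,e_N)$. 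Since $|\det\mathrm{Ad}_{g^{-1}}| = 1/\Delta(g)$ by the definition of the modular function, \eqref{eq:gactsmuDeltagmu} follows; this also confirms that $\cc\mu=\mu$ when $\cc\Omega=\Omega$ and is the input needed for \eqref{eq:muModularWeight} after passing to the left action.

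Finally, for \refitem{item:LiftIsomorphism}, $\Cinfty(M_\red)$-linearity is clear from \eqref{eq:LiftOmegatomu} since multiplying $\Omega$ by $\pi^*\varphi$ multiplies $\mu$ by $\varphi\circ\pi$ pointwise. Injectivity follows from \refitem{item:LiftPreservesPositivity} (or directly: if $\mu=0$ then $\Omega_{\pi(c)}$ vanishes on some frame, hence $\Omega=0$ since $\pi$ is surjective). For surjectivity onto densities satisfying \eqref{eq:gactsmuDeltagmu}, given such a $\mu'$ I would define $\Omega'$ on a basis $X_1,\ldots,X_n$ of $T_mM_\red$ by choosing any $c\in\pi^{-1}(m)$ and setting $\Omega'_m(X_1,\ldots,X_n) = \mu'_c\bigl(X_1^\hor(c),\ldots,X_n^\hor(c),(e_1)_C,\ldots,(e_N)_C\bigr) / |\D^N x|(e_1,\ldots,e_N)$; the transformation property \eqref{eq:gactsmuDeltagmu} together with the $\mathrm{Ad}$-law for fundamental vector fields and $|\det\mathrm{Ad}_{g^{-1}}|=1/\Delta(g)$ shows this is independent of the choice of $c$ in the fibre, hence defines a global smooth density on $M_\red$, and by construction $\Omega'\mapsto\mu'$. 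The only place where any real care is needed is the equivariance bookkeeping in \refitem{item:LIftModularWeight} and in the surjectivity argument — getting the direction of the $\mathrm{Ad}$-action and the sign/inverse in $\Delta(g)$ versus $\Delta(g^{-1})$ right, and matching the right-action convention of this appendix with the left-action convention \eqref{eq:muModularWeight} used in the main text; everything else is routine multilinear algebra of densities.
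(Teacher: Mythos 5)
Your argument is correct and, for parts \textit{i.)}--\textit{iii.)}, it is essentially the paper's proof spelled out: independence of the horizontal lift via the block structure of the change of frame, equivariance of a principal connection (the paper's ``invariant horizontal lift''), the $\Ad$-transformation of fundamental vector fields (your $T\mathsf{R}_g\bigl((e_a)_C(c)\bigr)=(\Ad_{g^{-1}}e_a)_C(\mathsf{R}_g c)$ is equivalent to the paper's $\mathsf{R}_g^*\xi_C=(\Ad_g\xi)_C$), and $|\det\Ad_{g^{-1}}|=1/\Delta(g)$. One small caveat on wording in \textit{i.)}: a density is not a multilinear alternating form, so ``multilinearity plus antisymmetry'' is not the right justification; the correct statement is that the two frames differ by a block-triangular matrix with identity diagonal blocks, hence $|\det|=1$ and the density value is unchanged --- which is exactly the ``block-structure'' argument the paper intends. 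Where you genuinely diverge is the surjectivity in \textit{iv.)}: you reconstruct $\Omega'$ directly, fibrewise, from $\mu'$ and re-run the equivariance bookkeeping to show independence of the point in the fibre, whereas the paper fixes a positive reference $\Omega>0$ with lift $\mu>0$, writes any $\tilde\mu$ satisfying \eqref{eq:gactsmuDeltagmu} as $\tilde\mu=f\mu$ with $f\in\Cinfty(C)$, and observes that the common transformation law forces $f$ to be $G$-invariant, i.e.\ $f=\pi^*u$. Your route is more explicit and needs no reference density but duplicates the computation from \textit{iii.)}; the paper's route is shorter because it outsources all equivariance to the already proven part \textit{iii.)} and to the nowhere-vanishing of $\mu$. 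Both are valid.
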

\begin{proof}
    The first part is a simple verification that $\mu$ transforms
    correctly under a change of the bases. Moreover, since passing to
    another horizontal lift changes $X^\hor$ by vertical terms, it
    follows from this block-structure that $\mu$ does not depend on
    the choice of the horizontal lift. The second part is clear. For
    the third, note that the fundamental vector field $\xi_C$
    satisfies $\mathsf{R}^*_g \xi_C = (\Ad_g \xi)_C$. Then
    \eqref{eq:gactsmuDeltagmu} follows easily as we can choose an
    invariant horizontal lift, i.e. we have $\mathsf{R}^*_g X^\hor =
    X^\hor$ for all vector fields $X \in \Secinfty(TM)$. Finally,
    \eqref{eq:OmegaToMu} is clearly $\Cinfty(M_\red)$-linear (along
    $\pi^*$) and injective. Now chose $\Omega > 0$ and thus $\mu > 0$.
    If $\tilde{\mu}$ is a density with \eqref{eq:gactsmuDeltagmu} then
    $\tilde{\mu} = \pi^*u \mu$ with some $u \in \Cinfty(M_\red)$
    showing the surjectivity.
\end{proof}

We need some local expressions for $\mu$ in order to compute
integrations with respect to $\mu$. Thus let $U \subseteq M_\red$ be a
small enough open subset such that there exists a $G$-equivariant
diffeomorphism
\begin{equation}
    \label{eq:PhiTrivializesC}
    \Phi: U \times G \longrightarrow \pi^{-1}(U) \subseteq C,
\end{equation}
i.e. a trivialization. Since we trivialize $C$ as a \emph{right}
principal bundle, the fundamental vector field $\xi_{U \times G}$ on
$U \times G$ at $(p, g)$ is simply given by minus the \emph{left}
invariant vector field $\xi_{U \times G} = - T_e\mathsf{l}_g (\xi)$. Note
that the minus sign appears as we define the fundamental vector fields
with respect to the left action. However, in the density $|\D^nx|$
this does not matter anyway. Since we are free to choose the
horizontal lift we take $X^\hor(p, g) = X(p)$ for $X \in
\Secinfty(TU)$. Then the definition of $\mu$ just gives
\begin{equation}
    \label{eq:muIsDleftTensorOmega}
    \mu = \Omega \boxtimes \Dleft g ,
\end{equation}
i.e. the (external) tensor product of the \emph{left} invariant Haar
density and $\Omega$. Thus for $\phi \in \Cinfty_0(C)$ with support in
$\pi^{-1}(U)$ we get
\begin{equation}
    \label{eq:IntegralphiFubini}
    \int_C \phi \; \mu 
    = \int_{U \times G} \phi(p, g) \Omega(p) \Dleft g.
\end{equation}
A more global interpretation of this local formula is obtained as
follows: for $\phi \in \Cinfty_0(C)$ the integral
\begin{equation}
    \label{eq:intCphidg}
    \int_G \mathsf{R}^*_g \phi \Dleft g\At{c}
    = \int_G \phi(\mathsf{R}_g(c)) \Dleft g
\end{equation}
yields an invariant smooth function on $C$ since $\Dleft g$ is left
invariant. Thus it is of the form $\pi^*u$ with some function $u \in
\Cinfty_0(M_\red)$. Note that $u$ still has compact support.
Using the above local result and a partition of unity argument we see
that for this function $u$ we have
\begin{equation}
    \label{eq:IntMreduIsIntphiC}
    \int_{M_\red} u \; \Omega = \int_C \phi \mu.
\end{equation}
With some slight abuse of notation (omitting the $\pi^*$) we therefor
write
\begin{equation}
    \label{eq:intphiMu}
    \int_C \phi \; \mu 
    = \int_{M_\red} \left(\int_G \mathsf{R}_g^* \phi \Dleft g\right) \Omega.
\end{equation}

%
% references
%

\begin{footnotesize}
    \renewcommand{\arraystretch}{0.5} 
%    \bibliographystyle{ewde}
%    \bibliography{dqarticle,dqbook,dqprocentry,dqproceeding,preprints,misc,dqthesis,notes,script}

\end{footnotesize}

\end{document}